\author{Stephen DeBacker}
\address{University of Michigan\\
Ann Arbor, MI 48109-1043, USA}
\email{smdbackr@umich.edu}
\let\@wraptoccontribs\wraptoccontribs
  \subjclass[2020]{Primary 20G25; Secondary 22E35}
  \date{\today}
  \title{Parameterizing Conjugacy Classes of Unramified Tori via Bruhat-Tits theory}
\newcommand{\toriK}{\mathcal{T}_K}
\newcommand{\toriKGamma}{\mathcal{T}_K^{\Gamma}}
\newcommand{\JGamma}{J^{\Gamma}}
\newcommand{\JGammamax}{J_\mathrm{max}^{\Gamma}}
\newcommand{\GGamma}{G^{\Gamma}}
\newcommand{\MGamma}{M^{\Gamma}}
\newcommand{\affFrW}{W^{\Fr,\aff}}
\newcommand{\Ebtorus}{{\bA}^{\! \scaleto{\# \mathstrut}{6pt}}}
\newcommand{\Ebtorusrat}{{A}^{\! \scaleto{\# \mathstrut}{6pt}}}
\newcommand{\EbStorus}{{\bS}^{\! \scaleto{\# \mathstrut}{6pt}}}
\newcommand{\EbTtorus}{{\bT}^{\! \scaleto{\# \mathstrut}{6pt}}_1}
\newcommand{\EbTtorusrat}{{T}^{\! \scaleto{\# \mathstrut}{6pt}}_1}
\DeclareMathOperator{\ee}{e}
\DeclareMathOperator{\aff}{aff}
\DeclareMathOperator{\X}{X}
\DeclareMathOperator{\characteristic}{char}
\DeclareMathOperator{\red}{red}
\DeclareMathOperator{\Ad}{Ad}
\DeclareMathOperator{\Stab}{Stab}
\begin{document}

\begin{abstract}
  Suppose $k$ is a nonarchimedean local field, $K$ is a maximally unramified extension of $k$, and $\bG$ is a connected reductive $k$-group.  In this paper we provide parameterizations via Bruhat-Tits theory of: the rational conjugacy classes of $k$-tori in $\bG$ that split over $K$;  the rational and stable conjugacy classes of the $K$-split components of the centers of unramified twisted Levi subgroups of $\bG$; and the rational conjugacy classes of unramified twisted generalized Levi subgroups of $\bG$.   We also provide parameterizations of analogous objects for finite groups of Lie type.  
\end{abstract}

\maketitle

\section*{Introduction}

Suppose $k$ is a nonarchimedean local field, $K$ is a maximally unramified extension of $k$, and $\bG$ is a connected reductive $k$-group.   If $\bT$ is a maximal $k$-torus in $\bG$, then $\bT^K$\!,  the maximal $K$-split torus in $\bT$, is defined over $k$ and $\bT$ is a maximal $K$-minisotropic $k$-torus in $\bL = C_\bG(\bT^K)$, the centralizer in $\bG$ of $\bT^K$.   The group $\bL$ is an unramified twisted Levi subgroup of $\bG$; that is, $\bL$ is a $k$-group that occurs as the Levi component of a parabolic $K$-subgroup of $\bG$.  Consequently, an approach to parameterizing the rational conjugacy classes of  maximal tori in $\bG$ is to
\begin{itemize}
    \item parameterize the rational conjugacy classes of unramified twisted Levi subgroups of $\bG$ and
    \item for each  unramified twisted Levi subgroup $\bL$ of $\bG$ parameterize the $\bL(k)$-conjugacy classes of $K$-minisotropic maximal $k$-tori in $\bL$.
\end{itemize}
This paper takes up the former problem.  The latter problem is studied in~\cite{debacker:totally}.  Future work will take up the problem of parameterizing, via Bruhat-Tits theory, the rational classes of  tame tori in a reductive $p$-adic group.

A subgroup $\bL$ of $\bG$ is called an \emph{unramified twisted Levi subgroup in $\bG$} provided that $\bL$ is a $k$-group that occurs as a Levi factor for a parabolic $K$-subgroup of $\bG$.
A $k$-torus $\bS$ is called an \emph{unramified torus in $\bG$} provided that $\bS$ is the $K$-split component of the center of an unramified twisted Levi subgroup in $\bG$.    It follows that  the problem of understanding the set of unramified twisted Levi subgroups up to rational conjugacy is equivalent to the problem of understanding  the set of unramified tori up to rational conjugacy.   A parameterization of the rational conjugacy classes of \emph{maximal} unramified tori in $\bG$ via Bruhat-Tits theory was carried out in~\cite{debacker:unramified}, and this paper may be viewed as a generalization of the results found there.

If one tries to generalize~\cite{debacker:unramified} by na\"ively replacing the role of ``maximal tori in groups over the residue field'' with ``twisted Levis in groups over the residue field,'' it will not work.  One reason for this failure is that any reductive group over the residue field is quasi-split, but unramified twisted Levis do not need to be $k$-quasi-split.  To make it work, one needs to introduce a bit more data, as we now discuss.

In Theorem~\ref{thm:maintheorem}  the rational conjugacy classes of unramified tori are parameterized in terms of equivalence classes of elliptic triples $(F,\theta,w)$ that arise from Bruhat-Tits theory.   When $\bG$ is $k$-split,  the triples $(F,\theta,w)$ may be described as follows. Let $\bA^k$ denote a maximal $k$-split torus in $\bG$.  Then  $F$ is a facet in the apartment of $\bA^k$ in the Bruhat-Tits building of $\bG$, $\theta$ is a subset of a basis for $\Phi(\bG,\bA^k)$, the set of roots of $\bG$ with respect to $\bA^k$, and $w$ is an element of the Weyl group of the reductive quotient at $F$  which preserves the root subsystem  in $\Phi(\bG,\bA^k)$ spanned by $\theta$.   There is a natural notion of equivalence among such triples, and the triple $(F,\theta,w)$ is elliptic provided that $\dim(F) \geq \dim(F')$ for all triples $(F',\theta',w')$ that are equivalent to $(F,\theta,w)$.  When $\bG$ is not $k$-split, the parameterization is modified to account for the action of the Galois group of $K$ over $k$.

If we restrict our attention to the set of triples that are  of the form $(F, \emptyset, w)$, then we
recover the parameterization of  maximal unramified tori of $\bG$ in~\cite{debacker:unramified}.  It is the addition of the datum $\theta$, which has little or no relationship to $F$, that allows the parameterization of this paper to work.   In analogy with~\cite{debacker:unramified}, the dimension of the Lie algebra of the maximal $k$-split torus that occurs in the centralizer of an unramified torus parameterized by $(F,\theta,w)$ is equal to the dimension of $F$.

 We  now discuss the contents of this paper.  
  By way of motivation, in  Section~\ref{sec:leviff} we parameterize the finite-field analogue of unramified twisted Levi subgroups; this can be viewed as a natural generalization of the known parameterization, in terms of Frobenius-conjugacy classes in the Weyl group, of maximal tori in finite groups of Lie type.   In Section~\ref{sec:littlereview} we recall some facts about the relationship between Levi subgroups and various tori in their centers.  This section also verifies that every unramified torus occurs as the maximal $K$-split subtorus of some maximal $k$-torus of $\bG$.
In Section~\ref{sec:Kktori} we parameterize, via Bruhat-Tits theory, \emph{all} 
$k$-tori that split over $K$ and contain the maximal $K$-split torus in the center of $\bG$.   Building on this, in Section~\ref{sec:paramunram}  we parameterize, as discussed above,  the rational conjugacy classes of unramified tori in terms of Bruhat-Tits theory.

In Section~\ref{sec:stableconj}
we define a notion of stable conjugacy for  unramified tori and provide a criterion, in terms of the triple $(F,\theta,w)$, to describe when two rational conjugacy classes of unramified tori are stably conjugate.   We also examine a more general $k$-embedding question. Given an unramified torus $\bS$ in $\bG$, a $k$-morphism $f \colon \bS \rightarrow \bG$ is said to be a \emph{$k$-embedding} provided that there exists $g\in \bG(K)$ such that $f(s) = gsg\inv$ for all $s \in \bS(K)$.   We enumerate, in terms of parameterizing data, the set of  $k$-embeddings of $\bS$  up to rational conjugacy.  Note that the discussion of this result in~\cite{debacker:oberwolfach} is incorrect.

Finally, in Section~\ref{sec:twistedgeneralized} we parameterize the rational conjugacy classes of generalized unramified twisted Levis (see Definition~\ref{defn:utgls}).   This is akin to the parameterization of maximal-rank unramified subgroups of a $K$-split group $\bG$ in~\cite{debacker:unramified}, though less restrictive and, I believe, easier to understand.  We end as we began  by parameterizing the conjugacy classes of  the finite-field analogue of unramified twisted generalized Levi subgroups.

\section*{Acknowledgements}  I thank  Jeffrey Adler, Jacob Haley, David Schwein, Loren Spice, and Cheng-Chiang Tsai for discussions that vastly improved both the content and the exposition of this paper.  I also thank the   American Institute of Mathematics whose hospitality and SQuaRE program created a wonderful  environment for doing mathematics, and it is there that  this research was partially carried out.

\section{Notation}
\subsection{Fields, groups, roots}
Let $k$ denote a field that is complete
with respect to a nontrivial discrete valuation $\nu$.
We assume that the residue field $\ff$ of $k$ is perfect, and outside of Sections~\ref{sec:littlereview} and~\ref{sec:Kktori}, we will assume that the residue field $\ff$ of $k$ is also quasi-finite.  Let $\bar{k}$ denote a fixed separable closure of $k$, and let $K \leq \bar{k}$ be the maximal unramified extension of $k$.  The valuation extends uniquely to $\bar{k}$, and we will also denote this extension by $\nu$.
Let $\ffc$ denote the residue field of $K$; it is an algebraic closure of $\ff$. We will often identify an algebraic $\ff$-group with its group of $\ffc$-points.

If $\mathcal{G}$ is a group and $x,y \in \mathcal{G}$, then $\Int(x) (y)$  and $\lsup{x}y$  are defined to be $xyx\inv $

If $\bH$ is a $k$-group, then we let $\Lie(\bH)$ denote its Lie algebra.

We will use the following font convention: If $\bH$ denotes an algebraic $K$-group, then we will denote its group of $K$-points by $H$.   

We identify $\Gamma = \Gal(K/k)$ with $\Gal(\ffc/\ff)$.  When $\ff$ is quasi-finite we fix a topological generator $\Fr$ for $\Gamma$, and  choose a lift of $\Fr$ to an element, which we will also call $\Fr$, of $\Gal(\bar{k}/k)$. 

Let $\bA$ denote a maximal $K$-split $k$-torus in $\bG$ that contains a maximal $k$-split torus of $\bG$; such a torus exists and is unique up to $G^{\Gamma}$-conjugacy (see~\cite[Theorem~6.1]{prasad:unramified} or in~\cite[Theorem~3.4.1]{debacker:unramified} take an unramified torus corresponding to a pair of the form $(F,\sfT) \in I^m$  with $F$ an alcove).
 We denote by $\Phi = \Phi(\bG, \bA)$ the root system of $\bG$ with respect to $\bA$  and by $W = W(G,\bA)$ the Weyl group $N_G(\bA)/C_G(\bA)$.  We denote by 
$\Psi = \Psi(\bG, \bA, K, \nu)$ the set of affine roots of $\bG$ with respect to $\bA$ and $\nu$.   For $\psi \in \Psi$, we let $\dot{\psi} \in \Phi$ denote the gradient of $\psi$.

Since $\bG$ is $K$-quasi-split, there is a Borel $K$-subgroup, call it $\bB$, that contains $\bA$.   Let $\Delta = \Delta(\bG, \bB ,\bA)$ denote the corresponding set of simple roots in $\Phi$.  Set
$$\Theta = \Theta(\bG,\bA) = \{w \rho \, | \text{$w \in W$ and $\rho \subset \Delta$} \}.$$
The set $\Theta$ is independent of the choice of $\bB$.  Note that  $\Gamma$ acts on $\Phi$, $W$, and $\Theta$. 
If $\theta \in \Theta$, then we let $\Phi_\theta \subset \Phi$ denote the root subsystem generated by $\theta$, we let $W_\theta \leq W$ denote the corresponding Weyl group, and we let 
$\bA_{\theta} = \left(\bigcap_{\alpha \in \theta} \ker(\alpha) \right)^\circ$.

\subsection{Notation for tori}
Suppose $E$ is a Galois extension of $k$, and $\bT$ is a $k$-torus. We let  $\bT^E$ denote the maximal $E$-split subtorus in $\bT$.     

\begin{defn}
A \emph{maximally $k$-split maximal $k$-torus in $\bG$} is a maximal $k$-torus $\bT$ in $\bG$ such that $\dim(\Lie(\bT^k)) \geq \dim(\Lie(\tilde{\bT}^k))$ for all maximal $k$-tori $\tilde{\bT}$ in $\bG$.  
\end{defn}

Note that if $\bG$ is $k$-quasi-split, then a maximally $k$-split maximal $k$-torus in $\bG$ is the centralizer of a maximal $k$-split torus in $\bG$.  In general, 
if $\bT$ is a maximally $k$-split maximal $k$-torus in $\bG$, then $\bT$ is a maximal $k$-torus in $\bG$ that contains a maximal $k$-split torus of $\bG$.  

Suppose $\bH$ is an algebraic $k$-subgroup of $\bG$.  We let $\bZ_{\bH}$ denote the center of $\bH$.   We set $\bZ = \bZ_\bG$, and we let $\bZ^E_{\bH}$ denote the maximal $E$-split subtorus in $\bZ_{\bH}$.

Similar notation applies to objects defined over $\ff$.

\subsection{Buildings}  \label{sec:buildings}
We denote by $\BB(G)$ the (enlarged) building of $G$.  If $F$ is a facet in $\BB(G)$, then we denote the corresponding parahoric subgroup by $G_F$ and its pro-unipotent radical by $G_{F,0^+}$.   The quotient $G_F/G_{F,0^+}$ is the group of $\ff$-points of a connected reductive group $\sfG_F$.  If $F$ if $\Gamma$-stable, then 
$\sfG_F$ is an $\ff$-group.
The group $G^{\Gamma}$ acts simplicially on $\BB(G)^{\Gamma}$, and a fundamental domain for this action is called an \emph{alcove}.

We let $\AA(A)$ denote the apartment in $\BB(G)$ corresponding to $A$.
If $F$ is a facet in $ \AA(A)^{\Gamma}$, then the image of $A \cap G_F$ in $\sfG_F$, which we call $\sfA_F$, is a maximally $\ff$-split
maximal $\ff$-torus in $\sfG_F$.  That is, $\sfA_F$ is a maximal $\ff$-torus in $\sfG_F$ that contains a maximal $\ff$-split torus of $\sfG_F$.   We denote by $W_F$ the Weyl group $N_{\sfG_F}(\sfA_F)/\sfA_F$.  
Note that we may and do naturally identify $W_F$ with a subgroup of $W$.    
Let $\Psi_F$ denote the set of affine roots of ${\bA}$ that vanish on $F$, and let $\Phi_F$ denote the corresponding set of gradients.   Let $ \lsub{F}\bM$ denote
the Levi $k$-subgroup that contains ${\bA}$ and corresponds to $\Phi_F$, and let $(\lsub{F}\bM)$ denote the $G^\Gamma$-conjugacy class of $\lsub{F}\bM$. Note that $\lsub{F}\sfM_F = \sfG_F$.

Let $\affFrW$ denote the affine Weyl group $N_{G^{\Fr}}(\bA)/ (C_G(\bA) \cap G^{\Fr}_F)$  where $F$ is any facet in $\AA(A)^{\Fr}$.  Note that this is the affine Weyl group for $G^{\Fr}$ and not the $\Fr$-fixed points of the affine Weyl group of $G$.

\begin{example}
For the group $\Sp_4(k)$  each alcove is a right isosceles triangle.  For the group $\Gtwo_2(k)$ each alcove is a $30$-$60$-$90$ triangle.  For every facet $F$ in the alcoves pictured in Figure~\ref{fig:one}, we describe the algebraic $\ff$-group $\sfG_F$. 

\begin{figure}[ht]
\centering
\begin{tikzpicture}
\draw (0,0)
  -- (5,0) 
  -- (5,5) 
  -- cycle;
  \draw (0,-.1) node[anchor=east]{$\Sp_4$};
 \draw (5,-.1) node[anchor=west]{$ \SL_2 \times \SL_2$};
  \draw (5,5.1) node[anchor=west]{$\Sp_4 $};
  \draw (2.5,0) node[anchor=north]{$\SL_2 \times \GL_1$};
  \draw (1,2.5) node[anchor=west]{$\GL_2$};
   \draw (5,2.5) node[anchor=west]{$\SL_2 \times \GL_1$};
   \draw (2.5,1.5) node[anchor=west]{$\GL_1 \times \GL_1$};

   \draw (11,0)
  -- (13.9,5) 
  -- (11,5) 
  -- cycle;
  \draw (11,5.1) node[anchor=east]{$\SO_4$};
  \draw (13.9,5.1) node[anchor=west]{$ \SL_3 $};
  \draw (11,0) node[anchor=north]{$\Gtwo_2 $};
  \draw (11,2.5) node[anchor=east]{$\GL_2$};
  \draw (12.45,2.5) node[anchor=west]{$\GL_2$};
   \draw (12.25,5) node[anchor=south]{$\GL_2$};
    \draw (11,4) node[anchor=west]{$\GL_1 \times \GL_1$};

\end{tikzpicture}

\caption{The reductive quotients for $\Sp_4$ and $\Gtwo_2$ \label{fig:one}}
\end{figure}
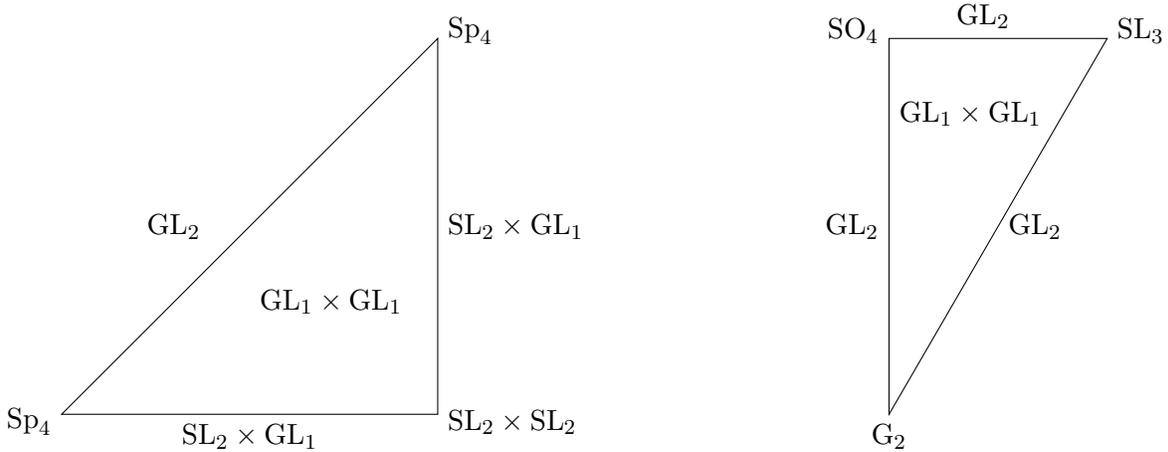
\end{example}

\section{Twisted Levi subgroups for reductive groups over quasi-finite fields} \label{sec:leviff}

Suppose $\ff$ is quasi-finite.

As a way to motivate what happens in the nonarchimedean setting, we first look at an analogous parameterization question for 
a connected reductive group over $\ff$, e.g. a finite group of Lie type.    We first recall an important fact about the Galois cohomology of such groups.

\begin{lemma} \label{lem:qfcohom}   If $\sfH$ is a connected reductive $\ff$-group, then $\cohom^1(\Fr,\sfH) = 1$ and Borel $\ff$-subgroups of $\sfH$ exist.
\end{lemma}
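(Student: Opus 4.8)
The plan is to reduce both assertions to standard facts about connected reductive groups over fields of cohomological dimension $\leq 1$, together with Lang's theorem in the finite case and a limit argument in the quasi-finite case. Recall that a quasi-finite field $\ff$ is, by definition, perfect with absolute Galois group $\widehat{\bZ}$, topologically generated by $\Fr$; such a field has cohomological dimension $\leq 1$. For the vanishing of $\cohom^1(\Fr,\sfH)$, I would first treat the case that $\ff$ is actually finite. Then $\sfH(\ffc)$ has a Frobenius endomorphism $\Fr$ in the sense of Lang–Steinberg, and Lang's theorem says the map $x \mapsto x\inv \cdot \Fr(x)$ is surjective on $\sfH(\ffc)$; since $\cohom^1(\Fr,\sfH)$ is computed as the quotient of the set of $1$-cocycles (which, because $\widehat{\bZ}$ is procyclic, is identified with $\sfH(\ffc)$ via $\sigma \mapsto c_\sigma$ where $c$ is determined by its value on $\Fr$) by the twisted-conjugation action $x \cdot c = x\inv c\, \Fr(x)$, the orbit of $1$ is everything and the cohomology is trivial.

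For the general quasi-finite case, I would pass to a finite subextension: any cocycle $c\colon \Gal(\ffc/\ff) \to \sfH(\ffc)$ is continuous, so it factors through $\Gal(\ff'/\ff)$ for some finite extension $\ff'/\ff$, and the entries $c_\sigma$ lie in $\sfH(\ff'')$ for some finite $\ff''/\ff$. One then has the inflation–restriction comparison together with the fact that for finite cyclic groups Lang's theorem (applied over the finite residue field, or over the finite field $\ff''$ relative to $\ff$) forces the class to be trivial. Concretely, it is cleanest to invoke Steinberg's theorem directly: a connected linear algebraic group over a perfect field of cohomological dimension $\leq 1$ has trivial $\cohom^1$ of the Galois group — this is exactly Steinberg's vanishing theorem (Serre, \emph{Galois Cohomology}, III.2.3), and a quasi-finite field qualifies. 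I expect the only subtlety here is bookkeeping about continuity of cocycles and the passage to the limit, which is routine.

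For the existence of Borel $\ff$-subgroups: the variety $\sfB$ of Borel subgroups of $\sfH$ is a smooth projective $\ff$-variety, and it is a homogeneous space under $\sfH$, so it is an $\sfH$-torsor over its own structure — more precisely, $\sfH$ acts transitively on $\sfB(\ffc)$ and the stabilizer of a point is a Borel subgroup, which is connected. The set $\sfB(\ff)$ of $\ff$-rational Borel subgroups is then nonempty if and only if the corresponding class in $\cohom^1(\Fr, \sfH_{\ov{B}})$ — or, more usefully, the obstruction in $\cohom^1(\Fr,\sfH)$ obtained by pushing forward the torsor structure of $\sfB$ — vanishes; but this follows from the vanishing just established, since any homogeneous space under a connected group $\sfH$ with connected point-stabilizers over a field of cohomological dimension $\leq 1$ has a rational point (again Steinberg; Serre, \emph{Galois Cohomology}, III.2.4, Cor.~3). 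Equivalently, in the finite case, Lang's theorem directly gives an $\Fr$-fixed point in $\sfB(\ffc)$.

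The main obstacle, such as it is, is simply choosing which black box to cite cleanly and making sure the quasi-finite (as opposed to finite) case is correctly handled: one must either quote Steinberg's theorem in the cohomological-dimension form, which applies verbatim, or explicitly run the continuity/limit argument reducing to the finite case where Lang–Steinberg applies. I would go with the former for brevity, remarking only that quasi-finite fields have cohomological dimension $\leq 1$ and are perfect, which is all that Steinberg's theorem requires.
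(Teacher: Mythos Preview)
Your proposal is correct and, after the exploratory discussion of Lang's theorem and limit arguments, lands on exactly the paper's approach: observe that a quasi-finite field has (cohomological) dimension $\leq 1$ and then invoke Steinberg's theorem from Serre, \emph{Galois Cohomology}, III \S2.3. The paper's proof is just those two citations with no further elaboration, so your final recommendation matches it precisely; the Lang--Steinberg detour you outline first is unnecessary but not wrong.
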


\begin{proof}
Thanks to~\cite[XIII Section 2]{serre:local} the quasi-finite field $\ff$ is of dimension $\leq 1$. The result now follows from Steinberg's Theorem~\cite[III Section 2.3]{serre:galois}.
\end{proof}

\subsection{Notation}

Suppose $\sfG$ is a connected reductive group defined over $\ff$.  Fix a Borel $\ff$-subgroup $\sfB$ of $\sfG$ and a maximal $\ff$-torus $\sfA$ of $\sfB$.  We denote by $\Delta_\sfG = \Delta(\sfG, \sfB, \sfA)$ the corresponding set of simple roots and by $W_\sfG = W(\sfG,\sfA)$ the corresponding Weyl group.   For $\theta \subset \Delta_\sfG$, we let $W_{\sfG,\theta}$ denote the corresponding subgroup of $W_\sfG$.  

\begin{defn}
A reductive subgroup $\sfL$ of $\sfG$ is called a \emph{twisted Levi $\ff$-subgroup of $\sfG$} provided that $\sfL$ is defined over $\ff$ and there exists a parabolic $\ffc$-subgroup of $\sfG$ for which $\sfL$ is a Levi factor.  We let $\LL$ denote the set of twisted Levi $\ff$-subgroups of $\sfG$.
\end{defn}
 
 Every twisted Levi $\ff$-subgroup $\sfL$ of $\sfG$ identifies an $\ff$-torus in $\sfG$: let $\sfS_\sfL$ denote the connected component of the center of $\sfL$.     On the other hand, the centralizer of any $\ff$-torus in $\sfG$ is a twisted Levi subgroup~\cite[Proposition~1.2.2]{digne-michel}.   In light of this, we make the following definition.

 \begin{defn}
 An $\ff$-torus $\sfS$ in $\sfG$ that is equal to  the connected component of the center of $C_\sfG(\sfS)$ will be called a \emph{Levi torus}. 
 \end{defn}
 
 \begin{rem}
 For a twisted Levi subgroup $\sfL$ of $\sfG$, we have that $\sfL = C_{\sfG}(\sfS_\sfL)$~\cite[Proposition~1.2.1]{digne-michel}, so there is a $\sfG^\Fr$-equivariant bijective correspondence between the set of Levi tori in $\sfG$ and the set of twisted Levi $\ff$-subgroups in $\sfG$.
 Consequently, if we  understand $\LL$ up to $\sfG^{\Fr}$-conjugacy, then we will understand the set of Levi tori in $\sfG$ up to 
 $\sfG^{\Fr}$-conjugacy (and \emph{vice-versa}).
 \end{rem}

\subsection{A parameterization} \label{subsec:ffparam}

In this section we provide a parameterization of $\tilde{\LL}$, the set of $\sfG^{\Fr}$-conjugacy classes in $\LL$.  This parameterization may be viewed as a natural generalization of the known classification of the $\sfG^\Fr$-conjugacy classes of maximal $\ff$-tori in $\sfG$ (see~\cite[Proposition~3.3.3]{carter:finite}, \cite[Lemma~4.2.1]{debacker:unramified}, or~\cite[Section 1]{kazhdan-lusztig:fixed}).

Let $I_\sfG$ denote the set of pairs $(\theta,w)$ where $\theta \subset \Delta_\sfG$ and $w \in W_\sfG$ such that $\Fr (\theta) = w \theta$.   For $(\theta',w')$ and $(\theta,w) \in I_\sfG$ we write $(\theta',w') \sim (\theta,w)$ provided that there exists an element $\dot{n} \in W_\sfG$ for which
\begin{itemize}
\item  $   \theta = \dot{n} \theta ' $ and
\item $ w = \Fr(\dot{n}) w' \dot{n}\inv $.
\end{itemize}
One checks that $\sim$ is an equivalence relation on the set $I_\sfG$.

\begin{lemma}  \label{lem:ffversion}
There is a natural bijective correspondence between $I_\sfG / \! \sim$ and $\tilde{\LL}$. 
\end{lemma}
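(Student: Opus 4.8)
The plan is to mimic the classical classification of maximal $\ff$-tori — which is exactly the case $\theta=\emptyset$ (see the references cited just before the statement) — and to produce the bijection by writing down maps in both directions. For $\theta\subseteq\Delta_\sfG$ let $\sfM_\theta$ denote the standard Levi subgroup of $\sfG$ containing $\sfA$ whose root system is the subsystem $\Phi_\theta$ spanned by $\theta$; note that $\sfM_\theta$ is a priori only an $\ffc$-subgroup, and it is defined over $\ff$ precisely when $\Fr\theta=\theta$. Two standard structural facts will be used throughout: over $\ffc$ every Levi subgroup of $\sfG$ is $\sfG(\ffc)$-conjugate to some $\sfM_\theta$; and if $x\in\sfG(\ffc)$ satisfies $x\sfM_{\theta'}x\inv=\sfM_\theta$ then $x=m\dot{v}$ with $m\in\sfM_\theta(\ffc)$ and $\dot{v}\in N_\sfG(\sfA)(\ffc)$ whose image $v\in W_\sfG$ carries $\Phi_{\theta'}$ onto $\Phi_\theta$ (since $x\sfA x\inv$ and $\sfA$ are conjugate maximal tori of $\sfM_\theta$) — together with the simple transitivity of $W_{\sfG,\theta}$ on the bases of $\Phi_\theta$.

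Given $\sfL\in\LL$: being connected reductive over $\ff$, it contains a maximal $\ff$-torus $\sfT$, which is also a maximal torus of $\sfG$. Choose $g\in\sfG(\ffc)$ with $g\sfA g\inv=\sfT$; by the transporter fact we may, after replacing $g$ by $g\dot{n}_1$ for a suitable $\dot{n}_1\in N_\sfG(\sfA)(\ffc)$ (which leaves $g\sfA g\inv=\sfT$ intact), also arrange $g\inv\sfL g=\sfM_\theta$ for some $\theta\subseteq\Delta_\sfG$. Since $\sfT$ and $\sfL$ are $\Fr$-stable, $\dot{w}_0:=g\inv\Fr(g)$ lies in $N_\sfG(\sfA)(\ffc)$ and conjugates $\sfM_{\Fr\theta}$ to $\sfM_\theta$; with $w_0\in W_\sfG$ its image we get $w_0\Phi_{\Fr\theta}=\Phi_\theta$, hence $w_0\Fr\theta$ is a base of $\Phi_\theta$, and there is a unique $u\in W_{\sfG,\theta}$ with $uw_0\Fr\theta=\theta$. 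Setting $w:=w_0\inv u\inv$ one has $w\theta=\Fr\theta$, so $(\theta,w)\in I_\sfG$, and we send $\sfL$ to the class of $(\theta,w)$. In the other direction, given $(\theta,w)\in I_\sfG$, choose a representative $\dot{w}\in N_\sfG(\sfA)(\ffc)$ of $w\inv$ and, using Lemma~\ref{lem:qfcohom} (for $\sfG$, to solve $g\inv\Fr(g)=\dot{w}$; for the torus $\sfA$, to see the outcome does not depend on the choice of $\dot{w}$), produce $g\in\sfG(\ffc)$ with $g\inv\Fr(g)=\dot{w}$; we send $(\theta,w)$ to the class of $g\sfM_\theta g\inv$. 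The relation $\Fr\theta=w\theta$ gives $\dot{w}\sfM_{\Fr\theta}\dot{w}\inv=\sfM_\theta$, which is exactly what makes $g\sfM_\theta g\inv$ a $\Fr$-stable, hence $\ff$-rational, Levi subgroup, so it belongs to $\LL$.

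What remains is four verifications, all of the same flavor as in the $\theta=\emptyset$ case. (i) The second map is well defined: two solutions $g$ of $g\inv\Fr(g)=\dot{w}$ differ on the left by an element of $\sfG^{\Fr}$, and a different representative of $w\inv$ is absorbed by Lang's theorem for $\sfA$. (ii) The second map respects $\sim$: if $\theta=\dot{n}\theta'$ and $w=\Fr(\dot{n})w'\dot{n}\inv$ with $\dot{n}\in W_\sfG$, then replacing $g$ by $g$ times a representative of $\dot{n}$ shows $g\sfM_\theta g\inv$ and $g'\sfM_{\theta'}(g')\inv$ are $\sfG^{\Fr}$-conjugate, because that representative conjugates $\sfM_{\theta'}$ to $\sfM_\theta$ and intertwines the two defining equations. (iii) The first map is well defined up to $\sim$: two maximal $\ff$-tori of $\sfL$ are $\sfL(\ffc)$-conjugate and replacing $\sfT$ by a $\sfG^{\Fr}$-conjugate conjugates the whole construction; the $W_\sfG$-association ambiguity in $\theta$ is absorbed into $\dot{n}$; and two admissible choices of $u$ differ by an element of $W_{\sfG,\theta}$, which is precisely a move by $\dot{n}\in W_{\sfG,\theta}$. (iv) The two maps are mutually inverse: starting from $(\theta,w)$ and running both, one may take $\sfT=g\sfA g\inv$ with the same $g$, so that $w_0=w\inv$, the auxiliary element is $u=e$, and $(\theta,w)$ is recovered on the nose; starting from $\sfL$ and running both, the transporter fact reduces matters to a computation in $N_\sfG(\sfA)(\ffc)$ modulo $\sfM_\theta(\ffc)$ producing the required element of $\sfG^{\Fr}$.

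I expect the main obstacle to be verifications (iii) and (iv): determining exactly how the equivalence relation $\sim$ — in particular a move by $\dot{n}\in W_{\sfG,\theta}$ — accounts for the auxiliary datum $u$, which genuinely cannot be removed, since the induced permutation $\eta\mapsto w_0\Fr(\eta)$ of the bases of $\Phi_\theta$ need not have a fixed point, and, throughout, converting equalities of Galois-cohomology classes into honest $\sfG^{\Fr}$-conjugacies via Lemma~\ref{lem:qfcohom}. The remaining ingredients are just the standard dictionary among Levi subgroups over $\ffc$, standard Levi subgroups, and Weyl-group combinatorics that already underlies the $\theta=\emptyset$ classification.
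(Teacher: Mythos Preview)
Your approach is correct and essentially the same as the paper's. The paper organizes the argument as a single map $\varphi\colon I_\sfG\to\tilde\LL$ shown to be well-defined, surjective, and injective on equivalence classes, rather than constructing explicit mutual inverses. The one simplification worth adopting: in the surjectivity step (your first map), the paper first picks a Borel $\ff$-subgroup $\sfB_\sfL\subset\sfL$ and a maximal $\ff$-torus $\sfA_\sfL\subset\sfB_\sfL$, then chooses $g$ with $\sfA_\sfL=\lsup{g}\sfA$ \emph{and} $\sfB_\sfL\leq\lsup{g}\sfB$. Since $\sfB_\sfL$ is $\Fr$-stable this forces $\Fr(\theta_\sfL)=w_\sfL\theta_\sfL$ directly, eliminating your correction element $u$---so contrary to your expectation, $u$ \emph{can} be removed by this alignment of Borels, and the verifications you flag as the main obstacle become correspondingly cleaner. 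The simple-transitivity argument you invoke then appears in the paper's injectivity step in the form: $x\in W_{\sfG,\theta}$ with $x\theta=\theta$ forces $x=1$.
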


\begin{rem}  \label{rem:ffversion}
The set of equivalence classes in $I_\sfG$ for which the first entry in each pair is the empty set parameterizes the set of $\sfG^{\Fr}$-conjugacy classes of maximal $\ff$-tori in $\sfG$ (see~\cite[Proposition~3.3.3]{carter:finite}, \cite[Lemma~4.2.1]{debacker:unramified}, or~\cite[Section 1]{kazhdan-lusztig:fixed}).   At the other extreme, the singleton containing $(\Delta,1) \in I_\sfG$ is an equivalence class and  parameterizes the $\sfG^{\Fr}$-conjugacy class $\{\sfG\}$.
\end{rem}

\begin{proof}  We begin by defining a map $\varphi \colon I_\sfG \rightarrow \tilde{\LL}$. 
  Suppose that we have a pair $(\theta, w) \in I_\sfG$.   Thanks to~\cite[Section 4.2]{debacker:unramified} we can choose $g \in \sfG$ such that $n_g := \Fr(g\inv) (g) \in N_\sfG(\sfA)$ and the image of  $n_g$ in $W_\sfG$ is $w$.   Let $\sfA_\theta = (\bigcap_{\alpha \in \theta} \ker (\alpha ) )^\circ$ and $\sfM_\theta = C_{\sfG}(\sfA_\theta)$.   Since $\Fr(\theta) = w \theta$, we have
  $$\Fr(\lsup{g} \sfM_\theta ) = \lsup{\Fr(g)} \Fr(\sfM_\theta ) =  \lsup{g n_g\inv} (\sfM_{\Fr(\theta)})  = \lsup{g}(\lsup{ n_g\inv} (\sfM_{w\theta})) = \lsup{g}  \sfM_\theta.$$
 Thus,  $\lsup{g} \sfM_\theta$ is a twisted Levi $\ff$-subgroup of $\sfG$.

 The only choice in the above construction was $g$.  We need to show that a different choice results in a twisted Levi $\ff$-subgroup that is $\sfG^\Fr$-conjugate to $\lsup{g}\sfM_\theta$.  Suppose $h \in \sfG$ is chosen so that $n_h := \Fr(h)\inv h $ has image $w$ in $W_\sfG$.   Choose $a \in \sfA$ so that $n_h = n_g  a$.    Since $\Fr(hg\inv)\inv hg\inv = {\lsup{g} a} \in {\lsup{g}\sfA}$ and $\cohom^1(\Fr,\lsup{g}\sfA)$ is trivial, there exists $t \in {\lsup{g}\sfA}$ such that $\Fr(hg\inv)\inv h g\inv = \Fr(t)\inv t$.  Thus $tgh\inv = \Fr(tgh\inv)$, so $tgh\inv \in \sfG^{\Fr}$ and
 $$\lsup{g}\sfM_\theta = \lsup{tg}\sfM_\theta = \lsup{(tgh\inv) h}\sfM_\theta.$$
 This shows that $\lsup{h}\sfM_\theta$ is rationally conjugate to $\lsup{g}\sfM_\theta$, and so $\varphi$ is well defined.

We now show that $\varphi$ descends to an injective map from $I_\sfG/\!\sim$ to $\tilde{ \LL}$; we shall call this map $\varphi$ as well.  Suppose  $(\theta, w)$ and $(\theta',w') \in I_\sfG$.  Choose $g \in \sfG$  (resp.\ $g' \in \sfG$) so that the image of  $n_g = \Fr(g)\inv g$ (resp.\ $n_{g'} = \Fr(g')\inv g'$) in $W_\sfG$ is $w$ (resp.\ $w'$).  If $\varphi(\theta,w) = \varphi(\theta',w')$, then there is $x \in \sfG^\Fr$ so that $\lsup{g}\sfM_\theta = \lsup{xg'} \sfM_{\theta'}$.  Without loss of generality, we may replace $g'$  by $xg'$.  Since $\lsup{g} \sfM_\theta = {\lsup{g'} \sfM_{\theta'}}$, we have  that both $\lsup{g} \sfA$ and $\lsup{g'} \sfA$ are maximal $\ff$-tori in  $\lsup{g} \sfM_\theta$.   Consequently, there exists $m' = \lsup{g}m$ with $m \in \sfM_\theta$ such that $\lsup{m'g'}\sfA = \lsup{g}\sfA$ and $\lsup{m'g'}(\sfB \cap \sfM_{\theta'}) = \lsup{g}(\sfB \cap \sfM_\theta)$.  Since we also have $\lsup{\Fr(m')}(\lsup{g'}\sfA) = \lsup{g}\sfA$, we conclude that $m' \Fr(m')\inv \in N_{\lsup{g}\sfM_\theta}(\lsup{g}\sfA)$, which implies $n_g \inv \Fr(m) n_g m\inv  \in N_{\sfM_\theta}(\sfA)$.  Set $n = mg\inv g' \in N_\sfG(\sfA)$.  Note that $n \theta' = \theta$  and 
$$\Fr(n)( n_{g'}) n\inv  = \Fr(mg\inv g') \Fr(g')\inv g' (mg\inv g')\inv = \Fr(m) ( n_g) m\inv =  n_g (n_g\inv \Fr(m) ( n_g) m\inv).$$
By looking at images in $W_\sfG$, we conclude that $\Fr(\dot{n})w' \dot{n}\inv \in w W_{\sfG,\theta}$ where $\dot{n}$ denotes the image of $n$ in $W_\sfG$.
Choose $x \in W_{\sfG,\theta}$ so that $\Fr(\dot{n})w' \dot{n}\inv = wx$.  Note that 
$$x \theta = w\inv \Fr(\dot{n})w' \dot{n}\inv \theta = w\inv \Fr(\dot{n})w' \theta' = w\inv \Fr(\dot{n}) \Fr(\theta') =  w\inv \Fr(\theta) = \theta.$$
Since the action of $W_{\sfG,\theta}$ on the set of  bases for the root system spanned by $\theta$ is simply transitive, we must have $x = 1$.

Finally, we show that $\varphi$ is surjective.  Suppose $\sfL \in \LL$.
Let $\sfS_{\sfL}$ denote the connected component of the center of $\sfL$.  Choose a Borel $\ff$-subgroup $\sfB_{\sfL}$ in $\sfL$ and a maximal $\ff$-torus $\sfA_{\sfL}$ in $\sfB_{\sfL}$.  Denote by $\Delta_{\sfL} = \Delta(\sfL, \sfB_\sfL, \sfA_\sfL)$ the corresponding set of simple roots.
Choose a Borel $\ffc$-subgroup $\sfB' \leq \sfG$ such that $\sfB_{\sfL} = \sfB' \cap \sfL$ and $\Delta_{\sfL} \subset \Delta(\sfG, \sfB', \sfA_\sfL)$.  Choose $g \in \sfG$ so that $\sfA_\sfL = \lsup{g} \sfA$ and $\sfB' = \lsup{g}\sfB$.
Define $\theta_{\sfL} = g\inv \cdot \Delta_{\sfL}$; note that $\theta_\sfL \subset \Delta_\sfG$.   Let $w_{\sfL}$ denote the image of $\Fr(g\inv)g$ in $W_\sfG$ and put  $\sfA_{\theta_\sfL}  = (\bigcap_{\alpha \in \theta_{\sfL} } \ker (\alpha ) )^\circ \leq \sfA$.   
We have 
\begin{itemize}
\item $\sfS_\sfL = {\lsup{g} \sfA_{\theta_\sfL}}$ and
\item $\Fr (\theta_\sfL) = w_\sfL \theta_\sfL$.
\end{itemize}
By construction, $\varphi(\theta_\sfL, w_\sfL)$ is the $\sfG^\Fr$-conjugacy class of $\sfL$.
\end{proof}

\begin{example}  \label{ex:g2finiteinitial} For the group $\sfG = \G2$ let $\Delta_\sfG = \{\alpha, \beta \}$ where $\alpha$ is short.  Let $w_\alpha$ and $w_\beta$ denote the corresponding simple reflections in $W_\sfG$, and let $c$ denote the Coxeter element $w_\alpha w_\beta$.  In Table~\ref{table:gtwofinite}  we provide a complete list of representatives for the elements of $I_\sfG/ \! \sim$.   We also indicate the type of the corresponding twisted Levi $\ff$-subgroup of $\G2$.    Groups of type $A_1$ for a long root are ornamented with a tilde (e.g. $\widetilde{U}(2)$).

\begin{table}[ht]
\centering
\begin{tabular}{ |c||c| }
 \hline
 Pair & Type of twisted Levi $\ff$-group \\
 \hline \hline
 $(\emptyset, 1)$ & $\GL_1 \times \GL_1$ \\
 $(\emptyset, w_\alpha)$ & Coxeter torus in  ${\GL}_2$ \\
 $(\emptyset, w_\beta)$ & Coxeter torus in $\widetilde{\GL}_2$ \\
 $(\emptyset, c )$ & Coxeter torus in $\G2$ \\
 $(\emptyset, c^2)$ & Coxeter torus in $\SL_3$\\
 $(\emptyset,c^3)$ & Coxeter  torus in $\SO_4$ \\
 $(\{\alpha \}, 1)$ & $\GL_2$ \\
 $(\{\alpha \},  w_\beta w_\alpha w_\beta w_\alpha w_\beta)$ & $U(2)$ \\
 $(\{\beta\}, 1)$ &  $\widetilde{\GL}_2$ \\
 $(\{\beta \}, w_\alpha w_\beta w_\alpha w_\beta w_\alpha)$ &  $\widetilde{U}(2)$ \\
 $( \Delta, 1)$ & $\G2$ \\
 \hline
 \end{tabular}
 \caption{$\G2$: A set of representatives for $I_\sfG/ \! \sim$ \label{table:gtwofinite}}
\end{table}
\end{example}

\begin{example}  For the group $\sfG = \SU(3)$ let $\Delta_\sfG = \{\alpha, \beta \}$ with  $\Fr(\alpha) = \beta$.  Let $w_\alpha$ and $w_\beta$ denote the corresponding simple reflections in $W_\sfG$.  In Table~\ref{table:suthreefinite}  we provide a complete list of representatives for the elements of $I_\sfG/ \! \sim$.   We also indicate the type of the corresponding twisted Levi $\ff$-subgroup of $\SU(3)$.   When viewing the table, we find it useful to remember that $(\emptyset,1)$ is equivalent to $(\emptyset, w_\alpha w_\beta)$.

\begin{table}[ht]
\centering
\begin{tabular}{ |c||c| }
 \hline
 Pair & Type of twisted Levi $\ff$-group \\
 \hline \hline
 $(\emptyset,1)$ & maximally $\ff$-split maximal $\ff$-torus in $\SU(3)$ \\
 $(\emptyset, w_\alpha)$ & maximal $\ff$-anisotropic torus in $\U(2)$ \\
  $(\emptyset, w_\alpha w_\beta w_\alpha )$ & maximal $\ff$-anisotropic torus in $\SU(3)$\\
  $(\{\alpha\}, w_\alpha w_\beta)$ & $\U(2)$ \\
  $( \Delta, 1)$ & $\SU(3)$ \\
 \hline
 \end{tabular}
 \caption{$\SU(3)$: A set of representatives for $I_\sfG/ \! \sim$  \label{table:suthreefinite}}
\end{table}
\end{example}

\section{Tori and Levi subgroups}  \label{sec:littlereview}

In this section we introduce some notation and recall some facts about tori and Levi subgroups.

\subsection{Facts about Levi \texorpdfstring{$(E,k)$-subgroups}{Ek subgroups}}
For this subsection suppose that $k$ is any field and $E$ is a Galois extension of $k$.
A subgroup $\bM$ of $\bG$ will be called a \emph{Levi $(E,k)$-subgroup} provided that it is a $k$-subgroup that occurs as a Levi factor for a parabolic $E$-subgroup of $\bG$. 

\begin{lemma} \label{lem:morelevi}
If $\bM$ is a Levi $(E,k)$-subgroup and $\bZ_{\bM}^E$ is the maximal $E$-split torus in the center of $\bM$, then $\bZ_{\bM}^E$ is defined over $k$ and $\bM = C_\bG(\bZ_{\bM}^E)$. 
\end{lemma}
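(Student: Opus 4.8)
The plan is to prove the two assertions — that $\bZ_{\bM}^E$ is defined over $k$, and that $\bM = C_\bG(\bZ_{\bM}^E)$ — essentially separately, leaning on general facts about Levi subgroups over the algebraically closed case and then tracking Galois descent. First I would recall the well-known statement over $\bar{k}$ (or over $E$ after base change): if $\bM$ is a Levi factor of a parabolic $E$-subgroup $\bP$, then $\bM = C_\bG(\bZ_{\bM}^{\circ,E})$, where the centralizer is taken of the maximal $E$-split torus in the connected center. This is classical — it follows from the structure theory of parabolic and Levi subgroups: $\bM$ is the centralizer of $\bZ_{\bM}^{\circ}$ and one checks that $\bZ_{\bM}^{\circ}$ and its maximal $E$-split subtorus have the same centralizer because $\bM$ itself, being an $E$-Levi, is generated by $\bA$ and the root subgroups for roots vanishing on $\bZ_{\bM}^{\circ,E}$. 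So the content of the lemma is really the rationality over $k$ of $\bZ_{\bM}^E$ together with the resulting descent of the identity $\bM = C_\bG(\bZ_{\bM}^E)$.

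For rationality: $\bM$ is a $k$-subgroup by hypothesis, hence its center $\bZ_{\bM}$ is a $k$-group, hence the connected component $\bZ_{\bM}^{\circ}$ is a $k$-torus. The key point is that for a $k$-torus $\bS$ and a Galois extension $E/k$, the maximal $E$-split subtorus $\bS^E$ is automatically defined over $k$: it is cut out inside $\bS$ by the character-lattice condition of being split by $E$, and the formation of this subtorus commutes with any field extension and in particular is $\Gal(\bar{k}/k)$-stable, since $\Gal(\bar{k}/k)$ permutes the $E$-split subtori of $\bS_{\bar{k}}$ (it fixes $E$ when $E/k$ is Galois, so it preserves "split over $E$"). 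Hence $\bZ_{\bM}^E = (\bZ_{\bM}^{\circ})^E$ is $\Gal(\bar{k}/k)$-stable and therefore defined over $k$. I would write this out carefully in terms of cocharacter lattices: $X_*(\bS^E) = \{\lambda \in X_*(\bS_E) : \sigma\lambda = \lambda \text{ for all } \sigma \in \Gal(E/k) \text{ acting via the } E\text{-split part}\}$ — or more cleanly, $\bS^E$ is the largest subtorus on which $\Gal(\bar{k}/E)$ acts trivially, and since $\Gal(\bar{k}/E)$ is normal in $\Gal(\bar{k}/k)$, this subtorus is $\Gal(\bar{k}/k)$-stable.

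With rationality in hand, the identity $\bM = C_\bG(\bZ_{\bM}^E)$ descends: we have the equality of $\bar{k}$-groups from the classical case, and both sides are now defined over $k$ (the left by hypothesis, the right because it is the centralizer of a $k$-group), so the equality holds over $k$. I would state the classical fact with a reference — it is in Borel's book or SGA3, and also $C_\bG(\bZ_{\bM}^E) \supseteq \bM$ is immediate while the reverse inclusion uses that a root of $\bG$ with respect to $\bA$ that is trivial on $\bZ_{\bM}^E$ is already a root of $\bM$ (because $\bM$, as an $E$-Levi, has root system exactly those roots trivial on $\bZ_{\bM}^{\circ,E}$, and $\bZ_{\bM}^{\circ,E} = \bZ_{\bM}^E$ as the relevant torus is automatically in the connected center). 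The main obstacle, and the step I would be most careful about, is the passage from "maximal $E$-split subtorus of an abstract $k$-torus is $k$-rational" being clean, versus the subtlety that $E$ here is an \emph{arbitrary} Galois extension (not necessarily finite, since ultimately $E = K$); I would make sure the Galois-stability argument is phrased so it works for infinite Galois extensions as well, using that "split over $E$" is preserved by any automorphism fixing $E$ pointwise and that the maximal such subtorus is characterized by a Galois-invariant condition on the cocharacter lattice.
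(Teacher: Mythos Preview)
Your proof is correct. The rationality argument for $\bZ_{\bM}^E$ is the same as the paper's (the paper is just terser: ``$\bM$ is defined over $k$, so its center is, so the unique maximal $E$-split subtorus is'').

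For the identity $\bM = C_\bG(\bZ_{\bM}^E)$, however, the paper takes a slightly different route. Rather than invoking the root-theoretic characterization of Levi subgroups as a black box, it argues directly: by Springer's \cite[Proposition~16.1.1(ii)]{springer:algebraic}, the parabolic $\bP$ contains an $E$-split torus $\bS$ with $C_\bG(\bS)$ a Levi factor of $\bP$; after $\bP(E)$-conjugating $\bS$, one has $\bM = C_\bG(\bS)$. Since $\bS$ is then central in $\bM$ and $E$-split, $\bS \leq \bZ_{\bM}^E$, and the sandwich $\bM \leq C_\bG(\bZ_{\bM}^E) \leq C_\bG(\bS) = \bM$ finishes. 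This is more self-contained than your approach --- it avoids unpacking the ``roots trivial on $\bZ_{\bM}^E$ are exactly the roots of $\bM$'' statement --- at the cost of one external reference to Springer. Your route is fine too, but note that your sketch mentions ``roots with respect to $\bA$,'' where $\bA$ is the paper's fixed maximal $K$-split torus; you would need to work with a maximal $E$-split torus of $\bG$ lying in $\bM$ instead, so that line would want a small adjustment if written out in full.
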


\begin{proof}  Since $\bM$ is defined over $k$, the center of $\bM$ is defined over $k$.   Thus $\bZ_{\bM}^E$, the unique maximal $E$-split sub-torus of the center of $\bM$,  is also defined over $k$.

Since $\bM$ is a Levi $(E,k)$-subgroup of $\bG$, there is a parabolic $E$-subgroup $\bP$ of $\bG$ such that $\bM$ is a Levi factor for $\bP$.
Thanks to the proof of~\cite[Proposition~16.1.1 (ii)]{springer:algebraic} $\bP$ contains an  $E$-split torus $\bS$ of $\bG$ such that $C_{\bG}(\bS)$ is a Levi factor of $\bP$.  Since $\bM$ and $C_{\bG}(\bS)$ are $\bP(E)$-conjugate~~\cite[Proposition~16.1.1 (ii)]{springer:algebraic}, we may assume, after conjugating $\bS$ by an element of $\bP(E)$, that  $\bM = C_{\bG}(\bS)$.   Since $\bS$ is in the center of $\bM$ and is $E$-split, we conclude that  $\bS \leq \bZ_{\bM}^E$.  Thus
$\bM \leq C_{\bG}(\bZ_{\bM}^E) \leq C_{\bG}(\bS) = \bM$,
and we conclude that $\bM = C_{\bG}(\bZ_{\bM}^E)$.
\end{proof}

\begin{cor}  \label{cor:3.1.2}
Suppose $\bM$ is a Levi $(E,k)$-subgroup with center $\bZ_\bM$.  If $\bH$ is a $k$-subgroup of $\bG$ that lies between $\bZ_{\bM}^E$ and $\bZ_{\bM}$, then $\bM = C_{\bG}(\bH)$.
\end{cor}

\begin{proof}
Since $\bZ_{\bM}^E \leq \bH \leq \bZ_{\bM}$, we have
$\bM \leq C_\bG(\bZ_\bM) \leq C_{\bG}(\bH) \leq C_{\bG}(\bZ_{\bM}^E) = \bM$.
\end{proof}

\begin{lemma}\label{lem:!levi}  If  $\bT$ is a maximal $k$-torus in $\bG$, then $C_\bG(\bT^E)$ is the unique Levi $(E,k)$-subgroup in $\bG$ that is minimal among Levi $(E,k)$-subgroups that contain $\bT$.
\end{lemma}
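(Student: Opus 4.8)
The plan is to establish three facts in turn: that $C_\bG(\bT^E)$ is a Levi $(E,k)$-subgroup, that it contains $\bT$, and that it is contained in \emph{every} Levi $(E,k)$-subgroup containing $\bT$. Together these exhibit $C_\bG(\bT^E)$ as the least element of the set of Levi $(E,k)$-subgroups containing $\bT$, and hence in particular as its unique minimal element.

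First I would verify the two positive assertions. The torus $\bT^E$ is the unique maximal $E$-split subtorus of $\bT$, so it is stable under $\Gal(\bar{k}/k)$ and therefore, like $\bT$, defined over $k$; consequently $C_\bG(\bT^E)$ is a $k$-subgroup. Since $\bT^E$ is an $E$-split torus, its centralizer is a connected reductive group that occurs as a Levi factor of a parabolic $E$-subgroup of $\bG$ --- this is the standard property of centralizers of split tori that is already invoked, over $E$, in the proof of Lemma~\ref{lem:morelevi}. Hence $C_\bG(\bT^E)$ is a Levi $(E,k)$-subgroup, and it contains $\bT$ because $\bT$ is commutative and $\bT^E \leq \bT$.

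Next comes the minimality step. Let $\bM$ be any Levi $(E,k)$-subgroup with $\bT \leq \bM$. By Lemma~\ref{lem:morelevi}, $\bM = C_\bG(\bZ_{\bM}^E)$. Because $\bT$ is a maximal torus of the connected reductive group $\bG$, we have $C_\bG(\bT) = \bT$; since $\bT \leq \bM$, the center $\bZ_{\bM}$ centralizes $\bT$, so $\bZ_{\bM} \leq \bT$. Then $\bZ_{\bM}^E$ is an $E$-split subtorus of $\bT$, hence $\bZ_{\bM}^E \leq \bT^E$, and taking centralizers reverses the inclusion: $C_\bG(\bT^E) \leq C_\bG(\bZ_{\bM}^E) = \bM$. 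This completes the argument.

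I do not expect a serious obstacle here; the only point requiring a little care is citing the correct form of the centralizer-of-a-split-torus statement --- namely that over $E$ the centralizer of an $E$-split torus is a Levi factor of a parabolic subgroup defined over $E$, not merely over $\bar{k}$ --- but this is classical and already used in the preceding lemma. One should also flag the use of $C_\bG(\bT) = \bT$, which is valid since $\bG$ is connected reductive.
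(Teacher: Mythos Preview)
Your argument is correct and follows essentially the same route as the paper's proof: both establish that $C_\bG(\bT^E)$ is a Levi $(E,k)$-subgroup containing $\bT$, and both derive minimality from Lemma~\ref{lem:morelevi} via $\bZ_\bM^E \leq \bT^E$. The only notable difference is that, where you assert as classical that the centralizer of an $E$-split torus is a Levi factor of a parabolic $E$-subgroup, the paper supplies an explicit construction (embedding $\bT^E$ in a maximal $E$-split torus $\bT'$, choosing a regular cocharacter $\lambda$, and exhibiting $\bM\bP_\lambda$); along the way the paper also records the equality $\bT^E = \bZ_\bM^E$, which is used later. Your citation of Lemma~\ref{lem:morelevi} for this step is slightly misdirected --- that proof only produces \emph{some} $E$-split torus with the desired centralizer inside a given parabolic, not the converse --- but the fact you need is indeed standard.
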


\begin{rem}
The condition of maximality on $\bT$ is necessary for uniqueness.
\end{rem}

\begin{proof}  
Let $\bM = C_{\bG}(\bT^E)$.    

We first show that $\bM$ is a Levi $(E,k)$-subgroup in $\bG$.
Since $\bT^E$ is a torus, $\bM$ is a Levi subgroup (see~\cite[Proposition~1.2.2]{digne-michel}).  Since $\bT^E$ is the unique maximal $E$-split subtorus in $\bT$,  it is defined over $k$.  Hence $\bM$ is defined over $k$.
If $\bZ^E_{\bM}$ denotes the maximal $E$-split torus in the center of $\bM$, then $\bT^E \leq \bZ^E_{\bM}$.  Moreover, since $\bT \leq \bM$, we also have $\bZ^E_{\bM} \leq \bT$, and so by maximality of $\bT^E$ we conclude that $\bZ^E_{\bM} \leq \bT^E$.  Thus $\bT^E = \bZ^E_{\bM}$, and so $\bM = C_{\bG}(\bT^E)
=C_{\bG}(\bZ^E_\bM)$.

Let $\bT'$ denote a maximal $E$-split torus in $\bG$ that contains $\bT^E$.   Choose $\lambda \in \X_*^E(\bT')$ that is non-trivial on every root of $\bG$ with respect to $\bT'$; let $\bP_\lambda$ denote the corresponding parabolic $E$-subgroup of $\bG$.   Since $\bT' \leq \bM$, the subgroup $\bM \bP_\lambda$ of $\bG$ is a parabolic $E$-subgroup of $\bG$ for which $\bM$ is a Levi factor.  

We now show that  $\bM$ is the unique minimal Levi $(E,k)$-subgroup in $\bG$ that contains $\bT$.
Suppose $\bM'$ is another Levi $(E,k)$-subgroup that contains $\bT$. Let $\bZ^E_{\bM'}$ denote the maximal $E$-split torus in the center of $\bM'$. From Lemma~\ref{lem:morelevi} we have $\bM' = C_{\bG}(\bZ^E
_{\bM'})$.  Since $\bT \leq \bM'$, we have $\bZ^E_{\bM'} \leq \bT$ and so $\bZ^E_{\bM'} \leq \bT^E = \bZ^E_{\bM}$; hence $\bM \leq \bM'$.
\end{proof}

\subsection{On unramified twisted Levi subgroups and unramified tori}
We again assume that $k$ is complete with respect to a nontrivial discrete valuation and $\ff$ is perfect.   When $E$ is the maximal unramified extension $K$, we use the following language. 

\begin{defn}
A subgroup $\bL$ of $\bG$ is called an \emph{unramified twisted Levi subgroup in $\bG$} provided that $\bL$ is a Levi $(K,k)$-subgroup of $\bG$.
\end{defn}

\begin{defn}
A $k$-torus $\bS$ is called an \emph{unramified torus in $\bG$} provided that $\bS$ is the $K$-split component of the center of an unramified twisted Levi subgroup in $\bG$.  
\end{defn}

\begin{rem}
Lemma~ \ref{lem:morelevi} tells us that 
if $\bL$ is an unramified twisted Levi subgroup in $\bG$ and $\bS$ is the $K$-split component of the center of $\bL$, then $\bL = C_{\bG}(\bS)$.
\end{rem}

\begin{rem}  \label{rem:421}
Since two Levi $(K,k)$-subgroups are $G^{\Gamma}$-conjugate if and only if the $K$-split components of their centers are $G^{\Gamma}$-conjugate, any parameterization of $G^{\Gamma}$-conjugacy classes of unramified tori is also a  parameterization of  $G^{\Gamma}$-conjugacy classes of Levi $(K,k)$-subgroups.
\end{rem}

\begin{lemma}  \label{lem:torusexists}  Suppose $\ff$ is finite.
A torus $\bT$ in $\bG$ is unramified in $\bG$ if and only if there exists
a maximal $k$-torus $\bT'$ in $\bG$ such that $\bT$ is the maximal $K$-split subtorus of $\bT'$.  
\end{lemma}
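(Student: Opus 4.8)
The statement is an ``if and only if,'' and the interesting direction is the forward one: every unramified torus arises as the maximal $K$-split subtorus of some maximal $k$-torus. The reverse direction is essentially the content of the introduction's opening paragraph, so I would dispatch it first: if $\bT'$ is a maximal $k$-torus, then $\bT'^K$ is defined over $k$, $C_\bG(\bT'^K)$ is a Levi $(K,k)$-subgroup by Lemma~\ref{lem:!levi}, and $\bT'^K$ is the $K$-split component of its center because $\bT'^K = \bZ^K_{C_\bG(\bT'^K)}$ (this equality is proved inside the proof of Lemma~\ref{lem:!levi}). Hence $\bT'^K$ is an unramified torus.

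For the forward direction, let $\bS$ be an unramified torus, so $\bS = \bZ^K_\bL$ for some unramified twisted Levi $\bL = C_\bG(\bS)$. I want a maximal $k$-torus $\bT'$ of $\bG$ with $\bT'^K = \bS$. The natural move is to look for $\bT'$ inside $\bL$: if $\bT'$ is a maximal $k$-torus of $\bL$ with $\bT'^K = \bS$, then since $\bT'$ is also a maximal $k$-torus of $\bG$ (maximal tori of $\bL$ and of $\bG$ have the same dimension, as $\bL$ has full rank) and since $\bS \leq \bT'$ is $K$-split and central in $\bL \supseteq \bT'$, maximality forces $\bT'^K = \bS$ once I know $\bT'^K \subseteq \bS$; but $\bT'^K \leq \bT' \leq \bL$, and... hmm, I need $\bT'^K$ to be central in $\bL$. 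That is not automatic. So I should instead choose $\bT'$ more carefully: take $\bT'$ to be a maximal $K$-minisotropic $k$-torus of $\bL$ --- equivalently, a maximal $k$-torus of $\bL$ whose image in $\bL/\bS$ is $K$-anisotropic (this exists because over a finite residue field every connected reductive $k$-group contains a maximal $k$-torus that is $K$-minisotropic modulo its $K$-split central part; this is where finiteness of $\ff$ enters, guaranteeing enough anisotropic tori via the classification of unramified tori in \cite{debacker:unramified} or a direct cohomological argument). Then $\bT'^K \leq \bT'$ maps to a $K$-split subtorus of the $K$-anisotropic $\bT'/\bS$, hence maps trivially, so $\bT'^K \leq \bS$; combined with $\bS \leq \bZ_\bL \cap \bT'$ being $K$-split, we get $\bT'^K = \bS$. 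Finally $\bT'$ is maximal in $\bG$ since $\bL$ has the same rank as $\bG$.

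The main obstacle, and the step that genuinely uses the hypothesis that $\ff$ is finite, is producing inside the unramified twisted Levi $\bL$ a maximal $k$-torus that is $K$-anisotropic modulo $\bS$. I would reduce this to the reductive quotient: pick a $\Gamma$-stable point in the building of $\bL$ corresponding to a parahoric of maximal reductive quotient, and use Lang's theorem / Steinberg's theorem (cf.\ Lemma~\ref{lem:qfcohom}) to lift an $\ff$-anisotropic maximal torus of the reductive quotient $\sfL_x$ modulo its split center to a maximal $k$-torus of $\bL$ with the desired isotropy behavior; the existence of an $\ff$-anisotropic maximal torus in a connected reductive group over a finite field is classical (via the $\Fr$-Coxeter class in the Weyl group, as in Remark~\ref{rem:ffversion}). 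I expect the bookkeeping --- matching ``$K$-split component of the center'' on the $k$-group side with ``$\ff$-split component'' on the residue-field side, and checking that the torus one lifts really has $\bT'^K = \bS$ and not something larger --- to be the fiddly part, but no single step should be deep.
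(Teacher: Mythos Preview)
Your overall architecture matches the paper's: the reverse direction is handled exactly as you say (the paper re-derives $\bT'^K = \bZ^K_{C_\bG(\bT'^K)}$ rather than citing the proof of Lemma~\ref{lem:!levi}, but it is the same argument), and for the forward direction the paper also reduces to finding a $K$-minisotropic maximal $k$-torus inside $\bL = C_\bG(\bS)$, after which $\bT'^K = \bS$ follows just as you outline.

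The gap is in your proposed construction of that $K$-minisotropic torus. Lifting an $\ff$-anisotropic maximal torus of the reductive quotient $\sfL_x$ produces a maximal \emph{$K$-split} $k$-torus of $\bL$ --- this is precisely the content of the unramified-torus theory you cite (Remark~\ref{rem:ffversion}, \cite{debacker:unramified}). Its centralizer is a maximal $k$-torus, but the $K$-split part of that centralizer is the maximal $K$-split torus you just lifted, not $\bS$. The reductive quotient only sees $K$-split behavior; no amount of lifting from $\sfL_x$ will ever produce a torus that is $K$-anisotropic modulo $\bS$. So the step ``the existence of an $\ff$-anisotropic maximal torus \ldots\ is classical'' is true but irrelevant: it gives $k$-minisotropic unramified tori, not $K$-minisotropic ones.

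The paper instead invokes Appendix~\ref{sec:appendixone}, which proves that every connected reductive $k$-group (over a local field with finite residue field) contains a $K$-minisotropic maximal $k$-torus. That argument is not building-theoretic: it reduces to the absolutely simple quasi-split case, then uses Borel--de~Siebenthal to find a full-rank subgroup that is a product of type-$A$ groups, where one can write down $K$-anisotropic maximal tori explicitly via totally ramified field extensions (e.g.\ $R_{E/k}\GL_1 \hookrightarrow \GL_n$ for $E/k$ totally ramified of degree $n$). This is where the finiteness of $\ff$ is genuinely used, but not in the way you anticipated.
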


\begin{proof}
Suppose $\bT$ is the $K$-split component of the center of a Levi $(K,k)$-subgroup $\bL$.   
From Appendix~\ref{sec:appendixone} 
there is a  $K$-minisotropic maximal $k$-torus, call it $\bT'$, in $\bL$.   Then $\bT$ is the maximal $K$-split subtorus of $\bT'$.

Suppose 
there exists a maximal $k$-torus $\bT'$ in $\bG$ for which $\bT$ is the maximal $K$-split subtorus of $\bT'$.   Since $\bT'$ is defined over $k$, so too is $\bT$.    Let $\bL = C_{\bG}(\bT)$.  From Lemma~\ref{lem:!levi}, we know $\bL$ is the unique minimal Levi $(K,k)$-subgroup containing $\bT'$.   By construction, $\bT$ is contained in $\bZ_\bL$, the center of $\bL$.  Thus $\bT = \bT^K \leq \bZ_\bL^K$.   Since $\bT'$ contains $\bZ_\bL$ and $\bT$ is the maximal $K$-split subtorus of $\bT'$, we conclude that $\bZ_\bL^K$ is contained in  $\bT$.  Hence $\bT = \bZ_\bL^K$, and so $\bT$ is an unramified torus in $\bG$.
\end{proof}

\subsection{A question about unramified tori} 
Suppose $\ff$ is quasi-finite and 
$\bS$ is a torus that occurs as the center of a Levi $(K,K)$-subgroup. Based on our experience with maximal $K$-split tori of $\bG$, it is natural to ask: 
\begin{question} \label{quest:frunramified}
If $\Fr(\bS)$ is $G$-conjugate to $\bS$, then does there exist 
a $g \in G$ for which $\lsup{g}\bS$ is defined over $k$?   That is, does $\lsup{G}\bS$ contain an unramified torus?
\end{question} 

Unfortunately, the answer is no as the following example illustrates. 

Let $\bH$ be a connected reductive group of type $\typeA_{2}$ such that $H^{\Fr} \cong \SL_1(D)$ where $D$ is an unramified  division algebra of index $3$ over $k$.  Recall that we may identify  $H = \bH(K)$ with $\SL_3(K)$.  Let $\bA$ denote a maximal $K$-split $k$-torus in $\bH$; it is unique up to $H^\Fr$-conjugacy.   Suppose $C$ is the alcove in $\AA(A) \leq \BB(H)$ for which $C^{\Fr} \neq \emptyset$.  Let $\{\psi_0, \psi_1 ,  \psi_{2} \}$ be the simple affine $K$-roots determined by $\bH$, $\bA$, $\nu$, and $C$.   We assume that the $\psi_i$ are labeled so that $\Fr(\psi_i) = \psi_{i+1}$ mod $3$.  

Let $\bA_j = (\ker \dot{\psi}_j)^\circ$ for $j \in \{1,2\}$.  Note that $\Fr(\bA_1) = \bA_2$.   Since there exists $n \in N_H(\bA)$ for which $n \dot{\psi}_2 = \dot{\psi}_1$, we conclude that  $\Fr (\bA_1)$ is $H$-conjugate to  $\bA_1$.  Suppose some conjugate, call it $\bS$, of $\bA_1$ is defined over $k$.   Let $\bT$ be a maximal $k$-torus in the Levi $k$-subgroup $C_\bH(\bS)$.   Then $\bT$ corresponds to an extension $E \leq D$ of degree $3$ over $k$, and $\bS$ corresponds to a quadratic extension of $k$ that lies in $E$.  Since no such quadratic extension exists, we conclude that the answer to Question~\ref{quest:frunramified} is no.

\section{On \texorpdfstring{$(K,k)$- tori}{Kk tori}} \label{sec:Kktori}

In this section we assume that the residue field $\ff$ of $k$ is perfect.

 If $E$ is a tame Galois extension of $k$ and $M$ is the group of $K$-rational points of a Levi $(E,k)$-subgroup of $G$, then we may and do identify $\BB(M)$ with a subset of $\BB(G)$.   There is no canonical way to do this, but all such identifications have the same image.

Recall that  $\bZ=\bZ_\bG$ denotes the center of $\bG$.

\begin{defn}
A torus in $\bG$ will be called a \emph{$(K,k)$-torus in $\bG$} provided that it is a $K$-split $k$-torus that contains $\bZ^K$.
A \emph{maximally $k$-split maximal $(K,k)$-torus $\bT$ in $\bG$}   is  a maximal $(K,k)$-torus $\bT$  in $\bG$  such that $\dim(\Lie(\bT^k)) \geq \dim(\Lie(\tilde{\bT}^k))$ for all maximal $(K,k)$-tori $\tilde{\bT}$ in $\bG$.
\end{defn}

To ease the notation, if $S = \bS(K)$ for a $(K,k)$-torus $\bS$ in $\bG$, then we will call $S$ a $(K,k)$-torus as well.

We let $\toriK$ denote the set of $(K,K)$-tori in $\bG$.  The set $\toriK$ carries a natural action of $\Gamma$, and we denote the set of $\Gamma$-fixed points in $\toriK$ by $\toriKGamma$.

The goal of this section is to describe the $G^{\Gamma}$-conjugacy classes in $\toriKGamma$.

\subsection{Some indexing sets}
\label{sec:someindexing}

To understand the elements of $\toriKGamma$, we introduce indexing sets that arise naturally from Bruhat-Tits theory.
For a facet $F$ in $\BB(G)$, we let $\sfZ_F$ denote the group 
corresponding to the image of $G_F \cap \bZ^{K}(K)$ in  $ G_F/G_{F,0^+} = \sfG_F$.
Consider the indexing set
$$J := \{(F, \sfS) \, | \, \text{$F$ is a facet in $\BB(G)$ and $\sfS$ is 
a torus in $\sfG_F$ that contains $\sfZ_F$} \}.$$

The following definition provides a link between $\toriK$ and $J$.
\begin{defn}  \label{defn:lift}
A $K$-split torus $\bS \in \toriK$ is said to be a \emph{lift} of $(F,\sfS) \in J$ provided that
\begin{enumerate}
\item $F \subset \BB(C_G(S))$ 
\item  the image of $S \cap G_F$ in $\sfG_F = G_F/G_{F,0^+}$ is $\sfS$.
\end{enumerate}
\end{defn}

\begin{rem} \label{rem:starone}
It follows from~\cite[4.4.2]{debacker:nilpotent} that if $\bS \in \toriK$ and $y\in \BB(C_G(S))$, then a point $x \in \BB(G)$ is $(S\cap G_y)$-fixed if and only if $x \in \BB(C_G(S))$.
\end{rem}

Suppose $(F,\sfS) \in J$.  Note that if $\Gamma(F) = F$, then $\sfG_F$ is defined over $\ff$.  In this situation, it makes sense to consider  $\Gamma(\sfS)$.
We define
$$\JGamma := \{(F, \sfS) \in J \, | \,  \text{$\Gamma(F) = F$ \text{and} $\Gamma(\sfS) = \sfS$} \}.$$

\begin{defn}
A pair $(F, \sfS) \in \JGamma$ will be called \emph{maximal}  provided that whenever a facet $H \subset \BB(G)$ is both $\Gamma$-stable and contains $F$ in its closure, then $\sfS$ belongs to the $\ff$-parabolic subgroup $G_H/G_{F,0^+}$ of $\sfG_F = G_F/G_{F,0^+}$ if and only if $F = H$.  
We let $\JGammamax$ denote the subset of maximal pairs in $\JGamma$.
\end{defn}

\begin{example} \label{ex:notintuitive}
Suppose $(F,\sfS) \in \JGammamax$.  If $\sfS$ is $\ff$-split, then $F^\Gamma$ is a $G^\Gamma$-alcove in $\BB(G)^\Gamma$.   
\end{example}

More generally,
suppose $(F,\sfS) \in  \JGammamax$.   As in Section \ref{sec:buildings} one can attach to $F$ a $\GGamma$-conjugacy class $(\lsub{F}\bM)$ of Levi $(k,k)$-subgroups.  From Example~\ref{ex:notintuitive}, one expects that if $\bS \in \toriKGamma$ is a lift of $(F,\sfS)$, then $(\lsub{F}\bM)$ is the minimal conjugacy class of Levi $(k,k)$-subgroups for which $\bS \leq \bM$ for some $\bM \in (\lsub{F}\bM)$.  This is true and will be proved in Section~\ref{sec:toriandlevi2}.

\subsection{Passing between tori over \texorpdfstring{$\ff$ and tori over $k$}{f and tori over k}}

Suppose $(F,\sfS) \in \JGamma$.  Our next two lemmas show that there is an element of $\toriKGamma$ that  lifts $(F,\sfS)$, and any two lifts of $(F,\sfS)$ are conjugate by an element of $G_{F,0^+}^{\Gamma}$.

  \begin{lemma}  \label{lem:AandS} Set $\sfM = C_{\sfG_F}(\sfS)$ and let $\sfT$ denote 
  a maximally $\ff$-split
  maximal $\ff$-torus in $\sfM$.   There is a torus $\bT \in \toriKGamma$ that lifts  $(F, \sfT)$.  Moreover, for all $\bT \in \toriKGamma$ lifting $(F,\sfT)$ there exists a unique lift $\bS \in \toriKGamma$ of $(F, \sfS)$ with the property that $\bS \leq \bT$.
  \end{lemma}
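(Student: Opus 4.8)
The plan is to reduce everything to combinatorics of cocharacter lattices of split tori, using $\bA$ and its apartment as a fixed reference. Write $\sfM=C_{\sfG_F}(\sfS)$; since $(F,\sfS)\in\JGamma$ the torus $\sfS$ is $\Gamma$-stable, so $\sfM$ is a connected reductive $\ff$-subgroup of $\sfG_F$, and $\sfT$ is a maximal $\ff$-torus of $\sfM$, hence of $\sfG_F$. I record two facts at the outset: $\sfS$ is a torus contained in and commuting with $C_{\sfG_F}(\sfS)=\sfM$, so it lies in the central torus of $\sfM$ and therefore in $\sfT$; and $\sfZ_F\le\sfS\le\sfT$, so $(F,\sfT)\in\JGamma$. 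Finally, since conjugating $(F,\sfS)$ by an element of $\GGamma$ carries lifts to lifts, carries $G_{F,0^+}^\Gamma$ to the conjugated group, and preserves inclusions of tori, I would replace $(F,\sfS)$ by a $\GGamma$-conjugate so as to assume $F\subseteq\AA(A)$; this is possible because the barycenter of $F$ is $\Gamma$-fixed, hence lies in an apartment of $\BB(G)^\Gamma$, and every such apartment is $\GGamma$-conjugate to $\AA(A)^\Gamma$.

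For existence: $\bA\in\toriKGamma$ lifts $(F,\sfA_F)$, and $\sfA_F$ is a maximally $\ff$-split maximal $\ff$-torus of $\sfG_F$. Since $\ffc$ is algebraically closed, $\sfT$ and $\sfA_F$ are $\sfG_F(\ffc)$-conjugate; I would choose $\bar g\in\sfG_F(\ffc)$ with $\lsup{\bar g}\sfA_F=\sfT$, lift it to $g\in G_F$, and set $\bT_0=\lsup{g}\bA$. Then $\bT_0$ is $K$-split, contains $\lsup{g}\bZ^K=\bZ^K$, satisfies $\BB(C_G(T_0))=\BB(\lsup{g}C_G(A))=g\cdot\BB(C_G(A))\supseteq g\cdot\AA(A)\supseteq g\cdot F=F$, and $T_0\cap G_F=\lsup{g}(A\cap G_F)$ has image $\lsup{\bar g}\sfA_F=\sfT$ in $\sfG_F$; so $\bT_0\in\toriK$ is a lift of $(F,\sfT)$, though it need not be $\Gamma$-stable. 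I would then consider the set $\mathcal X\subseteq\toriK$ of all lifts of $(F,\sfT)$: a dimension count (via $\dim\sfT=\dim\bA$) shows each member is a maximal $K$-split torus; $G_{F,0^+}$ acts transitively on $\mathcal X$ by conjugation, since any two lifts of $\sfT$ to the parahoric $G_F$ differ by conjugation by $G_{F,0^+}$; and $\Gamma$ acts $G_{F,0^+}$-semilinearly on $\mathcal X$, as $F$ and $\sfT$ are $\Gamma$-stable. Because $G_{F,0^+}$ is pro-unipotent and $\ff$ is perfect, $\cohom^1(\Gamma,G_{F,0^+})$ is trivial, and a standard twisting argument produces a $\Gamma$-fixed point of $\mathcal X$, that is, a torus $\bT\in\toriKGamma$ lifting $(F,\sfT)$.

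For the ``moreover'' assertion, fix any $\bT\in\toriKGamma$ lifting $(F,\sfT)$. Since $F\subseteq\BB(C_G(T))$, one has $T\cap G_F=\bT(\mathcal{O}_K)$, and reduction modulo the maximal ideal of $\mathcal{O}_K$ identifies $\sfT(\ffc)$ with $\X_*(\bT)\otimes\ffc^\times$, hence gives a $\Gamma$-equivariant isomorphism $\X_*(\bT)\cong\X_*(\sfT)$ carrying $\X_*(\bZ^K)$ onto $\X_*(\sfZ_F)$. Since $\sfS\le\sfT$, the sublattice $\X_*(\sfS)\subseteq\X_*(\sfT)$ is primitive, $\Gamma$-stable, and contains $\X_*(\sfZ_F)$; let $\bS\le\bT$ be the $K$-split subtorus whose cocharacter lattice is the corresponding primitive $\Gamma$-stable sublattice of $\X_*(\bT)$, which contains $\X_*(\bZ^K)$. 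Then $\bZ^K\le\bS$, so $\bS\in\toriKGamma$; condition~(1) of Definition~\ref{defn:lift} holds for $(F,\sfS)$ because $F\subseteq\BB(C_G(T))\subseteq\BB(C_G(S))$; and condition~(2) holds because $S\cap G_F=\bS(\mathcal{O}_K)$ reduces to $\X_*(\bS)\otimes\ffc^\times=\sfS(\ffc)$. Thus $\bS$ lifts $(F,\sfS)$ and $\bS\le\bT$. For uniqueness, any lift $\bS'\in\toriKGamma$ of $(F,\sfS)$ with $\bS'\le\bT$ has $\X_*(\bS')$ a primitive $\Gamma$-stable sublattice of $\X_*(\bT)$ reducing to $\sfS(\ffc)$; since $\ffc^\times$ is infinite, a primitive sublattice is determined by the subgroup it generates after tensoring with $\ffc^\times$, so $\X_*(\bS')=\X_*(\bS)$ and $\bS'=\bS$.

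I expect the crux to be the final step of the existence argument — upgrading $\bT_0$ to a $\Gamma$-stable lift. The conjugating element $\bar g$ is only well-defined modulo $N_{\sfG_F}(\sfA_F)(\ffc)$, so one must check that the relevant cocycle genuinely lands in $G_{F,0^+}$ modulo the pro-unipotent group $C_G(\bT_0)\cap G_{F,0^+}$; equivalently, one identifies the stabilizer of $\bT_0$ in $G_{F,0^+}$ with $C_G(\bT_0)\cap G_{F,0^+}$, using that an element of $G_{F,0^+}$ normalizing a maximal $K$-split torus whose apartment contains $F$ must centralize it, and then invokes the vanishing of Galois cohomology of pro-unipotent groups over the perfect field $\ff$. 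The remaining ingredients — the dimension count, the compatibilities between parahoric subgroups of $\bT$, of $C_G(\bT)$, and of $\bG$, and the lattice bookkeeping — are routine.
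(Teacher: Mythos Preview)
Your proof is correct and follows essentially the same approach as the paper: the identification $\X_*(\bT)\cong\X_*(\sfT)$ as $\Gamma$-modules is the core of the ``moreover'' and uniqueness parts in both arguments, and your cocharacter-lattice bookkeeping just makes explicit what the paper states in one line. The only difference is in the existence of a $\Gamma$-stable lift $\bT$: the paper simply cites \cite[Lemma~2.3.1]{debacker:unramified}, whereas you unpack that argument (transitivity of $G_{F,0^+}$ on lifts plus vanishing of $\cohom^1(\Gamma,G_{F,0^+})$), which is fine and indeed what the cited lemma does; your preliminary reduction to $F\subseteq\AA(A)$ is harmless but not needed for either version.
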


\begin{rem}
If $\sfM$ is $\ff$-quasi-split,
then $\sfT$ is the centralizer of a maximal  $\ff$-split torus in $\sfM$.
\end{rem}

\begin{proof}
Suppose $\bT \in \toriKGamma$ is a lift of  $(F,\sfT)$.  (Such a torus $\bT$ exists by~\cite[Lemma~2.3.1]{debacker:unramified}.)
Note that $\X_*(\sfT) = \X_*(\bT)$ as $\Gamma$-modules, and we can therefore choose a subtorus $\bS$ of $\bT$  corresponding to the image of $\X_*(\sfS)$ under  $\X_*(\sfS) \hookrightarrow \X_*(\sfT) = \X_*(\bT)$.   We have that $\bS \in \toriKGamma$.  Since $T \leq C_G(S)$ and $F \subset \BB(T) \subset \BB(C_G(S))$, we conclude that $S$ is a lift of $(F, \sfS)$.

If $\bS' \in \toriKGamma$ is another lift of $(F, \sfS)$ that lies in $\bT$,  then $\X_*({\bS'}) = \X_*(\bS) = \X_*(\sfS)$ in $\X_*(\bT)$, and so $\bS' = \bS$.
\end{proof}

\begin{cor}  \label{cor:heart}
If $\bS, \bS' \in \toriKGamma$ both lift $(F,\sfS)$, then there exists an element $g \in G_{F,0^+}^{\Gamma}$ such that $\lsup{g}\bS = \bS'$.
\end{cor}

\begin{proof}
We will use the notation of Lemma~\ref{lem:AandS} and its proof.

Set  ${\bM'} = C_{\bG}(\bS')$.   Note that $F \subset \BB(M')$ and the image of $M' \cap G_F$ in $\sfG_F$ is $\sfM = C_{\sfG_F}(\sfS)$.   Let $\bT' \leq \bM'$ be a lift of $(F,\sfT)$. 
Since $\bS'$ is in the center of $\bM'$, we have $\bS' \leq \bT'$.   Since $\bS'$ (resp. $\bT'$) is a $K$-split torus lifting $(F,\sfS)$ (resp. $(F,\sfT)$), from Lemma~\ref{lem:AandS} we conclude that $\bS'$ is the unique lift of $(F,\sfS)$ in $\bT'$.
By~\cite[Lemma~2.2.2]{debacker:unramified}, there is an element $g \in G_{F,0^+}^{\Gamma}$ such that $\lsup{g}\bT = \bT'$.  The result follows from Lemma~\ref{lem:AandS}.
\end{proof}

\begin{rem}  \label{rem:handsonM}
Suppose $\sfC$ is the maximal $\ff$-split component of the center of $\sfG_F$.   If $\bC$ is a lift of $(F, \sfC \sfZ_F)$, then $C_\bG(\bC)$ is $G^{\Gamma}_{F,0^+}$-conjugate to $\lsub{F}\bM$.
\end{rem}

Thanks to Lemma~\ref{lem:AandS} and Corollary~\ref{cor:heart} we can  define an action of $\GGamma$ on $\JGammamax$.  Suppose $g \in \GGamma$ and $(F, \sfS) \in \JGammamax$.  Let $\bS$ be a lift of $(F,\sfS)$.   Let $\lsup{g}\sfS$ denote the image of $\lsup{g}S \cap G_{gF}$ in $\sfG_{gF}$ and set  $g(F,\sfS) := (gF, \lsup{g}\sfS) \in \JGammamax$.

The following lemma allows us to move in the opposite direction: from tori over $k$ to tori over $\ff$.

\begin{lemma}  \label{lem:P}
For all $\bS \in \toriKGamma$ there exists $(F,\sfS) \in \JGammamax$ such that $\bS$ lifts $(F, \sfS)$.    
\end{lemma}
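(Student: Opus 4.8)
The plan is to start with an arbitrary $\bS \in \toriKGamma$ and produce the pair $(F,\sfS)$ by first choosing a good point of the building and then passing to the reductive quotient. First I would set $\bL = C_\bG(\bS)$, which is an unramified twisted Levi subgroup by the remark following the definition of unramified torus (via Lemma~\ref{lem:morelevi}), and note that $\bS = \bZ_\bL^K$. Since $\bL$ is a Levi $(K,k)$-subgroup, $L$ is a tame Levi subgroup and we may identify $\BB(L)^\Gamma$ with a $\Gamma$-stable subset of $\BB(G)^\Gamma$; in particular $\BB(L)^\Gamma$ is nonempty. Choose a point $x$ in $\BB(L)^\Gamma$, and let $F$ be the facet of $\BB(G)$ containing $x$ — or rather, I will need to be a bit more careful and choose $x$ so that the facet $F_L$ of $\BB(L)$ through $x$ is a vertex-type (minimal-dimensional) facet, equivalently so that $\sfG_{F_L} = \sfL_{F_L}$ is semisimple-plus-central in the appropriate sense; the exact genericity condition I want is the one that will make $(F,\sfS)$ \emph{maximal} in the sense of the definition preceding Example~\ref{ex:notintuitive}. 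Concretely I would pick $x$ in $\BB(\bZ_\bL^K) \subset \BB(L)$, i.e.\ in the (unique up to translation) subspace on which $\BB(L)$ is ``as small as possible transverse to the $\bS$-direction.''

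Next, with $F$ the facet of $\BB(G)$ through such an $x$, I would define $\sfS$ to be the image of $S \cap G_F$ in $\sfG_F = G_F/G_{F,0^+}$. One checks $\sfS$ is a torus: since $\bS$ is $K$-split and $F \subset \BB(C_G(S)) = \BB(L)$, the group $S \cap G_F$ is the Iwahori-type parahoric of $S$ and its image in $\sfG_F$ is a torus containing the image $\sfZ_F$ of $G_F \cap \bZ^K(K)$ (because $\bZ \leq \bZ_\bL$ forces $\bZ^K \leq \bS$). Hence $(F,\sfS) \in J$, and $\Gamma$-stability of $F$ and of $\sfS$ (which follows since both $\bS$ and $x$ are $\Gamma$-fixed) puts $(F,\sfS) \in \JGamma$. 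By construction $\bS$ lifts $(F,\sfS)$: condition (1) of Definition~\ref{defn:lift} holds because $F \ni x \in \BB(L) = \BB(C_G(S))$, and condition (2) is the definition of $\sfS$.

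The remaining — and I expect main — point is to verify $(F,\sfS) \in \JGammamax$, i.e.\ that $F$ was chosen ``small enough.'' Suppose $H$ is a $\Gamma$-stable facet with $F \subset \bar H$ and $\sfS \subset G_H/G_{F,0^+}$, the $\ff$-parabolic of $\sfG_F$ attached to $H$. I want to conclude $F = H$. The idea is that $\sfS$ lying in that parabolic means $\sfS$ centralizes (equivalently, is centralized by) enough of the structure that $H$ must already be visible inside $\BB(L) = \BB(C_G(S))$: using Remark~\ref{rem:starone} (the $(S \cap G_y)$-fixed-point characterization from~\cite[4.4.2]{debacker:nilpotent}), points of $H$ fixed by $S \cap G_F$ lie in $\BB(C_G(S)) = \BB(L)$, so $\bar H \cap \BB(L)$ is a facet of $\BB(L)$ strictly containing the $\BB(L)$-facet of $x$ unless $F = H$ — contradicting the minimality built into the choice of $x$. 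Turning this sketch into a clean argument requires (a) pinning down exactly which genericity property of $x$ I need, phrased intrinsically in $\BB(L)$, and (b) matching ``$\sfS \subset$ the $\ff$-parabolic $G_H/G_{F,0^+}$'' with ``$H$ meets $\BB(C_G(S))$ nontrivially in a way that enlarges the $\BB(L)$-facet.'' That bookkeeping — reconciling parahoric subgroups of $G$, of $L$, and the parabolic subgroups of $\sfG_F$ they cut out — is where the real work lies; everything else is a direct unwinding of the definitions together with the cited results from~\cite{debacker:unramified} and~\cite{debacker:nilpotent}.
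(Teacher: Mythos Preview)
Your overall strategy matches the paper's: set $\bM = C_\bG(\bS)$, pick a suitable $G$-facet $F$ inside $\BB(M)^\Gamma$, take $\sfS$ to be the image of $S\cap G_F$, and then use Remark~\ref{rem:starone} to reduce the maximality of $(F,\sfS)$ to a statement about facets of $\BB(M)$.  Your step~(b) is not really where the work lies --- Remark~\ref{rem:starone} does it in one line, exactly as you say.

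The genuine gap is in your step~(a): your instinct for the ``genericity condition'' points in the wrong direction.  You propose that the $L$-facet through $x$ be \emph{vertex-type (minimal-dimensional)}, or equivalently that $x$ lie in ``$\BB(\bZ_\bL^K)$.''  But if $H$ is a $\Gamma$-stable $G$-facet with $F\subset\bar H$, $F\neq H$, and $H\subset\BB(M)$, then the $M$-facet containing $H$ has the $M$-facet containing $F$ in its closure.  If the latter is a vertex, this gives no contradiction whatsoever; a vertex lies in the closure of everything.  What you need is the opposite: $F$ must be a $\Gamma$-stable $G$-facet whose $\Gamma$-dimension is \emph{maximal} among all $\Gamma$-stable $G$-facets in $\BB(M)$.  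The paper constructs this by first choosing an $M$-facet $H$ whose $\Gamma$-fixed set $H^\Gamma$ is an $M^\Gamma$-alcove, then taking $F\subset H$ to be a $\Gamma$-stable $G$-facet with $\dim(F^\Gamma)$ maximal, and observing (via $M^\Gamma$-conjugacy of alcoves) that this maximality extends over all of $\BB(M)$.  With that choice, if $F'\neq F$ is $\Gamma$-stable with $F\subset\bar{F'}$ and $\sfS$ lies in $G_{F'}/G_{F,0^+}$, then Remark~\ref{rem:starone} forces $F'\subset\BB(M)$, while $\dim(F'^\Gamma)>\dim(F^\Gamma)$ contradicts maximality.  (Incidentally, ``$\BB(\bZ_\bL^K)\subset\BB(L)$'' is not quite right either: since $\bS=\bZ_\bL^K$ is central in $\bL$, its building is naturally a \emph{factor} or \emph{quotient} of the enlarged $\BB(L)$, not a distinguished subspace.)
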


\begin{proof}
Fix $\bS \in \toriKGamma$.  Let $\bM = C_{\bG}(\bS)$.  Note that $\bM$ is a Levi $(K,k)$-subgroup of $\bG$.

Choose a $\Gamma$-stable $M$-facet $H$ in $\BB(M)$ so that  $H^\Gamma$ is an $M^\Gamma$-alcove; the choice of $H$ is unique up to $M^\Gamma$-conjugation.  Since $H$ can be written as the disjoint union of $G$-facets in $\BB(G)$, we may choose  a $\Gamma$-stable $G$-facet $F$ in $H$ so that  $\dim(F^\Gamma) \geq \dim(\tilde{F}^\Gamma)$ for  all $\Gamma$-stable $G$-facets $\tilde{F}$ in $\bar{H}$. 
In fact, $\dim(F^\Gamma) \geq \dim(\tilde{F}^\Gamma)$ for all $\Gamma$-stable $G$-facets $\tilde{F}$ in $\BB(M)$: if $\tilde{F}$ is a $\Gamma$-stable  $G$-facet in $\BB(M)$, then, since $M^\Gamma$-alcoves are $M^\Gamma$-conjugate, there is some $m \in M^\Gamma$ that carries $\tilde{F}$ into $\overline{H}$; thus $\dim(\tilde{F}^\Gamma) = \dim((m \tilde{F})^\Gamma) \leq \dim(F^\Gamma)$.

Let $\sfS$ be the $\ff$-torus in $\sfG_F$ corresponding to the image of $S \cap G_F$ in $\sfG_F$.  By construction, the torus $\bS$ is a lift of the pair $(F,\sfS)$.  We need to show that $(F,\sfS) \in \JGammamax$.   Suppose $F' \subset \BB(G)$ is a $\Gamma$-stable $G$-facet with $F \subset \overline{F}'$ and $F \neq F'$.  Note that $S \cap G_{F'} \subset S \cap G_{F}$.    If $\sfS$ belongs to the proper parabolic  $\ff$-subgroup $G_{F'}/G_{F,0^+}$ of $\sfG_F = G_F/G_{F,0^+}$, then $S \cap G_F = S \cap G_{F'}$ fixes $F'$ and $(F',\sfS') \in \JGamma$ where $\sfS'$ is the $\ff$-torus in $\sfG_{F'}$ corresponding to the image of $S \cap G_{F'}$ in $\sfG_{F'}$.  From Remark~\ref{rem:starone} we conclude that $F' \subset \BB(M)$.  But the dimension of $F'^\Gamma$ is strictly larger than that of $F^\Gamma$, contradicting that $\dim(F^\Gamma) \geq \dim(\tilde{F}^\Gamma)$ for all $\Gamma$-stable $G$-facets $\tilde{F}$ in $\BB(M)$.
\end{proof}

\subsection{An equivalence relation}

Thanks to Lemma~\ref{lem:P} and Corollary~\ref{cor:heart}, we have a surjective map $\varphi$ from $\JGammamax$ to the set of $\GGamma$-conjugacy classes in $\toriKGamma$.   We introduce an equivalence relation, to be denoted $\sim$, on $\JGammamax$ so that $\varphi$ descends to a bijection between $\JGammamax / \! \sim $  and the  set of $\GGamma$-conjugacy classes in $\toriKGamma$.

Suppose  $\AA$ is an apartment in  $\BB(G)^{\Gamma}$.  This means there is a maximal $k$-split torus $\bS \leq \bG$  such that $\AA = \AA(\bS,k)$.  Note that every such apartment is equal to $\AA(\lsup{g}A)^\Gamma$ for some $g \in G^\Gamma$.  

If $\Omega \subset \AA$, then we denote the smallest affine subspace of $\AA$ that contains $\Omega$ by  $A(\AA,\Omega)$.  When $F_1, F_2$ are two $G^{\Gamma}$-facets in $\AA$ for which $\emptyset \neq A(\AA,F_1) = A(\AA,F_2)$, then, since an affine root vanishes on $F_1$ if and only if it vanishes on $F_2$, there is a natural identification of  $\sfG_{F_1}$ with $\sfG_{F_2}$ as algebraic $\ff$-groups.  When this happens, we write
 $\sfG_{F_1} \overset{i}{=} \sfG_{F_2}$.

\begin{defn} \label{defn:531} Suppose $(F_i, \sfS_i) \in \JGamma$.  We write $(F_1, \sfS_1) \sim (F_2, \sfS_2)$ provided that there exist an element $g \in \GGamma$ and an apartment $\AA$ in $\BB(G)^{\Gamma}$ such that 
\begin{enumerate}
\item $\emptyset \neq A(\AA,F_1^\Gamma) = A(\AA,gF_2^\Gamma)$
\item $\sfS_1 \overset{i}{=}  {\lsup{g}\sfS_2}$ in $\sfG_{F_1} \overset{i}{=} \sfG_{gF_2}$.
\end{enumerate}
\end{defn}

\begin{lemma}  The relation $\sim$ is an equivalence relation on $\JGammamax$. 
\end{lemma}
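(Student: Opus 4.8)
The plan is to verify the three defining properties of an equivalence relation — reflexivity, symmetry, and transitivity — for the relation $\sim$ on $\JGammamax$ given in Definition~\ref{defn:531}. Reflexivity is immediate: given $(F,\sfS) \in \JGammamax$, take $g = 1$ and choose any apartment $\AA$ in $\BB(G)^\Gamma$ containing $F^\Gamma$ (such an apartment exists since every $G^\Gamma$-facet lies in some $G^\Gamma$-apartment); then $A(\AA,F^\Gamma) = A(\AA,F^\Gamma) \neq \emptyset$ and $\sfS \overset{i}{=} \sfS$ trivially. For symmetry, suppose $(F_1,\sfS_1)\sim(F_2,\sfS_2)$ via $g\in\GGamma$ and apartment $\AA$. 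I would apply $g\inv$ to everything: since $g\in\GGamma$ acts on $\BB(G)^\Gamma$ carrying apartments to apartments, $g\inv\AA$ is again an apartment in $\BB(G)^\Gamma$, and applying $g\inv$ to the equality $A(\AA,F_1^\Gamma) = A(\AA,gF_2^\Gamma)$ and to the identification of reductive quotients shows $(F_2,\sfS_2)\sim(F_1,\sfS_1)$ via $g\inv$ and $g\inv\AA$. Here I would note that the relation $\overset{i}{=}$ is itself symmetric and that the action $g\cdot(F,\sfS) = (gF,\lsup{g}\sfS)$ defined before the lemma is a genuine group action, so $\lsup{(g\inv)}(\lsup{g}\sfS) = \sfS$.

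The main work is transitivity. Suppose $(F_1,\sfS_1)\sim(F_2,\sfS_2)$ via $g\in\GGamma$ and apartment $\AA$, and $(F_2,\sfS_2)\sim(F_3,\sfS_3)$ via $h\in\GGamma$ and apartment $\AA'$. The difficulty is that the two witnessing apartments need not coincide, so I cannot simply compose. The standard Bruhat-Tits device is to use that $\GGamma$ acts transitively on the apartments of $\BB(G)^\Gamma$ (each is $\AA(\lsup{x}A)^\Gamma$ for some $x\in\GGamma$, as recalled just before Definition~\ref{defn:531}): so first I would replace the second relation's data by conjugating to make its apartment agree with $g\AA$. Concretely, since $\AA'$ and $g\AA$ are both apartments in $\BB(G)^\Gamma$, pick $x\in\GGamma$ with $x\AA' = g\AA$; replacing $(h,\AA')$ by $(xh, g\AA)$ — which still witnesses $(F_2,\sfS_2)\sim(F_3,\sfS_3)$ after re-examining property (2), using that $\lsup{x}(\lsup{h}\sfS_3)$ is identified appropriately — I may assume $\AA' = g\AA$. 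Then $F_1^\Gamma$, $gF_2^\Gamma$, and $g(h F_3^\Gamma)$ all lie in the single apartment $\AA$ after translating back by $g\inv$; property (1) for the two relations gives $A(\AA,F_1^\Gamma) = A(\AA,gF_2^\Gamma)$ and $A(g\inv\AA', \cdot)$ conditions that, once everything is in $\AA$, chain together to $A(\AA,F_1^\Gamma) = A(\AA, (gh)F_3^\Gamma) \neq\emptyset$. For property (2), the identifications $\sfG_{F_1}\overset{i}{=}\sfG_{gF_2}$ and $\sfG_{gF_2}\overset{i}{=}\sfG_{g h F_3}$ compose (since $\overset{i}{=}$ is defined by equality of affine spans, hence the same set of vanishing affine roots, hence literally the same reductive quotient), and under this composite $\sfS_1 \overset{i}{=} \lsup{g}\sfS_2 \overset{i}{=} \lsup{g}(\lsup{h}\sfS_3) = \lsup{(gh)}\sfS_3$, using that $g\cdot(h\cdot(F_3,\sfS_3)) = (gh)\cdot(F_3,\sfS_3)$. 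Thus $(F_1,\sfS_1)\sim(F_3,\sfS_3)$ via $gh\in\GGamma$ and apartment $\AA$.

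The step I expect to be the main obstacle is the alignment of apartments in the transitivity argument — specifically, checking that after conjugating the witnessing data of the second relation by an element $x$ chosen to match apartments, property (2) still holds for the conjugated pair. This requires knowing that the $\GGamma$-action $g\cdot(F,\sfS) = (gF,\lsup{g}\sfS)$ interacts correctly with the identification $\overset{i}{=}$ of reductive quotients attached to facets with equal affine span — i.e. that $g$ carries the configuration $(F_1^\Gamma \text{ vs } gF_2^\Gamma)$ equivariantly. A clean way to organize this is to observe at the outset that $\sim$ is manifestly $\GGamma$-invariant in the sense that $(F_1,\sfS_1)\sim(F_2,\sfS_2)$ iff $g_1\cdot(F_1,\sfS_1)\sim g_2\cdot(F_2,\sfS_2)$ for any $g_1,g_2\in\GGamma$ (this follows directly from Definition~\ref{defn:531} by absorbing $g_1, g_2$ into the witnessing $g$ and translating the apartment), which reduces transitivity to the case where the two middle occurrences of $(F_2,\sfS_2)$ are already literally equal — and then reduces the apartment-matching to a single application of transitivity of the $\GGamma$-action on $\BB(G)^\Gamma$-apartments. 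I would also need the elementary fact that if two $G^\Gamma$-facets in a common apartment have the same nonempty affine span then $A(\AA,\cdot)$ of a third facet with span equal to one equals its span relative to the other, which is transparent from the definition of $A(\AA,\Omega)$ as the smallest affine subspace containing $\Omega$.
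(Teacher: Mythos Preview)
The paper's own proof is simply a pointer to~\cite[Section~3.6]{debacker:nilpotent} and~\cite[Section~3.2]{debacker:unramified}, so there is no detailed argument to compare against; your direct verification of reflexivity, symmetry, and transitivity is exactly the kind of argument those references contain, and your handling of reflexivity and symmetry is fine.

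There is, however, a genuine gap in your transitivity step. You write that $\AA'$ and $g\AA$ are both apartments and choose $x\in\GGamma$ with $x\AA' = g\AA$, then claim that $(xh, g\AA)$ still witnesses $(F_2,\sfS_2)\sim(F_3,\sfS_3)$. But applying $x$ to the equality $A(\AA',F_2^\Gamma)=A(\AA',hF_3^\Gamma)$ yields $A(g\AA, xF_2^\Gamma)=A(g\AA, xhF_3^\Gamma)$, which involves $xF_2^\Gamma$, not $F_2^\Gamma$; your $x$ has no reason to fix $F_2$. The same problem persists in your cleaner $\GGamma$-invariance formulation: after reducing to witnessing elements equal to the identity, you still have two distinct apartments $\AA$ and $g\AA'$ both containing the common middle facet $gF_2^\Gamma$, and ``transitivity of the $\GGamma$-action on apartments'' is not enough---you need an element carrying one apartment to the other \emph{while fixing $gF_2^\Gamma$}.

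The missing ingredient (and the content of the referenced sections) is precisely this: any two apartments of $\BB(G)^\Gamma$ containing a given facet $H^\Gamma$ are conjugate by an element of $G^\Gamma$ fixing $H^\Gamma$ pointwise (indeed by an element of $G_H^\Gamma$). Once you invoke that, choose such an element $p$ with $p(g\AA') = \AA$; then $p$ fixes $gF_2^\Gamma$, so $A(\AA, F_1^\Gamma) = A(\AA, gF_2^\Gamma) = A(\AA, pgF_2^\Gamma) = A(\AA, pghF_3^\Gamma)$, and the identification of reductive quotients is preserved because $p\in G_{gF_2}^\Gamma$ acts compatibly. Also note that the apartment containing $F_2^\Gamma$ that you want to match against $\AA'$ is $g^{-1}\AA$, not $g\AA$.
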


\begin{proof} The proof is nearly identical to the material in ~\cite[Section 3.6]{debacker:nilpotent} or ~\cite[Section 3.2]{debacker:unramified}. 
\end{proof}

\subsection{A bijective correspondence}

Suppose $\bS \in \toriKGamma$ is a lift of $(F, \sfS) \in \JGammamax$.  Let $\bM = C_\bG(\bS)$.  Recall from Definition~\ref{defn:lift} that since $\bS$ is a lift of $(F,\sfS)$, we have $F \subset \BB(M)$.   

\begin{lemma} \label{lem:alcoveinM}
Let $C$ denote a $\Gamma$-stable $M$-facet in $\BB(M)$ that contains $F$ in its closure.  The $M^{\Gamma}$-facet $C^{\Gamma}$ is an alcove in $\BB(M)^{\Gamma}$ and $F^{\Gamma}$ is an open subset of $C^{\Gamma}$.
\end{lemma}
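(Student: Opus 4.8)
The plan is to use the maximality built into the definition of $\JGammamax$ to pin down the location of $F$ inside $\BB(M)$, and then to deduce both assertions. First I would record the local structure at $F$: set $\sfM := C_{\sfG_F}(\sfS)$, and observe that, since $\bS$ lifts $(F,\sfS)$ and $\bM = C_{\bG}(\bS)$, the group $\sfM$ is the image of $M \cap G_F$ in $\sfG_F$ (cf.\ the proof of Corollary~\ref{cor:heart}); thus $\sfM$ is a connected reductive $\ff$-group in which $\sfS$ is a central torus. The key point is then: whenever $H \subset \BB(G)$ is a $\Gamma$-stable $G$-facet with $F \subset \overline{H}$ and $H \subset \BB(M)$, the image of $M \cap G_H$ in $\sfG_F$ is a parabolic $\ff$-subgroup of $\sfM$ --- this follows by applying Bruhat--Tits theory inside $\BB(M) \subset \BB(G)$ via the standard comparison of Bruhat--Tits data for $\bM$. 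Being a parabolic of $\sfM$, this subgroup contains the central torus $\sfS$, so $\sfS$ lies in the parabolic $\ff$-subgroup $G_H/G_{F,0^+}$ of $\sfG_F$; since $(F,\sfS) \in \JGammamax$, we conclude $H = F$. Hence $F$ is the only $\Gamma$-stable $G$-facet contained in $\BB(M)$ that has $F$ in its closure.

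Next I would pass to $\Gamma$-fixed points. One has $F^{\Gamma} \neq \emptyset$, since $\Gamma$ permutes the finitely many vertices of $\overline{F}$ and so acts on $F$ through a finite quotient, and an affine action of a finite group on a simplex has nonempty fixed set. I would also invoke the standard comparison between $\Gamma$-stable $G$-facets of a $\Gamma$-stable apartment of $\BB(G)$ and their fixed-point sets: the face relation is preserved, and for a $\Gamma$-stable $M$-facet $D$ of $\BB(M)$ the sets $H^{\Gamma}$, as $H$ runs over the $\Gamma$-stable $G$-facets contained in $\overline{D}$, form a polyhedral subdivision of $\overline{D^{\Gamma}}$ (cf.\ \cite[Section 3.2]{debacker:unramified}, \cite[Section 3.6]{debacker:nilpotent}; here Lemma~\ref{lem:qfcohom} is used, guaranteeing that the reductive quotients over $\ff$ in play are quasi-split, even though $\bM$ itself need not be $k$-quasi-split). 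Let $C_0$ be the $M$-facet containing $F$, which is $\Gamma$-stable since $\Gamma$ fixes $F$, and choose a maximal $\Gamma$-stable $M$-facet $D$ of $\BB(M)$ with $C_0 \subset \overline{D}$; then $D^{\Gamma}$ is an alcove of $\BB(M)^{\Gamma}$ and $F^{\Gamma} \subset C_0^{\Gamma} \subset \overline{D^{\Gamma}}$. In the subdivision of $\overline{D^{\Gamma}}$ the cell $F^{\Gamma}$ is a face of some maximal cell $H^{\Gamma}$, where $H$ is a $\Gamma$-stable $G$-facet with $F \subset \overline{H} \subset \overline{D} \subset \BB(M)$ and $\dim H^{\Gamma} = \dim D^{\Gamma}$. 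By the previous paragraph $H = F$, so $\dim F^{\Gamma} = \dim D^{\Gamma}$; together with $F^{\Gamma} \subset \overline{D^{\Gamma}}$ this forces $F^{\Gamma}$ to be an open subset of $D^{\Gamma}$, hence $F^{\Gamma} \subset D^{\Gamma} \subset D$, so $F \subset D$ and therefore $D = C_0$. Thus $C_0$ is a maximal $\Gamma$-stable $M$-facet, $C_0^{\Gamma}$ is an alcove of $\BB(M)^{\Gamma}$, and $F^{\Gamma}$ is an open subset of $C_0^{\Gamma}$.

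Finally I would identify the facet $C$ of the statement with $C_0$: since $\overline{C}$ is a union of $M$-facets and $F$ lies in the single $M$-facet $C_0$, the hypothesis $F \subset \overline{C}$ gives $C_0 \subset \overline{C}$; as $C_0$ is a maximal $\Gamma$-stable $M$-facet and $C$ is $\Gamma$-stable, this forces $C = C_0$, and the lemma follows. The main obstacle I anticipate lies in the middle paragraph: making precise the dictionary between $\Gamma$-stable $G$-facets and their fixed-point sets and, within it, producing the $\Gamma$-stable $G$-facet $H \subset \BB(M)$ lying above $F$ with $\dim H^{\Gamma} = \dim D^{\Gamma}$ --- which is exactly where the quasi-splitness of reductive groups over the residue field $\ff$ (and not any quasi-splitness of $\bM$ over $k$) is needed.
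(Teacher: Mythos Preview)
Your argument is correct and rests on the same idea as the paper's proof: the centrality of $\sfS$ in $\sfM = C_{\sfG_F}(\sfS)$ (equivalently, of $\bS$ in $\bM$) forces $\sfS$ into the parabolic $G_H/G_{F,0^+}$ whenever $H \subset \BB(M)$ is a $\Gamma$-stable $G$-facet with $F \subset \overline{H}$, and then maximality of $(F,\sfS)$ gives $H=F$. The paper compresses this to three sentences by simply asserting ``it will be enough to show that $F^\Gamma$ is a maximal $G^\Gamma$-facet in $\BB(M)^\Gamma$'' and then noting directly that $S \cap G_F = S \cap G_D$ for any larger such $D$; you instead route the containment through the parabolic of $\sfM$ and then carefully unwind the passage from $\Gamma$-stable $G$-facets to $M^\Gamma$-alcoves via the polyhedral subdivision --- material the paper leaves implicit.

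One small correction: your parenthetical appeal to Lemma~\ref{lem:qfcohom} is out of place. Section~\ref{sec:Kktori} assumes only that $\ff$ is perfect, not quasi-finite, so that lemma is not available here; fortunately your subdivision argument does not actually need quasi-splitness of the reductive quotients (it is a purely combinatorial statement about $\Gamma$-stable facets and their fixed-point sets), so you should simply delete that remark.
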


\begin{proof}
It will be enough to show that $F^{\Gamma}$ is a maximal $\GGamma$-facet in $\BB(M)^{\Gamma}$.  Choose a $G^{\Gamma}$-facet $D \subset \BB(M)^{\Gamma}$ such that $F^\Gamma \subset \overline{D}$.   If $F^\Gamma \neq D$, then as $\bS$ is in the center of $\bM$, the image of $S \cap G_F = S \cap G_D$ in $G_F/G_{F,0^+}$ belongs to the parabolic $\ff$-subgroup $G_D/G_{F,0^+}$.  This contradicts that $(F,\sfS) \in \JGammamax$.
\end{proof}

\begin{lemma}
Suppose $(F_i, \sfS_i) \in \JGammamax$ with lift $\bS_i \in \toriKGamma$.  If there exists $g \in \GGamma$ such that $\lsup{g}\bS_1 = \bS_2$, then $(F_1, \sfS_1) \sim (F_2, \sfS_2)$.  
\end{lemma}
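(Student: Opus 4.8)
The plan is to use the $\GGamma$-equivariance of the lifting construction together with Lemma~\ref{lem:alcoveinM} to transport the hypothesis $\lsup{g}\bS_1 = \bS_2$ into the combinatorial data. First I would set $\bM_i = C_\bG(\bS_i)$, so that $\lsup{g}\bM_1 = \bM_2$ and $F_i \subset \BB(M_i)$ by Definition~\ref{defn:lift}. Choose a $\Gamma$-stable $M_1$-facet $C_1$ in $\BB(M_1)$ with $F_1 \subset \overline{C_1}$; by Lemma~\ref{lem:alcoveinM}, $C_1^\Gamma$ is an alcove in $\BB(M_1)^\Gamma$ and $F_1^\Gamma$ is open in $C_1^\Gamma$, so $A(\BB(M_1)^\Gamma, F_1^\Gamma) = \BB(M_1)^\Gamma$ (the whole affine span), and similarly for $F_2$ inside $\BB(M_2)$. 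Applying $g$, we get that $gF_2^\Gamma$ and $F_1^\Gamma$ both have affine span equal to $\BB(M_1)^\Gamma = \lsup{g}\BB(M_2)^\Gamma$ inside $\BB(G)^\Gamma$.

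The subtlety is that condition~(1) of Definition~\ref{defn:531} requires $F_1^\Gamma$ and $gF_2^\Gamma$ to have equal affine span \emph{inside a single apartment} $\AA$ of $\BB(G)^\Gamma$, not merely inside $\BB(M_1)^\Gamma$; and the identification $\sfG_{F_1} \overset{i}{=} \sfG_{gF_2}$ in condition~(2) is defined via that common affine span in $\AA$. So the next step is to adjust $g$ by an element of $M_1^\Gamma$. Since $M_1^\Gamma$ acts transitively on the alcoves of $\BB(M_1)^\Gamma$, I can find $m \in M_1^\Gamma$ with $mgC_2 = C_1$ where $C_2$ is a $\Gamma$-stable $M_2$-facet with $F_2 \subset \overline{C_2}$; replacing $g$ by $mg$ (which leaves $\lsup{g}\bS_1=\bS_2$ intact since $m$ centralizes $\bS_1$), we arrange $gF_2^\Gamma$ and $F_1^\Gamma$ to lie in the closure of a common $\GGamma$-alcove of $\BB(M_1)^\Gamma$, hence in a common apartment $\AA = \AA(\bS',k)^\Gamma$ of $\BB(G)^\Gamma$ for a suitable maximal $k$-split torus $\bS'$. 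Then $A(\AA, F_1^\Gamma) = A(\AA, gF_2^\Gamma) = \BB(M_1)^\Gamma$, which is nonempty, giving condition~(1); the identification $\sfG_{F_1} \overset{i}{=} \sfG_{gF_2}$ is then exactly the one furnished by the equality of affine spans.

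For condition~(2), I need $\sfS_1 \overset{i}{=} \lsup{g}\sfS_2$ inside $\sfG_{F_1} \overset{i}{=} \sfG_{gF_2}$. This follows from the definition of the $\GGamma$-action on $\JGammamax$: $g(F_2,\sfS_2) = (gF_2, \lsup{g}\sfS_2)$ where $\lsup{g}\sfS_2$ is the image of $\lsup{g}S_2 \cap G_{gF_2}$ in $\sfG_{gF_2}$, and the hypothesis $\lsup{g}\bS_1 = \bS_2$, i.e. $\bS_1 = \lsup{g^{-1}}\bS_2$, together with the fact that $\bS_1$ lifts $(F_1,\sfS_1)$, forces the image of $\lsup{g}S_2 \cap G_{gF_2} = \lsup{g}(S_2 \cap G_{F_2})$ in $\sfG_{gF_2}$ to coincide with the image of $\lsup{g}(S_1 \cap G_{gF_1})$... wait — more carefully: since $\bS_2 = \lsup{g}\bS_1$ lifts $(F_2,\sfS_2)$ and also $F_1 \mapsto gF_1$, the torus $\lsup{g}\bS_1$ is simultaneously a lift of $(gF_1,\sfS_1)$ (transporting that $\bS_1$ lifts $(F_1,\sfS_1)$) and a lift of $(F_2,\sfS_2)$; since both $F_1^\Gamma$ (hence $gF_1^\Gamma$) and $F_2^\Gamma$ have the same affine span in $\AA$, the reductive quotients are canonically identified and the two images of $\lsup{g}S_1 \cap G_{gF_1} = \lsup{g}\bS_1(K) \cap G_{gF_1}$ and $S_2 \cap G_{F_2} = \bS_2(K) \cap G_{F_2}$ agree, giving $\sfS_1 \overset{i}{=} \lsup{g}\sfS_2 = \sfS_2$ under the identification. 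Hence $(F_1,\sfS_1) \sim (F_2,\sfS_2)$.

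The main obstacle I anticipate is the bookkeeping in the previous paragraph: carefully checking that the canonical identifications $\sfG_{gF_1} \overset{i}{=} \sfG_{F_2}$ coming from equal affine spans are compatible with the identifications implicitly used in the definitions of ``lift'' and of the $\GGamma$-action, so that the two descriptions of the $\ff$-torus genuinely match. This is where one must be most careful, but it is the kind of compatibility that is forced once one knows (via the affine-span equality) that $G_{gF_1}$ and $G_{F_2}$ have the same pro-unipotent radical and the same reductive quotient; the rest is diagram-chasing.
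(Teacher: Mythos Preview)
Your overall strategy matches the paper's: use Lemma~\ref{lem:alcoveinM} to arrange that both $G^\Gamma$-facets sit as open subsets of a common $M^\Gamma$-alcove, whence both conditions of Definition~\ref{defn:531} follow. But you have a persistent direction error with $g$. From $\lsup{g}\bS_1 = \bS_2$ one gets $\lsup{g}\bM_1 = \bM_2$ and hence $g\cdot\BB(M_1) = \BB(M_2)$; thus it is $g^{-1}F_2$ that lies in $\BB(M_1)$, not $gF_2$, and your equation $\BB(M_1)^\Gamma = \lsup{g}\BB(M_2)^\Gamma$ is incorrect. Consequently $gC_2$ is not in $\BB(M_1)^\Gamma$, so ``find $m\in M_1^\Gamma$ with $mgC_2 = C_1$'' fails as stated; and ``replacing $g$ by $mg$ leaves $\lsup{g}\bS_1 = \bS_2$ intact since $m$ centralizes $\bS_1$'' is also wrong, since $\lsup{mg}\bS_1 = \lsup{m}\bS_2$, which equals $\bS_2$ only if $m\in M_2^\Gamma$. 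The same confusion reappears in your final paragraph, where you drift from $gF_2$ to $gF_1$.

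The paper avoids all of this by first replacing $(F_1,\sfS_1)$ with $(gF_1,\lsup{g}\sfS_1)$, reducing to $\bS_1=\bS_2=:\bS$ and a single $\bM = C_\bG(\bS)$. Then one picks $m\in M^\Gamma$ carrying the $M^\Gamma$-alcove containing $F_1$ to the one containing $F_2$, replaces $(F_1,\sfS_1)$ by $(mF_1,\lsup{m}\sfS_1)$ (noting $\lsup{m}\bS=\bS$), and both conditions of Definition~\ref{defn:531} are immediate: the $F_i^\Gamma$ are open in the common alcove so their affine spans in any apartment of $\BB(M)^\Gamma$ agree, and the same $\bS$ lifts both pairs. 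With that reduction in place, the bookkeeping you anticipate in your last paragraph disappears.
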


\begin{proof}
After replacing $(F_1,\sfS_1)$ with $(gF_1, \lsup{g}\sfS_1)$ we may and do assume that $\bS_1 = \bS_2$.  Without loss of generality, we assume $\bS = \bS_1 = \bS_2$, and set $\bM = C_\bG(\bS)$.  
Since $\bS$ is a lift of $(F_i, \sfS_i)$, from  Definition~\ref{defn:lift} we have $F_i \subset \BB(M)$.      Let $C_i$ denote the $M$-facet in $\BB(M)$ to which $F_i$ belongs.    By Lemma~\ref{lem:alcoveinM}, $C_i^{\Gamma}$ is an $M^{\Gamma}$-alcove in $\BB(\MGamma)$.  Since $\MGamma$ acts transitively on the alcoves in $\BB(M)^\Gamma$, there exists an $m \in \MGamma$ such that $m C_1 = C_2$.    We may and do replace $(F_1, \sfS_1)$ by $(mF_1, \lsup{m} \sfS_1)$.  Since $F_1^\Gamma$ and $F_2^\Gamma$ are open in $C_1^\Gamma = C_2^\Gamma$, for any apartment $\AA$ in $\BB(M)^\Gamma \subset \BB(G)^\Gamma$ we have $\emptyset  \neq A(\AA,F_1^\Gamma) = A(\AA,F_2^\Gamma)$.    Since $\lsup{m}\bS = \bS$, we conclude that $(F_1, \sfS_1) \sim (F_2, \sfS_2)$.
\end{proof}

\begin{cor}  \label{cor:toogeneralbij}
There exists a bijection between  $ \JGammamax / \sim$  and the set of $\GGamma$-conjugacy classes in   $ \toriKGamma$.
\end{cor}

\begin{proof} 
The only thing remaining to check is that if $(F_1, \sfS_1), (F_2, \sfS_2) \in \JGammamax$ with $(F_1, \sfS_1) \sim (F_2, \sfS_2)$, then they have lifts that are $\GGamma$-conjugate.   Suppose  $(F_1, \sfS_1), (F_2, \sfS_2) \in \JGammamax$ with $(F, \sfS_1) \sim (F_2, \sfS_2)$.   Then there exist an element $g \in \GGamma$ and an apartment $\AA$ in $\BB(G)^\Gamma$ such that 
\begin{enumerate}
\item $\emptyset \neq A(\AA,F_1^\Gamma) = A(\AA,gF_2^\Gamma)$
\item $\sfS_1 \overset{i}{=} {\lsup{g}\sfS_2}$ in $\sfG_{F_1} \overset{i}{=} \sfG_{gF_2}$
\end{enumerate}
We may and do assume that $\AA = \AA(A)^\Gamma$ and that  $g$ is the identity.

Let $\bM_i = \lsub{F_i}\bM$. 
Since   $A(\AA,F_1^\Gamma) = A(\AA,F_2^\Gamma)$, we have $\bM_1 = \bM_2$.  Set $\bM = \bM_1$.  By construction, the image of $M \cap G_{F_i}$ in $\sfG_{F_i}$ is $\sfG_{F_i}$ itself (that is, $\sfM_{F_i} =  \sfG_{F_i}$).

Since $\sfS_1 \overset{i}{=} \sfS_2$ in $\sfG_{F_1} \overset{i}{=} \sfG_{F_2}$, we can find $\bT \in \toriKGamma$ so that the image of $T \cap \lsub{F_1}M \cap \lsub{F_2}M$ in $\sfM_{F_i} =  \sfG_{F_i}$ is a 
maximally $\ff$-split $\ff$-torus containing $\sfS_i$.   As in Lemma~\ref{lem:AandS}  there is exactly one lift $\bS$ of $(F_i,\sfS_i)$ in $\bT$.   Since the image of  $S \cap \lsub{F_1}M \cap \lsub{F_2}M$ in $\sfM_{F_i} =  \sfG_{F_i}$ is $\sfS_i$, the proof is complete.
\end{proof}

\subsection{\texorpdfstring{$(K,k)$-tori and Levi $(k,k)$-subgroups}{Kk tori and Levi kk subgroups}}
\label{sec:toriandlevi2}

If $\bM'$ is a Levi $(k,k)$-subgroup of $\bG$, then we let $(\bM')$ denote the   $G^\Gamma$-conjugacy class of $\bM'$.  Set $\tilde{\mathcal{M}} = \{ (\bM') \colon \text{$\bM'$ is a Levi $(k,k)$-subgroup of $\bG$}\}$.   If $(\bM_1), (\bM_2) \in \tilde{\mathcal{M}}$, then  we write $(\bM_1) \leq (\bM_2)$ provided that there exists $\bL_i \in (\bM_i)$ such that $\bL_1 \leq \bL_2$; this defines a partial order on $\tilde{\mathcal{M}}$.

\begin{lemma}    \label{lem:prelimforM}
Fix $(F,\sfS) \in \JGammamax$ with $F \subset \AA(A)$,  and let $\bS\in \toriKGamma$ be a lift of $(F,\sfS)$.   There exists $\bM' \in (\lsub{F}\bM)$ such that $\bS \leq \bM'$.
\end{lemma}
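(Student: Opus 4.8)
The statement asserts that the Levi $(k,k)$-subgroup $\lsub{F}\bM$ attached to $F$ (more precisely, some member of its $\GGamma$-conjugacy class) contains a lift $\bS$ of $(F,\sfS)$. The natural approach is to exhibit the containment at the level of apartments and parahorics and then transfer it back to algebraic groups using the $\Gamma$-equivariant identification of cocharacter lattices that was already used in Lemma~\ref{lem:AandS}. First I would recall, as in Remark~\ref{rem:handsonM}, that if $\sfC$ denotes the maximal $\ff$-split component of the center of $\sfG_F$ and $\bC$ is a lift of $(F,\sfC\sfZ_F)$, then $C_\bG(\bC)$ is $G^\Gamma_{F,0^+}$-conjugate to $\lsub{F}\bM$. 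So it suffices to produce a lift $\bS$ of $(F,\sfS)$ with $\bS \leq C_\bG(\bC)$ for a suitable such $\bC$; applying the conjugating element in $G^\Gamma_{F,0^+}$ then moves $\bS$ into an actual member of $(\lsub{F}\bM)$.

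The key step is to build $\bS$ and $\bC$ simultaneously inside one ambient $(K,k)$-torus. Let $\sfT$ be a maximally $\ff$-split maximal $\ff$-torus in $\sfG_F$ containing both $\sfS$ and $\sfC\sfZ_F$ (this exists: $\sfS$ contains $\sfZ_F$, $\sfC\sfZ_F$ is central, and any maximal torus containing $\sfS$ works after adjusting so that it is maximally $\ff$-split — here one uses that over the residue field one may conjugate within $C_{\sfG_F}(\sfS)$, which is a twisted Levi, to reach a maximally $\ff$-split maximal torus through $\sfS$, and $\sfC\sfZ_F$ lies in every maximal torus). By~\cite[Lemma~2.3.1]{debacker:unramified} choose $\bT \in \toriKGamma$ lifting $(F,\sfT)$. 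Using $\X_*(\sfT)=\X_*(\bT)$ as $\Gamma$-modules, carve out the subtori $\bS \leq \bT$ and $\bC \leq \bT$ corresponding to $\X_*(\sfS)$ and $\X_*(\sfC\sfZ_F)$ respectively; as in Lemma~\ref{lem:AandS} these are the unique lifts of $(F,\sfS)$ and $(F,\sfC\sfZ_F)$ inside $\bT$, and both lie in $\toriKGamma$. Since $\sfC\sfZ_F$ is contained in the center of $\sfG_F = \sfM_F$, and hence commutes with $\sfS \leq \sfT$, but more to the point $\bC \leq \bT \leq C_\bG(\bS)$ automatically — the real content is the reverse: I want $\bS \leq C_\bG(\bC)$, which holds because $\bS$ and $\bC$ are both subtori of the torus $\bT$ and therefore commute.

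Finally, I would identify $C_\bG(\bC)$ with a member of $(\lsub{F}\bM)$. By Remark~\ref{rem:handsonM}, $C_\bG(\bC)$ is $G^\Gamma_{F,0^+}$-conjugate to $\lsub{F}\bM$, say $C_\bG(\bC) = \lsup{g}(\lsub{F}\bM)$ with $g \in G^\Gamma_{F,0^+}$. Set $\bM' = C_\bG(\bC) \in (\lsub{F}\bM)$. Then $\bS \leq \bT \leq C_\bG(\bC) = \bM'$, and since $g$ fixes $F$ pointwise, the torus $\bS$ still lifts $(F,\sfS)$; if one instead wants the distinguished representative $\lsub{F}\bM$ itself, replace $\bS$ by $\lsup{g^{-1}}\bS$, which lifts $(F, \lsup{g^{-1}}\sfS) = (F,\sfS)$ because $g^{-1} \in G^\Gamma_{F,0^+}$ acts trivially on $\sfG_F$. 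This gives the required $\bM' \in (\lsub{F}\bM)$ with $\bS \leq \bM'$.

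I expect the main obstacle to be the very first sub-step: arranging that the maximally $\ff$-split maximal $\ff$-torus $\sfT$ can be chosen to contain $\sfS$. One must check that a maximal torus through $\sfS$ can be upgraded to a maximally $\ff$-split one without leaving $C_{\sfG_F}(\sfS)$ — this uses that $C_{\sfG_F}(\sfS)$ is itself a connected reductive $\ff$-group (a twisted Levi, by~\cite[Proposition~1.2.2]{digne-michel}) together with conjugacy of maximal $\ff$-split tori and the fact that $\sfS$, being central in its own centralizer, sits inside some maximally $\ff$-split maximal torus of that centralizer; then that torus is also maximally $\ff$-split in $\sfG_F$ since $\sfS$ is $\ff$-split. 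Everything after that is the lattice bookkeeping already established in Lemma~\ref{lem:AandS}.
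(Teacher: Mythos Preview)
Your approach is sound at its core but contains one slip: the claim that ``$\sfS$ is $\ff$-split'' is false in general --- $\sfS$ is merely an $\ff$-torus, and indeed the whole point of later results (e.g.\ Lemma~\ref{lem:anisotropicasexpected}) is that $\sfS$ can be quite anisotropic over $\ff$. Fortunately this does not matter: you never actually use that $\sfT$ is maximally $\ff$-split in $\sfG_F$, only that it is a maximal $\ff$-torus containing $\sfS$ (whence automatically $\sfC\sfZ_F$, which is central). So drop that requirement, or simply invoke Lemma~\ref{lem:AandS} directly to produce $\bT$ together with the sublift $\bS$. A second minor point: the statement fixes $\bS$ in advance, whereas you construct a particular lift inside $\bT$; close this gap with Corollary~\ref{cor:heart}, which conjugates any two lifts by an element of $G^\Gamma_{F,0^+}$, and note that $(\lsub{F}\bM)$ is stable under such conjugation.

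Your route differs from the paper's. The paper works ``top-down'' over $k$: starting from the given $\bS$, it sets $\bM=C_\bG(\bS)$, chooses a maximally $k$-split maximal $(K,k)$-torus $\bT$ in $\bM$, and takes $\bM'=C_\bG(\bT^k)$, which visibly contains $\bS$; it then uses Lemma~\ref{lem:alcoveinM} to position $\bT$ so that $F^\Gamma\subset\BB(T)^\Gamma$ and to identify the image of $\bT^k$ in $\sfG_F$ with $\sfC$, after which Remark~\ref{rem:handsonM} finishes. You instead work ``bottom-up'' from $\ff$: carve $\bS$ and $\bC$ out of a common lifted torus $\bT$ via cocharacter lattices, so they commute automatically, and invoke Remark~\ref{rem:handsonM} directly. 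Your argument avoids Lemma~\ref{lem:alcoveinM} entirely, at the cost of establishing the result only for a constructed lift and then appealing to Corollary~\ref{cor:heart}; the paper's argument handles the given $\bS$ throughout but needs that extra building-theoretic input.
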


\begin{proof}
Let $\bM = C_\bG(\bS)$.   
Choose a maximally $k$-split maximal $(K,k)$-torus $\bT$ in $\bM$.    Note that $\bS \leq \bM' := C_\bG(\bT^k)$.
From Lemma~\ref{lem:alcoveinM}, $F^\Gamma$ is a subset of an $M^\Gamma$-alcove; thus, we can replace $\bT$ with an $M^\Gamma$-conjugate and assume $F^\Gamma \subset \BB(T)^{\Gamma} \subset \BB(M)^{\Gamma}$.

From Lemma~\ref{lem:alcoveinM}, $F^\Gamma$ is a maximal $G^{\Gamma}$-facet in $\BB(M)$.   Thus, if $\sfC$ denotes the image of $\bT^k(K) \cap G_F$ in $\sfG_F$, then $\sfC$ is the maximal $\ff$-split component in the center of $\sfG_F$.   Since $\bT^k \bZ^K$ is a lift of $(F,\sfC \sfZ_F)$, from Remark~\ref{rem:handsonM} we have $\bM' = C_\bG(\bT^k)$ is $G_{F,0^+}^\Gamma$-conjugate to $\lsub{F}\bM$.
\end{proof}

\begin{lemma}   \label{lem:howelliptic}
Fix $(F,\sfS) \in \JGammamax$ with $F \subset \AA(A)$, and let $\bS \in \toriKGamma$ be a lift of $(F,\sfS)$.  If $\bM''$ is a Levi $(k,k)$-subgroup of $\bG$ that contains $\bS$, then $(\lsub{F}\bM) \leq (\bM'')$.
\end{lemma}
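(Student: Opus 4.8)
The plan is to exhibit, inside a $G^{\Gamma}$-conjugate of $\bM''$, a representative of the class $(\lsub{F}\bM)$, using the concrete model for $\lsub{F}\bM$ coming from the proof of Lemma~\ref{lem:prelimforM}. Write $\bM = C_{\bG}(\bS)$; by Definition~\ref{defn:lift} we have $F \subset \BB(M)$. Since $\bM''$ is a Levi $(k,k)$-subgroup of $\bG$, Lemma~\ref{lem:morelevi} (applied with $E = k$) shows that $\bZ^k_{\bM''}$ is a $k$-split $k$-torus and $\bM'' = C_{\bG}(\bZ^k_{\bM''})$. Moreover, being central in $\bM''$ with $\bS \leq \bM''$, the torus $\bZ^k_{\bM''}$ centralizes $\bS$, so it is a $k$-split torus of $\bM = C_{\bG}(\bS)$.

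Next I would build a maximally $k$-split maximal $(K,k)$-torus $\bT$ of $\bM$ with $F^{\Gamma} \subset \BB(T)^{\Gamma}$ and with a $G^{\Gamma}$-conjugate of $\bZ^k_{\bM''}$ contained in $\bT^k$. First choose a maximal $k$-split torus $\bA_0$ of $\bM$ with $\bZ^k_{\bM''} \leq \bA_0$. By~\cite[Theorem~6.1]{prasad:unramified} applied to $\bM$, together with the $\bM(k)$-conjugacy of the maximal $k$-split tori of $\bM$, there is a maximal $K$-split $k$-torus $\bT_0$ of $\bM$ whose maximal $k$-split subtorus is $\bA_0$; such a $\bT_0$ is a maximally $k$-split maximal $(K,k)$-torus of $\bM$ and $\bZ^k_{\bM''} \leq \bT_0^k$. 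Since $F^{\Gamma}$ lies in an $M^{\Gamma}$-alcove of $\BB(M)^{\Gamma}$ (Lemma~\ref{lem:alcoveinM}) and $M^{\Gamma}$ acts transitively on those alcoves, exactly as in the proof of Lemma~\ref{lem:prelimforM} we may pick $m \in M^{\Gamma}$ with $F^{\Gamma} \subset \BB(\lsup{m}T_0)^{\Gamma}$ and set $\bT = \lsup{m}\bT_0$. As $m \in M^{\Gamma} = \bM(k)$ acts through a $k$-automorphism of $\bG$ that fixes $\bS$ and $\bM$, we get $\bT^k = \lsup{m}\bA_0 \supseteq \lsup{m}\bZ^k_{\bM''} = \bZ^k_{\lsup{m}\bM''}$, and $\lsup{m}\bM''$ is a Levi $(k,k)$-subgroup in the class $(\bM'')$ with $\lsup{m}\bM'' = C_{\bG}(\bZ^k_{\lsup{m}\bM''})$ by Lemma~\ref{lem:morelevi}.

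To finish: on the one hand, the argument in the proof of Lemma~\ref{lem:prelimforM} applies to $\bT$ verbatim and shows that $C_{\bG}(\bT^k)$ is $G^{\Gamma}_{F,0^+}$-conjugate to $\lsub{F}\bM$, so $C_{\bG}(\bT^k) \in (\lsub{F}\bM)$. On the other hand, since $\bZ^k_{\lsup{m}\bM''} \leq \bT^k$, Lemma~\ref{lem:morelevi} gives
\[
 C_{\bG}(\bT^k) \ \leq\ C_{\bG}(\bZ^k_{\lsup{m}\bM''}) \ =\ \lsup{m}\bM''.
\]
Thus a representative of $(\lsub{F}\bM)$ lies inside a representative of $(\bM'')$, which is exactly the assertion $(\lsub{F}\bM) \leq (\bM'')$.

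I expect the single point needing care is the check that $\bT$ is a legitimate input to the machinery of Lemma~\ref{lem:prelimforM} --- in particular that $F \subset \BB(C_{\bG}(\bT^k))$, so that $\bT^k \bZ^K$ is a lift of $(F, \sfC \sfZ_F)$ with $\sfC$ the maximal $\ff$-split central torus of $\sfG_F$. This holds because $\bT \leq C_{\bM}(\bT^k)$ forces $F^{\Gamma} \subset \BB(T)^{\Gamma} \subset \BB(C_{\bM}(\bT^k))^{\Gamma} \subset \BB(C_{\bG}(\bT^k))^{\Gamma}$, hence $F \subset \BB(C_{\bG}(\bT^k))$; the remaining identification of the image of $\bT^k$ in $\sfG_F$ is the same standard Bruhat-Tits computation used there. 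Everything else is bookkeeping, once one observes that the $M^{\Gamma}$-conjugation used to build $\bT$ cannot disturb the containment $\bZ^k_{\bM''} \leq \bT^k$, being effected by a $k$-rational element that fixes both $\bS$ and $\bM$.
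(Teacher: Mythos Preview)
Your proof is correct and follows essentially the same approach as the paper's: observe that $\bZ^k_{\bM''}\leq\bM=C_{\bG}(\bS)$, choose a maximally $k$-split maximal $(K,k)$-torus $\bT$ of $\bM$ containing $\bZ^k_{\bM''}$, conjugate by $M^{\Gamma}$ so that $F^{\Gamma}\subset\BB(T)^{\Gamma}$, and then invoke the argument of Lemma~\ref{lem:prelimforM} to get $(\lsub{F}\bM)=(C_{\bG}(\bT^k))\leq(\bM'')$. Your write-up is slightly more explicit than the paper's about the construction of $\bT$ and the verification that Lemma~\ref{lem:prelimforM} applies, but the two proofs are the same in substance.
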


\begin{proof}
Let $\bM = C_\bG(\bS)$. 

Suppose $\bM''$ is a Levi $(k,k)$-subgroup that contains $\bS$. Recall that  $\bZ_{\bM''}^k$ denotes the $k$-split component of the center of $\bM''$.   Since $\bZ_{\bM''}^k$ commutes with $\bS$, we have $\bZ_{\bM''}^k \leq \bM$.   Choose a maximally  $k$-split maximal $(K,k)$-torus $\bT$ in $\bM$ that contains $\bZ_{\bM''}^k$.

From Lemma~\ref{lem:alcoveinM}, $F^\Gamma$ is a subset of an $M^\Gamma$-alcove; thus, we can replace $\bT$ and $\bM''$ with  $M^\Gamma$-conjugates and assume $F^\Gamma \subset \BB(T)^\Gamma \subset \BB(M)^\Gamma$.  Since, from Lemma~\ref{lem:alcoveinM},  
$F^\Gamma$ is a maximal $G^\Gamma$-facet in $\BB(M)$, we have, as in the proof of Lemma~\ref{lem:prelimforM},  $(\lsub{F}\bM) = (C_\bG(\bT^k))$.  

Since  $\bZ_{\bM''}^k \leq \bT^k$, we conclude that 
 $(\lsub{F}\bM) \leq (\bM'')$.
 \end{proof}

\begin{cor}
Fix $(F,\sfS) \in \JGammamax$ with $F \subset \AA(A)$, and let $\bS \in \toriKGamma$ be a lift of $(F,\sfS)$.  We have
\[ \pushQED{\qed} 
(\lsub{F}\bM) = \min \{ (\bM')\in \tilde{\mathcal{M}} \colon \text{ there exists $\bL' \in (\bM')$ such that $\bS \leq \bL'$} \}. \qedhere 
\popQED
\]  
\end{cor}

\section{A parameterization of unramified tori}
\label{sec:paramunram}

We again assume that $\ff$, the residue field of $k$, is quasi-finite. 

We want to understand unramified tori, that is, those $k$-tori $\bS$ in $\bG$ for which $\bS$ is the $K$-split component of the center of $C_\bG(\bS)$.   

\begin{rem}
Tori such as $\{ \diag(t,t^2,t^{-3}) \}$ in $\SL_3$ are $(K,k)$-tori but are not unramified tori in our sense.
\end{rem}

We begin by recalling a known result (see, for example,~\cite[Lemma~2.3.1]{debacker-reeder:depth-zero}).

\begin{lemma}  \label{lem:parahoriccohom}
If  $F$ is a facet in $\BB(G)^\Fr$, then $\cohom^1(\Fr, G_F) = 1$.
\end{lemma}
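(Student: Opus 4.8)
The statement to prove is that $\cohom^1(\Fr, G_F) = 1$ whenever $F$ is a facet in $\BB(G)^{\Fr}$, where $\ff$ is quasi-finite.

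The plan is to exploit the filtration of the parahoric $G_F$ by its pro-unipotent radical $G_{F,0^+}$. We have the short exact sequence of $\Fr$-groups
\[
1 \to G_{F,0^+} \to G_F \to \sfG_F(\ff) \to 1,
\]
noting that $F$ being $\Fr$-stable makes $\sfG_F$ a connected reductive $\ff$-group. The associated long exact cohomology sequence gives
\[
\cohom^1(\Fr, G_{F,0^+}) \to \cohom^1(\Fr, G_F) \to \cohom^1(\Fr, \sfG_F).
\]
By Lemma~\ref{lem:qfcohom} the right-hand term is trivial, so it suffices to show that $\cohom^1(\Fr, G_{F,0^+})$ is trivial (together with the usual surjectivity-of-the-preceding-map bookkeeping, which here is automatic since $\cohom^1(\Fr, \sfG_F) = 1$ forces every class in $\cohom^1(\Fr, G_F)$ to come from $\cohom^1(\Fr, G_{F,0^+})$, and then one must check the fibers are single points — but for the pointed-set statement $\cohom^1(\Fr,G_F)=1$ it is enough that both outer terms vanish).

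First I would establish that $\cohom^1(\Fr, G_{F,0^+}) = 1$. The group $G_{F,0^+}$ is a pro-$p$ group equipped with the Moy–Prasad filtration $G_{F,r}$ for $r > 0$, whose successive quotients $G_{F,r}/G_{F,r^+}$ are finite-dimensional vector groups over $\ff$, i.e. groups of the form $V(\ff)$ for a vector $\ff$-group $V$ (the relevant Moy–Prasad layer). Since $\ff$ is quasi-finite (hence of cohomological dimension $\le 1$, and in fact $\cohom^1(\Fr, \mathbb{G}_a) = 0$ for a finite or quasi-finite field by the additive Hilbert 90 / normal basis type argument, or simply because $\cohom^1$ of a smooth connected unipotent group over a field of cohomological dimension $\le 1$ that is perfect vanishes), each $\cohom^1(\Fr, G_{F,r}/G_{F,r^+}) = 1$. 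A limit argument over the filtration — using that $G_{F,0^+} = \varprojlim G_{F,0^+}/G_{F,r^+}$, that each quotient $G_{F,0^+}/G_{F,r^+}$ is a successive extension of the vanishing-$\cohom^1$ layers, and that continuous cohomology commutes with the inverse limit here because the transition maps on $H^0$ are surjective (the $G_{F,s^+}/G_{F,r^+}$ are finite, giving Mittag-Leffler) — yields $\cohom^1(\Fr, G_{F,0^+}) = 1$. The cleanest route is to cite the standard fact that $\cohom^1(\Fr, U) = 1$ for $U$ a pro-unipotent group over a perfect field of cohomological dimension $\le 1$, which is exactly the situation Lemma~\ref{lem:qfcohom} invokes Steinberg's theorem for; alternatively one can cite \cite[Lemma~2.3.1]{debacker-reeder:depth-zero} directly, as the excerpt already flags.

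The main obstacle — really the only subtlety — is the passage to the inverse limit in the filtration argument: one must be careful that $\cohom^1$ of the pro-$p$ group $G_{F,0^+}$ is computed as the inverse limit (or at least is trivial if all finite-level quotients have trivial $\cohom^1$), which requires the Mittag-Leffler condition on the $H^0$ terms. This holds because the layers are finite groups, so the system $\{G_{F,0^+}/G_{F,r^+}\}$ has surjective transition maps and the $H^0$'s form a surjective (hence Mittag-Leffler) system. Everything else is formal: the reductive-quotient piece is handled verbatim by Lemma~\ref{lem:qfcohom}, and the vanishing on each vector-group layer is the elementary additive Hilbert 90 over a quasi-finite field.
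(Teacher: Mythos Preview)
Your proof is correct and follows essentially the same approach as the paper: both use the short exact sequence $1 \to G_{F,0^+} \to G_F \to \sfG_F \to 1$, invoke Lemma~\ref{lem:qfcohom} for the reductive quotient, and use the resulting exact sequence in nonabelian cohomology (the paper cites \cite[I~\S5.5, Prop.~38]{serre:galois}) to conclude. The only difference is that the paper simply asserts $\cohom^1(\Fr, G_{F,0^+}) = 1$ as known, whereas you supply the Moy--Prasad filtration and Mittag--Leffler argument; this is elaboration rather than a different route.
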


\begin{proof}
Note that  $\cohom^1(\Fr, G_{F,0^+}) = 1$ and from Lemma~\ref{lem:qfcohom} we have  $\cohom^1(\Fr, \sfG_F) = 1$.
Since 
$$1 \longrightarrow G_{F,0^+} \longrightarrow G_F \longrightarrow \sfG_F \longrightarrow 1$$
is exact, from~\cite[I Section 5.5: Proposition 38]{serre:galois} we have
$$\cohom^1(\Fr, G_{F,0^+}) \longrightarrow \cohom^1 (\Fr, G_F) \longrightarrow \cohom^1(\Fr,\sfG_F) $$
is exact.  The result follows. 
\end{proof}

\subsection{Comparison with the case of maximal unramified tori}

We already know (see, for example,~\cite{debacker:unramified}) how to describe the set of $G^{\Fr}$-conjugacy classes of  maximal unramified tori of $\bG$ (the situation where $ C_{\bG}(\bS)$ is abelian) in terms of Bruhat-Tits theory.  In this case, there is a bijective correspondence between the set of $G^{\Fr}$-conjugacy classes of maximal unramified tori in $\bG$ and the set of  equivalence classes (as in Definition~\ref{defn:531})
of pairs $(F,\sfS)$ where $F$ is a facet in $\BB(G)^{\Fr}$ and $\sfS$ is a maximal $\ff$-minisotropic torus in $\sfG_F$:
$$\{ \text{maximal unramified tori of $\bG$} \}/ \text{$G^{\Fr}$-conjugacy} \longleftrightarrow \{ \text{$(F,\sfS)$}  \} / \text{equivalence}.$$
The basic idea of the correspondence is that given a pair $(F,\sfS)$, there is a lift of $\sfS$ to a maximal $K$-split $k$-torus $\bS$ in $\bG$ and any two lifts of $\sfS$ are conjugate by an element of $G_{F,0^+}^{\Fr}$.  In the other direction, given a maximal $K$-split $k$-torus $\bS$ in $\bG$, the building of $S$ embeds into that of $G$.  We let $F$ be a maximal $G^{\Fr}$-facet in the building of $S$ and we let $\sfS$ be a maximal $\ff$-torus in $\sfG$ whose group of $\ffc$-points coincides with the image of $S \cap G_F$ in $\sfG_F$.  

\begin{example}  \label{ex:initialsp4}
For $\bG = \Sp_4$, there are nine $G^{\Fr}$-conjugacy classes of maximal unramified tori of $\bG$.   Since $G^{\Fr}$ acts transitively on the alcoves in $\BB(G)^{\Fr}$, to see how the above correspondence works it is enough to restrict our attention to a single alcove.   For a finite group of Lie type, the conjugacy classes of maximal tori in the finite group are in bijective correspondence with the $\Fr$-conjugacy classes in the Weyl group (see Remark~\ref{rem:ffversion}).  Here is what the notation in Figure~\ref{fig:Sp4pairs} represents: for a conjugacy class of $\ff$-minisotropic torus in $\sfG_F$ we list one element in the corresponding conjugacy class in the Weyl group of $\sfG_F$; 
 we have chosen a set of simple roots $\alpha$ and $\beta$ with $\alpha$ short and $\beta$ long so that in the diagram below the hypotenuse lies on a hyperplane defined by an affine root with gradient $\alpha$ and the horizontal edge lies on a hyperplane defined by an affine root with gradient $\beta$; the symbol $w_\alpha$ denotes the simple reflection in $W$ corresponding to $\alpha$, the notation $w_\beta$ denotes the reflection  corresponding to $\beta$, and $c$ denotes the Coxeter element $w_\alpha w_\beta$. 
In this way we  enumerate the nine pairs $(F,\sfS)$ that occur, up to equivalence.  
\begin{figure}[ht]
\centering
\begin{tikzpicture}
\draw (0,0) node[anchor=north]{\vphantom{$2^{2^2}$}\shortstack{$c$\\$c^2$} \hphantom{hello}}
  -- (5,0) node[anchor=north]{$\hphantom{hello} c^2$}
  -- (5,5) node[anchor=west]{\hphantom{h}\shortstack{$c$\\$c^2$} $ \vphantom{P_{P_{P_{P_{P}}}}}$}
  -- cycle;
  \draw (2.5,0) node[anchor=north]{$w_\beta$};
  \draw (1,2.5) node[anchor=west]{$w_\alpha$};
   \draw (5,2.5) node[anchor=west]{$w_{2\alpha +  \beta}$};
   \draw (3,1.5) node[anchor=west]{$1$};
\end{tikzpicture}
\caption{A labeling of the pairs $(F,\sfS)$ for $\Sp_4$ \label{fig:Sp4pairs}}
\end{figure}
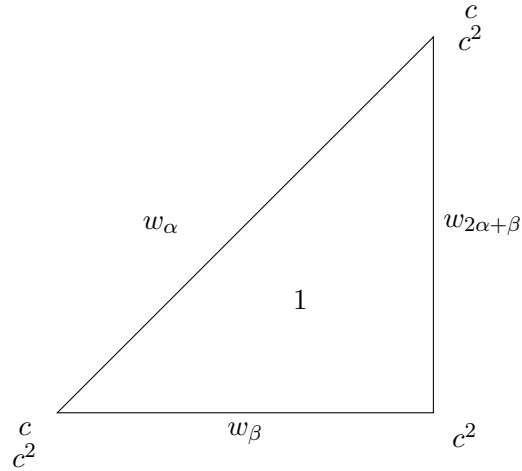
\end{example}

  In order to understand all unramified tori, not just the maximal ones, we should consider pairs $(F,\sfS)$ where $F$ is a facet in $\BB(G)^{\Fr}$ and $\sfS$ is an $\ff$-torus in $\sfG_F$ that lifts to an unramified torus in $\bG$.  The problem with this approach is that $F$ cannot ``see'' which $\ff$-tori $\sfS$ are relevant. 
We present an outline of how to overcome this problem, and then spend the remainder of this section fleshing out this outline.

\subsection{Outline}  \label{sec:overview}
Suppose that $(F,\sfS)$ is a pair with $F$ a $G^{\Fr}$-facet in $\AA(A)^{\Fr}$ and $\sfS$ an $\ff$-torus in $\sfG_F$ that contains $\sfZ_F$.  Let $\sfS'$ be a maximal $\ff$-torus in $\sfG_F$ that contains $\sfS$.   We can lift $\sfS'$ to a maximal unramified torus $\bS'$ in $\bG$, and we let $\bS$ be the subtorus of $\bS'$ corresponding to $\sfS$.  We can choose $g \in G_F$ so that $\bS' = \lsup{g} \bA$.   Since $\bS'$ is defined over $k$, we have that  $\Fr(g)^{-1} g$ belongs to the normalizer of $\bA$ in $G$, and we let $w \in W_F \leq W$ denote its image in the Weyl group.  If $\bS$ is going to be unramified, that is, if $\bS$ is going to be the $K$-split component of the center of $C_\bG(\bS)$, then one checks that there exists $\theta \in \Theta$ such that $\bS = {\lsup{g} \bA_\theta}$ and $\Fr(\Phi_\theta) = w \Phi_\theta$.
Thus, our pair $(F,\sfS)$ corresponds to a triple $(F,\theta,w)$ where $F$ is a facet in $\AA(A)^{\Fr}$, $\theta \in \Theta$, $w \in W_F$, and $\Fr(\Phi_\theta) = w \Phi_{\theta}$.

This approach leads to  a bijective correspondence between the set of $G^{\Fr}$-conjugacy classes of unramified tori in $\bG$ and the set of equivalence
classes of elliptic
triples $(F,\theta,w)$:
$$\{ \text{unramified tori} \}/ \text{$G^{\Fr}$-conjugacy} \longleftrightarrow \{ \text{elliptic $(F,\theta,w)$} \} / \text{equivalence}.$$
When we restrict this correspondence to maximal unramified tori in $\bG$, the triples under consideration are of the form $(F,\emptyset,w)$.

\subsection{An indexing set over \texorpdfstring{$\ff$}{f}} \label{sec:anindexingset}
Set
$$\dot{I} = \{ (\theta,w) \, | \, \theta \in \Theta(\bG,\bA) \text{, } w \in W \text{, and } \Fr(\Phi_\theta) = w \Phi_\theta \}.$$

Suppose $F \subset \AA(A)^{\Fr}$ is a $G^\Fr$-facet.
Set
$$\Phi(F) = \{ \dot{\psi} \, | \, \text{$\psi \in \Psi$ and $\res_F \psi$ is constant} \} \quad\text{and}\quad \bA(F) := \bigl(\bigcap_{\alpha \in \Phi(F)} \ker (\alpha)   \bigr)^\circ.$$
Recall that $\lsub{F}\bM = C_{\bG}(\bA(F))$ is a  Levi $(k,k)$-subgroup  of  $\bG$ and the image of $A(F) \cap G_{F}$ in $\sfG_F$ is the group of $\ffc$-points of the $\ffc$-split component of the center of $\sfG_F \simeq \lsub{F}\sfM_F$.

\begin{rem}
If $F$ is a minimal facet in $\AA(A)^{\Fr}$, then $\Phi(F) = \Phi$ and $\lsub{F}\bM = \bG$.
\end{rem}

Recall that $\sfA_F$ denotes the maximally $\ff$-split maximal $\ff$-torus in $\sfG_F$ whose group of $\ffc$-points is equal to the image of $A \cap G_F$ in $\sfG_F$.   Via the natural bijective correspondence between the characters of $\bA$ and the characters of $\sfA_F$, we (canonically) identify  $\Phi(F)$ with a subset of the character lattice of $\sfA_F$.  Note that $\Phi(F)$ will, in general, be strictly larger than the set of roots of $\sfG_F$ with respect to $\sfA_F$.

Let 
$$\dot{I}(F) := \{(\theta, w)  \in\dot{I} \, | \,  w \in W_F \leq W \}.$$
For $(\theta',w'), (\theta,w) \in \dot{I}(F)$ we write $(\theta',w') \stackrel{F}{\sim} (\theta,w)$
provided that there exists $n \in W_F$ for which
\begin{itemize}
\item  $\Phi_{\theta '} =  n  \Phi_{\theta}$ 
and
\item $ \Fr(n) w {n} ^{-1}  \in w' (W_F \cap W_{\theta'})$.
\end{itemize}

\begin{lemma}
The relation $\stackrel{F}{\sim}$ is an equivalence relation on $\dot{I}(F)$. \qed
\end{lemma}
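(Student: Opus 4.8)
The plan is to verify the three axioms — reflexivity, symmetry, and transitivity — directly from Definition of $\stackrel{F}{\sim}$, where the candidate element $n$ always lies in $W_F$. Throughout I will repeatedly use that $W_{\theta}$ acts simply transitively on the set of bases of $\Phi_{\theta}$, so that the set equality $\Phi_{\theta'} = n\Phi_{\theta}$ pins down the coset $nW_{\theta}$ uniquely, and correspondingly $n(W_F\cap W_{\theta})$ is determined as a coset in $W_F$. The second condition $\Fr(n)wn^{-1}\in w'(W_F\cap W_{\theta'})$ should be read modulo the subgroup $W_F\cap W_{\theta'}$, which is exactly the freedom left over once $n$ is only specified up to right multiplication by $W_F\cap W_\theta$; this compatibility is the point that makes the relation well defined and is worth checking carefully.

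First I would check reflexivity: take $n = 1 \in W_F$. Then $\Phi_\theta = 1\cdot\Phi_\theta$ and $\Fr(1)\,w\,1^{-1} = w \in w(W_F\cap W_\theta)$, so $(\theta,w)\stackrel{F}{\sim}(\theta,w)$. Next, symmetry: suppose $(\theta',w')\stackrel{F}{\sim}(\theta,w)$ via $n\in W_F$, so $\Phi_{\theta'} = n\Phi_\theta$ and $\Fr(n)wn^{-1}\in w'(W_F\cap W_{\theta'})$. Set $m = n^{-1}\in W_F$. Then $\Phi_\theta = m\Phi_{\theta'}$, and from $\Fr(n)wn^{-1} = w'u$ with $u\in W_F\cap W_{\theta'}$ we get $w = \Fr(m)w'u\, m^{-1} = \Fr(m)w'\,\bigl(\Fr(m)^{-1}u\,\Fr(m)\bigr)\Fr(m)m^{-1}$; rearranging, $\Fr(m)w'm^{-1} = w\cdot\bigl(m\,\Fr(m)^{-1}u^{-1}\Fr(m)\,m^{-1}\bigr)$ — wait, more cleanly: $\Fr(m)w'm^{-1} = \Fr(n)^{-1}w'n = w\,u^{-1}\cdot(\text{conjugate correction})$. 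The clean way is to note $\Fr(n^{-1})w'n = \Fr(n)^{-1}(w'u)u^{-1}n = \Fr(n)^{-1}\Fr(n)wn^{-1}u^{-1}n = w\,(n^{-1}u^{-1}n)$, and since $n^{-1}(W_F\cap W_{\theta'})n = W_F\cap W_{n^{-1}\theta'} = W_F\cap W_\theta$ (here I use that conjugation by $n\in W_F$ carries $\Phi_{\theta'}=n\Phi_\theta$ back to $\Phi_\theta$ and preserves $W_F$), we conclude $\Fr(m)w'm^{-1}\in w(W_F\cap W_\theta)$, giving $(\theta,w)\stackrel{F}{\sim}(\theta',w')$.

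For transitivity, suppose $(\theta_1,w_1)\stackrel{F}{\sim}(\theta_2,w_2)$ via $n_1$ and $(\theta_2,w_2)\stackrel{F}{\sim}(\theta_3,w_3)$ via $n_2$, all $n_i\in W_F$. Then $\Phi_{\theta_1} = n_1\Phi_{\theta_2}$ and $\Phi_{\theta_2} = n_2\Phi_{\theta_3}$, so $\Phi_{\theta_1} = n_1 n_2\Phi_{\theta_3}$ with $n_1 n_2\in W_F$. For the second condition, write $\Fr(n_1)w_2 n_1^{-1} = w_1 u_1$ and $\Fr(n_2)w_3 n_2^{-1} = w_2 u_2$ with $u_1\in W_F\cap W_{\theta_1}$, $u_2\in W_F\cap W_{\theta_2}$. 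Then
\[
\Fr(n_1 n_2)\,w_3\,(n_1 n_2)^{-1} = \Fr(n_1)\bigl(\Fr(n_2)w_3 n_2^{-1}\bigr)n_1^{-1} = \Fr(n_1)\,w_2 u_2\, n_1^{-1} = \bigl(\Fr(n_1)w_2 n_1^{-1}\bigr)\bigl(n_1 u_2 n_1^{-1}\bigr) = w_1 u_1 (n_1 u_2 n_1^{-1}).
\]
Now $u_1\in W_F\cap W_{\theta_1}$ and $n_1 u_2 n_1^{-1}\in n_1(W_F\cap W_{\theta_2})n_1^{-1} = W_F\cap W_{n_1\theta_2} = W_F\cap W_{\theta_1}$ (again using $\Phi_{\theta_1} = n_1\Phi_{\theta_2}$ and $n_1\in W_F$), so the product $u_1(n_1 u_2 n_1^{-1})$ lies in $W_F\cap W_{\theta_1}$, and hence $\Fr(n_1 n_2)w_3(n_1 n_2)^{-1}\in w_1(W_F\cap W_{\theta_1})$, i.e. $(\theta_1,w_1)\stackrel{F}{\sim}(\theta_3,w_3)$.

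The only genuinely non-formal point — the main obstacle, such as it is — is the repeated claim that conjugation by $n\in W_F$ satisfying $\Phi_{\theta'} = n\Phi_\theta$ sends $W_F\cap W_\theta$ onto $W_F\cap W_{\theta'}$: the inclusion $nW_\theta n^{-1} = W_{n\theta} = W_{\theta'}$ is standard, and $n W_F n^{-1} = W_F$ since $n$ normalizes $W_F$ (it lies in $W_F$), so intersecting gives the assertion. Given that, everything reduces to the bookkeeping above, which is identical in spirit to the corresponding verifications in \cite[Section 3.6]{debacker:nilpotent} and \cite[Section 3.2]{debacker:unramified}; I would simply record those references and the three short computations.
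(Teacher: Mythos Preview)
Your proof is correct: the paper gives no proof at all for this lemma (it is simply marked with \qed), and your direct verification of reflexivity, symmetry, and transitivity is exactly the routine check the author leaves to the reader. One small remark: your opening claim that $\Phi_{\theta'} = n\Phi_\theta$ pins down the coset $nW_\theta$ is not quite right (it only determines $n$ modulo the stabilizer of $\Phi_\theta$ in $W_F$, which can be larger than $W_F\cap W_\theta$), but you never actually use this in the argument, so the proof itself stands.
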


We will say that $(\theta, w) \in \dot{I}(F)$ is \emph{$F$-elliptic} provided that  for all $(\theta',w') \in \dot{I}(F)$ with $(\theta, w) \stackrel{F}{\sim} (\theta',w')$ we have that $w'$ does not belong to a $\Fr$-stable proper parabolic subgroup of $W_F$.
We set
$$\dot{I}^{\ee}(F) := \{(\theta, w)  \in \dot{I}(F) \, | \,  \text{$(\theta,w)$ is $F$-elliptic} \}.$$

\begin{rem}  Suppose $(\theta,w)\in \dot{I}(F)$. If
$(\theta,w)\in \dot{I}^{\ee}(F)$, then $(\emptyset,w)\in \dot{I}^{\ee}(F)$.   The converse is false.  Consider for example a non-maximal $F$ and $(\emptyset,w) \in \dot{I}^{\ee}(F)$; the pair $(\Delta,w) \in \dot{I}(F)$ is not elliptic.
\end{rem}

 \begin{lemma}\label{lem:gbackwards}  Suppose $(\theta, w) \in \dot{I}(F)$.  We can choose $g \in G_F$ such that the image of  $n= \Fr(g)^{-1} g \in N_{G_F}(\bA)$ in $W_F$ is $w$.
\end{lemma}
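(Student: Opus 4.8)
The plan is to realize $w$ first inside the reductive quotient $\sfG_F$ over $\ff$, using the known classification there, and then to lift the resulting element up to $G_F$, adjusting the lift by an element of the pro-unipotent radical $G_{F,0^+}$ so that $\Fr(g)^{-1}g$ lands precisely in $N_{G_F}(\bA)$.

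First I would apply the finite-field input. Since $F \subset \AA(A)^{\Fr}$, the group $\sfG_F$ is connected reductive over $\ff$, the image $\sfA_F$ of $A \cap G_F$ is a maximally $\ff$-split maximal $\ff$-torus in it, and $W_F = N_{\sfG_F}(\sfA_F)/\sfA_F$. Applying \cite[Section~4.2]{debacker:unramified} to $\sfG_F$ and $w$, I obtain $\bar g \in \sfG_F$ with $\bar n := \Fr(\bar g)^{-1}\bar g \in N_{\sfG_F}(\sfA_F)$ having image $w$ in $W_F$. The reduction $\rho\colon G_F \to \sfG_F$ has kernel $G_{F,0^+}$ and restricts to a surjection $N_{G_F}(\bA)\to N_{\sfG_F}(\sfA_F)$, so I may choose $m\in N_{G_F}(\bA)$ with $\rho(m)=\bar n$ (hence image $w$ in $W_F$) and $g_0\in G_F$ with $\rho(g_0)=\bar g$. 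Put $n_0 = \Fr(g_0)^{-1}g_0$; then $\rho(n_0)=\bar n$, so $u := n_0 m^{-1}\in G_{F,0^+}$. Replacing $g_0$ by $g := g_0 h$ with $h\in G_{F,0^+}$ changes $\Fr(g_0)^{-1}g_0$ into $\Fr(h)^{-1}n_0 h$, and it suffices to choose $h$ making this equal $m$. Since $G_{F,0^+}$ is normal in $G_F$, the element $\lsup{m}h := mhm^{-1}$ lies in $G_{F,0^+}$, and $\Fr(h)^{-1}n_0 h = m$ is equivalent to $u = \Fr(h)\cdot(\lsup{m}h)^{-1}$. Thus the lemma reduces to surjectivity of the map $\delta\colon G_{F,0^+}\to G_{F,0^+}$, $h\mapsto \Fr(h)\cdot(\lsup{m}h)^{-1}$.

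The hard part is this surjectivity --- a twisted Lang-type statement, the twist being by $\Int(m)$, which is defined only over $\ffc$, so one cannot directly invoke $\cohom^1(\Fr,G_{F,0^+})=1$. I would argue by working modulo the Moy--Prasad filtration subgroups $G_{F,r}$ ($r>0$): these are normal in $G_F$, hence $\Int(m)$-stable, and $\Fr$-stable since $F$ is; they are separated and cofinal; and each successive quotient $V = G_{F,r}/G_{F,s}$ is a finite-dimensional $\ffc$-vector space on which $\delta$ induces the map $v\mapsto \Fr(v) - \bar m v$, where $\bar m = \Int(m)|_V\in\GL(V)$ and $\Fr$ acts $\Fr$-semilinearly. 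This factors as $\bar m\circ(\phi - 1)$ with $\phi := \bar m^{-1}\circ\Fr$ a bijective $\Fr$-semilinear operator, and $\phi - 1$ is surjective because $\cohom^1(\Fr,V)=0$ for any finite-dimensional $\ffc$-vector space with semilinear $\Fr$-action (additive Hilbert~90 for the procyclic group $\Gamma$, valid as $\ff$ is quasi-finite). So $\delta$ is surjective on every successive quotient, and a standard successive-approximation argument, using that $G_{F,0^+}$ is complete with respect to the filtration, upgrades this to surjectivity of $\delta$. Taking $h\in\delta^{-1}(u)$, the element $g = g_0 h$ then satisfies $\Fr(g)^{-1}g = m\in N_{G_F}(\bA)$ with image $w$ in $W_F$, as desired.
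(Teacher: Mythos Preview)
Your approach differs from the paper's. The paper does not adjust a lift of $\bar g$ at all; instead it lifts the \emph{torus}. Concretely: from $\bar h\in\sfG_F$ with image of $\Fr(\bar h)^{-1}\bar h$ equal to $w$, set $\sfS={}^{\bar h}\sfA_F$, lift $(F,\sfS)$ to a maximal $K$-split $k$-torus $\bS$ via \cite[Lemma~2.3.1]{debacker:unramified}, and pick $x\in G_F$ with ${}^x\bA=\bS$. Because $\bS$ is already defined over $k$, one has $\Fr(x)^{-1}x\in N_{G_F}(\bA)$ \emph{for free}; its image in $W_F$ is some $\Fr$-conjugate $\Fr(w')^{-1}ww'$ of $w$, and right-multiplying $x$ by a lift $n'\in N_{G_F}(\bA)$ of $(w')^{-1}$ lands the image on $w$ exactly. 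No twisted Lang argument is needed.

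Your route --- lifting $\bar g$ and $\bar n$ separately and then forcing compatibility by a twisted Lang map on $G_{F,0^+}$ --- works when $\ff$ has positive characteristic, but the justification you give for the key surjectivity is not correct, and in the generality of the paper the claim itself can fail. The vanishing $\cohom^1(\Gamma,V)=0$ from additive Hilbert~90 refers to \emph{continuous} cohomology of $\Gamma\cong\hat{\mathbb{Z}}$: it says every continuous $1$-cocycle is a coboundary, but an arbitrary $v\in V$ need not determine a continuous cocycle for the $\phi$-twisted action (indeed, that action need not even extend from $\mathbb{Z}$ to $\hat{\mathbb{Z}}$), so one cannot conclude that $\phi-1$ is surjective. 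When $\ff$ is finite this surjectivity does hold by Lang--Steinberg (the differential of $\phi$ vanishes), and more generally in characteristic $p>0$ a trace argument works. But the paper allows $\ff$ quasi-finite of characteristic~$0$, and there already on a one-dimensional graded piece where $\bar m$ acts trivially the map $\Fr-1$ is not surjective on $\ffc$ (one cannot solve $\Fr(x)-x=1$: taking $\mathrm{Tr}_{\ff_n/\ff}$ of both sides gives $0=n$). So your $\delta$ is not surjective in that case, and the argument breaks down; the paper's torus-lifting approach avoids this issue entirely.
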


\begin{proof}
Choose $\bar{h} \in \sfG_F$ such that the image of $\Fr(\bar{h})\inv \bar{h}$ in $W_F$ is $w$.  Note that $\sfS = \lsup{\bar{h}} \sfA_F$ is a maximal $\ff$-torus in $\sfG_F$.   Let $\bS$ be a lift of $(F,\sfS)$.  Since $\bS$ is a maximal $K$-split $k$-torus of $\bG$ and $F \subset \AA(S)$, there exists $x \in G_F$ such that $\lsup{x}A = S$.  Let $\bar{x}$ denote the image of $x$ in $\sfG_F$.  Since $\sfS = \lsup{\bar{x}} \sfA_F$, from Lemma~\ref{lem:ffversion} and Remark~\ref{rem:ffversion} the image of $\Fr(\bar{x})\inv \bar{x}$ in $W_F$ is of the form  $\Fr(w')\inv w {w'}$ for some $w' \in W_F$. Let $n' \in N_{G_F}(\bA)$ be a lift of $(w')\inv$.  Set $g = xn'$. 
\end{proof}

\begin{rem}
In Lemma~\ref{lem:gbackwards} we can choose $g$ in $(\lsub{F}M)_F$.
\end{rem}

\subsection{Relevant tori over \texorpdfstring{$\ff$}{f}}

Recall the set $J$ defined in Section~\ref{sec:someindexing}.
Also recall that Corollary~\ref{cor:heart} shows that if $\bS, \bS'$ both lift $(F,\sfS)$, then there exists an element $g \in G_{F,0^+}^{\Gamma}$ such that $\lsup{g}\bS = \bS'$.

\begin{defn}  Suppose
$(F,\sfS) \in J^{\Fr}$.  Let $\bS$ be a lift of $(F,\sfS)$.  We will say that $\sfS$ is \emph{relevant in $\sfG_F$}
provided that $\bS$ is the $K$-split component of the center of $C_{\bG}(\bS)$.    Let $\RR(F)$ denote the set of relevant tori in $\sfG_F$.
\end{defn}

Fix $\iota = (\theta, w) \in \dot{I}(F)$.  
Thanks to Lemma~\ref{lem:gbackwards}, we may choose $g \in G_F$ such that the image of  $n= \Fr(g)^{-1} g \in N_{G_F}(\bA)$ in $W_F$ is $w$.  Let $\bar{g}$ denote the image of $g$ in $\sfG_F$.  
 Let 
$$\sfA_\theta = ( \bigcap_{\alpha \in \theta} \ker(\alpha) )^\circ \leq \sfA_F.$$  
   Let $\sfS_{\iota} = {\lsup{\bar{g}}\sfA_{\theta}}$ and $\bS_{\iota} = {\lsup{{g}}\bA_{\theta}}$.
Then $S_{\iota}$ is a lift of $(F,\sfS_{\iota})$.   Set $\sfL_\iota = C_{\sfG_F}(\sfS_{\iota})$ and $\bL_\iota = C_G(\bS_\iota)$.   Note that $ F \subset \BB(L_\iota)$ and
$$\Phi_\theta = \lsup{{g^{-1}}}\Phi(\bL_\iota, \lsup{g}\bA).$$
\begin{rem}
In general, it is not true that $\Phi_\theta = \lsup{\bar{g}^{-1}}\Phi(\sfL_\iota, \lsup{\bar{g}}\sfA_F)$.
\end{rem}  
Since $\bS_\iota$ is the $K$-split component of the center of $\bL_\iota$, $\sfS_\iota$ is relevant.

\begin{rem}
Here is a way to think about the information that $\iota$ carries:
The subset $\theta$ (or $\Phi_\theta$) determines, up to isogeny, the derived group of $\bL_{\iota}$, the action of $w\inv \circ \Fr$ on $\Phi_\theta$ determines the $k$-structure of the derived group of $\bL_{\iota}$,  and  the action of $w\inv \circ \Fr$ on $\Phi$ determines the $k$-structure of the $K$-split component of the center of $\bL_{\iota}$, i.e. how $k$-anisotropic the center of $\bL_\iota$ is.
\end{rem}

\begin{rem}
    Recall that $\iota = (\theta,w)$.  Note that  $\bL_\iota = C_{\bG}(\bS_\iota)$ is a torus if and only if $\theta = \emptyset$.
\end{rem}

\begin{lemma} \label{lem:well-definedtoN}
The map that sends $\iota \in \dot{I}(F)$ to the $\sfG^{\Fr}_F$-conjugacy class of $\sfS_\iota$ is well defined.   Similarly, the map that sends $\iota \in \dot{I}(F)$ to the  $G^{\Fr}_F$-conjugacy class of $\bS_\iota$ is well defined. 
\end{lemma}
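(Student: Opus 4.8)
The plan is to observe that the construction of $\sfS_\iota$ and $\bS_\iota$ from $\iota=(\theta,w)$ involves exactly one choice --- the element $g\in G_F$ with the image of $n_g:=\Fr(g)\inv g\in N_{G_F}(\bA)$ in $W_F$ equal to $w$ --- and to settle both assertions at once by running the cohomological argument used for well-definedness of $\varphi$ in the proof of Lemma~\ref{lem:ffversion}: over $k$ using the vanishing of parahoric cohomology (Lemma~\ref{lem:parahoriccohom}), and over $\ff$ using the vanishing of $\cohom^1$ for connected reductive $\ff$-groups (Lemma~\ref{lem:qfcohom}).

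First I would take two admissible choices $g,h\in G_F$, set $n_g=\Fr(g)\inv g$, $n_h=\Fr(h)\inv h\in N_{G_F}(\bA)$, both with image $w$ in $W_F$, and put $a:=n_g\inv n_h$, which then lies in $C_G(\bA)\cap G_F$. A one-line computation using $\Fr(g)n_g=g$ gives
\[
\Fr(hg\inv)\inv\,(hg\inv)=g a g\inv=\lsup{g}a ,
\]
and since $g\in G_F$ this element lies in $\lsup{g}(C_G(\bA)\cap G_F)=C_G(\lsup{g}\bA)\cap G_F$, a $\Gamma$-stable subgroup of $C_G(\bS_\iota)\cap G_F$ (here I use $\bS_\iota=\lsup{g}\bA_\theta\le\lsup{g}\bA$, so that anything centralizing $\lsup{g}\bA$ centralizes $\bS_\iota$). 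Consequently $\sigma\mapsto\sigma(hg\inv)\inv(hg\inv)$ is a continuous $1$-cocycle with values in this $\Gamma$-stable subgroup.

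For the statement over $k$: the group $C_{\bG}(\lsup{g}\bA)$ is a Levi $(K,k)$-subgroup of $\bG$ --- it is the centralizer of the $K$-split $k$-torus $\lsup{g}\bA$ --- and $F$ is a $\Gamma$-stable facet of its building, so the argument proving Lemma~\ref{lem:parahoriccohom}, applied to the connected reductive $k$-group $C_{\bG}(\lsup{g}\bA)$ in place of $\bG$, yields $\cohom^1(\Fr,\,C_G(\lsup{g}\bA)\cap G_F)=1$. Hence the cocycle above is a coboundary: there is $t\in C_G(\lsup{g}\bA)\cap G_F$ with $\sigma(hg\inv)\inv(hg\inv)=\sigma(t)\inv t$ for all $\sigma\in\Gamma$, and then $z:=tgh\inv\in G_F^{\Fr}$ satisfies $\lsup{z}(\lsup{h}\bA_\theta)=\lsup{tg}\bA_\theta=\bS_\iota$, because $t$ centralizes $\lsup{g}\bA\supseteq\bS_\iota$. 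For the statement over $\ff$: reducing modulo $G_{F,0^+}$ and using that reduction $G_F\to\sfG_F$ is $\Fr$-equivariant, the same computation gives $\Fr(\bar{h}\bar{g}\inv)\inv(\bar{h}\bar{g}\inv)=\lsup{\bar{g}}\bar{a}$ with $\bar{a}=\bar{n}_g\inv\bar{n}_h\in\sfA_F$, so the error term now lies in the $\Gamma$-stable maximal $\ff$-torus $\lsup{\bar{g}}\sfA_F$ of $\sfG_F$, whose first cohomology vanishes by Lemma~\ref{lem:qfcohom}; the same argument then produces $\bar{z}\in\sfG_F^{\Fr}$ with $\lsup{\bar{z}}(\lsup{\bar{h}}\sfA_\theta)=\sfS_\iota$, using $\lsup{\bar{g}}\sfA_F\subseteq C_{\sfG_F}(\lsup{\bar{g}}\sfA_\theta)$.

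The remaining points are bookkeeping: that reduction is $\Fr$-equivariant (so $\bar{n}_g=\Fr(\bar{g})\inv\bar{g}$ and $\bar{n}_g\inv\bar{n}_h\in\sfA_F$); that $\sfS_\iota$ is genuinely $\Fr$-stable, which is exactly where the defining relation $\Fr(\Phi_\theta)=w\Phi_\theta$ of $\dot{I}(F)$ enters, since it gives $\lsup{n_g\inv}\Fr(\bA_\theta)=\bA_\theta$; and the standard identification $C_G(\lsup{g}\bA)\cap G_F=(C_G(\lsup{g}\bA))_F$ together with its $\Gamma$-stability. The only step that needs an actual idea rather than calculation is recognizing that over $k$ one cannot work inside $C_G(\bS_\iota)$, whose cohomology we do not control, but must localize to the parahoric $C_G(\lsup{g}\bA)\cap G_F$, where Lemma~\ref{lem:parahoriccohom} supplies the vanishing; granting that, I do not anticipate a serious obstacle.
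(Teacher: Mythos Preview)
Your argument is correct and follows essentially the same cohomological route as the paper: compute the error $\Fr(hg\inv)\inv(hg\inv)$, observe it lands in a parahoric of a Levi $(K,k)$-subgroup, and kill it via Lemma~\ref{lem:parahoriccohom}. The one point worth flagging is your closing remark that ``over $k$ one cannot work inside $C_G(\bS_\iota)$, whose cohomology we do not control.'' In fact the paper does exactly this: it shows $\Fr(x)\inv x\in C_{G_F}(\bS_\iota)=(L_\iota)_F$ where $\bL_\iota=C_\bG(\bS_\iota)$, and then applies Lemma~\ref{lem:parahoriccohom} directly to the parahoric $(L_\iota)_F$ of the Levi $(K,k)$-subgroup $\bL_\iota$. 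Your choice to work instead in the smaller parahoric $C_G(\lsup{g}\bA)\cap G_F$ is perfectly valid and arguably cleaner, since the error term visibly lands there, but it is not forced; the larger group $(L_\iota)_F$ works just as well.
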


\begin{proof}
We need to show that the $\sfG^{\Fr}_F$-conjugacy class of $\sfS_\iota$ and the $G_F^{\Fr}$-conjugacy class of $\bS_\iota$ are independent of the choice of $g$.
Suppose $g' \in G_F$ such that the image of  $\Fr(g')^{-1} g' \in N_{G_F}(\bA)$ in $W_F$ is also $w$  and let $\bar{g}'$ denote the image of $g'$ in $\sfG_F$.    Let $\sfS'_{\iota} = {\lsup{\bar{g}'}\sfA_{\theta}}$ and $\bS'_{\iota} = {\lsup{{g'}}\bA_{\theta}}$.  Then $\bS'_{\iota}$ is a lift of $(F,\sfS'_{\iota})$.  
Since  $\Fr(g')^{-1} g'$ and  $\Fr(g)^{-1} g$ have image $w$ in $W_F$, there exists $a' \in C_{G}(A) \cap G_F$ such that 
$\Fr(g')^{-1} g' a' = \Fr(g)^{-1} g $.   Let $x = g' g^{-1} \in G_F$.  For all $s \in S_\iota$ we have
$$\Fr(\lsup{x}s) = \lsup{\Fr(g') \Fr(g^{-1})} \Fr(s) = \lsup{g'g^{-1}} (\lsup{\lsup{g}{a'}} \Fr(s)) = \lsup{x} \Fr(s).$$
Hence $\Int(x)$ and $\Int(\Fr(x))$ both carry $S_{\iota}^{\Fr}$ to ${S'_\iota}^{\Fr}$, hence they carry $\bS_{\iota}$ to $\bS'_{\iota}$.  Moreover, $\Fr(x)\inv x \in C_{G_F}(\bS_\iota) = (L_\iota)_F$.  
From Lemma~\ref{lem:parahoriccohom}, we have $\cohom^1(\Fr, (L_\iota)_F) = 1$.  Thus there exists $\ell \in C_{G_F}(\bS_{\iota})$ such that $\Fr(x)^{-1} x = \Fr(\ell)^{-1} \ell$ modulo $G_F^+$.
Thus $\bar{y}$, the image of $x \ell^{-1} \in G_F^{\Fr}$ in $\sfG_F$, belongs to $\sfG_F^{\Fr}$.  Note that  $\sfS_\iota$ and $\sfS_\iota'$ are $\sfG_F^\Fr$-conjugate by $\bar{y}$ while $\bS_\iota$ and $\bS_\iota'$ are $G_F^\Fr$-conjugate by ${x \ell^{-1}}$.
\end{proof}

\begin{lemma}  \label{lem:lem645}
The map that sends $\iota \in \dot{I}(F)$ to the $\sfG^{\Fr}_F$-conjugacy class of $\sfS_\iota$ descends to a bijective map from $ \dot{I}(F)/\stackrel{F}{\sim}$ to the set of $\sfG^{\Fr}_F$-conjugacy classes in $\RR(F)$.
\end{lemma}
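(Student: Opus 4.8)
The plan is to establish two things: first, that the map $\iota \mapsto [\sfS_\iota]$ (the $\sfG_F^{\Fr}$-conjugacy class) descends to a well-defined map on $\dot{I}(F)/\!\stackrel{F}{\sim}$, and second, that this descended map is a bijection onto the set of $\sfG_F^{\Fr}$-conjugacy classes in $\RR(F)$. Well-definedness on the level of $\dot{I}(F)$ is already Lemma~\ref{lem:well-definedtoN}, so for the first part I only need to check compatibility with the relation $\stackrel{F}{\sim}$. The key observation is that passing to the reductive quotient $\sfG_F$ converts the problem into exactly the finite-field situation already treated in Lemma~\ref{lem:ffversion}: the pair $(\theta,w)$ gives, via a choice of $g\in G_F$ with $\Fr(g)^{-1}g$ mapping to $w$, the $\ff$-torus $\sfS_\iota = \lsup{\bar g}\sfA_\theta$ inside $\sfG_F$, and $\sfA_\theta$ is the connected kernel of a subset $\theta$ of a set of simple roots \emph{for $\sfA_F$ in $\sfG_F$}. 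There is one subtlety: $\Phi(F)$ is in general strictly larger than the root system $\Phi(\sfG_F,\sfA_F)$, and the condition in $\stackrel{F}{\sim}$ is phrased using $\Phi_{\theta}$ rather than using roots of $\sfG_F$; but $\theta\subset\Delta$ still intersects $\Delta(\sfG_F,\sfA_F)$ in a set of simple roots for the subsystem it generates inside $\sfG_F$, and $W_F\cap W_{\theta}$ is the Weyl group of $\sfL_\iota=C_{\sfG_F}(\sfS_\iota)$. So I would first record this translation carefully, identifying $\dot I(F)/\!\stackrel{F}{\sim}$ with the set $I_{\sfG_F}/\!\sim$ of Section~\ref{subsec:ffparam} applied to the group $\sfG_F$ — or at least with the sub-quotient of it corresponding to Levi tori, since not every $(\theta,w)\in\dot I(F)$ need give a torus that is the full connected center of its centralizer in $\sfG_F$.

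Granting that translation, injectivity and surjectivity follow from the finite-field results. For surjectivity: if $\sfS\in\RR(F)$, then by Remark~\ref{rem:ffversion} / Lemma~\ref{lem:ffversion} applied to $\sfG_F$ (or more directly by the maximal-torus statement of Remark~\ref{rem:ffversion}) we can find a maximal $\ff$-torus $\sfA_F$-conjugate setup exhibiting $\sfS$ as $\lsup{\bar g}\sfA_\theta$ for appropriate data; concretely, choose a maximal $\ff$-torus $\sfT'$ of $\sfG_F$ containing $\sfS$, write $\sfT' = \lsup{\bar g}\sfA_F$ with $\Fr(\bar g)^{-1}\bar g$ mapping to some $w\in W_F$, and let $\theta\subset\Delta$ be chosen so that $\lsup{\bar g}\sfA_\theta=\sfS$; the relevance hypothesis on $\sfS$ forces the compatibility $\Fr(\Phi_\theta)=w\Phi_\theta$, i.e. $(\theta,w)\in\dot I(F)$, exactly as in the outline in Section~\ref{sec:overview}. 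For injectivity: if $\sfS_{\iota}$ and $\sfS_{\iota'}$ are $\sfG_F^{\Fr}$-conjugate, transport the conjugating element and then run the same argument as in the injectivity half of the proof of Lemma~\ref{lem:ffversion} — conjugate so that the two ambient maximal tori $\lsup{g}\sfA_F$, $\lsup{g'}\sfA_F$ agree, produce $n\in N_{\sfG_F}(\sfA_F)$ with $n\Phi_{\theta'}=\Phi_\theta$ and $\Fr(\dot n)w'\dot n^{-1}\in w(W_F\cap W_{\theta})$, which is precisely $(\theta',w')\stackrel{F}{\sim}(\theta,w)$.

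I expect the main obstacle to be bookkeeping around the discrepancy between $\Phi(F)$ (the gradients of affine roots constant on $F$) and the genuine root system of $\sfG_F$. The cleanest route is probably to avoid working with $\Phi(\sfG_F,\sfA_F)$ directly and instead argue entirely with $\Theta(\bG,\bA)$, $W_F$, and the subgroups $W_F\cap W_\theta$, checking by hand that (i) $W_F\cap W_\theta$ is the relative Weyl group of $\sfL_\iota$ — or at least that $W_F\cap W_\theta = W_F\cap W_{\theta'}$ whenever $\sfS_\iota=\sfS_{\iota'}$, which is all that is needed for injectivity — and (ii) the simple-transitivity fact used at the end of Lemma~\ref{lem:ffversion}'s proof, namely that $W_F\cap W_\theta$ acts simply transitively on bases of $\Phi_\theta$ realized inside $\Phi_F$, still holds. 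A secondary point to handle with care is that $\sfS_\iota$ genuinely lies in $\RR(F)$, i.e. the map really does land in the stated codomain: this was observed just before the statement (since $\bS_\iota$ is the $K$-split component of the center of $\bL_\iota=C_G(\bS_\iota)$), so I would just cite that observation. With these two combinatorial facts in hand, the proof is a near-verbatim transcription of the proof of Lemma~\ref{lem:ffversion} with $\sfG$ replaced by $\sfG_F$ and $\Delta_{\sfG}$-subsets replaced by elements of $\Theta$ lying in $W_F$, so I would present it in that spirit rather than re-deriving every step.
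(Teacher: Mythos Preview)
Your proposal has a genuine gap: the reduction to Lemma~\ref{lem:ffversion} applied to $\sfG_F$ does not go through, and the combinatorial patches you suggest do not fix it.

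The datum $\theta$ lives in $\Theta(\bG,\bA)$, not in the simple roots of $\sfG_F$; indeed the paper explicitly warns (just before Lemma~\ref{lem:well-definedtoN}) that in general $\Phi_\theta \neq \lsup{\bar g^{-1}}\Phi(\sfL_\iota,\lsup{\bar g}\sfA_F)$. So your claim that ``$\theta\subset\Delta$ still intersects $\Delta(\sfG_F,\sfA_F)$ in a set of simple roots for the subsystem it generates inside $\sfG_F$'' is neither correct nor relevant, and your fact~(i) (that $W_F\cap W_\theta$ is the Weyl group of $\sfL_\iota$) need not hold. Fact~(ii) is also false in general ($W_\theta$ acts simply transitively on bases of $\Phi_\theta$, but $W_F\cap W_\theta$ is typically a proper subgroup), and in any case the relation $\stackrel{F}{\sim}$ only requires $\Phi_{\theta'}=n\Phi_\theta$, not $\theta'=n\theta$, so no simple-transitivity step is needed. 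For surjectivity, the sentence ``let $\theta\subset\Delta$ be chosen so that $\lsup{\bar g}\sfA_\theta=\sfS$'' hides the real content: you must explain why relevance of $\sfS$ forces $\lsup{\bar g^{-1}}\sfS$ to be of the form $\sfA_\theta$ for some $\theta\in\Theta$, and this cannot be seen inside $\sfG_F$ alone.

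The paper's route is to \emph{lift} everything to $G_F$ via Corollary~\ref{cor:heart} and argue with the Levi $(K,k)$-subgroups $\bL_i=C_\bG(\bS_i)$ of $\bG$, where one genuinely has $\Phi_{\theta_i}=g_i^{-1}\Phi(\bL_i,\lsup{g_i}\bA)$. For injectivity: lift the $\sfG_F^{\Fr}$-conjugacy to a $G_F^{\Fr}$-conjugacy of the $\bS_i$, then inside $\bL_1$ align the maximal $K$-split tori $\lsup{g_i}\bA$ via some $\ell\in L_1\cap G_F$; the resulting $m=g_2^{-1}\ell g_1\in N_{G_F}(\bA)$ satisfies $m\Phi_{\theta_1}=\Phi_{\theta_2}$ (computed via root systems of $\bL_1$) and $\Fr(m)^{-1}w_2 m\in w_1 W_{\theta_1}$ (since $\Fr(\ell)^{-1}\ell\in L_1$), while membership in $W_F$ is automatic. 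For surjectivity: lift $\sfT$ and a maximal $\ff$-torus $\sfT'\supset\sfT$ to $\bT\leq\bT'\leq\bL=C_\bG(\bT)$, choose a Borel $\bB_\bL$ of $\bL$ containing $\bT'$, and set $\theta=g^{-1}\Delta(\bL,\bB_\bL,\bT')$; this is how $\theta\in\Theta$ is actually produced. Your pivot to ``argue entirely with $\Theta(\bG,\bA)$, $W_F$, and $W_F\cap W_\theta$'' is the right instinct, but the mechanism that makes it work is this lifting to $\bG$, which your proposal does not supply.
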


\begin{proof}
We first show that the map is injective.
Suppose $\iota_i = (\theta_i,w_i) \in \dot{I}(F)$ and $g_i \in  G_F$ 
such that the image of $\Fr(g_i)^{-1} g_i$ in $W_F$ is $w_i$. Set $\bS_i = {\lsup{g_i}\bA_{\theta_i}}$ and $\sfS_i =  {\lsup{\bar{g}_i}}\sfA_{\theta_i}$ where $\bar{g}_i$ is the image of $g_i$ in $\sfG_F$.
Note that $\bS_i$ is a lift of $(F,\sfS_i)$.  Suppose there exists  $\bar{h} \in \sfG_F^{\Fr}$ such that  $\sfS_{1} = {\lsup{\bar{h}} \sfS_{2}}$. Then from Corollary~\ref{cor:heart}, there exists a lift $h \in G^{\Fr}_F$ of $\bar{h}$ for which $\bS_1 = \lsup{h}\bS_2$.   Without loss of generality, replace $g_2$ by $hg_2$, so that $\bS_1 = \bS_2$ and $\sfS_1 = \sfS_2$.  
Let $\sfL_1 = C_{\sfG_F}(\sfS_1)$ and let $\bL_1 = C_{\bG}(\bS_1)$.
There exists $\bar{\ell} \in \sfL_1$ for which $\lsup{\bar{g}_2}\sfA_F = {\lsup{\bar{\ell}\bar{ g}_1} \sfA_F}$. 
Thanks to Corollary~\ref{cor:heart} there is a lift $\ell \in L_1 \cap G_F$  of $\bar{\ell}$ for which $\lsup{g_2}\bA = {\lsup{\ell g_1} \bA}$.
Choose $m \in N_{G}(\bA)$ for which $\ell g_1 = g_2 m$.   Note that $m = g_2\inv \ell g_1 \in G_F$.  Let $\bM_{\theta_i} = C_\bG(\bA_{\theta_i})$.  We have
\[
\begin{split}
\Phi_{\theta_1} &= \Phi(\bM_{\theta_1}, \bA) \\
 & = g_1\inv \Phi(\bL_1, {\lsup{g_1}}\bA) =    g_1\inv  \ell\inv \Phi(\bL_1 , {\lsup{\ell g_1}}\bA) =    g_1\inv  \ell\inv \Phi(\bL_1, {\lsup{g_2}}\bA)  \\
  & =   g_1\inv  \ell\inv g_2 \Phi(\bM_{\theta_2},  \bA) =  m\inv \Phi(\bM_{\theta_2} , \bA)  =  \Phi(\bM_{m\inv \theta_2},\bA)   \\
  &= m\inv \Phi_{\theta_2},
\end{split}
\]
i.e. $m \Phi_{\theta_1} = \Phi_{\theta_2}$.

 Since the image of $\lsup{g_1^{-1}}(\Fr(\ell)^{-1} \ell ) \in N_{G}(\bA)$ in $W$ belongs to $W_{\theta_1}$, it follows that  
 $$\Fr(m)^{-1} w_2 m = \Fr({\ell} {g}_1)^{-1} {\ell} {g}_1 A = w_1 g_1^{-1} (\Fr (\ell)^{-1} \ell) g_1 A \in w_1 W_{\theta_1}.$$
 Since $\Fr(m)^{-1} w_2 m = \Fr(\bar{\ell} \bar{g}_1)^{-1} \bar{\ell} \bar{g}_1 \sfA_F \in W_F$,
 we conclude that $\iota_1 \stackrel{F}{\sim} \iota_2$.

 We now show that the map is surjective.  Suppose $\sfT \leq \sfG_F$ belongs to $\RR(F)$.
 Let $\sfT'$ be a maximal $\ff$-torus in $\sfG_F$ that contains $\sfT$, and that has the largest possible $\ff$-split rank among tori in $\sfG_F$ that contain $\sfT$.
 By definition $\sfT$ contains the center of $\sfG_F$. There exist  lifts $\bT$ of $(F,\sfT)$ and $\bT'$ of $(F,\sfT')$ such that $\bL = C_\bG(\bT)$ is a Levi $(K,k)$-subgroup, $\bT$ is the $K$-split component of the center of $\bL$, and $\bT \leq \bT' \leq \bL$.  
 Let $\bB_\bL \leq \bL$ be a Borel $K$-subgroup of $\bL$ that contains $\bT'$. 
 Since $\bT'$ is a lift of $(F,\sfT')$, there is a  $g \in G_F$ such that $\lsup{g}\bA = \bT'$.   Let  ${\theta}  = g ^{-1} \Delta(\bL, \bB_\bL, \bT') \in \Theta$.   Let $w$ denote the image of $\Fr(g)^{-1} g$ in $W_F$.   The pair $(\theta,w)$ belongs to $\dot{I}(F)$ and corresponds to $\sfT$.
\end{proof}

\begin{lemma}  \label{lem:anisotropicasexpected}
If $\sfS \in \RR(F)$ corresponds to $(\theta, w) \in \dot{I}^{\ee}(F)$, then $ C_{\sfG_F}(\sfS)$ is an $\ff$-minisotropic  maximal torus in $\sfG_F$ that corresponds to $(\emptyset, w) \in \dot{I}^{\ee}(F)$.
\end{lemma}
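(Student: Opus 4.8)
By Lemma~\ref{lem:well-definedtoN} I may work with the explicit model: write $\sfS = \sfS_\iota = \lsup{\bar g}\sfA_\theta$ for a chosen $g \in G_F$ with $\Fr(g)\inv g \in N_{G_F}(\bA)$ of image $w$ in $W_F$, set $\bS = \bS_\iota = \lsup{g}\bA_\theta$, $\bL := C_\bG(\bS) = \lsup{g}\bM_\theta$ with $\bM_\theta = C_\bG(\bA_\theta)$, and $\sfL := C_{\sfG_F}(\sfS)$, which is the image of $L\cap G_F$ in $\sfG_F$ (as in the proof of Corollary~\ref{cor:heart}). Since $\Phi(\bM_\theta,\bA) = \Phi_\theta$ spans the same rational subspace as $\theta$, the connected center of $\bM_\theta$ is $\bA_\theta$, so $\bZ_\bL^\circ = \lsup{g}\bA_\theta = \bS$; in particular the connected center of $\bL$ is $K$-split. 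Put $\sfT_0 := \lsup{\bar g}\sfA_F$: this is a maximal $\ff$-torus of $\sfG_F$ of type $w$, it centralizes $\sfS$ and hence lies in $\sfL$, and $\sfT_0 = \sfS_{(\emptyset,w)}$ (since $\sfA_\emptyset = \sfA_F$). As $\sfT_0 \subseteq \sfL$, the statement ``$\sfL$ is a torus'' is equivalent to ``$\sfL = \sfT_0$''. Finally $\sfL$ is $\ff$-quasi-split by Lemma~\ref{lem:qfcohom}. The plan is to force $\sfL$ to be a torus, then identify it and check minisotropy.

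\textbf{The dichotomy.} The key step is: if $\sfL$ is not a torus, then $(\theta,w)$ is not $F$-elliptic. Assume $\sfL$ is not a torus. A maximal $\ff$-torus $\sfT_\sfL$ inside a Borel $\ff$-subgroup of the quasi-split group $\sfL$ is maximally $\ff$-split, and since $\sfL$ is quasi-split but not a torus it is not $\ff$-anisotropic modulo its center; hence the maximal $\ff$-split subtorus $\sfA_\sfL$ of $\sfT_\sfL$ is noncentral in $\sfL$, a fortiori in $\sfG_F$, so $\sfM := C_{\sfG_F}(\sfA_\sfL)$ is a \emph{proper} Levi $\ff$-subgroup of $\sfG_F$ containing $\sfT_\sfL$. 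A proper Levi $\ff$-subgroup of the quasi-split $\ff$-group $\sfG_F$ is $\sfG_F^{\Fr}$-conjugate to a standard $\Fr$-stable Levi whose Weyl group is a proper $\Fr$-stable parabolic subgroup $W_J \leq W_F$; consequently the type of the maximal $\ff$-torus $\sfT_\sfL$ of $\sfM$ is the $\Fr$-conjugacy class of some $w_J \in W_J$.

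\textbf{Building the competing pair.} Now I would exhibit the failure of $F$-ellipticity concretely. Choose a maximal $K$-split $k$-torus $\bT'' \leq \bL$ with $F$ in its apartment whose image in $\sfG_F$ is $\sfT_\sfL$ (such a lift exists since the image of the parahoric $(L)_F$ in $\sfG_F$ is $\sfL$; cf.\ \cite[Lemma~2.3.1]{debacker:unramified} and Lemma~\ref{lem:AandS}), and pick $g'' \in G_F$ with $\lsup{g''}\bA = \bT''$ and with the image of $\Fr(g'')\inv g''$ in $W_F$ equal to $w_J$ — possible because the type of $\bT''$ is the $\Fr$-class of $w_J$ and modifying $g''$ by an element of $N_{G_F}(\bA)$ runs through that whole $\Fr$-conjugacy class. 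Let $\bB''$ be a Borel $K$-subgroup of $\bL$ containing $\bT''$, set $\theta'' := (g'')\inv\Delta(\bL,\bB'',\bT'') \in \Theta$ and $\iota'' := (\theta'',w_J)$. Exactly as in the proof of Lemma~\ref{lem:lem645}, $\iota'' \in \dot I(F)$; and since the connected center $\bZ_\bL^\circ$ lies in every maximal torus of $\bL$, the recipe there gives $\bS_{\iota''} = \lsup{g''}\bA_{\theta''} = \bZ_\bL^\circ = \bS$. Thus $\sfS_{\iota''}$ is $\sfG_F^{\Fr}$-conjugate to $\sfS$, so by the injectivity in Lemma~\ref{lem:lem645} we get $\iota'' \stackrel{F}{\sim} (\theta,w)$. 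As $w_J$ lies in the proper $\Fr$-stable parabolic $W_J$, the pair $(\theta,w)$ is not $F$-elliptic, contradicting the hypothesis. Hence $\sfL = \sfT_0$ is a maximal $\ff$-torus; it equals $\sfS_{(\emptyset,w)}$, so it corresponds to $(\emptyset,w)$, and $(\emptyset,w)\in\dot I^{\ee}(F)$ by the remark following the definition of $\dot I^{\ee}(F)$. Finally $\sfT_0$ is $\ff$-minisotropic: if not, its maximal $\ff$-split subtorus would be noncentral in $\sfG_F$, so $\sfT_0$ would lie in a proper Levi $\ff$-subgroup of $\sfG_F$; arguing as in the second paragraph, the type $w$ of $\sfT_0$ would be $\Fr$-conjugate in $W_F$ into a proper $\Fr$-stable parabolic, and since $(\emptyset,w)\stackrel{F}{\sim}(\emptyset,w')$ whenever $w'$ is $\Fr$-conjugate to $w$ in $W_F$, this would contradict $(\emptyset,w)\in\dot I^{\ee}(F)$.

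\textbf{Main obstacle.} The step I expect to be the crux is the construction of $\iota''$ together with the verification that $\iota'' \stackrel{F}{\sim}(\theta,w)$: this hinges on the identity $\bS_{\iota''} = \bZ_\bL^\circ = \bS$, i.e.\ on the fact --- forced by relevance of $\sfS$ --- that the connected center of $\bL = C_\bG(\bS)$ is $K$-split, so that running the recipe of Lemma~\ref{lem:lem645} on \emph{any} maximal $K$-split $k$-torus of $\bL$ returns the same torus $\bS$. A secondary point needing care is the standard passage ``a maximal $\ff$-torus contained in a proper Levi $\ff$-subgroup of the quasi-split group $\sfG_F$ has type in a proper $\Fr$-stable parabolic of $W_F$,'' which is where quasi-splitness of $\sfG_F$ (and of $\sfL$) enters decisively.
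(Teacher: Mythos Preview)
Your argument is correct and follows the same overall strategy as the paper's proof: assume $\sfL = C_{\sfG_F}(\sfS)$ is not a torus, take a maximally $\ff$-split maximal $\ff$-torus $\sfT_\sfL$ inside it, observe that $\sfT_\sfL$ sits in a proper Levi $(\ff,\ff)$-subgroup of $\sfG_F$ (hence has type in a proper $\Fr$-stable parabolic of $W_F$), and turn this into a pair equivalent to $(\theta,w)$ whose second entry lies in that parabolic, contradicting $F$-ellipticity.

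Where you diverge from the paper is in \emph{how} you produce the equivalent pair. The paper stays entirely over $\ff$: it takes $h \in \sfM = C_{\sfG_F}(\sfS)$ with $\lsup{h^{-1}g}\sfA_F = \sfT_\sfM$ to get $(\theta,w) \stackrel{F}{\sim} (\theta,w')$, then $\ell$ in the proper Levi $C_{\sfG_F}(\sfT_\sfM^\ff)$ with $\lsup{\ell}\sfA_F = \sfT_\sfM$ to get $(\theta,w') \stackrel{F}{\sim} (n^{-1}\theta,w'')$ with $w''$ in a proper $\Fr$-stable parabolic --- the equivalences are computed by hand. You instead lift $\sfT_\sfL$ to a maximal $K$-split $k$-torus $\bT'' \leq \bL$, use the key structural fact (forced by relevance) that $\bZ_\bL^\circ = \bS$ is $K$-split to see that the surjectivity recipe of Lemma~\ref{lem:lem645} applied to $\bT''$ reproduces $\bS$, and then invoke the \emph{injectivity} of Lemma~\ref{lem:lem645} to conclude $(\theta'',w_J) \stackrel{F}{\sim} (\theta,w)$. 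Your approach buys a cleaner, more conceptual endgame (the equivalence comes for free from a bijection already established) at the cost of the lifting step and the need to check $\bT''$ can be taken inside $\bL$ --- which works via Lemma~\ref{lem:AandS} and Corollary~\ref{cor:heart} exactly as you indicate. The paper's approach is more self-contained and avoids any passage to $K$, so neither lifting nor the identity $\bZ_\bL^\circ = \bS$ is needed; the price is a slightly more intricate direct computation with the elements $h$ and $\ell$.
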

 
\begin{proof}
Suppose $\sfS \in \RR(F)$ corresponds to $(\theta,w) \in \dot{I}^{\ee}(F)$.  Choose $g \in \sfG_F$ so that the image of $\Fr(g)^{-1} g$ in $W_F$ is $w$. Without loss of generality, assume $\sfS = {\lsup{g}\sfA_{\theta}}$.  Since $(\theta,w) \in \dot{I}^{\ee}(F)$, no $\Fr$-conjugate of $w$ in $W_F$ can belong to a proper $\Fr$-stable parabolic subgroup of $W_F$, so $\sfS' := {\lsup{g}\sfA_F}$ is an $\ff$-minisotropic maximal torus in $\sfG_F$.
 
Let $\sfM$ denote the centralizer of $\sfS$ in $\sfG_F$.  Note that $\sfM$ is a Levi $(\ffc,\ff)$-subgroup of $\sfG_F$, it contains $\sfS'$,  and it is $\ff$-quasi-split.  If $\sfM = \sfS'$, then $\sfS'$ corresponds to $(\emptyset,w)$ and we are done.  So, it will be enough to show that $\sfS' = \sfM$.

Suppose $\sfS' \neq \sfM$.  Let $\sfT_\sfM$ be a maximally $\ff$-split maximal $\ff$-torus in $\sfM$ and recall that $\sfT^\ff_{\sfM}$ denotes the unique maximal $\ff$-split torus in $\sfT_\sfM$.  Since $\sfM$ is $\ff$-quasi-split and is not a torus,  we have $\sfT_\sfM = C_{\sfM}(\sfT_{\sfM}^\ff) \lneq \sfM$. Hence $\sfL := C_{\sfG_F}(\sfT_\sfM^\ff)$  cannot be $\sfG_F$, so $\sfL$
is a proper Levi $(\ff,\ff)$-subgroup of $\sfG_F$.  Note that $\sfL$ contains $\sfS$ and $\sfT_\sfM$.   After replacing $\sfS$ with a $\sfG_F^\Fr$-conjugate, we may also assume that $\sfT_{\sfM}^\ff \leq \sfA_F$ and so  $\sfA_F \leq \sfL$.

Choose $h \in \sfM$ so that $\lsup{h^{-1} g}\sfA_F = \sfT_\sfM$. Note that $\sfS = {\lsup{h^{-1} g} \sfA_{\theta}}$ and $w'$, the image of $\Fr(h^{-1} g)^{-1} h^{-1} g$ in $W_F$, has image in $w(W_\theta \cap W_F)$.
That is, $(\theta,w')$ and $(\theta,w)$ are equivalent in $\dot{I}(F)$.

Choose $\ell \in \sfL$ so that $\lsup{\ell}\sfA_F = \sfT_\sfM$. Let $w''$ denote the image of $\Fr(\ell)^{-1} \ell$ in $W_F$.   By construction $w''$ belongs to a $\Fr$-stable proper parabolic subgroup of $W_F$.  Moreover, since $\sfT_\sfM = {\lsup{\ell} \sfA_F} = {\lsup{h\inv g} \sfA_F}$ there exists an $n \in W_F$ for which $\Fr(n)  w'' n^{-1} = w'$.   Note that $(n^{-1} \theta, w'') \stackrel{F}{\sim} (\theta,w') \stackrel{F}{\sim} (\theta,w)$, contradicting that $(\theta,w) \in \dot{I}^{\ee}(F)$.
 \end{proof}
 
 \begin{cor}   \label{cor:improved541}
 We maintain the notation of Lemma~\ref{lem:anisotropicasexpected}.  If $\bT$ is a lift of $(F,C_{\sfG_F}(\sfS))$, then $C_{\bG}(\bT^k) \in (\lsub{F}\bM)$.
 \end{cor}
 
 \begin{proof}
 Since $\sfT = C_{\sfG_F}(\sfS)$ is an $\ff$-minisotropic maximal $\ff$-torus in $\sfG_F$, we have that $\sfC = \sfT^\ff$ is the $\ff$-split component of the center of $\sfG_F$.  Since $\bT^k \bZ^K$ is a lift of $(F, \sfC \sfZ_F)$, from Remark~\ref{rem:handsonM} we conclude that $C_\bG(\bT^k\bZ^K) \in (\lsub{F}\bM)$.   Since $C_\bG(\bT^k\bZ^K) = C_\bG(\bT^k)$, the result follows.
 \end{proof}

\begin{cor}
We maintain the notation of Lemma~\ref{lem:anisotropicasexpected}.   If $\bS$ is a lift of $(F,\sfS)$, then the dimension of a maximal $k$-split torus in $\bL = C_\bG(\bS)$ is equal to the $\mathbb{R}$-dimension of $A(\AA,F)^{\Fr}$.   
\end{cor}

\begin{proof} 
Let $d_L$ denote the dimension of a maximal $k$-split torus in $\bL$ and let $d_F$ denote 
the dimension of $A(\AA,F)^{\Fr}$.  We need to show $d_L = d_F$.

Suppose $\bA_L \leq \bL$ is a maximally $k$-split maximal $K$-split $k$-torus in $\bL$.  The dimension of $\bA^k_L$ is equal to $d_L$.  Since all maximally $k$-split maximal $K$-split $k$-tori in $\bL$ are $L^\Fr$-conjugate, we may assume $F$ is a subset of the apartment of  $A_L$.  Let $\sfA_L$ denote the $\ff$-torus whose group of $\ffc$-points coincides with the image of $A_L \cap G_{F,0}$ in $\sfG_F$.  We have that $d_L$ is equal to the $\ff$-dimension of $\sfA^{\ff}_L$. Since $\bA_L \leq \bL$, we have $\sfA_L \leq C_{\sfG_F}(\sfS)$.   From Lemma~\ref{lem:anisotropicasexpected} we know that $C_{\sfG_F}(\sfS)$ is an $\ff$-minisotropic maximal $\ff$-torus in $\sfG_F$.    Since $C_{\sfG_F}(\sfS)$ is $\ff$-minisotropic in $\sfG_F$, we conclude that $\sfA^\ff_L$ is the maximal $\ff$-split torus in the center of $\sfG_F$.   But the dimension of the latter is $d_F$.   Thus, $d_L = d_F$.
\end{proof}

\subsection{Parameterizing \texorpdfstring{$G^{\Fr}$-conjugacy classes of unramified tori in $\bG$}{Fr fixed conjugacy classes of unramified tori in G}}
Define
$$\tilde{I} = \{ (F, \theta, w) \colon \text{$F \subset \AA(A)^{\Fr}$ is a $G^\Fr$-facet and $(\theta, w) \in \dot{I}(F)$} \}$$
and let $\UU$ denote the set of $G^{\Fr}$-conjugacy classes of unramified tori in $\bG$.  Thanks to Lemma~\ref{lem:well-definedtoN} and Corollary~\ref{cor:heart} we can define a function $j \colon \tilde{I} \rightarrow \UU$ as follows.   For  $(F,\theta,w) \in \tilde{I}$, let $\sfS \in \RR(F)$ be a relevant torus associated to $(\theta,w)$ and let $j((F,\theta,w))$ be the $G^{\Fr}$-conjugacy class of any lift of $(F,\sfS)$.

 For $(F', \theta',w'), (F, \theta,w) \in \tilde{I}$ we write $(F', \theta',w') \approx (F,\theta,w)$ provided that there exists an element $n \in \affFrW$ for which $A(\AA(A)^\Fr, F') = A(\AA(A)^{\Fr},nF)$  and with the identifications of  $\sfG_{F'} \overset{i}{=} \sfG_{nF}$  and $\X^*(\sfA_{F'}) \overset{i}{=} \X^*(\sfA_{nF}) = \X^*(\bA)$ thus induced we have that $(\theta',w') \stackrel{F'}{\sim} (n\theta, \lsup{n}w)$ in $\dot{I}(F') \overset{i}{=} \dot{I}(nF)$.

\begin{lemma}
The relation $\approx$ is an equivalence relation on $\tilde{I}$. \qed
\end{lemma}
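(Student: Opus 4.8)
The goal is to show that $\approx$ is reflexive, symmetric, and transitive on $\tilde{I}$. The plan is to reduce each property to corresponding facts about the (already established) equivalence relation $\stackrel{F}{\sim}$ on each $\dot I(F)$ together with the elementary combinatorics of affine subspaces spanned by facets under the action of $(\affW)^{\Fr}$.

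\textit{Reflexivity.} Taking $n = 1 \in (\affW)^{\Fr}$, we get $A(\AA(A)^\Fr, F) = A(\AA(A)^\Fr, F)$, the identifications $\sfG_F \overset{i}{=} \sfG_F$ and $\X^*(\sfA_F) \overset{i}{=} \X^*(\sfA_F)$ are the identity, and $(\theta, w) \stackrel{F}{\sim} (\theta, w)$ since $\stackrel{F}{\sim}$ is reflexive. Hence $(F,\theta,w) \approx (F,\theta,w)$.

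\textit{Symmetry.} Suppose $(F',\theta',w') \approx (F,\theta,w)$ via $n \in (\affW)^{\Fr}$. First one notes the basic fact that $A(\AA(A)^\Fr, F') = A(\AA(A)^\Fr, nF)$ is equivalent to $A(\AA(A)^\Fr, n^{-1}F') = A(\AA(A)^\Fr, F)$, since $n$ acts affinely on $\AA(A)^\Fr$ and carries the affine span of $n^{-1}F'$ to that of $F'$; thus $n^{-1}$ is a candidate for the witness in the reverse direction. The identifications induced by $n^{-1}$ are the inverses of those induced by $n$, under which $(\theta', w')$ and $(n\theta, \lsup n w)$ are interchanged with $(n^{-1}\theta', \lsup{n^{-1}}w')$ and $(\theta, w)$; since $\stackrel{F}{\sim}$ is symmetric, $(\theta, w) \stackrel{F}{\sim} (n^{-1}\theta', \lsup{n^{-1}}w')$ follows from $(\theta',w') \stackrel{F'}{\sim} (n\theta, \lsup n w)$ after transporting along the identification. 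Care is needed here: $\stackrel{F}{\sim}$ and $\stackrel{F'}{\sim}$ are \emph{a priori} different equivalence relations, so the argument must transport the witnessing element $n' \in W_{F'}$ for $(\theta',w') \stackrel{F'}{\sim} (n\theta, \lsup n w)$ to an element of $W_F$ via the identification $\sfG_{F'} \overset{i}{=} \sfG_{nF}$ and the inclusion $W_{nF} \leq W$ — which, since $A(\AA, F) = A(\AA, n^{-1}F')$ forces $n^{-1}F'$ and $F$ to span the same affine subspace, gives $W_F = W_{n^{-1}F'} = \lsup{n^{-1}}W_{F'}$. This compatibility of Weyl groups under identification of affine spans is the technical crux.

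\textit{Transitivity.} Suppose $(F'', \theta'', w'') \approx (F', \theta', w')$ via $n_1$ and $(F', \theta', w') \approx (F, \theta, w)$ via $n_2$. The plan is to use $n_1 n_2 \in (\affW)^{\Fr}$ as the composite witness: $A(\AA, F'') = A(\AA, n_1 F') = A(\AA, n_1 n_2 F)$, where the second equality comes from applying the affine automorphism $n_1$ to $A(\AA, F') = A(\AA, n_2 F)$. The composite identification $\sfG_{F''} \overset{i}{=} \sfG_{n_1 F'} \overset{i}{=} \sfG_{n_1 n_2 F}$ agrees with the one induced directly by $n_1 n_2$ (since all these identifications are induced by vanishing/non-vanishing of affine roots and the translation part of $W$-elements acts compatibly), and likewise on character lattices. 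Finally, transporting the relation $(\theta', w') \stackrel{F'}{\sim} (n_2\theta, \lsup{n_2}w)$ along the identification induced by $n_1$ gives $(n_1\theta', \lsup{n_1}w') \stackrel{F''}{\sim} (n_1 n_2 \theta, \lsup{n_1 n_2}w)$ in $\dot I(F'')$ — again using compatibility of $\stackrel{\bullet}{\sim}$ with the identifications — and composing with $(\theta'', w'') \stackrel{F''}{\sim} (n_1 \theta', \lsup{n_1}w')$ and transitivity of $\stackrel{F''}{\sim}$ yields $(\theta'', w'') \stackrel{F''}{\sim} (n_1 n_2 \theta, \lsup{n_1 n_2}w)$, as required.

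The routine verifications — that $n \in (\affW)^{\Fr}$ acts on $\AA(A)^\Fr$ by affine maps sending facets to facets and affine spans to affine spans, and that the root-theoretic identifications are functorial in the obvious way — are standard and proceed exactly as in the proof that $\sim$ is an equivalence relation on $\JGammamax$, cf.\ the references in \cite[Section 3.6]{debacker:nilpotent} and \cite[Section 3.2]{debacker:unramified}. The main obstacle, as indicated above, is bookkeeping the interaction between the facet-dependent relations $\stackrel{F}{\sim}$ and the identifications of the ambient combinatorial data; once one checks that these identifications carry $W_F$ isomorphically to the Weyl group attached to the identified facet and intertwine the defining conditions of $\stackrel{F}{\sim}$, the three properties follow formally.
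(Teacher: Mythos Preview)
Your verification is correct; the paper itself omits the proof entirely (the bare \qed\ signals it is left as a routine check), so you have simply spelled out the bookkeeping the author declines to write. The only substantive point---that the identifications $\sfG_{F'} \overset{i}{=} \sfG_{nF}$ carry $W_{F'}$ to $W_{nF} = \lsup{n}W_F$ and intertwine the defining conditions of $\stackrel{F}{\sim}$---is exactly what the paper's references to \cite[Section~3.6]{debacker:nilpotent} and \cite[Section~3.2]{debacker:unramified} are meant to cover, and you have handled it appropriately.
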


\begin{defn}
We will say that $(F, \theta, w) \in \tilde{I}$ is \emph{elliptic} provided that $(\theta, w) \in \dot{I}^{\ee}(F)$, that is, for all $(\theta',w') \in \dot{I}(F)$ with $(\theta, w) \stackrel{F}{\sim} (\theta',w')$ we have that $w'$ does not belong to a $\Fr$-stable proper parabolic subgroup of $W_F$.
We set
$$\tilde{I}^{\ee} := \{(F, \theta, w)  \in \tilde{I} \, | \,  \text{$(F, \theta, w)$ is elliptic} \}.$$
\end{defn}

\begin{rem}
Suppose $(F_i,\theta_i,w_i) \in \tilde{I}$ for $i \in \{1,2\}$ with $(F_1,\theta_1,w_1) \approx (F_2,\theta_2,w_2)$.   Then $(F_1,\theta_1,w_1) \in \tilde{I}^{\ee}$ if and only if $(F_2,\theta_2,w_2) \in \tilde{I}^{\ee}$.
\end{rem}

\begin{theorem}  \label{thm:maintheorem}
The map $j$ induces a  bijection from ${\tilde{I}^{\ee}/\!\approx}$ to $\UU$.
\end{theorem}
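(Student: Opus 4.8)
The plan is to show that $j$ is constant on $\approx$-classes, that it descends to a well-defined and injective map on $\tilde I^{\ee}/\!\approx$, and that it is surjective, the last two being the substantive points. Throughout I would work relative to a fixed $\Gamma$-stable apartment $\AA = \AA(A)^{\Fr}$ and use the dictionary already established: for $(F,\theta,w)\in\tilde I$ a choice of $g\in G_F$ with image $w$ of $\Fr(g)\inv g$ in $W_F$ produces a lift $\bS_\iota = \lsup{g}\bA_\theta$ of $(F,\sfS_\iota)$ with $\sfS_\iota\in\RR(F)$, and $j(F,\theta,w)$ is the $G^{\Fr}$-conjugacy class of $\bS_\iota$. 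By Lemma~\ref{lem:well-definedtoN} this is independent of $g$, and by Corollary~\ref{cor:heart} independent of the choice of lift, so $j$ is well defined on $\tilde I$.

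First I would check that $\approx$-equivalent triples have the same image. If $(F',\theta',w')\approx(F,\theta,w)$ via $n\in(\affW)^{\Fr}$, then $n$ lifts to an element of $N_G(\bA)^{\Fr}$ conjugating the relevant data at $F$ into data at $nF$; since $A(\AA,F') = A(\AA,nF)$ we have $\sfG_{F'}\overset{i}{=}\sfG_{nF}$ as $\ff$-groups, and the condition $(\theta',w')\stackrel{F'}{\sim}(n\theta,\lsup{n}w)$ together with Lemma~\ref{lem:lem645} forces the relevant tori $\sfS_{F'}$ and $\sfS_{nF}$ to be $\sfG_{F'}^{\Fr}$-conjugate, hence their lifts are $G^{\Fr}$-conjugate. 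So $j$ descends to $\tilde I^{\ee}/\!\approx\to\UU$; I keep the restriction to elliptic triples for the injectivity and surjectivity arguments.

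For \emph{surjectivity}, start with an unramified torus $\bS$ in $\bG$ and set $\bL=C_\bG(\bS)$, so $\bS$ is the $K$-split component of the center of $\bL$. Apply Lemma~\ref{lem:P} (working inside $\toriKGamma$, which is legitimate since unramified tori are $(K,k)$-tori containing $\bZ^K$ after possibly enlarging $\bS$ to $\bS\bZ^K$, which has the same centralizer) to produce $(F,\sfS)\in\JGammamax$ with $\bS$ a lift of $(F,\sfS)$; after $G^{\Fr}$-conjugation assume $F\subset\AA$. Because $\bS$ is unramified, $\sfS$ lies in $\RR(F)$, so by Lemma~\ref{lem:lem645} there is $(\theta,w)\in\dot I(F)$ with $\sfS_{(\theta,w)}$ conjugate to $\sfS$ in $\sfG_F$; thus $j(F,\theta,w)$ is the class of $\bS$. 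Finally, $(F,\theta,w)$ is elliptic: maximality of $(F,\sfS)$ in $\JGammamax$ translates, via Lemma~\ref{lem:alcoveinM} and the remark identifying ellipticity with maximality of $\dim F$ among $\approx$-equivalent triples, into the $F$-ellipticity of $(\theta,w)$ — if some $(\theta',w')\stackrel{F}{\sim}(\theta,w)$ had $w'$ in a $\Fr$-stable proper parabolic of $W_F$, one could enlarge $F$ to a $\Gamma$-stable facet $F'$ in whose reductive quotient's parabolic $\sfS$ still sits, contradicting maximality. For \emph{injectivity}, suppose $j(F_1,\theta_1,w_1)=j(F_2,\theta_2,w_2)$ with both triples elliptic, and choose lifts $\bS_1,\bS_2$ with $\lsup{g}\bS_1=\bS_2$ for some $g\in G^{\Fr}$. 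Replacing triple $1$ by its $g$-translate we may assume $\bS_1=\bS_2=:\bS$; set $\bM=C_\bG(\bS)$ and $\bL_i=C_\bG(\bS_{\iota_i})$. By Lemma~\ref{lem:alcoveinM} each $F_i^\Gamma$ is open in an $M^\Gamma$-alcove, and $M^\Gamma$ acts transitively on alcoves of $\BB(M)^\Gamma$, so after an $M^\Gamma$-conjugation (which fixes $\bS$) I can arrange $F_1^\Gamma$ and $F_2^\Gamma$ to lie in a common alcove and in a common apartment $\AA'$ with $A(\AA',F_1^\Gamma)=A(\AA',F_2^\Gamma)$; an element of $(\affW)^{\Fr}$ moves $\AA'$ back to $\AA$ and identifies $\sfG_{F_1}\overset{i}{=}\sfG_{F_2}$. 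Under this identification both $\sfS_{\iota_1},\sfS_{\iota_2}\in\RR(F_1)$ lift $\bS$, hence are $\sfG_{F_1}^{\Fr}$-conjugate by Corollary~\ref{cor:heart}, and Lemma~\ref{lem:lem645} gives $(\theta_1,w_1)\stackrel{F_1}{\sim}(n\theta_2,\lsup{n}w_2)$ for the chosen $n$; that is exactly $(F_1,\theta_1,w_1)\approx(F_2,\theta_2,w_2)$.

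The main obstacle I anticipate is the step in the injectivity argument where one must move $F_1$ and $F_2$ into a common apartment using only $M^\Gamma$ (so as not to disturb $\bS$), while simultaneously keeping track that the resulting identification $\sfG_{F_1}\overset{i}{=}\sfG_{F_2}$ is compatible with the identifications of character lattices $\X^*(\sfA_{F_i})=\X^*(\bA)$ built into the definition of $\approx$ — here the hypothesis that $F_i^\Gamma$ is \emph{open} in an $M^\Gamma$-alcove (Lemma~\ref{lem:alcoveinM}), which rests on $(F_i,\sfS_i)$ being \emph{maximal}, i.e. on ellipticity, is what makes everything cohere, and the bookkeeping that the affine-Weyl element realizing the apartment change also induces the correct identification of indexing data $\dot I(F_1)\overset{i}{=}\dot I(F_2)$ is the delicate part.
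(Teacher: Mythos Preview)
Your strategy matches the paper's: surjectivity via choosing a maximal $G^{\Fr}$-facet $F$ in $\BB(L)^{\Fr}$ (you do this by invoking Lemma~\ref{lem:P}, the paper does it by hand---these are the same $F$), and injectivity via the machinery of $\JGammamax$ built up in Section~\ref{sec:Kktori}. Your ellipticity argument for surjectivity is correct in outline; the paper's version is slightly more direct: rather than going through $\JGammamax$, it observes that if $(\theta',w')\stackrel{F}{\sim}(\theta,w)$ with $w'\in W_H$ for a larger facet $H$, then choosing $h\in G_H$ realizing $w'$ and using Lemma~\ref{lem:lem645} to identify $\lsup{h}\bA_{\theta'}$ with a $G_F^{\Fr}$-conjugate of $\bS$ produces a larger facet inside $\BB(L)^{\Fr}$, contradicting maximality of $F$.

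There is one genuine muddle in your injectivity argument. After your $M^{\Gamma}$-conjugation you land in some apartment $\AA'$ in $\BB(G)^{\Fr}$ that need not be $\AA(A)^{\Fr}$, and you then write ``an element of $(\affW)^{\Fr}$ moves $\AA'$ back to $\AA$.'' But $(\affW)^{\Fr}$ is (the image of) $N_{G^{\Fr}}(\bA)$ modulo a compact piece; it stabilizes $\AA(A)^{\Fr}$ and does not carry other apartments onto it. The paper avoids this by citing Corollary~\ref{cor:toogeneralbij} as a black box to obtain $g\in G^{\Fr}$ and an apartment $\AA'$ with $A(\AA',F_1)=A(\AA',gF_2)$ and $\sfS_1\overset{i}{=}\lsup{g}\sfS_2$; then, since $F_1\subset\AA'\cap\AA(A)^{\Fr}$, an element of $G_{F_1}^{\Fr}$ (not of $(\affW)^{\Fr}$) carries $\AA'$ to $\AA(A)^{\Fr}$. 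The remaining step---the one you correctly flag as delicate---is to extract from the composite $g\in G^{\Fr}$ an element $n\in(\affW)^{\Fr}$ realizing the $\approx$-equivalence, and here the paper invokes the affine Bruhat decomposition to write $g=n\cdot k$ with $n\in N_{G^{\Fr}}(\bA)$ and $k\in G_{F_2}^{\Fr}$. That decomposition is what makes the bookkeeping with $\X^*(\sfA_{F_i})=\X^*(\bA)$ come out right, and it is the ingredient your sketch is missing.
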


\begin{proof}
We first show that $j$ is surjective.   Let $\bS$ be an unramified torus in $\bG$.  Let  $\bL$ denote the centralizer of $\bS$ and let $F$ be a maximal $G^{\Fr}$-facet in $\BB(L)^{\Fr} \subset \BB(G)^{\Fr}$.  Choose a   maximal $K$-split $k$-torus  $\bS'$  of $\bL$ such that  $F \subset \BB(\bS')^{\Fr}$.  Note that   $\bS'$ contains $\bS$.  Fix a Borel $K$-subgroup  $\bB_\bL$ of $\bL$ that contains $\bS'$.    Choose $h \in G^{\Fr}$ so that $hF \subset \AA(A)^{\Fr}$.  After replacing $\bS$ with $\lsup{h}\bS$, we may assume that $F \subset \AA(A)^{\Fr}$.  

Let $\sfS$ denote the $\ff$-torus in $\sfG_F$ whose group of $\ff$-rational points coincides with the image of $S \cap G_F$ in $\sfG_F$.   There exists $g \in G_F$ such that $\bS' = {\lsup{g}\bA}$.  Let $\theta = g^{-1} \Delta(\bL,\bB_\bL,\bS') \in \Theta$ and let $w$ denote the image of $\Fr(g)^{-1} g$ in $W_F$.    Note that $\bS$ belongs to $j((F,\theta, w))$.   

To complete the proof of surjectivity, we need to show that $(F,\theta, w)$ is elliptic.  If it is not elliptic, then there exist $(\theta',w') \in \dot{I}(F)$ with $(\theta,w) \stackrel{F}{\sim} (\theta', w')$ and a $G^{\Fr}$-facet $H \subset \AA(A)^{\Fr}$ with $F \subset \bar{H}$ so that $w'$ lies in $W_H$.   Since $w' \in W_H$, there exists $h \in G_{H} \subset G_F$ such that  the image of $\Fr(h)^{-1} h$ in $W_{H} \leq W_F$ is $w'$.   
Since  $(\theta,w) \stackrel{F}{\sim} (\theta', w')$, from Lemma~\ref{lem:lem645} we have 
$\lsup{h}\bA_{\theta'} = \lsup{x}\bS$ for some $x \in G_F^{\Fr}$.  Set $k = x\inv h \in  G_F$.  Note that $\lsup{k}\bA \leq \bL$.   Hence,  $k H \subset \AA(\lsup{k}\bA)^\Fr \leq \BB(L)^{\Fr}$, contradicting the maximality of $F$.

We now show that if $(F_i,\theta_i, w_i)$ for $i \in \{1,2\}$ are two elements of  $\tilde{I}^{\ee}$ with $j((F_1 ,\theta_1 , w_1)) = j((F_2, \theta_2 , w_2))$, then $(F_1, \theta_1, w_1)  \approx (F_2, \theta_2, w_2).$    Choose $\sfS_i \in \RR(F_i)$ corresponding to $(\theta_i, w_i) \in \dot{I}^{\ee}(F_i)$ and let $\bS_i$ be a lift of $(F_i,\sfS_i)$.   Note that $(F_i, \sfS_i) \in \JGammamax$.  Since $j((F_1 , \theta_1 , w_1)) = j((F_2 , \theta_2 , w_2))$, we have that $\bS_1$ is $G^{\Fr}$-conjugate to  $\bS_2$.  Thanks to Corollary~\ref{cor:toogeneralbij}, we know that there exist $g \in G^{\Fr}$ and an apartment $\AA'$ in $\BB(G)^{\Fr}$ such that  $ \emptyset \neq A(\AA',F_1) = A(\AA',gF_2)$
and $\sfS_1 \overset{i}{=} {\lsup{g}\sfS_2}$ in $\sfG_{F_1} \overset{i}{=} \sfG_{gF_2}$.   After conjugating everything in sight by an element of $G_{F_1}^{\Fr}$ we may assume that $\AA' = \AA(A)^{\Fr}$.  Thanks to the affine Bruhat decomposition, we may choose $n \in N_{G^{\Fr}}(\bA)$ so that $n^{-1} g \in G^{\Fr}_{F_2}$.   Then there exists $x \in \sfG^{\Fr}_{F_2}$ such that after replacing, as we may, $\sfS_2$ by $\lsup{x}\sfS_2$ we may assume $ A(\AA(A)^\Fr, F_1 ) = A( \AA(A)^\Fr , nF_2 )$
and $\sfS_1 \overset{i}{=} {\lsup{n}\sfS_2}$ in $\sfG_{F_1} \overset{i}{=} \sfG_{nF_2}$.  Identifying $n$ with its image in $\affFrW$, the fact that $\sfS_1 \overset{i}{=} {\lsup{n}\sfS_2}$ in $\sfG_{F_1} \overset{i}{=} \sfG_{nF_2}$ means
$(\theta_1,w_1) \stackrel{F}{\sim} (n\theta_2, \lsup{n}w_2)$ in $\dot{I}(F_1)\overset{i}{=} \dot{I}(nF_2)$.
\end{proof}

\begin{cor}
There is a natural bijection between ${\tilde{I}^{\ee}/\!\approx}$ and the set of $G^\Fr$-conjugacy classes of unramified Levi subgroups in $\bG$.
\end{cor}

\begin{proof}
This follows from Theorem~\ref{thm:maintheorem}  and Remark~\ref{rem:421}
\end{proof}

\subsection{Example: Rational classes of Levi \texorpdfstring{$(k,k)$-subgroups}{(k,k)-subgroups}} \label{sec:ratlevis}
    Suppose $\bM$ is a Levi $(k,k)$-subgroup of $\bG$, that is, $\bM$ is the Levi component of a parabolic $k$-subgroup of $\bG$.  After conjugating $\bM$ by an element of $G^{\Fr}$,  we may and do assume that $\bA \leq \bM$.   Choose a basis $\theta \subset \Theta$ for $\Phi(\bM,\bA)$.   Since $\bM$ is a Levi $(k,k)$-subgroup and $\Phi_{\theta} = \Phi(\bM,\bA)$, we have $\Fr(\Phi_{\theta}) = \Phi_{\theta}$.   So, for any facet $F \subset \AA(A)^{\Fr}$ we have $(\theta,1) \in \dot{I}(F)$.  Thus $(\theta,1) \in \dot{I}^{\ee}(F)$ if and only if $F$ is an alcove in $\AA(A)^{\Fr}$.  By construction we have $\bM = C_\bG(\bA_\theta)$ and  $\bA_\theta \in j((C,\theta,1))$ where $C$ is any alcove in $\AA(A)^{\Fr}.$

Here is a systematic method, inspired by~\cite[pages 4 and 5]{solleveld:conjugacy}, for identifying, up to equivalence, the possible $\theta$ that can arise in a triple $(C,\theta,1)$ such that $j((C,\theta,1))$ parameterizes a rational conjugacy class of Levi $(k,k)$-subgroups.    Suppose $\bG^*$ is a $k$-quasi-split inner form of $\bG$ with Frobenius acting  on $G^*$ by $\Fr^*$.   Choose a Borel $k$-subgroup $\bB^*$ in $\bG^*$ and a maximally $k$-split maximal $K$-split $k$-torus $\bA^* \leq \bB^*$.  Let $\Delta^* = \Delta(\bG^*,\bB^*,\bA^*)$.   Fix a $\Fr^*$-stable alcove $D$ in $\AA(A^*) \subset \BB(G^*)$ such that every element of $\Delta^*$ occurs as the gradient of some affine simple root of $\bG$  with respect to $\bA^*$, $K$, $\nu$, and $D$.  Let $D'$ denote the image  of $D$ in the reduced building of $G^*$.   Since $\bG$ is an inner form of $\bG^*$, there exists $\Ad(m) \in N_{G^*_{\ad}}(\bA^*) \cap \Stab_{G^*_{\ad}}(D')$ such that $\bG$ is $k$-isomorphic to $\bG^*$ twisted by $\Ad(m) \circ \Fr^*$ (see, for example,
~\cite[Remark~3.4.5]{debacker:totally}).
 That is, we may assume $\bG_K = \bG^*_K$,  $G = G^*$ as abstract groups, and $\Fr(g) = \Ad(n) \circ \Fr^*(g)$ for all $g \in G$.

Without loss of generality, $\bB = \bB^*_K$. Since $\Ad(m) \circ \Fr^* (\bA^*) = \bA^*$, we conclude that $\bA^*$ twisted by $\Ad(m) \circ \Fr^*$ is a maximal $K$-split $k$-torus in $\bG$.   Since $\Ad(m) \circ \Fr^*(D) = D$, we have that $\bA^*$ twisted by $\Ad(m) \circ \Fr^*$ is a maximally $k$-split maximal $K$-split $k$-torus in $\bG$, that is, we can, without loss of generality, take $\bA$ to be $\bA^*$ twisted by $\Ad(m) \circ \Fr^*$.   We therefore have $\Delta = \Delta^*$, and we can take $C = D^{\Fr} \subset \AA(A)^{\Fr}$\!.
Define $\theta_m \subset \Delta \in \Theta$ by 
$$\theta_m := \{ \alpha \in \Delta \, | \, \bA^k \subset \ker(\alpha)\}. $$
The set of $\theta_m$ may also be characterized as the set of gradients of affine roots $\psi  \in \Psi(\bG^*,\bA^*,K,\nu)$ for which $\dot{\psi} \in \Delta$ and $ \res_{C}\psi$ is constant.

As we now show,  the set of triples $(C, \theta, 1)$ with $\theta \subset \Delta$ such that  $\theta_m \subset \theta$ and ${\theta}$ is $\Fr^*$-stable contains  a complete set of representatives (possibly with duplicates) of the equivalence classes in $\tilde{I}^{\ee}$ that parameterize the $G^{\Fr}$-conjugacy classes of Levi $(k,k)$-subgroups of $\bG$.

Note that $\Phi_{\theta_m}$ is $\Fr$-stable, $\bA^k \leq \bA_{\theta_m}$, and $\bA^k = \bA_{\theta_m}^k$.   Thus, by Corollary~\ref{cor:3.1.2}  we have that
$\bL_{\theta_m} := C_{\bG}(\bA_{\theta_m}) =  C_{\bG}(\bA^k)$ is 
a minimal Levi $(k,k)$-subgroup  of $\bG$.
Choose $\lambda \in \X_*(\bA^k)$ such that $\langle \lambda, \beta \rangle >0$ for all $\beta \in \Delta \setminus \Delta_m$.   Then 
$\bP_{\theta_m} := \bP(\lambda)$, the parabolic $k$-subgroup of $\bG$ generated by $\bL_{\theta_m}$ and those root groups $\bU_\alpha$ for $\alpha \in \Phi(\bG,\bA)$ such that $\langle \lambda,\beta \rangle >0$, is a minimal parabolic $k$-subgroup in $\bG$ that contains $\bB$ and has Levi factor $\bL_{\theta_m}$.   Since $\bB^* = \Fr^*(\bB^*)$, we conclude that, as $K$-groups, we have  $\bB \leq \Fr^*(\bP_{\theta_m}) = \Ad(m\inv) (\bP_{\theta_m})$.
Thus $\bP_{\theta_m}$ and $\Ad(m\inv)(\bP_{\theta_m})$ are conjugate standard parabolic $K$-subgroups, hence equal.  Since parabolic subgroups are self normalizing, we conclude that  $\Ad(m) \in P_{\ad}(\lambda)$.  Similarly,  since
$\bA  \leq \Fr^*(\bL_{\theta_m} )= \Ad(m\inv) (\bL_{\theta_m}) \leq \Ad(m\inv)(\bP_{\theta_m}) =  \bP_{\theta_m}$ as $K$-groups, we conclude that $\Ad(m\inv) (\bL_{\theta_m}) = \bL_{\theta_m}$, and so $\Ad(m) \in (L_{\ad})_{\theta_m}$.

 We are interested in those  $\theta$ such that $(C,\theta,1)$ corresponds to a maximal $K$-split torus in the center of a Levi $(k,k)$-subgroup of $\bG$.  Such a Levi belongs to a parabolic $k$-subgroup of $\bG$.  After conjugating by an element of $G^{\Fr}$, we may assume $\bA$ is contained in this Levi  and the parabolic subgroup is a standard parabolic  subgroup. Thus, we can assume $\theta_m \subset \theta \subset \Delta$.  
 Note  that if $\theta' \subset \Delta$ with $\theta_m \subset \theta'$, then, since $\Ad(m)$ has image in $W_{\theta_m}$,   we have $\Fr(\Phi_{\theta'}) = \Phi_{\theta'}$ if and only if $\Fr^* (\Phi_{\theta'}) = \Phi_{\theta'}$. Since $\theta' \subset \Delta^*$, we have that   $\Fr^*(\Phi_{\theta'}) = \Phi_{\theta'}$ if and only if $\Fr^* ({\theta'}) = {\theta'}$. Thus, the $\theta$ we seek are those subsets $\theta$ of $\Delta$ such that $\theta_m \subset \theta$ and $\Fr^* ({\theta}) = {\theta}$.

\subsection{Some applications of Theorem~\ref{thm:maintheorem}}

\begin{cor} \label{cor:corto5.5.4}  Suppose $\theta \in \Theta$.   A $G$-conjugate of $\bA_\theta$ is defined over $k$ if and only if there exists a facet $F \subset \AA(A)^{\Fr}$ such that $\Fr(\Phi_\theta) = w( \Phi_{\theta})$ for some $w \in W_F$. \qed
\end{cor}

\begin{cor} \label{cor:cortocorto5.5.4}Suppose $\bG$ is $k$-quasi-split, $\bM$ is a Levi $(K,K)$-subgroup of $\bG$, and $\bS$ is the maximal $K$-split torus in the center of $\bM$.   If $\Fr(\lsup{G}S) = \lsup{G}S$, then there exists $h \in G$ such that $\Fr(\lsup{h}S) = \lsup{h}S$.  Moreover, we may assume that $C_{\bG}(\lsup{h}\bS)$ is a $k$-quasi-split unramified twisted Levi subgroup of $\bG$.
\end{cor}

\begin{rem} The converse to Corollary~\ref{cor:cortocorto5.5.4} is trivially true, even when $\bG$ is not $k$-quasi-split.
\end{rem}

\begin{proof}
Without loss of generality $\bS = \bA_{\theta}$ for some $\theta \in \Theta$.   Choose $h \in G$ such that $\Fr(A_\theta) = \lsup{h}A_\theta$.  Since $\Fr(A) = A$, we have $A_\theta, \lsup{h}A_\theta \subset A$.  Thus $\lsup{h}\bA$ and $\bA$ are maximal $K$-split tori in $C_{\bG}(\lsup{h}\bA_{\theta})$.  Consequently, there exists $\ell \in C_{G}(\lsup{h}\bA_{\theta})$ such that $\lsup{\ell h} \bA = \bA$.  That is, $\ell h \in N_G(\bA)$.  Let $w$ denote the image of $\ell h$ in $W$.   Note that $\Fr(A_\theta) = A_{w \theta}$, that is $\Fr(\Phi_\theta) = w(\Phi_\theta)$.  

Since $\bG$ is $k$-quasi-split, there exists an absolutely special vertex $x_0$  that belongs to the image of $\AA(A)$ in  $\BB^{\red}(G)^{\Fr}$.  Let $n \in N_{G_{x_0,0}}(\bA)$ be a lift of $w$.   From Corollary~\ref{cor:corto5.5.4} with $F$ being the preimage in $\BB(G)^{\Fr}$  of $x_0$, we conclude that a $G$-conjugate of $A_{\theta}$ is $\Fr$-fixed.

We now show that we may assume that the centralizer in $\bG$ of this $\Fr$-fixed  $G$-conjugate of $A_{\theta}$ is $k$-quasi-split.  Indeed, we can choose $h \in G_{x_0,0}$ such that the image in $W$ of $\Fr(h)\inv h \in N_{G_{x_0,0}}(\bA)$  is $w$.  Note that both $\lsup{h}A_\theta$ and $\lsup{h}A$ are $\Fr$-stable.   Let $\bar{h}$ denote the image of $h$ in $\sfG_{x_0}$. Since $x_0$ is absolutely special, the root system $\Phi(C_{\sfG_{x_0}}(\lsup{\bar{h}}\sfA_\theta), \lsup{\bar{h}}\sfA)$ has a $\Fr$-invariant basis if and only if the root system  $\Phi(C_{\bG}(\lsup{h}\bA_\theta), \lsup{h}\bA)$ has a $\Fr$-invariant basis.  Since $C_{\sfG_{x_0}}(\lsup{\bar{h}}\sfA_\theta)$ is $\ff$-quasi-split, we conclude that $C_{\bG}(\lsup{h}\bA_\theta)$ is $k$-quasi-split.
\end{proof}

\begin{defn}  We will say that $\gamma \in G$ is an \emph{unramified semisimple element} provided that $C_\bG(\gamma)$ is a Levi $(K,K)$-subgroup and $\gamma$ belongs to the group of $K$-points of the maximal $K$-split torus in the center of $C_\bG(\gamma)$.
\end{defn}

\begin{cor}  \label{cor:ratpointsunramss} Suppose $\bG$ is $k$-quasi-split and $\gamma \in G$ is an unramified semisimple element. If $\Fr(\lsup{G} \gamma) = \lsup{G}\gamma$ then there exists $h \in G$ such that $\Fr(\lsup{h}\gamma) = \lsup{h}\gamma$.  Moreover, we may assume that $C_{\bG}(\lsup{h}\gamma)$ is a $k$-quasi-split unramified twisted Levi subgroup of $\bG$.
\end{cor}

\begin{rem} The converse to Corollary~\ref{cor:ratpointsunramss} is trivially true, even when $\bG$ is not $k$-quasi-split.
\end{rem}

\begin{rem}
When the derived group of $\bG$ is simply-connected, Corollary~\ref{cor:ratpointsunramss} may be derived from~\cite[Theorem~4.1 and Lemma~3.3]{kottwitz:rational}.
\end{rem}

\begin{proof}  Suppose $\gamma \in G$ is an unramified semisimple element.   Let $\bS$ denote the maximal $K$-split torus in the center of $C_{\bG}(\gamma)$.  The assignment $\gamma \mapsto \bS$ defines a $G$-equivariant function from the set of unramified semisimple elements to the set of tori that arise as maximal $K$-split tori in the center of  Levi $(K,K)$-subgroups in $\bG$.   Moreover, we have $\Fr(\lsup{g}\gamma) \mapsto \lsup{\Fr(g)} \Fr(\bS)$ for all $g \in G$.   Consequently, since $\Fr(\lsup{G}\gamma) = \lsup{G}\gamma$, we conclude that $\Fr(\lsup{G}S) = \lsup{G}S$.   From Corollary~\ref{cor:cortocorto5.5.4} there exists $h \in G$ such that $\Fr(\lsup{h}S) = \lsup{h}S$. From Theorem~\ref{thm:maintheorem} there exists $(F,\theta,w) \in \tilde{I}^{\ee}$ such that $\lsup{h}S \in j((F,\theta,w))$.  Thus, from the construction of the map $j$, there exists $g \in G$ such that $\lsup{g} \bS = \bA_{\theta}$, the image of $\Fr(g)\inv g \in N_{G_{F,0}}(\bA)$ in $W$ is $w$, and  $\lsup{g}\gamma \in \bA_{\theta}$.

Since $\bG$ is $k$-quasi-split, there exists an absolutely special vertex $x_0$ that belongs to the image of $\AA(A)$ in  $\BB^{\red}(G)^{\Fr}$.  Choose $\ell \in G_{x_0,0}$ such that the image of $\Fr(\ell)\inv \ell \in N_{G_{x_0,0}}(\bA)$ in $W$ is $w$.  
Replacing $\gamma$ by $\lsup{\ell g}\gamma$ and $\bS$ by $\lsup{\ell} A_\theta$, we can assume $\gamma \in \lsup{\ell} A_\theta$.   Since $\Fr(\lsup{\ell} A_\theta) = \lsup{\ell} A_\theta$, we must also have $\Fr(\gamma) \in \lsup{\ell} A_\theta$. 

Since $\Fr(\lsup{G}\gamma) = \lsup{G}\gamma$, there exists $m \in G$ such that $\lsup{m}\gamma = \Fr(\gamma)$.  Note that $\gamma, \lsup{m} \gamma = \Fr(\gamma) \in  \lsup{\ell} A_\theta  \leq \lsup{\ell}A$.   It follows that $\lsup{m \ell}\bA$ and $\lsup{\ell}\bA$ are maximal $K$-split tori in $C_{\bG}(\lsup{m}\gamma)$.   Thus, there exists $r \in C_{G}(\lsup{m}\gamma)$  such that $\lsup{r m \ell} \bA = \lsup{\ell} \bA$.   Choose $n \in N_{G_{x_0,0}}(\bA)$ such that $n$ and $\ell\inv r m \ell$ have the same image in $W$.    Since $\ell n \ell\inv \in G_{x_0,0}$, by Lang-Steinberg there exists $h \in G_{x_0,0}$ such that $\Fr(h)\inv h= \ell n \ell\inv$.  Since $\lsup{r} (\lsup{m} \gamma) = \lsup{m} \gamma$ and $\lsup{n}(\lsup{\ell\inv}\gamma ) = \lsup{\ell\inv r m \ell}(\lsup{\ell\inv}\gamma ) $, we have
$$\Fr(\lsup{h} \gamma ) = \lsup{\Fr(h) m} \gamma = \lsup{\Fr(h) rm} \gamma = \lsup{(\Fr(h) \ell)(\ell\inv rm  \ell)(\ell\inv)} \gamma= \lsup{\Fr(h) (\ell n \ell\inv)} \gamma = \lsup{h} \gamma. $$

The proof that we may assume that  $C_{\bG}(\lsup{h}\gamma)$ is a $k$-quasi-split unramified twisted Levi subgroup of $\bG$ is nearly identical to the proof of the similar result for $C_{\bG}(\lsup{h}\bS)$ in Corollary~\ref{cor:cortocorto5.5.4}
\end{proof}

\subsection{Unramified tori and Levi \texorpdfstring{$(k,k)$-subgroups}{kk subgroups}}

Recall from Section~\ref{sec:toriandlevi2}
 that if $\bM'$ is a Levi $(k,k)$-subgroup of $\bG$, then we let $(\bM')$ denote the   $G^\Fr$-conjugacy class of $\bM'$.

\begin{lemma}   Suppose $(F,\theta,w) \in \tilde{I}^{\ee}$.  If $\bS \in j((F,\theta,w))$, then there exists $\bM' \in (\lsub{F}\bM)$ such that $\bS \leq \bM'$.  Moreover,  if $\bM$ is a Levi $(k,k)$-subgroup of $\bG$ that contains $\bS$, then $(\lsub{F}\bM) \leq (\bM)$.
\end{lemma}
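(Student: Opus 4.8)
The plan is to derive this lemma from the earlier results about $(K,k)$-tori, specifically Lemma~\ref{lem:prelimforM} and Lemma~\ref{lem:howelliptic}, by combining them with the already-established relevant-torus bookkeeping in Section~\ref{sec:anindexingset}. First I would unpack the definition of $j(F,\theta,w)$: by construction there is a relevant torus $\sfS \in \RR(F)$ associated to $(\theta,w)$, and $\bS$ is a lift of $(F,\sfS)$. The key point to verify at the outset is that $(F,\sfS) \in \JGammamax$; this is needed so that Lemma~\ref{lem:prelimforM} and Lemma~\ref{lem:howelliptic} apply verbatim. That membership in $\JGammamax$ was in fact recorded inside the proof of Theorem~\ref{thm:maintheorem} (``Note that $(F_i, \sfS_i) \in \JGammamax$''), and it follows because $(F,\theta,w)$ is elliptic: the $F$-ellipticity of $(\theta,w)$ forces, via Lemma~\ref{lem:anisotropicasexpected}, that $C_{\sfG_F}(\sfS)$ is an $\ff$-minisotropic maximal torus in $\sfG_F$, which in turn means $\sfS$ cannot belong to a proper $\Gamma$-stable parabolic subgroup of $\sfG_F$, i.e. the maximality condition in the definition of $\JGammamax$ holds.

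With $(F,\sfS) \in \JGammamax$ in hand, the first assertion — existence of $\bM' \in (\lsub{F}\bM)$ with $\bS \leq \bM'$ — is precisely Lemma~\ref{lem:prelimforM}. The only caveat is the hypothesis $F \subset \AA(A)$ in that lemma; but $F$ is a $G^\Fr$-facet in $\AA(A)^\Fr \subset \AA(A)$ by the very definition of $\tilde{I}$, so there is nothing to do there. The second assertion — that any Levi $(k,k)$-subgroup $\bM$ of $\bG$ containing $\bS$ satisfies $(\lsub{F}\bM) \leq (\bM)$ — is exactly Lemma~\ref{lem:howelliptic}, again applicable since $F \subset \AA(A)$ and $\bS$ is a lift of $(F,\sfS)$. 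So the proof is essentially a two-line citation once the setup is stated, plus the verification that $(F,\sfS)$ is maximal.

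I expect the main obstacle to be the justification that $(F,\sfS) \in \JGammamax$, i.e. translating ``$(\theta,w)$ is $F$-elliptic'' into the parabolic-avoidance condition defining $\JGammamax$. The cleanest route: let $\bS$ be a lift of $(F,\sfS)$ and suppose, for contradiction, that there is a $\Gamma$-stable facet $H \supsetneq F$ (with $F \subset \bar H$) such that $\sfS$ lies in the $\ff$-parabolic $G_H/G_{F,0^+}$ of $\sfG_F$. Then $S \cap G_F$ fixes $H$, so by Remark~\ref{rem:starone} we get $H \subset \BB(C_G(S)) = \BB(L)$ where $\bL = C_\bG(\bS)$; but $\dim(H^\Fr) > \dim(F^\Fr)$ contradicts the fact that $F$ was chosen (inside $\RR(F)$-land, equivalently via $F$-ellipticity) to be a maximal $G^\Fr$-facet in $\BB(L)^\Fr$ — this maximality is exactly what $F$-ellipticity of $(\theta,w)$ encodes, as used in the surjectivity argument of Theorem~\ref{thm:maintheorem}. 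I would phrase the proof to invoke that argument or Lemma~\ref{lem:anisotropicasexpected} rather than re-deriving it.

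\begin{proof}
Let $\sfS \in \RR(F)$ be a relevant torus associated to $(\theta,w)$, and let $\bS$ be a lift of $(F,\sfS)$, so that $\bS \in j(F,\theta,w)$; any element of $j(F,\theta,w)$ is $G^\Fr$-conjugate to such a torus, so it suffices to treat this $\bS$. Since $(F,\theta,w)$ is elliptic, $(\theta,w) \in \dot{I}^\ee(F)$, and therefore, as in the proof of Theorem~\ref{thm:maintheorem}, $(F,\sfS) \in \JGammamax$. As $F \subset \AA(A)^\Fr \subset \AA(A)$, Lemma~\ref{lem:prelimforM} applies and yields $\bM' \in (\lsub{F}\bM)$ with $\bS \leq \bM'$. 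For the second statement, if $\bM$ is a Levi $(k,k)$-subgroup of $\bG$ containing $\bS$, then Lemma~\ref{lem:howelliptic} (again applicable since $F \subset \AA(A)$ and $\bS$ is a lift of $(F,\sfS)$) gives $(\lsub{F}\bM) \leq (\bM)$.
\end{proof}
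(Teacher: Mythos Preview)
Your proof is correct and follows the same approach as the paper: choose $\sfS \in \RR(F)$ corresponding to $(\theta,w)$, observe that $(F,\sfS) \in \JGammamax$, and then cite Lemmas~\ref{lem:prelimforM} and~\ref{lem:howelliptic}. If anything, you supply more justification than the paper does for the claim $(F,\sfS) \in \JGammamax$, which the paper simply asserts.
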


\begin{proof}
Choose $\sfS \in \RR(F)$ that corresponds to $(\theta,w) \in \dot{I}^{\ee} (F)$.  The pair $(F,\sfS)$ belongs to $\JGammamax$, and $\bS$ is  $G^\Fr$-conjugate to a lift of $(F,\sfS)$.   The result follows from Lemmas~\ref{lem:prelimforM} and~\ref{lem:howelliptic}.
 \end{proof}

\begin{defn}
Suppose $E$ is a Galois extension of $k$.  A Levi $(E,k)$-subgroup $\bL$ is called \emph{elliptic} provided that any maximal $k$-split torus in $\bL$  coincides with the maximal $k$-split torus in the center of $\bG$.
\end{defn}
 
\begin{cor}
An unramified twisted Levi corresponding to the parameterizing data $(F, \theta, w) \in \tilde{I}^{\ee}$ is elliptic if and only if $F$ is a minimal facet in $\BB(G)^\Fr$. \qed
\end{cor}

\subsection{A more concrete realization of the parameterization}
To parametrize the elements of $\UU$, one only needs to look at $G^{\Fr}$-facets in $\AA(A)^\Fr$ up to equivalence, and on each $G^{\Fr}$-facet, look at $\dot{I}(F)$ up to 
the equivalence given by the natural action of  $N_{\dot{W}}(W_F)$ on $\dot{I}(F) / \stackrel{F}{\sim}$.  Here $\dot{W}$ denotes the image of $N_{G^{\Fr}}(\bA)/C_{G^{\Fr}}(\bA)$ in $W^{\Fr}$.   More specifically, one can reduce to the following situation:

For  $G^{\Fr}$-facets $F, F'$ in $\AA(A)^{\Fr}$ we will say that $F$ and $F'$ are equivalent provided that there exists $n \in \affFrW$ such that  $\emptyset \neq A(\AA(A)^\Fr, F) = A(\AA(A)^\Fr, nF')$.  Fix a set of representatives $\FF$ for the equivalence classes determined by this equivalence relation.

Fix an alcove $C$ in $\AA(A)^{\Fr}$. Without loss of generality, if $F \in \FF$, then $F$ is in the closure of $C$.  For $F \in \FF$, we say that $(\theta,w), (\theta',w') \in \dot{I}^{\ee}(F)$ are equivalent provided that there exists $m \in N_{\dot{W}}(W_F) \cdot W_F$ such that 
\begin{itemize}
    \item $m \Phi_\theta = \Phi_{\theta'}$
    \item $\Fr(m)wm^{-1} \in w'(W_F \cap W_{\theta'})$
\end{itemize}

For each $F \in \FF$ choose a set of representatives $\iota(F)$ in $\dot{I}^{\ee}(F)$  for the action described above. Without loss of generality, we also require that if $(\theta,w) \in \iota(F)$, then $w$ does not lie in a proper $\Fr$-parabolic subgroup of $W_F$.  The set 
$$\{(F,\theta,w) \, | \text{$ F \in \FF$ and $(\theta,w) \in \iota(F)$} \}$$
indexes the $G^{\Fr}$-conjugacy classes of unramified tori in $\bG$.

\begin{example}
We consider $\Sp_4$ and adopt the notation of Example~\ref{ex:initialsp4}.  There are sixteen $G^{\Fr}$-conjugacy classes of unramified tori.   Since $G^{\Fr}$ acts transitively on the alcoves in $\BB(G)^{\Fr}$, to see how the above correspondence works it is enough to restrict our attention to a single alcove.  In Figure \ref{fig:Sp4tori}, we enumerate the sixteen triples $(F, \theta, w)$ that occur, up to equivalence.  
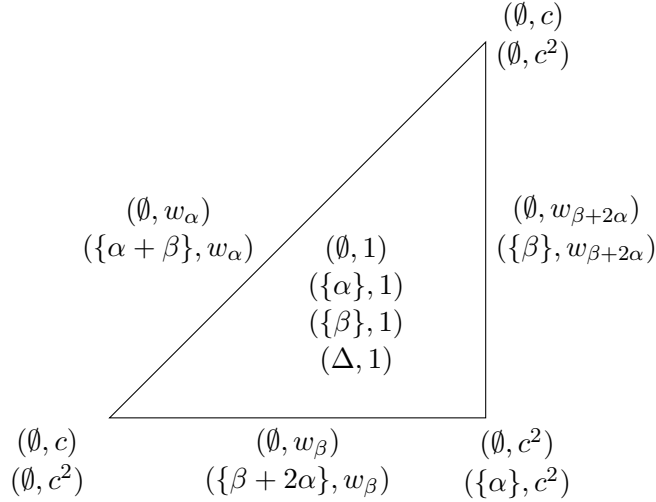
\begin{figure}[ht]
\centering
\begin{tikzpicture}
\draw (0,0) node[anchor=north]{\shortstack{$(\emptyset, c)$\\$(\emptyset, c^2)$} \hphantom{hellodolly}}
  -- (5,0) node[anchor=north]{$\hphantom{hello}$\shortstack{ $(\emptyset, c^2)$ \\ $(\{\alpha\}, c^2)$}}
  -- (5,5) node[anchor=west]{\shortstack{$(\emptyset,c)$\\$(\emptyset,c^2)$} $ \vphantom{P_{P_{P_{P_{P}}}}}$}
  -- cycle;
  \draw (2.5,0) node[anchor=north]{\shortstack{$(\emptyset, w_\beta)$ \\ $(\{\beta + 2 \alpha \}, w_\beta)$} };
  \draw (-.5,2.5) node[anchor=west]{\shortstack{$(\emptyset, w_\alpha)$ \\$(\{\alpha + \beta\},w_\alpha)$} $\hphantom{hellodollythree}$};
   \draw (5,2.5) node[anchor=west]{\shortstack{$(\emptyset, w_{\beta + 2 \alpha})$ \\ $(\{\beta\}, w_{\beta + 2 \alpha})$}};
   \draw (2.5,1.5) node[anchor=west]{\shortstack{$(\emptyset, 1)$ \\ $(\{\alpha\}, 1)$\\ $(\{\beta\},1)$\\$(\Delta, 1)$}};
\end{tikzpicture}
\caption{A parameterization  of the rational conjugacy classes of unramified tori  for $\Sp_4$ \label{fig:Sp4tori}}
\end{figure}
The centralizer of the unramified torus corresponding to the pair $(\{\alpha + \beta\}, w_\alpha)$ is unramified  $U(1,1)$ while the centralizer of the unramified torus corresponding to the pair $(\{\alpha\},c^2)$ is unramified $U(2)$  (using Jabon's notation~\cite{jabon:thesis}).  The centralizers of the tori corresponding to the pairs $(\{\beta\}, w_{\beta + 2 \alpha})$ and $(\{\beta + 2 \alpha\}, w_\beta)$ are of the form $\SL_2 \times \bS$ where $\bS(k)$ is the group of norm-one elements of an unramified quadratic extension of $k$.   The four unramified tori with labels of the form $(\theta, 1)$ are the $k$-split components of the centers of the four (up to rational conjugacy) distinct $k$-subgroups of $\Sp_4$ that occur as a Levi factor for a parabolic $k$-subgroup of $\Sp_4$.

\end{example}

\begin{example}
Let $\bG$ be a connected reductive group of type $\typeA_{n-1}$ such that $G^{\Fr} \cong \SL_1(D)$ where $D$ is a division algebra of index $n$ over $k$.  Recall that we may identify  $\bG(K)$ with $\SL_n(K)$.    Suppose $C$ is the alcove in $\AA(A) \leq \BB(G)$ for which $C^{\Fr} \neq \emptyset$.  Let $\{\psi_0, \psi_1 , \ldots , \psi_{n-1} \}$ be the simple affine $K$-roots determined by $\bG$, $\bA$, $\nu$, and $C$.   We assume that the $\psi_i$ are labeled so that $\Fr(\psi_i) = \psi_{i+1}$ mod $n$.

Suppose $1 \leq j \leq n$ and $n = jm$ for some $m \in \mathbb{N}$.  For $1 \leq \ell \leq (j-1)m$ set $$\alpha_\ell = \sum_{i=1}^m \dot{\psi}_{\ell + i}.$$
Note that for $1 \leq i \leq m$ the set $$\theta_j^i = \{ \alpha_i, \alpha_{i+m}, \ldots , \alpha_{i+(j-2)m} \}$$
is a basis for a root subsystem  of type $\typeA_{j-1}$ in $\Phi(\bG,\bA)$.  Moreover, the roots in $\theta_j^i$ are orthogonal to those in $\theta_j^{i'}$ for $i \neq i'$.

Define $\theta_j = \cup_{i = 1}^m \theta_j^i$. Note that if $j=1$, then   $\theta_j = \theta_1 = \emptyset$, while if $j = n$, then $\theta_j = \theta_n = \{ \dot{\psi}_2, \dot{\psi}_3,  \cdots , \dot{\psi}_{n-2}, \dot{\psi}_{n-1}, \dot{\psi}_0\}$. We have that  $C_\bG(\bA_{\theta_j})$ is a Levi $K$-subgroup in $\bG$ of type $\typeA_{j-1} \times \typeA_{j-1} \times \cdots \times \typeA_{j-1}$ where there are $m$ copies of $\typeA_{j-1}$ in this product.

Since $\Fr(\alpha_\ell) = \alpha_{\ell+1} \in \theta_j$ for $1 \leq \ell \leq (j-1)m -1$ and $\Fr(\alpha_{(j-1)m} ) = - (\alpha_1 + \alpha_{1+m} + \cdots + \alpha_{1 +(j-1)m}) \in \Phi_{\theta_j^1} \subset \Phi_{\theta_j}$, we can conclude that $\Phi_{\theta_j}$ is $\Fr$-stable.  
Hence, $C_\bG(\bA_{\theta_j})$ is a Levi $(K,k)$-subgroup; that is,  $\bA_{\theta_j}$ is an unramified torus in $\bG$.

The set $\{(C^{\Fr}, \theta_j, 1) \, | \, \text{$1 \leq j \leq n$ and $j$ divides $n$} \}$ is a complete set of representatives for ${\tilde{I}^{\ee}/\!\approx}$.   Indeed, $(C^{\Fr}, \theta_j, 1)$ corresponds to the $G^\Fr$-conjugacy class of $\bA_{\theta_j}$.  In turn, these tori  correspond to the maximal unramified extensions that occur in  fields $E \leq D$ that split over a degree $n$ extension of $k$  and whose maximal unramified subfield has degree $n/j$ over $k$.
\end{example}

\section{Stable conjugacy}
\label{sec:stableconj}

Suppose $\bS$ is an unramified torus in $\bG$.   A $k$-embedding of $\bS$ into $\bG$ is a $k$-morphism $f \colon \bS \rightarrow \bG$ for which there exists $g \in G$ such that $f(s) = \lsup{g}s$ for all $s \in S$.
In this section we investigate the $k$-embeddings of $\bS$ into $\bG$ and enumerate these $k$-embeddings up to $G^{\Fr}$-conjugacy.

\subsection{Two indexing sets over \texorpdfstring{$k$}{k}}
Recall from Section~\ref{sec:anindexingset} that we have defined
$$\dot{I} = \{ (\theta,w) \, | \, \theta \in \Theta(\bG,\bA) \text{, } w \in W \text{, and } \Fr(\Phi_\theta) = w \Phi_\theta \}.$$
For $(\theta',w'), (\theta,w) \in \dot{I}$ we write $(\theta',w') \dot{\sim} (\theta,w)$ provided that there exists a $n \in W$ for which
\begin{itemize}
\item  $\Phi_{\theta '} =  n  \Phi_{\theta}$ and
\item $w' \in \Fr(n) w {n} ^{-1} W_{\theta'}$.
\end{itemize}

\begin{lemma}
The relation $\dot{\sim}$ is an equivalence relation on $\dot{I}$. \qed
\end{lemma}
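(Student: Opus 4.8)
The plan is to verify the three properties of an equivalence relation—reflexivity, symmetry, and transitivity—directly from Definition of $\dot{\sim}$, in exactly the same manner as the proof of the analogous Lemma for $\stackrel{F}{\sim}$ on $\dot{I}(F)$. Reflexivity is immediate: take $n = 1$, so that $\Phi_\theta = \Phi_\theta$ and $w \in \Fr(1) w \, 1^{-1} W_\theta = w W_\theta$ since $1 \in W_\theta$.

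For symmetry, suppose $(\theta',w') \dot{\sim} (\theta,w)$ via $n \in W$, so $\Phi_{\theta'} = n\Phi_\theta$ and $w' \in \Fr(n) w n^{-1} W_{\theta'}$. I would show $(\theta,w) \dot{\sim} (\theta',w')$ using $n^{-1}$: clearly $\Phi_\theta = n^{-1} \Phi_{\theta'}$. For the second condition, from $w' \in \Fr(n) w n^{-1} W_{\theta'}$ write $w' = \Fr(n) w n^{-1} u$ with $u \in W_{\theta'}$, so $w = \Fr(n)^{-1} w' u^{-1} n = \Fr(n^{-1}) w' \bigl(\Fr(n^{-1}) u^{-1} \Fr(n)\bigr)(n^{-1})^{-1}$. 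The key point is that $\Fr(n^{-1}) W_{\theta'} \Fr(n) = W_{\Fr(n^{-1})\theta'}$, and since $\Fr(\Phi_{\theta'}) = w'\Phi_{\theta'}$ (as $(\theta',w') \in \dot{I}$) one has $\Fr(\Phi_{\theta'}) = \Phi_{\Fr(\theta')}$, hence $\Fr(n^{-1})\Phi_{\Fr(\theta')} = n^{-1}\Phi_{\theta'} = \Phi_\theta$; combining, the conjugating element lies in $W_{\theta}$, so $w \in \Fr(n^{-1}) w' (n^{-1})^{-1} W_\theta$, as required. One must be slightly careful about whether the root-system subscript should be $\Phi_\theta$ or $\Phi_{\theta'}$ when conjugating Weyl subgroups; tracking this bookkeeping is the only subtlety.

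For transitivity, suppose $(\theta'',w'') \dot{\sim} (\theta',w')$ via $n_1$ and $(\theta',w') \dot{\sim} (\theta,w)$ via $n_2$, so $\Phi_{\theta''} = n_1 \Phi_{\theta'}$, $w'' \in \Fr(n_1) w' n_1^{-1} W_{\theta''}$, $\Phi_{\theta'} = n_2 \Phi_\theta$, and $w' \in \Fr(n_2) w n_2^{-1} W_{\theta'}$. Set $n = n_1 n_2$; then $\Phi_{\theta''} = n_1 n_2 \Phi_\theta = n\Phi_\theta$. Substituting the expression for $w'$ into that for $w''$ gives $w'' \in \Fr(n_1)\Fr(n_2) w n_2^{-1} W_{\theta'} n_1^{-1} W_{\theta''} = \Fr(n) w n_2^{-1} n_1^{-1} \bigl(n_1 W_{\theta'} n_1^{-1}\bigr) W_{\theta''} = \Fr(n) w n^{-1} W_{n_1\theta'} W_{\theta''}$. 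Since $\Phi_{n_1\theta'} = n_1\Phi_{\theta'} = \Phi_{\theta''}$, both factors equal $W_{\theta''}$, so $w'' \in \Fr(n) w n^{-1} W_{\theta''}$, completing transitivity.

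The main obstacle, such as it is, is purely notational: keeping straight which index ($\theta$, $\theta'$, or $\theta''$) labels each copy of a Weyl subgroup as one conjugates by $n$, $n^{-1}$, or $n_1$, and invoking the identity $\Fr(\Phi_\eta) = \Phi_{\Fr(\eta)}$ together with $n W_\eta n^{-1} = W_{n\eta}$ at the right moments. Since this is routine, I expect to simply cite the parallel argument: \emph{The proof is nearly identical to that of the preceding lemma for $\stackrel{F}{\sim}$ (see also~\cite[Section 3.6]{debacker:nilpotent}).}
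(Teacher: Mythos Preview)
Your approach is the natural one and matches the paper's intent (the paper simply writes \qed with no proof, treating this as routine). Reflexivity and transitivity are fine as written.

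However, your symmetry argument contains an algebraic slip. From $w' = \Fr(n) w n^{-1} u$ with $u \in W_{\theta'}$ you obtain $w = \Fr(n^{-1}) w' u^{-1} n$, but your next rewriting as $\Fr(n^{-1}) w' \bigl(\Fr(n^{-1}) u^{-1} \Fr(n)\bigr)(n^{-1})^{-1}$ is incorrect: expanding gives $\Fr(n^{-1}) w' \Fr(n^{-1}) u^{-1} \Fr(n) n$, which is not the same expression. No Frobenius is needed at this step. The correct and simpler manipulation is
\[
w = \Fr(n^{-1}) \, w' \, n \cdot \bigl(n^{-1} u^{-1} n\bigr),
\]
and since $n^{-1} \Phi_{\theta'} = \Phi_\theta$ we have $n^{-1} W_{\theta'} n = W_\theta$, so $n^{-1} u^{-1} n \in W_\theta$. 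Thus $w \in \Fr(n^{-1}) w' (n^{-1})^{-1} W_\theta$, which is exactly the required condition with $m = n^{-1}$. Your subsequent remarks invoking $\Fr(\Phi_{\theta'}) = w'\Phi_{\theta'}$ and the identity $\Fr(n^{-1})\Phi_{\Fr(\theta')} = \Phi_\theta$ are both unnecessary and the latter is not correct in general (you would get $\Fr(\Phi_\theta)$, not $\Phi_\theta$). Once you replace that paragraph with the one-line computation above, the proof is complete.
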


There is a smaller indexing  set that carries much of  the same information as $\dot{I}$.  Let $I$ denote the set of pairs $(\theta,w)$ where $\theta \subset \Delta$ and $w \in W$ such that $ \Fr (\theta) = w \theta$; we  require neither that $w^{-1} \circ \Fr$ fix $\theta$ point-wise nor that $\Fr(\theta)$ and $w \theta$ are subsets of $\Delta$.   For $(\theta',w'), (\theta,w) \in I$ we write $(\theta',w') \sim (\theta,w)$ provided that there exists a $n \in W$ for which
\begin{itemize}
\item  $\theta ' =  n  \theta$ and
\item $w' = \Fr(n) w {n} ^{-1}$.
\end{itemize}

\begin{lemma}
The relation $\sim$ is an equivalence relation on $I$. \qed
\end{lemma}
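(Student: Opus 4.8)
The plan is to verify directly the three axioms of an equivalence relation, exactly as in the analogous verifications for $\sim$ on $I_\sfG$ in Lemma~\ref{lem:ffversion} and for $\stackrel{F}{\sim}$ on $\dot I(F)$. The only structural fact needed beyond the definitions is that $\Fr$ restricts to a group automorphism of $W$ that is compatible with the $W$-action on $\Phi$ (hence on subsets of $\Delta$); this is part of the setup in the Notation section, where $\Gamma$ is recorded as acting on $\Phi$, $W$, and $\Theta$. In particular $\Fr(1)=1$, $\Fr(n^{-1})=\Fr(n)^{-1}$, and $\Fr(n_1 n_2)=\Fr(n_1)\Fr(n_2)$ for $n,n_1,n_2\in W$.

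For reflexivity, given $(\theta,w)\in I$, take $n=1$: then $\theta=1\cdot\theta$ and $w=\Fr(1)\,w\,1^{-1}=w$, so $(\theta,w)\sim(\theta,w)$. For symmetry, suppose $(\theta',w')\sim(\theta,w)$ via some $n\in W$, so $\theta'=n\theta$ and $w'=\Fr(n)\,w\,n^{-1}$. Then $\theta=n^{-1}\theta'$, and applying $\Fr(n)^{-1}$ on the left and $n$ on the right of the relation for $w'$ gives $w=\Fr(n)^{-1}\,w'\,n=\Fr(n^{-1})\,w'\,(n^{-1})^{-1}$; hence $n^{-1}$ witnesses $(\theta,w)\sim(\theta',w')$. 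For transitivity, suppose $(\theta'',w'')\sim(\theta',w')$ via $n_1$ and $(\theta',w')\sim(\theta,w)$ via $n_2$. Then $\theta''=n_1\theta'=n_1 n_2\theta$ and
\[
w''=\Fr(n_1)\,w'\,n_1^{-1}=\Fr(n_1)\Fr(n_2)\,w\,n_2^{-1}n_1^{-1}=\Fr(n_1 n_2)\,w\,(n_1 n_2)^{-1},
\]
so $n_1 n_2$ witnesses $(\theta'',w'')\sim(\theta,w)$. Note that throughout we are only relating pairs already known to lie in $I$, so no re-verification of the defining conditions $\theta\subset\Delta$ and $\Fr(\theta)=w\theta$ is required.

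There is no genuine obstacle here: the argument is pure bookkeeping, and the only point that deserves explicit mention is that $\Fr\colon W\to W$ is a homomorphism, which is what makes the choices $n=1$, $n^{-1}$, and $n_1 n_2$ do the work in the three cases. If one wishes, one can simply remark that the proof is identical to that of the corresponding statement for $\dot{\sim}$ on $\dot I$ (or for $\stackrel{F}{\sim}$ on $\dot I(F)$), with the condition ``$\Phi_{\theta'}=n\Phi_\theta$'' replaced by ``$\theta'=n\theta$'' and the coset conditions on $w'$ replaced by the equality $w'=\Fr(n)w n^{-1}$.
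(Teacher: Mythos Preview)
Your verification is correct and is exactly the routine check the paper omits (the statement is marked \qed\ with no argument given). The paper treats this as self-evident bookkeeping, which is precisely what you have written out; there is nothing to add or compare.
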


\begin{example}
For $\Sp_4(k)$ representatives of the ten equivalence classes in $I$ can be taken to be: $(\emptyset, 1)$; 
$(\emptyset, w_\alpha)$; 
$(\emptyset, w_\beta)$; 
$(\emptyset, w_\alpha w_\beta)$; 
$(\emptyset,  w_\alpha w_\beta w_\alpha w_\beta)$; 
$(\{\alpha\}, 1)$; 
$(\{\alpha\}, w_\beta w_\alpha w_\beta)$; 
$(\{\beta\}, 1)$; 
$(\{\beta\}, w_\alpha w_\beta w_\alpha)$;   and 
$(\Delta, 1)$.
\end{example}

\begin{lemma}  \label{lem:yexists}
Suppose $\theta \subset \Delta$ and $w \in W$.   If $ \Fr (\Phi_\theta) = w \Phi_\theta$, then there exists a unique $y \in W_\theta$ for which $\Fr (\theta) = wy \theta$.  
\end{lemma}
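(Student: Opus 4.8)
The plan is to exploit the fact that $\theta$ is a set of \emph{simple} roots, so that $\Phi_\theta$ is the root system with base $\theta$, and that $W_\theta$ acts simply transitively on the set of bases of $\Phi_\theta$. First I would observe that the hypothesis $\Fr(\Phi_\theta) = w\Phi_\theta$ implies $w^{-1}\Fr(\Phi_\theta) = \Phi_\theta$, so $w^{-1}\circ\Fr$ permutes $\Phi_\theta$. Both $\theta$ and $w^{-1}\Fr(\theta)$ are bases of $\Phi_\theta$: the former since $\theta \subset \Delta$ is a base, the latter since $\Fr$ sends the base $\theta$ of $\Phi$ to a base $\Fr(\theta)$ of $\Fr(\Phi_\theta) = w\Phi_\theta$, hence $w^{-1}\Fr(\theta)$ is a base of $\Phi_\theta$. (Here one uses that $\Fr$ and $w$ act by automorphisms preserving the relevant root systems, which is immediate.)

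Next, by simple transitivity of $W_\theta$ on the bases of $\Phi_\theta$, there is a unique $y \in W_\theta$ with $y\theta = w^{-1}\Fr(\theta)$, i.e. $wy\theta = \Fr(\theta)$. This gives both existence and uniqueness of $y$ \emph{within $W_\theta$}. The only subtlety to address is that the displayed condition $\Fr(\theta) = wy\theta$ determines $y$ only up to the stabilizer in $W$ of the base $\theta$ of $\Phi_\theta$; but since we are asked for $y \in W_\theta$, and $W_\theta$ acts simply transitively on bases of $\Phi_\theta$, the element of $W_\theta$ realizing a given base is genuinely unique, so there is nothing further to check.

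The step I expect to require the most care is verifying that $w^{-1}\Fr(\theta)$ really is a base \emph{of the subsystem} $\Phi_\theta$ and not merely a linearly independent set of roots in $\Phi$: one must note that $w^{-1}\Fr$ maps $\Phi_\theta$ bijectively to itself (from the hypothesis), maps the base $\theta$ to a set of roots all lying in $\Phi_\theta$, and preserves the property of being a base (being an automorphism of the ambient vector space carrying one positive system to another). Once that is in hand, the simple transitivity of $W_\theta$ on bases of $\Phi_\theta$ — a standard fact for root systems, e.g. from the theory of Weyl chambers — finishes the argument immediately. No computation beyond this is needed.
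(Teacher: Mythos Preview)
Your proof is correct and follows essentially the same approach as the paper: both arguments observe that $w^{-1}\Fr(\theta)$ is a base of $\Phi_\theta$ and invoke the simple transitivity of $W_\theta$ on bases of $\Phi_\theta$ to produce the unique $y$. One small wording slip: you call $\theta$ ``the base of $\Phi$'' when you mean the base of $\Phi_\theta$, but your argument uses the correct fact.
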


\begin{rem}  If there exists  $y \in W_\theta$ for which $\Fr (\theta) = wy \theta$, then we have that $ \Fr (\Phi_\theta) = w \Phi_\theta$.
\end{rem}

\begin{proof}
Note that $\Fr(\theta)$ is a basis for $\Fr (\Phi_\theta)$ and $w^{-1} \Fr(\theta)$ is a basis for  $ w^{-1} \Fr (\Phi_\theta) = \Phi_\theta$.  Since $W_\theta$ acts simply transitively on the set of bases for $\Phi_\theta$, there exists a unique $y \in W_\theta$ for which $y ^{-1} w^{-1}\Fr (\theta) = \theta$.
\end{proof}

\begin{lemma}
The natural inclusion $I \hookrightarrow \dot{I}$ induces a bijection between ${I/\!\sim}$ and $\dot{I}/\dot{\sim}$.
\end{lemma}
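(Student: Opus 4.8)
The plan is to construct explicit maps in both directions and check they are mutually inverse on equivalence classes. First I would show the inclusion $I \hookrightarrow \dot{I}$ is well defined at the level of sets: if $(\theta,w) \in I$, meaning $\theta \subset \Delta$ and $\Fr(\theta) = w\theta$, then certainly $\Fr(\Phi_\theta) = \Phi_{\Fr(\theta)} = \Phi_{w\theta} = w\Phi_\theta$, so $(\theta,w) \in \dot{I}$. Next I would check that the inclusion respects the equivalence relations, i.e.\ if $(\theta',w') \sim (\theta,w)$ in $I$ via $n \in W$, then $(\theta',w') \dot{\sim} (\theta,w)$ in $\dot{I}$ via the same $n$: the condition $\theta' = n\theta$ implies $\Phi_{\theta'} = n\Phi_\theta$, and $w' = \Fr(n)w n^{-1}$ certainly lies in $\Fr(n)wn^{-1}W_{\theta'}$. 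So we get a well-defined map ${I/\!\sim} \to \dot{I}/\dot{\sim}$.

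For surjectivity, I would take an arbitrary $(\theta_0, w_0) \in \dot{I}$, where $\theta_0 \in \Theta$ and $\Fr(\Phi_{\theta_0}) = w_0 \Phi_{\theta_0}$. Since $\theta_0 \in \Theta = \{v\rho : v \in W, \rho \subset \Delta\}$, write $\theta_0 = v\rho$ with $\rho \subset \Delta$ and $v \in W$. Then $\Phi_{\theta_0} = v\Phi_\rho$, and I would transport the pair: set $\theta = \rho \subset \Delta$ and conjugate $w_0$ appropriately. Concretely, the condition $\Fr(v\Phi_\rho) = w_0 v \Phi_\rho$ rearranges (using $\Gamma$-equivariance of the $W$-action, so $\Fr(v\Phi_\rho) = \Fr(v)\Fr(\Phi_\rho) = \Fr(v)\Phi_{\Fr(\rho)}$) to $\Phi_{\Fr(\rho)} = (\Fr(v)^{-1} w_0 v)\Phi_\rho$, i.e.\ setting $w_1 = \Fr(v)^{-1} w_0 v$ we have $(\rho, w_1) \in \dot{I}$ with $\rho \subset \Delta$, and $(v\rho, w_0) = (\theta_0, w_0) \dot{\sim} (\rho, w_1)$ via $n = v$. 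Now I invoke Lemma~\ref{lem:yexists}: since $\rho \subset \Delta$ and $\Fr(\Phi_\rho) = w_1 \Phi_\rho$, there is a unique $y \in W_\rho$ with $\Fr(\rho) = w_1 y \rho$. Then $(\rho, w_1 y) \in I$, and $(\rho, w_1 y) \dot{\sim} (\rho, w_1)$ in $\dot{I}$ via $n = 1$ because $w_1 y \in w_1 W_\rho = w_1 W_{\rho}$. Chaining, $(\theta_0, w_0) \dot{\sim} (\rho, w_1 y)$, so the class of $(\rho, w_1y) \in I$ maps to the class of $(\theta_0, w_0)$, giving surjectivity.

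For injectivity, suppose $(\theta_1, w_1), (\theta_2, w_2) \in I$ with $(\theta_1, w_1) \dot{\sim} (\theta_2, w_2)$ in $\dot{I}$; I must show $(\theta_1, w_1) \sim (\theta_2, w_2)$ in $I$. By hypothesis there is $n \in W$ with $\Phi_{\theta_2} = n\Phi_{\theta_1}$ and $w_2 \in \Fr(n) w_1 n^{-1} W_{\theta_2}$. Since $\theta_1, \theta_2 \subset \Delta$ are bases of $\Phi_{\theta_1}, \Phi_{\theta_2}$ and $n$ carries one root system to the other, $n\theta_1$ and $\theta_2$ are both bases of $\Phi_{\theta_2}$; as $W_{\theta_2}$ acts simply transitively on bases of $\Phi_{\theta_2}$, there is a unique $u \in W_{\theta_2}$ with $un\theta_1 = \theta_2$. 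Replace $n$ by $un$: then still $\Phi_{\theta_2} = n\Phi_{\theta_1}$, now $n\theta_1 = \theta_2$, and the coset condition is preserved since $\Fr(un)w_1(un)^{-1} W_{\theta_2} = u\Fr(n)w_1 n^{-1}u^{-1}W_{\theta_2} = \Fr(n)w_1n^{-1}W_{\theta_2}$ (using $u \in W_{\theta_2}$ and that $W_{\theta_2}$ is normalized, which follows because conjugating $W_{\theta_2}$ by $u \in W_{\theta_2}$ is inner). So we may assume $n\theta_1 = \theta_2$. Now write $w_2 = \Fr(n) w_1 n^{-1} z$ for some $z \in W_{\theta_2}$. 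Applying both sides to $\theta_2$: $w_2 \theta_2 = \Fr(\theta_2)$ by the defining relation for $(\theta_2,w_2) \in I$, while $\Fr(n)w_1 n^{-1} z \theta_2 = \Fr(n) w_1 n^{-1} \theta_2 = \Fr(n) w_1 \theta_1 = \Fr(n)\Fr(\theta_1) = \Fr(n\theta_1) = \Fr(\theta_2)$, using $z\theta_2 = \theta_2$ (as $z \in W_{\theta_2}$ fixes its base setwise — more precisely permutes it, and we need $z$ fixes the base; actually $W_{\theta_2}$ acting on the base $\theta_2$ need not fix it, so here I must be careful: rather, compare $w_2$ and $\Fr(n)w_1n^{-1}$ directly). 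I expect the main obstacle to be exactly this last point: pinning down that $z = 1$, which should follow because $w_2$ and $\Fr(n) w_1 n^{-1}$ both send the base $\theta_2$ to the base $\Fr(\theta_2)$, so their ratio $z$ lies in $W_{\theta_2}$ and fixes a base of $\Phi_{\theta_2}$, hence $z = 1$ by simple transitivity. Then $w_2 = \Fr(n) w_1 n^{-1}$ and $(\theta_1, w_1) \sim (\theta_2, w_2)$ in $I$, completing injectivity.
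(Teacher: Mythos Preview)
Your overall strategy matches the paper's proof almost exactly: for surjectivity, move an arbitrary $(\theta_0,w_0)\in\dot{I}$ to a pair with first coordinate in $\Delta$ by a Weyl conjugation, then invoke Lemma~\ref{lem:yexists} to land in $I$; for injectivity, adjust $n$ by an element of $W_{\theta_2}$ so that $n\theta_1=\theta_2$, then show the leftover element of $W_{\theta_2}$ fixes the base $\theta_2$ and hence is trivial. The surjectivity argument and the final ``$z=1$'' step are correct as written.

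There is one genuine gap in the injectivity argument. When you replace $n$ by $un$ with $u\in W_{\theta_2}$, you assert that the coset condition is preserved via
\[
\Fr(un)\,w_1\,(un)^{-1}W_{\theta_2} \;=\; u\,\Fr(n)\,w_1\,n^{-1}u^{-1}W_{\theta_2},
\]
but $\Fr(un)=\Fr(u)\Fr(n)$, not $u\Fr(n)$, and in general $\Fr(u)\notin W_{\theta_2}$. The coset condition \emph{is} preserved, but for a different reason: since $(\theta_2,w_2)\in I$ we have $\Fr(\theta_2)=w_2\theta_2$, hence $\Fr(u)\in W_{\Fr(\theta_2)}=w_2 W_{\theta_2} w_2^{-1}$. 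Combining this with $w_2\in \Fr(n)w_1n^{-1}W_{\theta_2}$ gives $\Fr(u)\,\Fr(n)w_1n^{-1}W_{\theta_2}=\Fr(n)w_1n^{-1}W_{\theta_2}$, and then the $u^{-1}$ on the right is absorbed as you say. This is precisely the observation the paper uses (phrased there as showing directly that $\tilde n\,w_1^{-1}\Fr(\tilde n)^{-1}w_2\in W_{\theta_2}$ via $\Fr(y)\in w_2 W_{\theta_2}w_2^{-1}$). Once you insert this correction, your argument is complete and agrees with the paper's.
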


\begin{proof}
Consider the map $\iota \colon I \rightarrow \dot{I}/\dot{\sim}$ defined by sending $(\theta, w) \in I$ to the equivalence class of $(\theta,w)$ in $\dot{I}/\dot{\sim}$.

We first show that $\iota$ is surjective.  Suppose $(\theta',w') \in \dot{I}$.   There exists $n \in W$ such that $n\inv \theta' \subset \Delta$.    Set $w = \Fr(n)\inv w' n$ and $\theta = n\inv \theta'$.  Note that
$$w \Phi_\theta =  \Fr(n)\inv w' n \Phi_{n\inv \theta'} = \Fr(n)\inv w' \Phi_{\theta'} = \Fr(n \inv) \Fr(\Phi_{\theta'}) = \Fr(n\inv \Phi_{\theta'}) = \Fr(\Phi_\theta).$$
Thanks to  Lemma~\ref{lem:yexists}, there exists $y \in W_\theta$ such that $\Fr(\theta) = wy \theta$.    Hence $(\theta,wy) \in \iota(I)$.   We need to show $(\theta',w') \dot{\sim} (\theta,wy)$.

Since $n\theta = \theta'$ we have $\Phi_{\theta'} = n \Phi_{\theta}$.   Since $w' = \Fr(n) w n\inv$, we have
$$w' = (\Fr(n) w n\inv) (nyn\inv) (n y\inv n\inv) \in (\Fr(n) w n\inv) (nyn\inv) W_{\theta'} =  \Fr(n) wy n\inv W_{\theta'}.$$
Hence $(\theta',w') \dot{\sim} (\theta,wy)$, and we conclude that  $\iota$ is surjective.

Suppose $(\dot{\theta},\dot{w}), (\hat{\theta},\hat{w}) \in I$ with $\iota((\dot{\theta},\dot{w})) = \iota((\hat{\theta},\hat{w}))$.    Then there  exists an $n \in W$ for which
 $\Phi_{\hat{\theta}} =  n  \Phi_{\dot{\theta}}$ and
 $\hat{w} \in \Fr(n) \dot{w} {n} ^{-1} W_{\hat{\theta}}$. 
 Since $n\dot{\theta} \subset \Phi_{\hat{\theta}}$ and  $W_{\hat{\theta}}$ acts (simply) transitively on the bases of $\Phi_{\hat{\theta}}$, there exists $y \in W_{\hat{\theta}}$ such that $yn \dot{\theta} = \hat{\theta}$.  Let $\tilde{n} = yn$.   Note that  $\hat{\theta} = \tilde{n} \dot{\theta}$.
 Since $y \in W_{\hat{\theta}}$, we have $\Fr(y) \in  W_{\Fr (\hat{\theta})} = \hat{w} W_{\hat{\theta}} \hat{w}\inv$. Consequently,
  $$\tilde{n} \dot{w}\inv \Fr(\tilde{n})\inv \hat{w}  = y (n \dot{w}\inv \Fr(n) \inv) \Fr(y)\inv \hat{w} \in y (W_{\hat{\theta}} \hat{w}\inv) \Fr(y)\inv \hat{w} =  (y W_{\hat{\theta}})  (\hat{w}\inv \Fr(y)\inv \hat{w}) =  y W_{\hat{\theta}} = W_{\hat{\theta}}.$$
  Since $W_{\hat{\theta}}$ acts simply transitively on the bases of $\Phi_{\hat{\theta}}$ and
 $$(\tilde{n} \dot{w}\inv \Fr(\tilde{n})\inv \hat{w}) 
 \hat{\theta} 
 = \tilde{n} \dot{w}\inv \Fr(\tilde{n})\inv \Fr(\hat{\theta})
 = \tilde{n} \dot{w}\inv \Fr(\tilde{n}\inv \hat{\theta})
  = \tilde{n} \dot{w}\inv \Fr(\dot{\theta})
    = \tilde{n} \dot{\theta} 
    =\hat{\theta},$$
 we conclude that $\tilde{n} \dot{w}\inv \Fr(\tilde{n})\inv \hat{w} = 1$, that is $\hat{w} = \Fr(\tilde{n}) \dot{w} \tilde{n}\inv$.
 Hence $(\dot{\theta}, \dot{w}) \sim (\hat{\theta},\hat{w})$, and the induced map is  injective.
\end{proof}

\subsection{Stable conjugacy of unramified tori}   \label{sec:6.2}

\begin{defn}  Suppose $\bS$ and $\bS'$ are unramified tori.  We will say that $\bS$ and $\bS'$ are \emph{stably conjugate} provided that there exists $h \in G$ such that $\lsup{h} \bS = \bS'$ and $\Int(h) \colon \bS \rightarrow \bS'$ is a $k$-isomorphism.
\end{defn}

\begin{rem}  With notation as above, $\Int(h) \colon \bS \rightarrow \bS'$ is a $k$-isomorphism if and only if $\Fr(\lsup{h}s) = {\lsup{h}s}$ for all $s\in S^{\Fr}$.
\end{rem}

\begin{rem}  \label{rem:restrictiontoKOK}
    It would perhaps be more standard to say that $\bS$ and $\bS'$ are {stably conjugate} provided that there exists $g \in \bG(\bar{k})$ such that $\lsup{g} \bS = \bS'$ and $\Int(g) \colon \bS \rightarrow \bS'$ is a $k$-isomorphism.  To see the equivalence of this definition to the one given, suppose we have such a $g$.    Let $\bM$ be the Levi $(K,k)$-subgroup $C_{\bG}(\bS)$.  Choose $s \in S^{\Fr}$ such that $C_{\bG}(s) = C_{\bG}(\bS)$.   For all $\gamma \in \Gal(\bar{k}/K)$, we have $\lsup{g}s = \gamma(\lsup{g}s) = \lsup{\gamma(g)}s$; hence $g\inv \gamma(g) \in \bM(\bar{k})$.  Since $\cohom^1(\Gal(\bar{k}/K),\bM)$ is trivial, there exists $m \in \bM(\bar{k})$ such that $ g\inv \gamma(g) = m \gamma(m\inv)$ for all $\gamma \in I$.   Hence $gm \in G$ and for all $\tilde{s} \in S$ we have $\lsup{gm}\tilde{s} = \lsup{g}\tilde{s}$. 
\end{rem}

\begin{lemma} \label{lem:6.2.4}
Suppose the unramified tori $S_i$ for $i \in \{1,2\}$ belong to $G^{\Fr}$-conjugacy classes parameterized by data $(F_i,\theta_i,w_i) \in \tilde{I}$.  We have  $(\theta_1,w_1) \dot{\sim} (\theta_2,w_2)$ in $\dot{I}$ if and only if $S_1$ and $S_2$ are stably conjugate.
\end{lemma}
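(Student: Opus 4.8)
The plan is to make the torus attached to a triple explicit on cocharacter lattices and then read off both the stable conjugacy class and the relation $\dot{\sim}$ from that description. Since replacing $\bS_i$ by a $G^{\Fr}$-conjugate changes neither its stable conjugacy class nor the validity of the asserted equivalence, and since $\bS_i$ lies in $j(F_i,\theta_i,w_i)$, I may assume $\bS_i=\lsup{g_i}\bA_{\theta_i}$ with $g_i\in G_{F_i}$ chosen, as in Lemma~\ref{lem:gbackwards}, so that $\Fr(g_i)^{-1}g_i\in N_G(\bA)$ has image $w_i$ in $W$. Put $\bM_{\theta_i}=C_{\bG}(\bA_{\theta_i})$, the Levi $K$-subgroup containing $\bA$ with root system $\Phi_{\theta_i}$. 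Two structural facts are used throughout: (i) for $\dot n\in N_G(\bA)$ with image $n\in W$ one has $\lsup{\dot n}\bA_{\theta_j}=\bA_{\theta_i}$ if and only if $n\Phi_{\theta_j}=\Phi_{\theta_i}$, since $\bA_\theta=(\bigcap_{\alpha\in\Phi_\theta}\ker\alpha)^\circ$ depends only on $\Phi_\theta$ and $\lsup{\dot n}\bM_{\theta_j}=C_{\bG}(\lsup{\dot n}\bA_{\theta_j})$ has root system $n\Phi_{\theta_j}$; and (ii) the pointwise stabilizer of $\X_*(\bA_{\theta_i})$ in $W$ is exactly $W_{\theta_i}$, because $N_G(\bA)\cap C_{\bG}(\bA_{\theta_i})=N_{\bM_{\theta_i}}(\bA)$ has image $W_{\theta_i}$. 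Conjugating by $g_i$ transports the Galois action cutting out the $k$-structure of $\bS_i$ to a $\Gamma$-action $\sigma_i$ on $\X_*(\bA_{\theta_i})\subseteq\X_*(\bA)$ for which $\sigma_i(\Fr)$ is the restriction of a $w_i$-twist of the ambient action of $\Fr$ on $\X_*(\bA)$; this restriction is well defined precisely because $\Fr(\Phi_{\theta_i})=w_i\Phi_{\theta_i}$. With this in hand, the definition of $\dot{\sim}$ amounts to the statement that there is $n\in W$ with $n\Phi_{\theta_1}=\Phi_{\theta_2}$ carrying $\sigma_1$ onto $\sigma_2$ up to the $W_{\theta_i}$-ambiguity of (ii).

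For the forward direction, suppose $h\in G$ gives a stable conjugacy $\bS_1\to\bS_2$. Then $x:=g_2^{-1}hg_1$ conjugates $\bA_{\theta_1}$ onto $\bA_{\theta_2}$, hence $\bM_{\theta_1}$ onto $\bM_{\theta_2}$, so $\lsup{x}\bA$ and $\bA$ are both maximal $K$-split tori of $\bM_{\theta_2}$ and are conjugate by some $m\in\bM_{\theta_2}(K)$; set $n_0:=mx\in N_G(\bA)$, and note that $\Int(n_0)$ and $\Int(x)$ agree on $\bA_{\theta_1}$ (as $m$ centralizes $\bA_{\theta_2}$) and that $\lsup{n_0}\bA_{\theta_1}=\bA_{\theta_2}$. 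Let $n\in W$ be the image of $n_0$; by (i), $n\Phi_{\theta_1}=\Phi_{\theta_2}$. The hypothesis that $\Int(h)$ is a $k$-isomorphism says exactly that the induced map $\X_*(\bA_{\theta_1})\to\X_*(\bA_{\theta_2})$ intertwines $\sigma_1$ and $\sigma_2$; feeding in the description of the $\sigma_i$ and, via (ii), the fact that the coset ambiguity is exactly $W_{\theta_2}$, one extracts the congruence $w_2\equiv\Fr(n)\,w_1\,n^{-1}\pmod{W_{\theta_2}}$, which together with $n\Phi_{\theta_1}=\Phi_{\theta_2}$ is precisely $(\theta_1,w_1)\dot{\sim}(\theta_2,w_2)$ (using the symmetry of $\dot{\sim}$).

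Conversely, suppose $n\in W$ witnesses $(\theta_1,w_1)\dot{\sim}(\theta_2,w_2)$ and lift it to $\dot n\in N_G(\bA)$. By (i), $\lsup{\dot n^{-1}}\bA_{\theta_1}=\bA_{\theta_2}$, so $h:=g_2\dot n^{-1}g_1^{-1}$ satisfies $\lsup{h}\bS_1=\bS_2$; the coset condition on $w_1,w_2$, read together with (ii) (equivalently, with the triviality of the $W_{\theta_i}$-action on $\X_*(\bA_{\theta_i})$), shows that $\Int(\dot n)$ intertwines $\sigma_1$ and $\sigma_2$, so $\Int(h)\colon\bS_1\to\bS_2$ is a $k$-isomorphism and $\bS_1,\bS_2$ are stably conjugate. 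The conceptual content sits entirely in the first paragraph; the real work — and the step I expect to be the main obstacle — is the convention bookkeeping in the two middle implications, namely turning ``$\Int(\cdot)$ intertwines the twisted Galois actions'' into the precise one-sided $W_\theta$-coset identity defining $\dot{\sim}$, keeping the choices for $\Fr(g_i)^{-1}g_i$, for the $\Gamma$-action on $W$, and for composition order mutually consistent (the double-coset freedom coming from adjusting $n_0$ by $N_{\bM_{\theta_1}}(\bA)$ and $N_{\bM_{\theta_2}}(\bA)$ is what makes the one-sided normalization legitimate).
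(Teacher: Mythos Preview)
Your proposal is correct and follows essentially the same route as the paper's proof. Both arguments reduce to the same computation: given the stable conjugacy $h$, one corrects $g_2^{-1}hg_1$ by an element of $\bM_{\theta_2}$ (the paper does this in $\bL_2$ via $\ell_2\in L_2$, which is the same thing after conjugating by $g_2$) to land in $N_G(\bA)$, and then reads off both conditions of $\dot\sim$ from the image in $W$; for the converse, both lift the witnessing $n\in W$ and take $h=g_2\dot n^{\pm1}g_1^{-1}$, checking directly that $\Fr(h)h^{-1}$ lies in the centralizer. Your packaging via twisted Galois actions $\sigma_i=w_i^{-1}\circ\Fr$ on $\X_*(\bA_{\theta_i})$ and the pointwise-stabilizer fact (ii) is a clean way to phrase what the paper establishes by explicit manipulation of $\Fr(g_2 n g_1^{-1})$ and $\Fr(n)w_1n^{-1}$, but the underlying computation is identical.
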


\begin{proof}
Since the $G^{\Fr}$-conjugacy class of  $S_i$ is parameterized by   $(F_i,\theta_i,w_i)$ we can assume that $S_i = {\lsup{g_i} A_{\theta_i}}$ with $g_i \in G_{F_i}$ and $n_i := \Fr(g_i)^{-1} g_i$ having image $w_i$ in $W_{F_i} \leq W$.  

Suppose first that $(\theta_1,w_1) \dot{\sim} (\theta_2,w_2)$ in $\dot{I}$.   Then there exists $w \in W$ for which  $w \Phi_{\theta_1} = \Phi_{\theta_2}$ and $w_2^{-1} \Fr(w) w_1 w^{-1} \in W_{\theta_2}$.   Let $n \in N_G(A)$ be a  lift of  $w$.  Note that
$$\Fr(g_2 n g_1 ^{-1}) = g_2 (g_2 ^{-1} \Fr(g_2)) \Fr(n) (\Fr(g_1)^{-1} g_1) g_1 ^{-1} = 
g_2 n g_1^{-1} \cdot \lsup{g_1n^{-1}} (n_2^{-1} \Fr(n) n_1 n^{-1}).$$
Since $n_2^{-1} \Fr(n) n_1 n^{-1} \in N_G(\bA)$ has image in $W_{\theta_2}$ and $\lsup{n\inv} W_{\theta_2} = W_{\theta_1}$,  the element $\lsup{g_1n^{-1}} (n_2^{-1} \Fr(n) n_1 n^{-1})$ acts trivially on $S_1$, and so 
$$\Fr( \lsup{g_2 n g_1\inv} s) = \lsup{g_2 n g_1\inv}s$$
for all $s \in S_1^{\Fr}$.

For the other direction, suppose that $S_1$ and $S_2$ are stably conjugate by $h \in G$; i.e. $\Int(h) \colon \bS_1 \rightarrow \bS_2$ is a $k$-isomorphism.  
Let $\bL_i = C_{\bG}(\bS_i)$.  
Since $\lsup{h g_1}\bA$ and $\lsup{ g_2}\bA$ are maximal $K$-split tori in $\bL_2$, there exists $\ell_2 \in L_2$ such that $\lsup{\ell_2 h g_1}\bA = {\lsup{ g_2}\bA}$.  
Note that $m = {g_2 ^{-1} \ell_2 h g_1} \in N_G(A)$.  Let $n$ denote the image of $m$ in $W$.  Since 
$\lsup{\ell_2 h  g_1}\bM_{\theta_1} = \lsup{ g_2}\bM_{\theta_2}$,
we have $n \Phi_{\theta_1} = \Phi_{\theta_2}$.   Note that 
$\Fr(n)w_1n^{-1}$ is the image in $W$ of
$$\Fr(g_2)  ^{-1} \Fr(\ell_2) \Fr(h)  \Fr(g_1) (\Fr(g_1)^{-1} g_1 ) g_1 ^{-1} h^{-1} {\ell_2}^{-1} g_2$$
which is
$$ (\Fr(g_2) ^{-1} g_2) \cdot g_2 ^{-1} (\Fr( h^{-1} {\ell_2}^{-1} )^{-1}      h^{-1} {\ell_2}^{-1}       )g_2.$$
Since  $\Int(h\inv) \colon \bS_2 \rightarrow \bS_1$ is also a $k$-isomorphism, we have $\Fr(\lsup{h\inv}s_2) = \lsup{h\inv}s_2$ for all $s_2 \in S_2^{\Fr}$, hence $\Fr(h) h\inv \in L_2$.  Thus, $\Fr( h^{-1} {\ell_2}^{-1} )^{-1}      h^{-1} {\ell_2}^{-1}  \in L_2$.   Since $\lsup{g_1} \bA$ and $\lsup{g_2}\bA$ are maximal $K$-split $k$-tori in $\bG$, we have 
$\lsup{g_1}\bA = \lsup{h^{-1} \ell_2 ^{-1}} (\lsup{g_2}\bA) = \lsup{\Fr(h)^{-1} \Fr(\ell_2) ^{-1}} (\lsup{g_2}\bA)$.   We conclude that $g_2 ^{-1} (\Fr( h^{-1} {\ell_2}^{-1} )^{-1}      h^{-1} {\ell_2}^{-1}       )g_2 \in N_G(\bA)$ has image in $W_{\theta_2}$.  Hence, we have $\Fr(n)w_1n^{-1} \in w_2 W_{\theta_2}$.  
\end{proof}

\begin{cor}
There is a bijective correspondence between $I/\!\sim$ and the set of stable classes of unramified tori in $\bG$.   \qed
\end{cor}

\subsection{Example: Stable classes of unramified tori associated to Levi \texorpdfstring{$(k,k)$-subgroups}{(k,k) subgroups}}

   For $i \in \{1,2\}$ let $\bM_i$ denote a Levi $(k,k)$-subgroup of $\bG$. After conjugating $\bM_i$ by an element of $G^{\Fr}$,  we may and do assume that $\bA \leq \bM_i$.   As in Section~\ref{sec:ratlevis} we may associate to $\bM_i$ a triple
    $(C,\theta_i,1)  \in \tilde{I}^{\ee}$ where $C$ is an alcove in $\AA(A)^{\Fr}\!$, $\theta_i \in \Theta$ such that  $\Phi_{\theta_i} = \Fr (\Phi_{\theta_i})$, 
 $\bM_i = C_\bG(\bA_{\theta_i})$, and  $\bA_{\theta_i} \in j((C,\theta,1))$.     From Lemma~\ref{lem:6.2.4} we know that $A_{\theta_1}$ is stably conjugate to $A_{\theta_2}$ if and only if there exists $n\in W$ such that $n\Phi_{\theta_1} = \Phi_{\theta_2}$ and $\Fr(n)n\inv \in W_{\theta_2}$.     From Theorem~\ref{thm:maintheorem} we know that $A_{\theta_1}$ is $G^{\Fr}$-conjugate to $A_{\theta_2}$ if and only if there exists $x \in N_{G^{\Fr}}(\bA)/C_{G^{\Fr}}(\bA)$ such that $x\Phi_{\theta_1} = \Phi_{\theta_2}$.

   Thanks to the work of Solleveld~\cite{solleveld:conjugacy} we know that 
   $A_{\theta_1}$ and $A_{\theta_2}$ are stably conjugate if and only if they are $G^{\Fr}$-conjugate.  In fact, Solleveld shows:  
   $A_{\theta_1}$ and $A_{\theta_2}$ are $\bG(\bar{k})$-conjugate if and only if they are $G^{\Fr}$-conjugate.

   The equivalence of stable and rational conjugacy of $A_{\theta_1}$ and $A_{\theta_2}$ can also be established as follows.
   
   Suppose that $\bG$ is $k$-quasi-split and there   exists $\bar{n}\in W$ such that $\bar{n}\Phi_{\theta_1} = \Phi_{\theta_2}$ and $\Fr(\bar{n})\bar{n}\inv \in W_{\theta_2}$.  
    Since $\bG$ is $k$-quasi-split, there exists an absolutely special vertex $x_0$ that belongs to the image of $\AA(A)$ in  $\BB^{\red}(G)^{\Fr}$.
    Choose $n \in N_{G_{x_0,0}}(\bA)$ that lifts $\bar{n}$.  Note that $\Fr(n) n\inv \in M_2$ and so $\Int(n) \colon  \bA_{\theta_1} \rightarrow \bA_{\theta_2}$ is a $k$-isomorphism.  Since $n \in N_{G_{x_0,0}}(\bA)$, by Lang-Steinberg there exists $k \in G_{x_0,0} \cap M_2$ such that $\Fr(k) k\inv = \Fr(n) n\inv$.  Thus $k\inv n \in G^{\Fr}$ and $\lsup{k\inv n}\bA_{\theta_1} = \bA_{\theta_2}$.
    
We adapt Waldspurger's argument~\cite[Lemma~4]{solleveld:conjugacy} to extend this result to any reductive $k$-subgroup.  We adopt the notation of Section~\ref{sec:ratlevis}.  In particular, $\bG^*$ is a quasi-split inner form of $\bG$, $\bA^* \leq \bB^*$ is a Borel-torus pair in $\bG^*$, and $\Ad(m)$ is an element of $N_{G^*_{\ad}}(\bA^*) \cap \Stab_{G^*_{\ad}}(D')$ such that
 $\bG_K = \bG^*_K$, $\bA_K = \bA^*_K$, $G = G^*$ as abstract groups, and $\Fr(g) = \Ad(n) \circ \Fr^*(g)$ for all $g \in G$.   Let $\theta_m \subset \Delta = \Delta(\bG^*,\bB^*,\bA^*)$ denote those simple roots $\alpha$ for which $\bA^k \subset \ker(\alpha)$.
As we may, we choose $\theta_i \subset \Delta$ such that $\theta_m \subset \theta_i$ and $\Fr^*({\theta_i}) = {\theta_i}$.  Since  $\theta_m \subset \theta_i$, this means $\Fr^*(\Phi_{\theta_i}) = (\Ad(m) \circ \Fr^*)(\Phi_{\theta_i}) = \Phi_{\theta_i}$.   Note that $(\bA_{\theta_i})_K = (\bA^*_{\theta_i})_K$ is a $K$-split torus in $\bG_K=\bG^*_K$.

Suppose that $\bA_{\theta_1}$ and $\bA_{\theta_2}$ are stably conjugate in $\bG$.  Then there exists $n \in W$ such that $n \Phi_{\theta_1} = \Phi_{\theta_2}$ and $ \Fr(n)n\inv = (\Ad(m) \circ \Fr^*)(n) n\inv  \in W_{\theta_1}$.  Let $\bar{m}$ denote the image of $\Ad(m)$ in $W$.   Then $\bar{m} \in W_{\theta_m} \leq W_{\theta_1} \cap W_{\theta_2}$.   Thus $m \Fr^*(n \bar{m} n\inv) \in W_{\theta_1}$ and  
$\Fr^*(n\bar{m})(n\bar{m})\inv = (m \Fr^*(n \bar{m} n\inv))\inv \Fr(n) n\inv \in W_{\theta_1} $.
    Thus, we have $n \bar{m} \in W$ such that $n \bar{m} \Phi_{\theta_1} = \Phi_{\theta_2}$ and $\Fr^*(n \bar{m})(n \bar{m})\inv \in W_{\theta_1}$.  That is, $\bA^*_{\theta_1}$ and $\bA^*_{\theta_2}$ are stably conjugate in $\bG^*$.

    Since $\bG^*$ is $k$-quasi-split, we conclude that  $\bA^*_{\theta_1}$ and $\bA^*_{\theta_2}$ are rationally conjugate in $\bG^*$.  Thus, there exists $x \in N_{(G^*)^{\Fr}}(\bA^*)$ such that $x \Phi_{\theta_1} = \Phi_{\theta_2}$.   Let $\bP_1$ be the standard parabolic subgroup in $\bG$ corresponding to $\theta_1$ and set $\bP_2 = \Int(x) \bP_1$.   Since $m \in (L_{\ad})_{\theta_m}$, we have $\Fr(\bP_j) = \Ad(m) \Fr^*( \bP_j) = \bP_j$ for $j \in \{1, 2\}$, we conclude that $\bP_1$ and $\bP_2$ are parabolic $k$-subgroups in $\bG$.   They are conjugate by $x \in G^* = G$, hence these parabolic subgroups  are conjugate by $y \in G^{\Fr}$.  We have $\bP_2 = \Ad(y) \bP_1$ and $\Ad(y) \bL_{\theta_1}$ is a Levi factor of $\bP_2$.  Thus, there exists $u \in P_2^{\Fr}$ such that $uy \bL_{\theta_1} = \bL_{\theta_2}$.

\subsection{\texorpdfstring{$k$-embeddings}{k embeddings} of unramified tori}  \label{sec:$k$-embeddings}
 
 For many purposes in harmonic analysis, it is not enough to understand the stable conjugacy classes of (unramified) tori.  We therefore introduce the following refinement.
 
 \begin{defn}  Suppose $\bS$ is an unramified torus in $\bG$.   A \emph{$k$-embedding} of $\bS$ into $\bG$ is a map $f \colon \bS \rightarrow \bG$ such that 
 \begin{enumerate}
     \item there exists $g \in G$ such that $f(s) = {\lsup{g}s}$ for all $s \in S$ and
     \item $f$ is a $k$-morphism.
 \end{enumerate}
 \end{defn}
 
 \begin{example}  If $\bS_1$ and $\bS_2$ are stably conjugate unramified tori in $\bG$, then there exists a $k$-embedding $h \colon \bS_1 \rightarrow \bG$ such that $h[\bS_1] = \bS_2$.
 \end{example}

 \begin{rem}
 Suppose $\bS$ is an unramified torus in $\bG$ and $f \colon \bS \rightarrow \bG$ is a $k$-embedding.
 \begin{enumerate}
     \item If $g,h \in G$ such that $f(s) = \lsup{g}s = \lsup{h}s$ for all $s \in S$, then $g \in h C_G(S)$.
     \item Since $f$ is a $k$-morphism, we have $\Fr(f(s)) = f(s)$ for all $s \in S^{\Fr}$.
 \end{enumerate}
 \end{rem}
 
 We will be most interested in those $k$-embeddings of an unramified torus $\bS$ into $\bG$ for which the images of the $k$-embeddings are $\bS$.

 \begin{rem}  
    It would perhaps be more standard to say that  a {$k$-embedding} of $\bS$ into $\bG$ is a map $f \colon \bS \rightarrow \bG$ such that (a)
there exists $g \in \bG(\bar{k})$ such that $f(s) = {\lsup{g}s}$ for all $s \in S$ and (b)
 $f$ is a $k$-morphism.
    The equivalence of this definition and the one given can be seen by arguing as in Remark~\ref{rem:restrictiontoKOK}.
\end{rem}

 \subsubsection{Notation for Section \ref{sec:$k$-embeddings}}
 
  Suppose $\bS_1$ is an unramified torus and set $\bL_1 = C_\bG(\bS_1)$.   Suppose that $\bS_1$ corresponds to $(F_1,\theta_1,w_1) \in \tilde{I}^{\ee}$ and $\bS_1 = {\lsup{g_1}\bA_{\theta_1}}$ for $g_1 \in  \lsub{F_1}\bM(K) \cap G_{F_1}$ with $n_1 := \Fr(g_1)\inv g_1$ having image $w_1$ in $W$.  
 
 Recall  from Lemma~\ref{lem:anisotropicasexpected} that $\sfT_1 = C_{\sfG_F}(\sfS_1)$ is an $\ff$-minisotropic maximal $\ff$-torus in $\sfG_F$ that corresponds to $(\emptyset,w_1) \in \dot{I}^{\ee}(F_1)$.  Let $\bT_1 \leq \lsub{F_1}\bM$ be a lift of $\sfT_1$ that contains $\sfS_1$.   Thanks to Lemma~\ref{lem:alcoveinM} and Corollary~\ref{cor:improved541}, the $k$-rank of $\bL_1$ is equal to the $k$-rank of $\bT_1^k$, hence  $\bT_1  \leq \bL_1$ is a maximally $k$-split maximal $K$-split $k$-torus in $\bL_1$ that contains $\bS_1$.
 
 Finally, we may assume that $\BB(T_1)^\Fr$ is a subset of $\AA(A)^\Fr$.  Indeed, there exists $m \in \lsub{F_1}M^\Fr$ for which $\bT_1^k \leq \lsup{m}\bA$, hence $\BB(T_1)^\Fr \subset \AA(\lsup{m}A)^\Fr$.  Since $F_1 \subset \AA(\lsup{m}A)^\Fr$, there exists $\ell \in \lsub{F_1}M_{F_1}^\Fr$ such that
 $$\BB(\lsup{\ell}T_1)^\Fr = \ell \cdot \BB(T_1)^\Fr \subset \ell \cdot \AA(\lsup{m}A)^\Fr = \AA(A)^\Fr.$$
Without loss of generality, we may replace $g_1$ by $\ell g_1$.

 \subsubsection{Results on normalizers}
 
 Let $\bZ_1$ denote the center of $\bL_1$.   If $h \in C_G(\bZ_1)$, then since $\bS_1$ is the unique $K$-split subtorus in $\bZ_1$, we must have $h \in C_{G}(\bS_1) = L_1$.  Since $\bL_1 \leq C_{\bG}(Z_1)$, we have $C_{\bG}(Z_1) = \bL_1$

\begin{lemma}  \label{lem:SandT}
Suppose $g \in G$.  We have 
   $\lsup{g}\bS_1$ is defined over $k$ if and only if $\lsup{g}\bZ_1$ is defined over $k$.  
\end{lemma}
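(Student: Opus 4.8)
The plan is to prove both implications by using the characterization that $\lsup{g}\bS_1$ (resp. $\lsup{g}\bZ_1$) is defined over $k$ if and only if it is $\Fr$-stable, together with the key identities $\bS_1 = \bZ_1^K$ and $\bL_1 = C_\bG(\bZ_1) = C_\bG(\bS_1)$, the latter being exactly the content of the discussion immediately preceding the statement. First I would observe that conjugation by $g$ commutes with everything: $\lsup{g}\bS_1$ is the $K$-split component of the center of $\lsup{g}\bL_1$, and $\lsup{g}\bZ_1$ is the center of $\lsup{g}\bL_1$; moreover $C_\bG(\lsup{g}\bS_1) = \lsup{g}\bL_1 = C_\bG(\lsup{g}\bZ_1)$.

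For the forward direction, suppose $\lsup{g}\bS_1$ is defined over $k$. Then $\lsup{g}\bL_1 = C_\bG(\lsup{g}\bS_1)$ is defined over $k$, since the centralizer of a $k$-subgroup is a $k$-subgroup. Hence its center $\lsup{g}\bZ_1$ is also defined over $k$. Conversely, suppose $\lsup{g}\bZ_1$ is defined over $k$. Then $\lsup{g}\bS_1 = (\lsup{g}\bZ_1)^K$ is the maximal $K$-split subtorus of a $k$-torus, and such a subtorus is automatically defined over $k$ (it is the unique maximal $K$-split subtorus, hence $\Gal(\bar k/k)$-stable). This completes both directions.

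The only step requiring any care is making sure the two defining facts — that $\bS_1 = \bZ_1^K$ and that $\bL_1 = C_\bG(\bS_1) = C_\bG(\bZ_1)$ — are genuinely available. The first holds because $\bS_1$ is an unramified torus with centralizer $\bL_1$, so by Lemma~\ref{lem:morelevi} (and the Remark following the definition of unramified torus) $\bS_1$ is precisely the $K$-split component of the center of $\bL_1$. The second is the displayed identity $C_\bG(Z_1) = \bL_1$ stated just before the lemma. Since both identities are equivariant under $\Int(g)$, applying them to $\lsup{g}\bL_1$ gives the relations used above, and there is no real obstacle: the argument is a short chain of "defined over $k$" propagating through centralizers and through the maximal-$K$-split-subtorus operation.
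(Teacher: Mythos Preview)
Your proof is correct and matches the paper's own argument essentially line for line: the forward direction passes through $\lsup{g}\bL_1 = C_\bG(\lsup{g}\bS_1)$ and then to its center, and the reverse uses that the maximal $K$-split subtorus of a $k$-group is defined over $k$. The only nitpick is that $\bZ_1$ need not be a torus (it is the center of a Levi, so a group of multiplicative type), but your uniqueness argument for the maximal $K$-split subtorus works equally well in that generality, so nothing is lost.
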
 
 
 \begin{proof}
 If $\lsup{g}\bS_1$ is defined over $k$, then $ \lsup{g}\bL_1 = C_{\bG}(\lsup{g}S_1)$ is defined over $k$, and thus so too is its center, $\lsup{g}\bZ_1$.    On the other hand, if $\lsup{g}\bZ_1$ is defined over $k$,  then $\lsup{g}\bS_1$, the maximal $K$-split torus in  $\lsup{g}\bZ_1$, is also defined over $k$.
 \end{proof}
 
 \begin{cor}
 Suppose  $g \in G$ and  $\lsup{g}\bS_1$ is defined over $k$.  The map $\Int(g) \colon \bS_1 \rightarrow {\lsup{g}\bS_1}$ is a $k$-isomorphism  if and only  if $\Int(g) \colon \bZ_1 \rightarrow {\lsup{g}\bZ_1}$ is a $k$-isomorphism.
 \end{cor}
 
 \begin{proof}
 Thanks to Lemma~\ref{lem:SandT}
 both $\lsup{g}\bS_1$ and $\lsup{g}\bZ_1$ are defined over $k$.
 
 If  $\Int(g) \colon \bS_1 \rightarrow {\lsup{g}\bS_1}$ is a $k$-isomorphism, then $\Fr(g)^{-1} g \in L_1$ and so $\lsup{\Fr(g)^{-1} g}t = t$  for all $t \in Z_1$; hence $\Int(g) \colon \bZ_1 \rightarrow {\lsup{g}\bZ_1}$ is a $k$-isomorphism.
 
 On the other hand,  if  $\Int(g) \colon \bZ_1 \rightarrow {\lsup{g}\bZ_1}$ is a $k$-isomorphism, then since $\bS_1$ (resp. $\lsup{g}\bS_1$) is the unique maximal $K$-split $k$-torus in $\bZ_1$ (resp. $\lsup{g}\bZ_1$), the map $\Int(g) \colon  \bZ_1 \rightarrow {\lsup{g}\bZ_1}$ restricts to a $k$-isomorphism $\Int(g) \colon \bS_1 \rightarrow {\lsup{g}\bS_1}$.
 \end{proof}

 \begin{lemma} \label{lem:mapsofinterest}   If $g \in G$ such that  $\lsup{g}S_1 = S_1$, then there exists $\ell \in L_1$ such that  ${\ell g} \in N_G(\bT_1)$.  
 \end{lemma}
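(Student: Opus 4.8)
The statement to prove: if $g \in G$ satisfies $\lsup{g}S_1 = S_1$, then there exists $\ell \in L_1$ with $\ell g \in N_G(\bT_1)$. Here $\bT_1 \leq \bL_1$ is the maximally $k$-split maximal $K$-split $k$-torus in $\bL_1$ containing $\bS_1$ that was fixed in the notation for this section. The key observation is that conjugation by $g$ normalizes $\bS_1$, hence normalizes its centralizer $\bL_1 = C_\bG(\bS_1)$; so $g \in N_G(\bL_1)$. Consequently $\lsup{g}\bT_1$ is again a maximally $k$-split maximal $K$-split $k$-torus of $\bL_1$, and I want to use the transitivity of $L_1$-conjugation on such tori to correct $g$ by an element of $L_1$.

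The steps I would carry out, in order. First, verify $g \in N_G(\bL_1)$: since $\lsup{g}\bS_1 = \bS_1$, we have $\lsup{g}\bL_1 = \lsup{g}C_\bG(\bS_1) = C_\bG(\lsup{g}\bS_1) = C_\bG(\bS_1) = \bL_1$. Second, note that $\lsup{g}\bT_1$ is a maximal $K$-split $k$-torus in $\bL_1$ (it is $K$-split as a conjugate of $\bT_1$; it is defined over $k$ because $\bT_1$ is and $g$ normalizes $\bL_1$ — more carefully, $\lsup{g}\bT_1 \leq \bL_1$ and its character lattice structure as a torus is preserved, but actually I need $\lsup{g}\bT_1$ to be $k$-rational, which requires an argument). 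Here is the subtlety: $g$ need not lie in $G^\Fr$, so $\lsup{g}\bT_1$ is a priori only a $K$-torus. However, $\lsup{g}\bS_1 = \bS_1$ is defined over $k$, and $\bT_1$ is the (unique, by construction and Lemma~\ref{lem:AandS}-type uniqueness) lift inside $\bL_1$ of a maximally $\ff$-split maximal $\ff$-torus in $\sfG_{F_1}$; but $\lsup{g}\bT_1$ need not be such a lift. So I should instead argue purely at the level of $K$-split tori: both $\bT_1$ and $\lsup{g}\bT_1$ are maximal $K$-split tori of $\bL_1$ containing $\bS_1$, and maximal $K$-split tori of a connected reductive group are conjugate by an element of the group of $K$-points — but I need them conjugate \emph{within} $L_1$ and, crucially, I need the conjugating element to land me in $N_G(\bT_1)$, which it automatically does once I write $\lsup{g}\bT_1 = \lsup{\ell^{-1}}\bT_1$ for some $\ell \in L_1$, giving $\lsup{\ell g}\bT_1 = \bT_1$, i.e., $\ell g \in N_G(\bT_1)$.

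So the cleanest route is: (1) $g \in N_G(\bL_1)$ as above; (2) $\bT_1$ and $\lsup{g}\bT_1$ are both maximal $K$-split tori of $\bL_1$; (3) by conjugacy of maximal $K$-split tori in the connected reductive $K$-group $\bL_1$ (applied over $K$), there is $\ell' \in L_1$ with $\lsup{\ell'}(\lsup{g}\bT_1) = \bT_1$; set $\ell = \ell'$, so $\ell g \in N_G(\bT_1)$. The one place I need to be slightly careful is whether ``$\lsup{g}S_1 = S_1$'' should be read as equality of $K$-points (hence of $K$-tori), which it is given the conventions of the paper; no $k$-rationality of $g$ is assumed or needed for this lemma. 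I expect \textbf{the main obstacle} to be purely expository: making sure the reader sees that $k$-rationality plays no role here (contrast with the subsequent discussion of $k$-embeddings, where it does), and invoking the right conjugacy statement — conjugacy of maximal $K$-split tori in $\bL_1(K)$ — rather than the more delicate statement about the canonical torus $\bT_1$. If one wants $\ell g$ to moreover respect more structure (e.g.\ a Borel), one could additionally arrange $\lsup{\ell'}(\lsup{g}\bB_{\bL_1}) = \bB_{\bL_1}$, but that is not asked for here.
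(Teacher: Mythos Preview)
Your proposal is correct and matches the paper's proof essentially verbatim: the paper observes that $\lsup{g}S_1 = S_1$ gives $g \in N_G(\bL_1)$, so $\bT_1$ and $\lsup{g}\bT_1$ are both maximal $K$-split tori in $\bL_1$, and then invokes $L_1$-conjugacy of such tori to find $\ell \in L_1$ with $\lsup{\ell g}\bT_1 = \bT_1$. Your digression about $k$-rationality is unnecessary (as you yourself conclude)---the paper works purely with maximal $K$-split tori here and saves the $k$-rational refinement for the next lemma.
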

 
 \begin{proof}
If $\lsup{g}S_1 = S_1$, then we have  $g \in N_G(\bL_1)$. 
Both $\bT_1$ and  $\lsup{g}\bT_1$ are maximal $K$-split tori in $\bL_1$. Thus, there exists $\ell \in L_1$ such that
 $\lsup{\ell g}\bT_1 = \bT_1$.    
 \end{proof}

\begin{lemma}\label{lem:rationaln} 
If $g \in G^{\Fr}$ such that  $\lsup{g}S_1 = S_1$, then there exists $\ell \in L_1^{\Fr}$ such that  ${\ell g} \in N_{G^{\Fr}}(\bT_1) \leq  N_G(\bT_1)$. 
 \end{lemma}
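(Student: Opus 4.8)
The plan is to refine Lemma~\ref{lem:mapsofinterest} by replacing the bare $L_1$-conjugacy of maximal $K$-split tori in $\bL_1$ with the rational conjugacy of \emph{maximally $k$-split} maximal $K$-split $k$-tori supplied by~\cite[Theorem~6.1]{prasad:unramified}, now applied to the connected reductive $k$-group $\bL_1$ rather than to $\bG$.

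First I would observe that since $g \in G^{\Fr}$ and $\lsup{g}\bS_1 = \bS_1$ we have
\[
\lsup{g}\bL_1 = \lsup{g}C_{\bG}(\bS_1) = C_{\bG}(\lsup{g}\bS_1) = C_{\bG}(\bS_1) = \bL_1,
\]
so $g$ normalizes $\bL_1$; as $g \in \bG(k)$, conjugation by $g$ restricts to a $k$-automorphism of $\bL_1$. In particular $\lsup{g}\bT_1$ is again a $K$-split $k$-torus contained in $\bL_1$: it is $K$-split as a conjugate of the $K$-split torus $\bT_1$, and it is defined over $k$ because $\bT_1$ is defined over $k$ and $g \in \bG(k)$.

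Next I would check that $\lsup{g}\bT_1$ is still \emph{maximally} $k$-split among the maximal $K$-split $k$-tori of $\bL_1$. Recall from the notation of this subsection that $\bT_1$ is such a torus, so it contains a maximal $k$-split torus $\bA_0$ of $\bL_1$; applying the $k$-automorphism $\Int(g)$ of $\bL_1$, the torus $\lsup{g}\bT_1$ contains $\lsup{g}\bA_0$, which is again a maximal $k$-split torus of $\lsup{g}\bL_1 = \bL_1$. Thus both $\bT_1$ and $\lsup{g}\bT_1$ are maximal $K$-split $k$-tori of $\bL_1$ that contain a maximal $k$-split torus of $\bL_1$, and by~\cite[Theorem~6.1]{prasad:unramified} such tori are unique up to $L_1^{\Fr}$-conjugacy. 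Hence there exists $\ell \in L_1^{\Fr}$ with $\lsup{\ell}(\lsup{g}\bT_1) = \bT_1$, i.e.\ $\ell g \in N_G(\bT_1)$; since $\ell, g \in G^{\Fr}$ we have $\ell g \in G^{\Fr}$, and therefore $\ell g \in N_G(\bT_1) \cap G^{\Fr} = N_{G^{\Fr}}(\bT_1)$, as claimed.

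The only real content beyond Lemma~\ref{lem:mapsofinterest} is the verification that $\lsup{g}\bT_1$ meets the hypotheses of the rational uniqueness theorem — in particular that it still contains a maximal $k$-split torus of $\bL_1$ — which is exactly the place where the $\Fr$-fixedness of $g$ is essential; granting this, \cite[Theorem~6.1]{prasad:unramified} does the rest, and no further computation is needed.
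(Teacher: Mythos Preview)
Your proof is correct and follows essentially the same approach as the paper: both argue that $\bT_1$ and $\lsup{g}\bT_1$ are maximally $k$-split maximal $(K,k)$-tori in $\bL_1$ and then invoke \cite[Theorem~6.1]{prasad:unramified} to obtain the required $\ell \in L_1^{\Fr}$. Your version simply spells out the verification (via the $k$-automorphism $\Int(g)$ of $\bL_1$) that $\lsup{g}\bT_1$ remains maximally $k$-split, which the paper asserts without comment.
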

 
 \begin{rem}
 For  $g$ and $\ell$ as in Lemma~\ref{lem:rationaln},  the map  $\Int(\ell g) \colon \bS_1 \rightarrow \bS_1 $ is a $k$-isomorphism.
 \end{rem}
 
 \begin{proof}
   Both $\bT_1$ and  $\lsup{g}\bT_1$ are maximally $k$-split maximal $(K,k)$-tori in $\bL_1$.  Thanks to~\cite[Theorem~6.1]{prasad:unramified} they are $L_1^{\Fr}$-conjugate.
\end{proof}   

\subsubsection{Classifying $k$-embeddings of $\bS_1$ into $\bG$ with image $\bS_1$}

\begin{defn}  For $H \subset G$, set
$$N^k(H,S_1) := \{h \in H \, | \,{\lsup{h}S_1} = S_1 \text{ and } \Int(h) \colon \bS_1 \rightarrow \bS_1 \text{ is a $k$-isomorphism}\}.$$
\end{defn}

\begin{example}  We have
$$N^k(L_1,S_1) = L_1 \text{, } N^k(C_G(\bT_1),S_1) = C_G(\bT_1) \text{, and } N^k(G^\Fr,S_1) = N_{G^\Fr}(S_1).$$
\end{example}

\begin{rem}
The quotient $ N^k(G,S_1) / L_1$ indexes the distinct $k$-embeddings of $\bS_1$ into $\bG$ that have image $\bS_1$.
\end{rem}

Note that $N^k(G,S_1)$, $L_1$, $N^k(N_G(\bT_1),S_1)$, and $N_{L_1}(\bT_1)$ are $\Fr$-modules.

\begin{lemma} \label{lem:firststeps}
There is a natural $\Fr$-equivariant (group) isomorphism
$$\xi \colon N^k(G,S_1)/L_1 \longrightarrow N^k(N_G(\bT_1),S_1)/N_{L_1}(\bT_1).$$
\end{lemma}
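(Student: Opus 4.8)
The plan is to identify $\xi$ as the inverse of the homomorphism on quotients induced by the inclusion $N^k(N_G(\bT_1),S_1)\hookrightarrow N^k(G,S_1)$. First I would record the two normality statements that make the displayed quotients into groups. If $h\in N^k(G,S_1)$ then $\lsup{h}\bS_1=\bS_1$, so $\lsup{h}\bL_1=\lsup{h}C_\bG(\bS_1)=C_\bG(\bS_1)=\bL_1$; hence $h$ normalizes $L_1=\bL_1(K)$, and $L_1\triangleleft N^k(G,S_1)$. If moreover $h\in N_G(\bT_1)$, then $h$ normalizes both $L_1$ and $\bT_1$, hence normalizes $L_1\cap N_G(\bT_1)=N_{L_1}(\bT_1)$; so $N_{L_1}(\bT_1)\triangleleft N^k(N_G(\bT_1),S_1)$. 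Directly from the definition of $N^k(H,S_1)$ one has $N^k(N_G(\bT_1),S_1)=N^k(G,S_1)\cap N_G(\bT_1)$, and since $N_{L_1}(\bT_1)\subset L_1$, the inclusion descends to a group homomorphism $\bar\iota\colon N^k(N_G(\bT_1),S_1)/N_{L_1}(\bT_1)\to N^k(G,S_1)/L_1$.

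Next I would check that $\bar\iota$ is bijective. Injectivity is immediate: if $h\in N^k(N_G(\bT_1),S_1)$ maps into $L_1$, then $h\in L_1\cap N_G(\bT_1)=N_{L_1}(\bT_1)$, so $h$ is already trivial in the source. For surjectivity, take $h\in N^k(G,S_1)$. By Lemma~\ref{lem:mapsofinterest} there is $\ell\in L_1$ with $\ell h\in N_G(\bT_1)$. Because $\ell\in L_1=C_\bG(\bS_1)(K)$, the automorphism $\Int(\ell)$ is trivial on $\bS_1$, so $\Int(\ell h)=\Int(h)$ on $\bS_1$ (hence a $k$-isomorphism) and $\lsup{\ell h}S_1=\lsup{\ell}S_1=S_1$; thus $\ell h\in N^k(N_G(\bT_1),S_1)$. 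Since $\ell\in L_1\triangleleft N^k(G,S_1)$ and $h\in N^k(G,S_1)$, the cosets $\ell h L_1$ and $h L_1$ coincide, so $\bar\iota$ sends the class of $\ell h$ to the class of $h$. Hence $\bar\iota$ is an isomorphism, and I set $\xi:=\bar\iota^{-1}$; concretely, $\xi$ sends $hL_1$ to $(\ell h)N_{L_1}(\bT_1)$ for any $\ell\in L_1$ with $\ell h\in N_G(\bT_1)$, which exhibits $\xi$ as canonical.

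For $\Fr$-equivariance I would note that $\bS_1$, $\bL_1$, and $\bT_1$ are all defined over $k$, from which it follows that each of the four groups in the statement is $\Fr$-stable and $\Fr$ acts on it by a group automorphism (for instance, $\Int(h)$ being a $k$-isomorphism of $\bS_1$ is the condition $\Fr(h)^{-1}h\in L_1$, and this is preserved by $\Fr$ because $\bL_1$ is defined over $k$). The inclusion $N^k(N_G(\bT_1),S_1)\hookrightarrow N^k(G,S_1)$ visibly commutes with $\Fr$, so $\bar\iota$ is $\Fr$-equivariant, and therefore so is $\xi=\bar\iota^{-1}$.

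I do not expect a genuine obstacle here: the single non-formal ingredient is Lemma~\ref{lem:mapsofinterest}, invoked for surjectivity, and everything else is unwinding the definition of $N^k(-,S_1)$ together with the observation that elements of $L_1=C_\bG(\bS_1)(K)$ act trivially on $\bS_1$. The only points requiring a little care are the two normality claims and the verification that being a $k$-isomorphism of $\bS_1$ is stable under multiplication by $L_1$ and under $\Fr$; both are routine.
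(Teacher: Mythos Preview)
Your proof is correct and essentially the same as the paper's: both hinge on Lemma~\ref{lem:mapsofinterest} to supply the $\ell\in L_1$ with $\ell h\in N_G(\bT_1)$, and the remaining verifications (well-definedness, injectivity, $\Fr$-equivariance) are identical in content. The only organizational difference is that you build the inclusion-induced map $\bar\iota$ first and take $\xi=\bar\iota^{-1}$, whereas the paper constructs $\xi$ directly by choosing $\ell$ and then checks the choice does not matter; this is purely cosmetic.
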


\begin{rem}
Since $L_1$ is a normal subgroup of $N^k(G,S_1)$ and $N_{L_1}(\bT_1)$ is a normal subgroup of $N^k(N_G(\bT_1),S_1)$,  we may quotient out on either the left or right for both the source and target of $\xi$.
\end{rem}

\begin{proof}
Suppose $g \in N^k(G,S_1)$.  From Lemma~\ref{lem:mapsofinterest} there exists $\ell \in L_1$ such that $\ell g \in N_G(\bT_1)$.   Since $\lsup{\ell}s = s$ for all $s \in S_1$, we have that $\Int(\ell g) \colon \bS_1 \rightarrow \bS_1$ is a $k$-isomorphism; hence $\ell g \in N^k(N_G(\bT_1),S_1)$.

If $\ell' \in L_1$ is another choice such that $\ell' g \in N^k(N_G(\bT_1),S_1)$, then $(\ell g)(\ell' g)\inv \in N_G(\bT_1)$.  Thus $\ell (\ell')\inv \in N_G(\bT_1) \cap L_1 = N_{L_1}(\bT_1)$.   So $\ell = \lambda \ell'$ for some $\lambda \in N_{L_1}(\bT_1)$.

In this way we can define a map $\bar{\xi} \colon N^k(G,S_1) \rightarrow  N_{L_1}(\bT_1) \setminus  N^k(N_G(\bT_1),S_1)$.   If $g, h \in N^k(G,S_1)$ with $\bar{\xi}(h) = \bar{\xi}(g)$, then there exist $\ell_g, \ell_h \in L_1$ such that $\ell_g g \in N_{L_1}(\bT_1) \ell_h h$, that is $g \in L_1 h$.   Hence $\bar{\xi}$ descends to a bijective map 
$$\xi \colon L_1 \setminus N^k(G,S_1) \longrightarrow N_{L_1}(\bT_1) \setminus N^k(N_G(\bT_1),S_1).$$

To see that $\xi$ is $\Fr$-equivariant, note that if $g \in N^k(G,S_1) $, then there exists $\ell \in L_1 = \Fr(L_1)$ such that $\xi(L_1 g)$ is the image of $\ell g \in N^k(N_G(\bT_1),S_1)$ in $N^k(N_G(\bT_1),S_1)/N_{L_1}(\bT_1)$.   We have $\Fr(\ell g) \in \Fr(N_G(\bT_1)) = N_G(\bT_1)$, hence
\[
\Fr(\xi(L_1 g)) = \Fr( \ell g N_{L_1}(T_1)) = (\Fr(\ell) \Fr(g))N_{L_1}(T_1) = \xi(\Fr(L_1 g)).     
\qedhere
\]
\end{proof}

\begin{defn}
Suppose $(\theta,w) \in \dot{I}$.  Put
$$W_{w \circ \Fr, \theta} := \{w'  \in N_W(W_{\theta})  \colon  w^{-1} \Fr(w')^{-1}  w w' \in W_{\theta}\}  .$$
\end{defn}

\begin{rem}
Note that 
$W_{w \circ \Fr, \theta}$ is a group,  $N_W(A_{\theta}) = N_W(W_{\theta})$, and $W_{\theta} \unlhd W_{w \circ \Fr, \theta}$.   Also, $W_{w \circ \Fr, \theta}$ consists of precisely those $w'  \in N_W(W_{\theta})$ for which  $w\inv \circ \Fr$ preserves the coset $w'W_{\theta}$ in $N_W(W_{\theta})/W_{\theta}$ .
\end{rem}

\begin{lemma} \label{lem:quotient}
There is a natural $\Fr$-equivariant group isomorphism  
$$ \eta \colon  N^k(N_G(\bT_1),S_1)/C_G(\bT_1) \longrightarrow W_{w_1 \circ \Fr, \theta_1}.$$
Here $\Fr$ acts on $W_{w_1 \circ \Fr, \theta_1}$ via $w_1\inv \circ \Fr$.
\end{lemma}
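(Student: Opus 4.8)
The plan is to build the isomorphism $\eta$ by tracking how an element $n \in N^k(N_G(\bT_1), S_1)$ acts on the combinatorial data attached to $\bT_1$, $\bS_1$, and the fixed realization $\bS_1 = {\lsup{g_1}\bA_{\theta_1}}$. First I would recall the setup: $\bT_1 \leq \lsub{F_1}\bM$ is a maximally $k$-split maximal $K$-split $k$-torus in $\bL_1 = C_\bG(\bS_1)$ containing $\bS_1$, and $g_1 \in \lsub{F_1}M \cap G_{F_1}$ satisfies $\bT_1 = {\lsup{g_1}\bA}$ (after the normalization of $g_1$ in the notation subsection, $\bT_1^k \leq \lsup{g_1}\bA$ already) with $n_1 = \Fr(g_1)\inv g_1$ having image $w_1 = w$ in $W$, and $\bS_1 = {\lsup{g_1}\bA_{\theta_1}}$ with $\theta_1 = \theta$. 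Given $n \in N^k(N_G(\bT_1),S_1)$, the element $m := g_1\inv n g_1$ lies in $N_G(\bA)$; I would define $\eta(n)$ to be the image $\dot m$ of $m$ in $W$. Since $C_G(\bT_1) = C_G(\bA)^{g_1}$ and $m$ normalizes $\bA$, this is well defined on cosets $N^k(N_G(\bT_1),S_1)/C_G(\bT_1)$, and it is a group homomorphism because conjugation-by-$g_1$ intertwines $N_G(\bT_1)/C_G(\bT_1)$ with $N_G(\bA)/C_G(\bA) = W$.

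Next I would identify the image. The condition $\lsup{n}\bS_1 = \bS_1$ translates, under conjugation by $g_1\inv$, to $\lsup{m}\bA_{\theta_1} = \bA_{\theta_1}$, i.e. $\dot m \in N_W(\bA_{\theta_1}) = N_W(W_{\theta_1})$ (the equality $N_W(\bA_\theta) = N_W(W_\theta)$ is noted in the remark preceding the lemma). The condition that $\Int(n) \colon \bS_1 \to \bS_1$ be a $k$-isomorphism is, by the remark in Section~\ref{sec:6.2}, the statement that $\Fr(\lsup{n}s) = \lsup{n}s$ for all $s \in S_1^{\Fr}$; pulling this back through $g_1$ and using $\Fr(g_1) = g_1 n_1\inv$ I would compute that $\Fr(\lsup{m}s') = \lsup{m}s'$ for $s' \in A_{\theta_1}$ is equivalent to $n_1\inv \Fr(m) n_1 m\inv$ acting trivially on $\bA_{\theta_1}$, i.e. to $w_1\inv \Fr(\dot m)\inv w_1 \dot m \in W_{\theta_1}$ (being careful about the direction: $\Fr(g_1)\inv g_1 = n_1$ gives $\Fr(m) = \Fr(g_1\inv n g_1) = n_1 g_1\inv \Fr(n) g_1 n_1\inv$ when $n \in G^{\Fr}$; in general $n$ need not be $\Fr$-fixed, and the correct relation is the one recorded in the definition of $W_{w_1 \circ \Fr, \theta_1}$). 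This shows $\eta(n) \in W_{w_1\circ\Fr,\theta_1}$.

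For surjectivity, given $w' \in W_{w_1\circ\Fr,\theta_1}$, I would lift it to $\dot n' \in N_G(\bA)$, set $n := \lsup{g_1}\dot n' \in N_G(\bT_1)$, and verify via the same computation in reverse that $\lsup{n}\bS_1 = \bS_1$ and that $\Int(n)$ is a $k$-isomorphism — here the point is that the two defining conditions of $W_{w_1\circ\Fr,\theta_1}$ are exactly what is needed, so $n \in N^k(N_G(\bT_1),S_1)$ and $\eta(n) = w'$. For injectivity on cosets, if $\eta(n) = 1$ then $m = g_1\inv n g_1 \in C_G(\bA)$, so $n \in C_G(\bT_1)$. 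Finally, for $\Fr$-equivariance I would check that $\Fr$ acting on $N^k(N_G(\bT_1),S_1)/C_G(\bT_1)$ corresponds under $\eta$ to $w_1\inv \circ \Fr$ on $W_{w_1\circ\Fr,\theta_1}$: applying $\Fr$ to $n$ and conjugating by $g_1$ gives $g_1\inv \Fr(n) g_1 = g_1\inv \Fr(g_1) \cdot \Fr(g_1\inv n g_1) \cdot \Fr(g_1)\inv g_1 = n_1\inv \Fr(m) n_1$, whose image in $W$ is $w_1\inv \Fr(\dot m) w_1$; since $\Fr(m)$ already lies in $N_G(\bA)$, this is a clean identity of Weyl-group elements.

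The main obstacle I expect is getting the twisting conventions exactly right — in particular, keeping straight whether the relevant cocycle relation is $n_1\inv \Fr(m)n_1 m\inv$ or $m\inv n_1\inv \Fr(m) n_1$, and correspondingly whether $W_{w_1\circ\Fr,\theta_1}$ should be defined with $w^{-1}\Fr(w')^{-1}ww'$ or with $w'w^{-1}\Fr(w')^{-1}w$ (these differ by passing to the normal subgroup $W_{\theta_1}$ on the appropriate side, which is harmless since $W_{\theta_1} \unlhd W_{w_1\circ\Fr,\theta_1}$, but the bookkeeping must be done carefully). Everything else is a routine transport-of-structure argument through the fixed conjugating element $g_1$, together with the already-established facts that $C_G(\bT_1)$ and $C_G(\bA)$ correspond and that a $k$-isomorphism is detected on $\Fr$-fixed points.
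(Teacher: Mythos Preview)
Your proposal is correct and follows essentially the same route as the paper: define $\eta(n)$ as the image in $W$ of $g_1^{-1} n g_1$, translate the two defining conditions of $N^k(N_G(\bT_1),S_1)$ into $\dot m \in N_W(W_{\theta_1})$ and $w_1^{-1}\Fr(\dot m)^{-1} w_1 \dot m \in W_{\theta_1}$, then check surjectivity by lifting, injectivity via $N_G(\bT_1)/C_G(\bT_1) \cong W$ under $\Int(g_1^{-1})$, and $\Fr$-equivariance via the identity $g_1^{-1}\Fr(n)g_1 = n_1^{-1}\Fr(m)n_1$. The paper phrases the cocycle step as the observation that $\Fr(n)^{-1} n \in N_{L_1}(\bT_1)$ (so its $g_1^{-1}$-conjugate has image in $W_{\theta_1}$), which is exactly your ``acting trivially on $\bA_{\theta_1}$'' step; your caution about the left/right convention is well placed but, as you note, harmless since $W_{\theta_1}$ is normal in $W_{w_1\circ\Fr,\theta_1}$.
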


\begin{rem}
Since $C_G(\bT_1) \leq C_G(\bS_1)$, we have $C_G(\bT_1) = C_{L_1}(\bT_1)$.  Note that 
$$N^k(N_G(\bT_1),S_1)/C_{L_1}(\bT_1) \cong (N_{N_G(\bT_1)}(\bS_1)/C_{L_1}(\bT_1))^{\Fr}.$$
\end{rem}

\begin{proof}
 Suppose $n \in N^k(N_G(\bT_1),S_1)$.
 
 The map $\Int(g_1^{-1}) \colon N_G(\bT_1) \rightarrow N_G(\bA)$ is an isomorphism.  Since 
  $\lsup{n}S_1 = S_1$ and $\lsup{\Fr(n)}S_1 = S_1$, the images of $g_1^{-1}n  g_1$  and $g_1^{-1} \Fr(n) g_1$  in $W$ preserve $A_{\theta_1}$ and so belong to $N_W(W_{\theta_1})$.

 Set $m = \lsup{g_1^{-1}}n \in N_G(A)$.  Let $w$ denote the image of $m$ in $N_{W}(W_{\theta_1})$.   Since $(w_1\inv \circ \Fr) (\theta_1) = \theta_1$, we have $(w_1\inv \circ \Fr )(W_{\theta_1}) = W_{\theta_1}$ and $(w_1\inv \circ \Fr )(N_W(W_{\theta_1})) = N_W(W_{\theta_1})$.  Thus,  $w_1^{-1} \Fr(w)^{-1}  w_1 w \in N_W(W_{\theta_1})$.   Note that $w_1^{-1} \Fr(w)^{-1}  w_1 w$ is the image of  
 $$ (g_1^{-1} \Fr(g_1)) \Fr(m^{-1}) (\Fr(g_1)^{-1} g_1) m (g_1^{-1} g_1) = g_1^{-1}( \Fr(n)^{-1} n) g_1 \in N_G(\bA).$$
 Since $\Int(n) \colon \bS_1 \rightarrow \bS_1$ and $\Int(\Fr(n)) \colon \bS_1 \rightarrow \bS_1$ are $k$-isomorphisms we have $\lsup{\Fr(n)} s = \lsup{n}s$ 
 for all $s \in S_1^{\Fr}$.  Thus, $\Fr(n)^{-1} n \in N_{L_1}(\bT_1)$.
 Hence, the image of $g_1^{-1} (\Fr(n)^{-1} n) g_1$ in $N_W(W_{\theta_1}) \leq W$ belongs to $W_{\theta_1}$.
 Thus $w_1^{-1} \Fr(w)^{-1}  w_1 w \in W_{\theta_1}$.
 
Consequently, we can define a map
$$\eta \colon N^k(N_G(\bT_1), S_1) \longrightarrow  W_{w_1 \circ \Fr, {\theta_1}}$$
by letting $\eta(n)$ be the image of $g_1 \inv n g_1$ in $N_W(W_{\theta_1})$ for $n \in N^k(N_G(\bT_1), S_1)$.   Note that
$$\lsup{g_1\inv} \Fr(n) = g_1\inv \Fr(g_1) \Fr(g_1\inv n g_1) \Fr(g_1)\inv g_1,$$
 that is, $\eta(\Fr(n)) = (w_1\inv \circ \Fr) (\eta(n))$ for all $n \in N^k(N_G(\bT_1), S_1)$.

 We now show that $\eta$ is surjective.
Suppose $w \in W_{w_1 \circ \Fr, {\theta_1}}$. Choose a representative $m \in N_G(\bA)$ for $w$ and let $n = \lsup{g_1}m$.  Note that
$$\lsup{n}S_1 = \lsup{n g_1}A_{\theta_1} = \lsup{g_1 m g_1^{-1} g_1} A_{\theta_1} = \lsup{g_1 m}A_{\theta_1} =  \lsup{g_1}A_{\theta_1} = S_1.$$
Let $n_1 = \Fr(g_1 )^{-1} g_1 \in N_G(\bA)$, this is a lift of  $w_1$.  Let $p = n_1^{-1} \Fr(m)^{-1} n_1 m$.  By hypothesis, the image of $p$ in $N_W(W_{\theta_1}) \leq W$ belongs to $W_{\theta_1}$.  
Fix $s \in S_1^{\Fr}$, and set $a = \lsup{g_1^{-1}} s \in A_{\theta_1}$.   We have 
$$\Fr(\lsup{n}s) = \lsup{\Fr(n)}s = \lsup{\Fr(g_1) \Fr(m) \Fr(g_1^{-1})}(\lsup{ g_1}a )= \lsup{g_1}(\lsup{ n_1^{-1} \Fr(m) n_1}( a)) =  \lsup{g_1}( \lsup{m p\inv} a) = \lsup{g_1}( \lsup{m} a) = \lsup{n}s.$$
Hence $n \in N^k(N_G(\bT_1),S_1)$ and $\eta(n) = w$.

The injectivity of $\eta$ follows from the fact that the isomorphism  $\Int(g_1 ^{-1}) \colon N_G(\bT_1) \rightarrow N_G(\bA)$ induces an isomorphism $N_G(\bT_1)/ C_G(\bT_1) \rightarrow W =  N_G(\bA)/C_G(\bA)$.
\end{proof}

Note that $C_G(\bT_1) = C_{L_1}(\bT_1)$ is a normal subgroup of $N^k(N_G(\bT_1), S_1)$ and is contained in $N_{L_1}(\bT_1)$.   Since the image of $g_1\inv N_{L_1}(\bT_1) g_1$ in $W$ is $W_{\theta_1}$, Lemma~\ref{lem:firststeps} and Lemma~\ref{lem:quotient} show:

\begin{cor}  \label{cor:wherephidefined}
There is a natural $\Fr$-equivariant (group) isomorphism 
$$\varphi \colon N^k(G, S_1)/L_1 \longrightarrow W_{w_1 \circ \Fr, \theta_1}/W_{\theta_1}$$
where $\Fr$ acts on $W_{w_1 \circ \Fr, \theta_1}/W_{\theta_1}$ via $w_1 \inv \circ \Fr$. \qed
\end{cor}

\subsubsection{Classifying,  up to $G^{\Fr}$-conjugation,  $k$-embeddings of $\bS_1$ into $\bG$ with image $\bS_1$}

\begin{defn}
Suppose $\bS$ is an unramified torus in $\bG$.  Suppose $f,h \colon \bS \rightarrow \bG$ are two $k$-embeddings of $\bS$ into $\bG$.  We say \emph{$f$ is $G^\Fr$-conjugate to $h$} provided that there exists $x \in G^\Fr$ such that $\Int(x) \circ f = h$.
\end{defn}

We want to understand the set of $G^\Fr$-conjugacy classes of $k$-embeddings of $\bS_1$ into $\bG$ having image $\bS_1$.

\begin{lemma}  The set of $G^\Fr$-conjugacy classes of $k$-embeddings of $\bS_1$ into $\bG$ with image $\bS_1$ is parameterized by
\[
(N^k(G,S_1)/L_1)/(N_{G^\Fr}(S_1) L_1 / L_1 ) \cong N^k(G,S_1)/N_{G^\Fr}(S_1) L_1.
\]
\end{lemma}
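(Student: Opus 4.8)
The plan is to reduce the statement to elementary coset bookkeeping inside the group $N^k(G,S_1)$, using throughout that $L_1=C_G(S_1)$ is normal in it. First I would parametrize the $k$-embeddings themselves. If $f\colon\bS_1\to\bG$ is a $k$-embedding with image $\bS_1$, then by definition $f=\Int(g)|_{\bS_1}$ for some $g\in G$, and the requirements that the image be $\bS_1$ and that $f$ be a $k$-morphism say precisely that $g\in N^k(G,S_1)$; moreover $\Int(g)|_{\bS_1}=\Int(h)|_{\bS_1}$ if and only if $h^{-1}g\in C_G(S_1)=L_1$, i.e.\ $gL_1=hL_1$. Since any element normalizing $\bS_1$ also normalizes $C_\bG(\bS_1)=\bL_1$, the subgroup $L_1$ is normal in $N^k(G,S_1)$, so $N^k(G,S_1)/L_1$ is a group and we obtain a bijection between the set of $k$-embeddings of $\bS_1$ into $\bG$ with image $\bS_1$ and $N^k(G,S_1)/L_1$, sending $f=\Int(g)|_{\bS_1}$ to the coset $gL_1$.

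Next I would transport the $G^{\Fr}$-action through this bijection. For $x\in G^{\Fr}$ and $f=\Int(g)|_{\bS_1}$ one has $\Int(x)\circ f=\Int(xg)|_{\bS_1}$, so $\Int(x)\circ f$ again has image $\bS_1$ exactly when $\lsup{xg}S_1=S_1$; since $g$ already normalizes $S_1$, this forces $x\in N_G(S_1)$, hence $x\in N_{G^{\Fr}}(S_1)$. Conversely every $x\in N_{G^{\Fr}}(S_1)$ lies in $N^k(G,S_1)$, because for $s\in S_1^{\Fr}$ we have $\Fr(\lsup{x}s)=\lsup{\Fr(x)}\Fr(s)=\lsup{x}s$. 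Therefore, under the bijection of the previous paragraph, two $k$-embeddings with image $\bS_1$ are $G^{\Fr}$-conjugate if and only if the corresponding cosets $gL_1,\,hL_1\in N^k(G,S_1)/L_1$ differ by left translation by an element of $N_{G^{\Fr}}(S_1)$; that is, the $G^{\Fr}$-conjugacy classes of such $k$-embeddings are exactly the orbits of the subgroup $N_{G^{\Fr}}(S_1)L_1/L_1\le N^k(G,S_1)/L_1$ acting by translation on $N^k(G,S_1)/L_1$.

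Finally I would identify this orbit set with the displayed expression: the orbits of a subgroup of a group acting by translation are its cosets, so the orbit set is $(N^k(G,S_1)/L_1)/(N_{G^{\Fr}}(S_1)L_1/L_1)$, and since $L_1\subseteq N_{G^{\Fr}}(S_1)L_1\subseteq N^k(G,S_1)$ with $L_1\trianglelefteq N^k(G,S_1)$, the third isomorphism theorem (equivalently, the evident map on cosets) gives the identification with $N^k(G,S_1)/N_{G^{\Fr}}(S_1)L_1$. I expect the only genuinely delicate point to be the claim in the second paragraph that an element of $G^{\Fr}$ witnessing $G^{\Fr}$-conjugacy of two $k$-embeddings with image $\bS_1$ must itself normalize $S_1$ — this is what makes $N_{G^{\Fr}}(S_1)$, rather than all of $G^{\Fr}$, the group effecting the identification; the rest is bookkeeping, though one must be consistent about whether $N^k(G,S_1)/L_1$ is acted on by left or by right translation, since the final answer is written as a right quotient.
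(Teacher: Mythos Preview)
Your proposal is correct and follows essentially the same approach as the paper: identify $k$-embeddings with image $\bS_1$ with $N^k(G,S_1)/L_1$ (the paper states this as a preceding remark), then observe that $x\in G^{\Fr}$ carries one such embedding to another if and only if $x\in N_{G^{\Fr}}(\bS_1)$, so the orbits are cosets of $N_{G^{\Fr}}(S_1)L_1/L_1$. Your write-up is considerably more detailed than the paper's one-line proof; the only minor quibble is that you call the final identification the ``third isomorphism theorem,'' whereas $N_{G^{\Fr}}(S_1)L_1$ need not be normal in $N^k(G,S_1)$ --- but your parenthetical ``the evident map on cosets'' is exactly right, and your caution about the left/right convention is well placed.
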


\begin{proof}
Suppose $f,h \colon \bS_1 \rightarrow \bG$ are $k$-embeddings with images $\bS_1$.  We have $f$ is $G^\Fr$-conjugate to $h$ by $x \in G^{\Fr}$ if and only if $x \in N^k(G^\Fr,S_1) = N_{G^{\Fr}}(\bS_1) = (N_{G}(\bS_1))^\Fr$.
\end{proof}

Suppose $g \in N_{G^\Fr}(\bS_1)$.  Thanks to Lemma~\ref{lem:rationaln} there exists $\ell \in L_1^\Fr$ such that $n = g \ell \in N_{G^\Fr}(\bT_1) \cap N_{G^\Fr}(\bS_1) = N_{N_{G^\Fr}(\bT_1)}(\bS_1)$.  Hence, in the notation of Lemma~\ref{lem:firststeps}, $\xi(gL_1) = n N_{L_1}(\bT_1)$ and, as groups,
\[
\begin{split}
\xi(N_{G^\Fr}(\bS_1) L_1/ L_1) &= N_{N_{G^\Fr}(\bT_1)}(\bS_1) N_{L_1}(\bT_1) / N_{L_1}(\bT_1)\\
& \cong (N_{N_{G^\Fr}(\bT_1)}(\bS_1) C_{L_1}(\bT_1)/C_{L_1}(\bT_1))/(N_{L_1}(\bT_1)/C_{L_1}(\bT_1)).
\end{split}
\]

\begin{rem}
We also have, as groups,
\[
\xi(N_{G^\Fr}(\bS_1) L_1/ L_1)
\cong
N_{N_{G^\Fr}(\bT_1)}(\bS_1)/ N_{L_1^{\Fr}}(\bT_1).
\]
\end{rem}

Recall that $\bS_1$ corresponds to $(F_1,\theta_1,w_1) \in \tilde{I}^{\ee}$ and $\bS_1 = {\lsup{g_1}\bA_{\theta_1}}$ for $g_1 \in G_{F_1}$ with $\Fr(g_1)\inv g_1$ having image $w_1$ in $W$.

\begin{defn}
We let $W(F_1)$ denote the image of the stabilizer (not fixator)  in ${N_G(\bA)}$ of  $A(\AA(A)^\Fr, F_1)$. 
We set  
$$W(F_1,\theta_1,w_1) := W(F_1) \cap N_W(W_{\theta_1}) \cap W_{w_1 \circ \Fr, \emptyset} .$$
\end{defn}

\begin{rem}
The group $W(F_1,\theta_1,w_1) $ is a subgroup of $ W_{w_1 \circ \Fr, \theta_1}$.
\end{rem}

\begin{example}
If $w_1$ is the identity element of $W$, then $F_1$ is an alcove.
If $\Fr$ acts trivially and $w_1$ is the identity, then  $W(F_1,\theta_1,w_1) = W_{w_1 \circ \Fr, \theta_1} = N_W(W_{\theta_1}).$
\end{example}

\begin{lemma}
Suppose $\eta$ is the $\Fr$-equivariant group isomorphism of Lemma~\ref{lem:quotient}.   We have
$$\eta(N_{N_{G^\Fr}(\bT_1)}(\bS_1) C_{L_1}(\bT_1)/C_{L_1}(\bT_1)) \leq W(F_1,\theta_1,w_1)$$
with equality if $\bG$ is $K$-split or $\bG$ is simply connected.
\end{lemma}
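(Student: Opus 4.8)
The plan is to compute the map $\eta$ (hence the composite of Lemmas~\ref{lem:firststeps} and~\ref{lem:quotient}) explicitly on representatives. Since $C_{L_1}(\bT_1)=C_G(\bT_1)$, an element of the source is represented by some $n\in N_{G^\Fr}(\bT_1)\cap N_{G^\Fr}(\bS_1)$, and $\eta$ sends its class to the image $w$ in $W$ of $m:=g_1^{-1}ng_1\in N_G(\bA)$; I would prove the inclusion by checking $w$ lies in each of the three groups cutting out $W(F_1,\theta_1,w_1)$. Membership $w\in N_W(W_{\theta_1})$ is automatic (it is part of the codomain of $\eta$; directly, $n$ normalizes $\bS_1=\lsup{g_1}\bA_{\theta_1}$). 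For $W_{w_1\circ\Fr,\emptyset}$: since $\Fr(n)=n$ one has $\Fr(m)=\Fr(g_1)^{-1}n\Fr(g_1)=n_1 m n_1^{-1}$ with $n_1=\Fr(g_1)^{-1}g_1$ of image $w_1$, whence $\Fr(w)=w_1 w w_1^{-1}$, i.e.\ $w_1^{-1}\Fr(w)^{-1}w_1 w=1$. For $W(F_1)$, the point is that $n$ normalizes $\bT_1$, hence $C_\bG(\bT_1)$, hence its maximal $k$-split subtorus, which equals $\bT_1^k$; so $n$ normalizes $C_\bG(\bT_1^k)$, which by Corollary~\ref{cor:improved541}, the argument of Lemma~\ref{lem:prelimforM}, and Remark~\ref{rem:handsonM} we may arrange (replacing $g_1$ by $hg_1$ with $h\in G^\Fr_{F_1,0^+}$, which does not alter $n_1$) to equal $\lsub{F_1}\bM$. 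As $g_1\in\lsub{F_1}\bM(K)$, it follows that $m$ too normalizes $\lsub{F_1}\bM$, so $w$ normalizes $\Phi(F_1)$ and preserves the linear span $V_{F_1}=\X_*(\bA(F_1))^{\Fr}_{\mathbb R}$ of $A(\AA(A)^\Fr,F_1)$; and since $g_1$ fixes $F_1$ pointwise while $n$, normalizing $\lsub{F_1}\bM$ and carrying the maximal $G^\Fr$-facet $F_1$ of $\BB(L_1)^\Fr$ into $\BB(L_1)^\Fr$, maps the central fibre of $\BB(\lsub{F_1}\bM)^\Fr$ through $p\in F_1$ to the one through $np$, one gets $mp=g_1^{-1}(np)\in p+V_{F_1}$. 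Thus $m$ stabilizes $A(\AA(A)^\Fr,F_1)$, so $w\in W(F_1)$, giving $w\in W(F_1,\theta_1,w_1)$.

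For the reverse inclusion when $\bG$ is $K$-split or simply connected, I would start with $w\in W(F_1,\theta_1,w_1)$ and, using $w\in W(F_1)$, choose a lift $m\in N_G(\bA)$ of $w$ stabilizing $A(\AA(A)^\Fr,F_1)$; put $n_0:=\lsup{g_1}m\in N_G(\bT_1)$. Then $w\in N_W(W_{\theta_1})$ gives that $n_0$ normalizes $\bS_1$, and $w\in W_{w_1\circ\Fr,\emptyset}$ forces $c:=\Fr(n_0)n_0^{-1}$ into $C_\bG(\bT_1)(K)=:\hat\bT(K)$, where it is a continuous $\Gal(K/k)$-cocycle. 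Because $n_1$ and $m$ (hence $\Fr(m)$) stabilize $A(\AA(A)^\Fr,F_1)$, so does $c$, which therefore acts on $\AA(A)$ by a translation lying in $V_{F_1}$; since moreover $n_1$ fixes $F_1$ pointwise, $w_1$ acts trivially on $V_{F_1}$, and combining this with continuity of the cocycle forces $\nu_{\mathrm{val}}(c)=0$, i.e.\ $c$ lies in the maximal bounded subgroup $\hat\bT(K)_b$. The obstruction to adjusting $n_0$ within $\hat\bT(K)$ to an $\Fr$-fixed $n$ is then the class $[c]\in\cohom^1(\Gal(K/k),\hat\bT(K)_b)$. When $\bG$ is $K$-split, $\hat\bT=\bT_1$ is $K$-split, so $\hat\bT(K)_b=\hat\bT(K)_0$ and $\cohom^1(\Gal(K/k),\hat\bT(K)_0)=1$ by the filtration argument used in Lemma~\ref{lem:parahoriccohom} (pro-unipotent radical plus $\cohom^1(\Fr,\cdot)=1$ for the torus quotient over $\ff$); when $\bG$ is simply connected, $[c]$ dies in $\cohom^1(k,\bG)=1$, and together with the fact that $\hat\bT$ is the maximal torus of $\bL_1$ attached to $\bS_1$ this forces $[c]=0$. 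Either way we obtain $n=n_0t$ with $t\in\hat\bT(K)$, lying in $N_{G^\Fr}(\bT_1)\cap N_{G^\Fr}(\bS_1)$ and with $\eta(n)=w$.

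The routine ingredients are the Frobenius–twist identities and the building-theoretic bookkeeping around $\lsub{F_1}\bM$; the genuinely delicate step is the reverse inclusion — identifying the obstruction as a class in the cohomology of the \emph{bounded} part of $\hat\bT$ (which is where the hypothesis $w\in W(F_1)$ and the continuity of the cocycle get used to kill the unbounded contribution), and then establishing its vanishing in the two listed cases, the $K$-split case by the parahoric-cohomology computation and the simply connected case through $\cohom^1(k,\bG)=1$.
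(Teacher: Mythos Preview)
Your inclusion argument for $N_W(W_{\theta_1})$ and $W_{w_1\circ\Fr,\emptyset}$ is fine and matches the paper. For $W(F_1)$ you work much too hard: the setup in the paragraph ``Notation for Section~\ref{sec:$k$-embeddings}'' already arranges $\BB(T_1)^\Fr\subset\AA(A)^\Fr$, and since $F_1\subset\BB(T_1)^\Fr$ with $\dim\BB(T_1)^\Fr=\dim A(\AA(A)^\Fr,F_1)$ one has $\BB(T_1)^\Fr=A(\AA(A)^\Fr,F_1)$. Then $n\in N_{G^\Fr}(\bT_1)$ stabilizes $\BB(T_1)^\Fr$ directly, and $g_1\in\lsub{F_1}M_{F_1}$ does too, so $m=g_1^{-1}ng_1$ stabilizes $A(\AA(A)^\Fr,F_1)$. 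No passage through $\lsub{F_1}\bM$ or ``central fibres'' is needed.

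The surjectivity argument has a genuine gap in the simply connected case. First, a missed simplification: since $n_0=\lsup{g_1}m$ stabilizes the $\Fr$-fixed set $A(\AA(A)^\Fr,F_1)$, for every $y$ there one has $n_0\cdot y=\Fr(n_0\cdot y)=\Fr(n_0)\cdot y$, so $\Fr(n_0)^{-1}n_0$ \emph{fixes each such $y$}. This replaces your indirect ``continuity of the cocycle'' step and gives a stronger conclusion than mere boundedness. Second, and more seriously, your appeal to $\cohom^1(k,\bG)=1$ does not work: vanishing of the image of $[c]$ in $\cohom^1$ of $\bG$ says nothing about vanishing of $[c]$ in $\cohom^1$ of $\hat\bT(K)_b$ or $\hat\bT(K)$, and the phrase ``together with the fact that $\hat\bT$ is the maximal torus of $\bL_1$'' does not supply the missing argument. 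The paper instead uses a different consequence of simple connectedness: in a simply connected group the stabilizer in $G$ of any point of $\BB(G)$ equals the parahoric there. Combined with the pointwise-fixing statement above, this forces $\Fr(n_0)^{-1}n_0\in\hat T\cap G_{F_1}$, and then Lang--Steinberg on this parahoric (Lemma~\ref{lem:parahoriccohom}) produces the required $t$ exactly as in your $K$-split case.
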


\begin{proof}    
Suppose $n \in N_{N_{G^\Fr}(\bT_1)}(\bS_1)$.   We need to show that $w := \eta(n)$ is an element of $W(F_1) \cap N_W(W_{\theta_1}) \cap W_{w_1 \circ \Fr, \emptyset} $.   Since   $n \in G^{\Fr}$ normalizes $T_1$, it stabilizes $\BB(T_1)^{\Fr} = A(\AA(A)^\Fr,{F_1})$  (see~\cite[Lemma~2.2.1]{debacker:unramified}).  Since $g_1$ stabilizes $F_1$ and belongs to $\lsub{F_1}\bM(K)$, we conclude that $g_1$ stabilizes $A(\AA(A)^\Fr,{F_1})$ as well.  Thus,
 $\eta(n)$, the image of $g_1\inv n g_1$ in $W$, belongs to $W(F_1)$.  Let $n_1 = \Fr(g_1 )^{-1} g_1 \in N_G(\bA)$, this is a lift of  $w_1$. Since $w_1\inv \Fr(w)\inv w_1 w$ is the image in $W$ of 
$n_1\inv \Fr(g_1\inv n g_1)\inv n_1 (g_1\inv n g_1) = 1$ (recall that $\Fr(n)\inv n = 1$), we conclude that $\eta (n) \in W_{w_1 \circ \Fr, \emptyset}$.   Since $n$  normalizes $\bL_1 = C_\bG(\bS_1)$, we conclude that $\eta(n) \in N_W(W_{\theta_1})$.

We now show that if $\bG$ is $K$-split or simply connected, then $\eta$ is surjective.   Suppose $w \in W(F_1,\theta_1,w_1)$.   Choose $m \in {N_G(\bA)}$ lifting $w$.   Since $m$ stabilizes  $A(\AA(A)^\Fr, F_1)$ and $g_1 \in \lsub{F_1}{M}_{F_1}$ also stabilizes  $A(\AA(A)^\Fr, F_1)$, we have that $\lsup{g_1}m$ stabilizes $A(\AA(A)^{\Fr}, F_1)$.   Thus, for all $y \in A(\AA(A)^\Fr, F_1)$ we have $\lsup{g_1}m \cdot y = \Fr((\lsup{g_1}m) \cdot y)  = \Fr(\lsup{g_1}m) \cdot y$.  This implies 
that  $\Fr(\lsup{g_1}m)\inv  \cdot {\lsup{g_1}m}$ fixes $y$.    Since $w \in W_{w_1 \circ \Fr, \emptyset} $, we have $\Fr(\lsup{g_1}m)\inv  \cdot {\lsup{g_1}m} = g_1 (n_1\inv \Fr(m)\inv n_1 m) g_1\inv \in  C_G(\bT_1)$.
Set $\EbTtorus := C_{\bG}(\bT_1)$.
 If $\bG$ is simply connected, then the set of points in $\EbTtorusrat$ that fix $y$ belong to $(\EbTtorusrat \cap G_{F_1})$.  If $\bG$ is $K$-split, then $\EbTtorus = \bT_1$, and since $\bT_1$ is a maximal $K$-split torus in $\bG$, the set of points in $T_1$ that fix $y$ is equal to $T_1 \cap G_{F_1}$.  In either case, we have  $\Fr(\lsup{g_1}m)\inv  \cdot {\lsup{g_1}m}  \in \EbTtorusrat \cap G_F$.   By Lang-Steinberg, we can choose $t \in \EbTtorusrat \cap G_F$ such that 
$\Fr(\lsup{g_1}m)\inv  \cdot {\lsup{g_1}m}  = \Fr(t)t\inv$.  Thus, if $a = g_1 \inv t g_1 \in C_G(\bA)$, then $\lsup{g_1}(ma) \in G^{\Fr}$.  Note that $\eta(\lsup{g_1}(ma)) =w$.
\end{proof}

The following Corollary summarizes the results of Section~\ref{sec:$k$-embeddings}.  Recall from Corollary~\ref{cor:wherephidefined} that $\varphi \colon N^k(G, S_1)/L_1 \rightarrow W_{w_1 \circ \Fr, \theta_1}/W_{\theta_1} $ is a $\Fr$-equivariant (group) isomorphism.

\begin{cor}  
The set of $G^{\Fr}$-conjugacy classes  of $k$-embeddings of $\bS_1$ into $\bG$ with image $\bS_1$ is in natural bijective correspondence with 
$$\overline{W}(F_1, \theta_1, w_1) := W_{w_1 \circ \Fr, {\theta_1}}/ \varphi( N_{G^\Fr}(\bS_1)L_1/L_1) W_{\theta_1}.$$
If $\bG$ is $K$-split or simply connected, this is
$$W_{w_1 \circ \Fr, {\theta_1}} /  W(F_1,\theta_1,w_1) W_{\theta_1},$$
and, in general, we have  $\varphi( N_{G^\Fr}(\bS_1)L_1/L_1) \leq  W(F_1,\theta_1,w_1)$. \qed
\end{cor}

\subsection{Examples}
We now illustrate the results of this section by looking at three examples: $\Sp_4$, $\Gtwo_2$, and unramified $\SU_3$.

\begin{example}
We consider $\Sp_4$ and adopt the notation of Example~\ref{ex:initialsp4}. 
In Figure \ref{fig:Sp4$k$-embeddings}, we have added a number and a letter to each datum of $\Sp_4$.  The number  records the number of $k$-embeddings, up to $G^{\Fr}$-conjugacy, of the unramified torus corresponding to the datum into itself.   If two data have the same letter, then their corresponding unramified tori are stably conjugate.
\begin{figure}[ht]
\centering
\begin{tikzpicture}
\draw (0,0) node[anchor=north]{\shortstack{$(\emptyset, c):1:a$\\$(\emptyset, c^2):1:b$} \hphantom{hellodollythreefour}}
  -- (5,0) node[anchor=north]{$\hphantom{hellodollythree}$\shortstack{ $(\emptyset, c^2):2:b$ \\ $(\{\alpha\}, c^2):1:c$}}
  -- (5,5) node[anchor=west]{\shortstack{$(\emptyset,c):1:a$\\$(\emptyset,c^2):1:b$} $ \vphantom{P_{P_{P_{P_{P}}}}}$}
  -- cycle;
  \draw (2.5,0) node[anchor=north]{\shortstack{$(\emptyset, w_\beta):1:d$ \\ $(\{\beta + 2 \alpha \}, w_\beta):1:e$} };
  \draw (-1.5,2.5) node[anchor=west]{\shortstack{$(\emptyset, w_\alpha):1:f$ \\$(\{\alpha + \beta\},w_\alpha):1:c$} $\hphantom{hellodollythree}$};
   \draw (5,2.5) node[anchor=west]{\shortstack{$(\emptyset, w_{2 \alpha + \beta}):1:d$ \\ $(\{\beta\}, w_{\beta + 2 \alpha}):1:e$}};
   \draw (2.2,1.5) node[anchor=west]{\shortstack{$(\emptyset, 1):1:g$ \\ $(\{\alpha\}, 1):1:h$\\ $(\{\beta\},1):1:i$\\$(\Delta, 1):1:j$}};
\end{tikzpicture}
\caption{A parameterization of the $k$-embeddings of unramified tori for $\Sp_4$ \label{fig:Sp4$k$-embeddings}}
\end{figure}
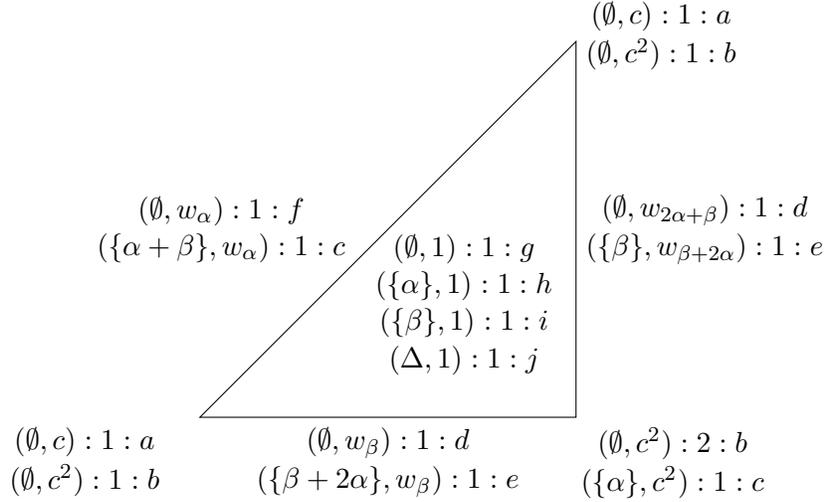
So, for example, the number of $k$-embeddings of an unramified torus corresponding to the datum $(\{\alpha + \beta\},w_\alpha)$ is two, up to $G^{\Fr}$-conjugacy.

\end{example}

\begin{example}  
In Figure~\ref{fig:g2$k$-embeddings} we have enumerated the data that classifies the rational conjugacy classes of unramified tori for the group $\Gtwo_2$.  The tick marks on the horizontal edge and the hypotenuse indicate that those two facets are equivalent.

\begin{figure}[ht]
\centering
\begin{tikzpicture}

\draw (0,0) 
  -- (4.04,7) 
  -- (0,7) 
  -- cycle;
  
  \draw(5.4,6.8) node[anchor=south]{\shortstack{ $(\emptyset,   c^2):2:b$}};
  
  \draw (1.5,0) node[anchor=north]{\shortstack{$(\emptyset, c ):1:a$\\$(\emptyset, c^2):1:b$\\$(\emptyset, c^3):1:c$} \hphantom{hellodollythreefour}};

  \draw (-2.2,6.2) node[anchor=south]{\shortstack{$(\emptyset,c^3):3:c$\\$(\{ \beta \},c^3):1:d$\\$( \{ \alpha + \beta \},c^3):1:e$} };

  \draw (2.1,3.2) node[anchor=west]{\shortstack{$(\emptyset, w_\beta):1:g$ \\ $(\{\beta + 2 \alpha \}, w_\beta):1:e$} };
  
  \draw (-4.2,3.2) node[anchor=west]{\shortstack{$(\emptyset, w_\alpha):1:f$ \\$(\{3\alpha + 2\beta\},w_\alpha):1:d$} $\hphantom{hellodollythree}$};

  \draw (2,6.8)--(2,7.2);
  
  \draw (2.18,3.41)--(1.82,3.59);

   \draw (.15,5.5) node[anchor=west]{\shortstack{$(\emptyset, 1):1:h$ \\ $(\{\alpha\}, 1):1:i$\\ $(\{\beta\},1):1:j$\\$(\Delta, 1):1:k$}};
\end{tikzpicture}
\caption{A parameterization of the $k$-embeddings of unramified tori for $\Gtwo_2$ \label{fig:g2$k$-embeddings}}
\end{figure}

We have chosen a set of simple roots $\alpha$ and $\beta$ for $\Gtwo_2$ with $\alpha$ short and $\beta$ long so that in Figure~\ref{fig:g2$k$-embeddings} the hypotenuse lies on a hyperplane defined by an affine root with gradient $\beta$ and the vertical edge lies on a hyperplane defined by an affine root with gradient $\alpha$.   We let $w_\alpha$ denote the simple reflection corresponding to $\alpha$, and  $w_\beta$ denotes the simple reflection corresponding to $\beta$.  We let $c$ denote the Coxeter element $w_\alpha w_\beta$.

As for $\Sp_4$, each label has three parts: a datum from $\dot{I}(F)$ where $F$ is the facet adjacent to the label; a number that records the number of $k$-embeddings, up to $G^{\Fr}$-conjugacy, of the unramified torus corresponding to the datum into itself; and a letter indicating the stable conjugacy class of the unramified torus corresponding to the datum.  

The centralizer of the unramified torus corresponding to the pair $(\{3\alpha + 2\beta\}, w_\alpha)$ is unramified  $\tilde{U}(1,1)$ while the centralizer of the unramified torus corresponding to the pair $(\{\beta\},c^3)$ is unramified $\tilde{U}(2)$.   The ornamental tilde indicates that the group $\tilde{U}(2)$ is of type $A_1$ for a long root.  The centralizer of the unramified torus corresponding to the pair $(\{\beta + 2\alpha \}, w_\beta)$ is unramified  ${U}(1,1)$ while the centralizer of the unramified torus corresponding to the pair $(\{\alpha + \beta\},c^3)$ is unramified ${U}(2)$. 
  The four unramified tori with labels of the form $(\theta, 1)$ are the $k$-split components of the centers of the four (up to conjugacy) distinct $k$-subgroups of $\Gtwo_2$ that occur as a Levi factor for a parabolic $k$-subgroup of $\Gtwo_2$.

\end{example}

\begin{example}  
In Figure~\ref{fig:su3$k$-embeddings} we have enumerated the data that classifies the rational conjugacy classes of unramified tori for the  group of $k$-rational points of unramified $\SU(3)$.  Thinking of this group as the $\Fr$-fixed points of $\SL_3(K)$, the dotted equilateral triangle is a $\Fr$-stable alcove of $\SL_3(K)$.

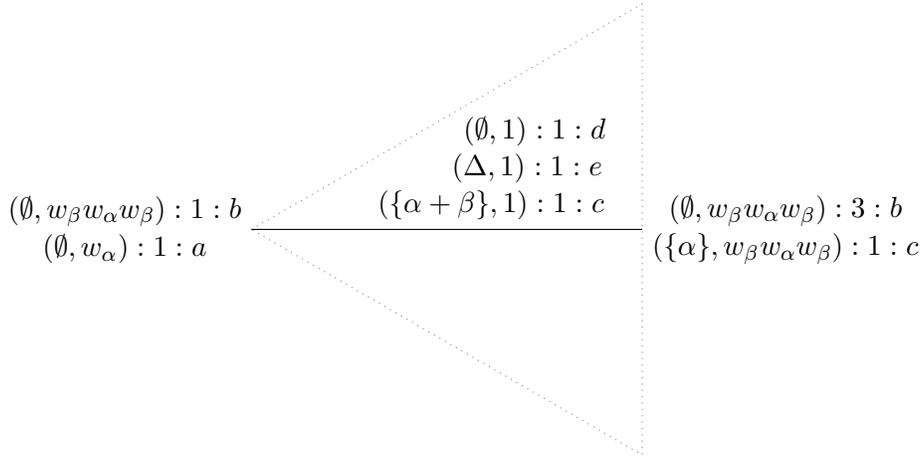
\begin{figure}[ht]
\centering
\begin{tikzpicture}

\draw[dotted, gray] (0,0) 
  -- (5.196,3) 
  -- (5.196,-3) 
  -- cycle;
 \draw (0,0) -- (5.196,0); 

  \draw(0,0) node[anchor=east]{\shortstack{$(\emptyset,w_\beta w_\alpha w_\beta):1:b$  \\$(\emptyset, w_\alpha) :1:a$}};
  
   \draw(5.196,0) node[anchor=west]{\shortstack{ $(\emptyset,w_\beta w_\alpha w_\beta):3:b$ \\ $(\{\alpha\} , w_\beta w_\alpha w_\beta) :1
  :c$}};
  
   \draw(2.8,0) node[anchor=south]{\shortstack{
   $\hspace{5em}(\emptyset,1):1:d$  \\ $\hspace{4.5em}(\Delta ,1) :1
  :e$ \\$\hspace{2em}(\{\alpha + \beta \},1):1:c$ }};

\end{tikzpicture}
\caption{A parameterization of the $k$-embeddings of unramified tori for unramified $\SU(3)$ \label{fig:su3$k$-embeddings}}
\end{figure}

We have chosen a set of simple roots $\alpha$ and $\beta$ for $\SL_3$   so that $\Fr(\alpha) = \beta$ and the hyperspecial vertex (of $\SU(3)$) pictured in Figure~\ref{fig:su3$k$-embeddings}  lies on the hyperplanes defined by  affine roots with gradients $\alpha$ and $\beta$ while the other vertex lies on a hyperplane defined by an affine root with gradient $\alpha + \beta$. 

As for $\Sp_4$ and $\Gtwo_2$, each label has three parts: a datum from $\dot{I}(F)$ where $F$ is the facet adjacent to the label; a number that records the number of $k$-embeddings, up to $G^{\Fr}$-conjugacy, of the unramified torus corresponding to the datum into itself; and a letter indicating the stable conjugacy class of the unramified torus corresponding to the datum.  

The centralizer of the unramified torus corresponding to the pair $(\{\alpha\}, w_\beta w_\alpha w_\beta)$ is unramified  ${U}(2)$ while the centralizer of the unramified torus corresponding to the pair $(\{\alpha + \beta\},1)$ is unramified ${U}(1,1)$.  
  The two unramified tori with labels of the form $(\theta, 1)$ where $\theta \in \{ \emptyset, \Delta \}$ are the maximally $K$-split components of the centers of the two (up to conjugacy) distinct $k$-subgroups of $\SU(3)$  that occur as a Levi factor for a parabolic $k$-subgroup of $\SU(3)$.

\end{example}

\section{Unramified twisted generalized Levis}
\label{sec:twistedgeneralized}

In this section we provide a parameterization of $\tilde{\UU}$, the set of $G^\Fr$-conjugacy classes of unramified twisted generalized Levi subgroups of $\bG$.

\begin{defn}  \label{defn:utgls}
A connected reductive $k$-subgroup $\bL$ of $\bG$  will be called an \emph{unramified twisted generalized Levi subgroup} of $\bG$ provided that
 it  contains  the centralizer of a maximal unramified torus of $\bG$.
\end{defn}

\begin{example}   Every unramified twisted Levi  is an unramified twisted generalized Levi.
\end{example}

\subsection{Closed and quasi-closed root subsystems}

Suppose $\bS$ is a maximal $K$-torus in $\bG$.
Since $\bG$ is $K$-quasi-split, we have that $\EbStorus := C_{\bG}(\bS)$ is a maximal $k$-torus in $\bG$.  If $\mu \subset \Phi(\bG,\EbStorus)$, then define $\bG_\mu := \langle \EbStorus, \bU_\alpha \, | \, \alpha \in \mu \rangle$.

\begin{defn}
A subset $\mu$ of $\Phi(\bG,\EbStorus)$ is said to be a \emph{quasi-closed} provided that if $\beta \in \Phi(\bG,\EbStorus)$ and $\bU_\beta \subset \bG_{\mu}$, then $\beta \in \mu$.   A subset $\Upsilon$ of  $\Phi(\bG,\EbStorus)$ is called  \emph{closed} provided that for all $\alpha, \beta \in \Upsilon$ we have $\alpha + \beta \in \Phi(\bG,\EbStorus)$ if and only if $\alpha + \beta \in \Upsilon$.  
\end{defn}

\begin{rem} As discussed in~\cite[Section 3]{borel-tits:groupes} (see also, ~\cite[XXIII, Corollaire  6.6]{schemasengroupesIII}), every closed subset of $\Phi(\bG,\EbStorus)$ is quasi-closed and the converse is true if the characteristic of $k$ is not $2$ or $3$.  See Footnote 17 in~\cite[XXIII]{schemasengroupesIII} or the \emph{Remarque} following \cite[Proposition~2.5]{borel-tits:groupes} for a specific list of cases where the converse fails.
\end{rem}

A subset $\rho$ of $\Phi(\bG,\bA)$ is said to be a \emph{quasi-closed root subsystem} provided that the set of roots
$$\{ \alpha \in \Phi(\bG,\Ebtorus) \colon \res_{\bA} \alpha \in \rho \}$$
is quasi-closed in $\Phi(\bG,\Ebtorus)$.   Here $\Ebtorus$ is the maximal $k$-torus $C_{\bG}(\bA)$.  Similarly, a subset $\delta$ of  $\Phi(\bG,\bA)$ is called a \emph{closed root subsystem} provided that 
$$\{ \alpha \in \Phi(\bG,\Ebtorus) \colon \res_{\bA} \alpha \in \delta \}$$
 is closed in $\Phi(\bG,\Ebtorus)$.

We let $\tilde{\Theta} = \tilde{\Theta}(\bG,\bA)$ denote the set of bases of quasi-closed root  subsystems  of $\Phi(\bG,\bA)$.

\begin{example} If $\Xi \subset \Delta \subset \Phi(\bG,\bA)$, then $\Xi \in \tilde{\Theta}$, and, in general, $\Theta \subset \tilde{\Theta}$.   More exotically, the long roots in the root system of $\Sp_4$ form a closed root subsystem and so, in the notation of Example~\ref{ex:initialsp4}, $\{\beta, \beta + 2 \alpha \} \in \tilde{\Theta}$.  Note, however, that the short roots in the root system of $\Sp_4$ do \textbf{not} form a closed root subsystem, but they do form a quasi-closed root subsystem when the characteristic of $k$ is $2$.  Thus, in the notation of Example~\ref{ex:initialsp4}, we have that  $\{\alpha, \beta +  \alpha \} \in \tilde{\Theta}$ whenever the characteristic of $k$ is $2$.
\end{example}

\begin{rem}  
Both $W$ and $\Fr$ act on $\tilde{\Theta}$.
\end{rem}

Suppose $\Xi \in \tilde{\Theta}$. Let $\Phi_\Xi$ denote the $\mathbb{Z}$-span of $\Xi$ in $\Phi$ and let $W_{\Xi} \leq W$ denote the associated  Weyl group.   Let $\bM_{\Xi}$ be the connected reductive $K$-group in $\bG$ generated by $\Ebtorus$ and the root groups $\bU_\alpha$ for $\alpha \in \Phi(\bG,\Ebtorus)$ with $\res_{\bA} \alpha \in \Phi_{\Xi}$.   Note that $W_\Xi$ is the Weyl group $W(M_\Xi,\bA)$.

\subsection{A result about parahoric subgroups and unramified twisted generalized Levi subgroups}

Suppose $\bH$ is an unramified twisted generalized Levi subgroup of $\bG$, that is, $\bH$ is a 
connected reductive $k$-subgroup of $\bG$ that contains the centralizer of a maximal unramified torus of $\bG$.   Since  $\bH$ contains a maximal unramified torus of $\bG$, the building of $\BB(H)$ embeds into the building of $\BB(G)$. There is not a canonical embedding, but all such embeddings have the same image.

\begin{lemma}
If $x \in \BB(H)$, then $H_x \leq G_x \cap H$.
\end{lemma}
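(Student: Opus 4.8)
The plan is to reduce the statement to the corresponding fact about parahorics inside a fixed maximal unramified torus's centralizer, and then exploit that the embedding $\BB(H) \hookrightarrow \BB(G)$ is compatible with the inclusion $H \le G$. Concretely, let $\bT$ be a maximal unramified torus of $\bG$ with $C_\bG(\bT) \le \bH$; such a $\bT$ exists by hypothesis, and $\BB(T) \subset \BB(H) \subset \BB(G)$. Fix $x \in \BB(H)$. First I would move $x$ into an apartment common to all three buildings: choose a maximal $K$-split $k$-torus $\bS_H$ of $\bH$ with $x \in \AA(S_H)^\Fr$ (conjugating $\bT$ within $\bH$ if necessary so that $\bT \le C_\bG(\bS_H)$, which is possible since all maximal unramified tori of a given type are conjugate and $\bH$ is reductive). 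Then $\AA(S_H)$ sits inside an apartment $\AA(S_G)$ of $\BB(G)$, and the compatibility of building embeddings recalled at the start of Section~\ref{sec:Kktori} ensures these identifications are canonical on images.

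The heart of the argument is then: an element $h \in H_x$ fixes $x \in \BB(H)$, and I must show $h \in G_x$, i.e. $h$ fixes $x$ as a point of $\BB(G)$. Since $h \in H \subset G$, it suffices to observe that the action of $H$ on $\BB(H) \subset \BB(G)$ is the restriction of the $G$-action on $\BB(G)$; this is precisely what it means for the embedding $\BB(H) \hookrightarrow \BB(G)$ to be $H$-equivariant, which is part of the functoriality of Bruhat--Tits theory for the inclusion of the reductive $k$-subgroup $\bH$ containing a maximal unramified torus of $\bG$. Hence $h \cdot x = x$ in $\BB(G)$, so $h \in G_x$; combined with $h \in H$ this gives $h \in G_x \cap H$, proving $H_x \le G_x \cap H$.

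The step I expect to be the main obstacle is justifying that the parahoric subgroup $H_x$ of $\bH$ at $x$ really does act on $x \in \BB(G)$ through the ambient $G$-action in a way that pins down its image inside $G_x$ — that is, establishing the $H$-equivariance and compatibility of the embedding $\BB(H) \hookrightarrow \BB(G)$ carefully enough that ``$h$ fixes $x$ in $\BB(H)$'' transfers to ``$h$ fixes $x$ in $\BB(G)$.'' This is standard but requires invoking the functoriality results for buildings of reductive subgroups (via the identification of $\BB(H)$ with the fixed-point set of a suitable subtorus of $\bS_G$ acting on $\BB(G)$, cf.\ the discussion around Remark~\ref{rem:starone} and~\cite[4.4.2]{debacker:nilpotent}); once that identification is in hand, the containment is immediate because fixing $x$ in the subcomplex is the same as fixing $x$ in $\BB(G)$ for an element already known to lie in $H$.
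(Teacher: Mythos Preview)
Your argument has a genuine gap: you conflate the parahoric subgroup $G_x$ with the full stabilizer $\Stab_G(x)$. In the paper's notation (see Section~\ref{sec:buildings}), $G_F$ denotes the \emph{parahoric} subgroup attached to a facet $F$, and the stabilizer of a point can strictly contain the parahoric there (think of $\PGL_n$, where the stabilizer of a hyperspecial vertex surjects onto $\Z/n\Z$ with kernel the parahoric). Your chain of reasoning --- $h \in H_x$ fixes $x$ in $\BB(H)$, hence by $H$-equivariance of the embedding fixes $x$ in $\BB(G)$, hence $h \in G_x$ --- is valid up to and including ``$h$ fixes $x$ in $\BB(G)$,'' but the final implication only yields $h \in \Stab_G(x)$, not $h \in G_x$. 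That last step is exactly where the content of the lemma lies.

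The paper's proof handles this by working with explicit generators of $H_x$. One chooses a maximal $K$-split torus $\bS$ of $\bH$ whose apartment contains $x$; then $H_x$ is generated by the parahoric of $\bS' = C_\bG(\bS)$ together with the affine root subgroups $U_\psi$ for $\psi \in \Psi(\bH,\bS,\nu)$ with $\psi(x) \geq 0$. The torus part $S'_0$ lands in $G_x$ directly. For each $U_\psi$, every $u \in U_\psi$ is \emph{unipotent} in $G$ and fixes $x$; the crucial (standard) fact is that a unipotent element of $G$ fixing $x$ lies in the parahoric $G_x$, not merely in the stabilizer. This is what bridges the gap between stabilizer and parahoric, and it is precisely the ingredient missing from your argument.
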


\begin{proof}
Let $\bS$ be a maximal $K$-split torus of $\bH$ such that $x \in \AA(S) \subset \BB(H)$.  Since the maximal $K$-split tori in $\bH$ form a single $H$-conjugacy class and since $\bH$  contains the centralizer of a maximal unramified torus of $\bG$, we conclude that  $\bS' = C_\bG(\bS)$ is a subgroup of  $\bH$.   

Note that $S'_0$, the parahoric subgroup of $S'$, is equal to $S_0 S'_{0^+}$ where $S_0$ is the parahoric subgroup of $S$ and $S'_{0^+}$ is the pro-unipotent radical of $S'_0$. 

The parahoric subgroup $H_x$ is generated by $S'_0$ and the groups $U_\psi$ for $\psi \in \Psi(\bH, \bS, \nu)$ with $\psi(x) \geq 0$.  Suppose $\psi \in \Psi(\bH, \bS, \nu)$ and $\psi(x) \geq 0$.   If $u \in U_\psi$, then $u$ is unipotent in $G$ and fixes $x$, hence $u \in G_x$. Since $G_x$ contains $S'_0$, we conclude that $H_x \leq G_x \cap H$.
\end{proof}

\subsection{Some indexing sets}

For a $G^\Fr$-facet $F \subset \AA(A)^\Fr$, set
$$ \calI(F) = \{(\Xi, w) \, | \, \Xi \in \tilde{\Theta}, w \in W_F, \Fr(\Phi_{\Xi}) = w \Phi_{\Xi} \}.$$
For $(\Xi, w), (\Xi', w') \in \calI(F)$, we write $(\Xi, w) \stackrel{F}{\sim} (\Xi', w')$ provided that there exists $m \in W_F$ such that 
\begin{itemize}
    \item $m \Phi_{\Xi} = \Phi_{\Xi'}$
    \item $\Fr(m) w m\inv \in w'(W_F \cap W_{\Xi'})$
\end{itemize}

\begin{lemma}
The relation $\stackrel{F}{\sim}$ is an equivalence relation on $\calI(F)$. \qed
\end{lemma}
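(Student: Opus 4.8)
The plan is to verify the three defining properties of an equivalence relation --- reflexivity, symmetry, and transitivity --- directly from the definition, exactly as in the nearly identical verifications for $\stackrel{F}{\sim}$ on $\dot{I}(F)$ (the lemma following Definition of $\dot{I}(F)$) and on $I_\sfG$ (in Section~\ref{subsec:ffparam}). The only substantive difference is that $\tilde{\Theta}$ replaces $\Theta$, but since $W$ acts on $\tilde{\Theta}$ and $W_\Xi$, $\Phi_\Xi$ are defined for all $\Xi \in \tilde{\Theta}$ just as for $\theta \in \Theta$, the combinatorial bookkeeping is formally the same.

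First, for reflexivity: given $(\Xi,w) \in \calI(F)$, take $m = 1 \in W_F$; then $m\Phi_\Xi = \Phi_\Xi$ and $\Fr(m)wm\inv = w \in w(W_F \cap W_\Xi)$ since $1 \in W_F \cap W_\Xi$. Second, for symmetry: suppose $(\Xi,w)\stackrel{F}{\sim}(\Xi',w')$ via $m \in W_F$, so $m\Phi_\Xi = \Phi_{\Xi'}$ and $\Fr(m)wm\inv = w'z$ for some $z \in W_F \cap W_{\Xi'}$. I would set $m' = m\inv \in W_F$; then $m'\Phi_{\Xi'} = \Phi_\Xi$, and I need to check $\Fr(m')w'm'^{-1} \in w(W_F \cap W_\Xi)$. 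From $\Fr(m)wm\inv = w'z$ we get $w' = \Fr(m)wm\inv z\inv$, hence $\Fr(m')w'm'^{-1} = \Fr(m\inv)\Fr(m)wm\inv z\inv m = \Fr(m\inv)\Fr(m) w (m\inv z\inv m)$. The key point is that $m\inv z\inv m \in W_F \cap W_\Xi$ because $z \in W_F$ (so $m\inv z\inv m \in W_F$) and $z \in W_{\Xi'}$ together with $m\inv W_{\Xi'} m = W_{m\inv \Xi'} = W_\Xi$ (using $m\Phi_\Xi = \Phi_{\Xi'}$ and that the Weyl group of a root subsystem is determined by its $\mathbb{Z}$-span). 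Also $\Fr(m\inv)\Fr(m) = 1$, so $\Fr(m')w'm'^{-1} = w\cdot(m\inv z\inv m) \in w(W_F \cap W_\Xi)$, as desired. Third, for transitivity: if $(\Xi,w)\stackrel{F}{\sim}(\Xi',w')$ via $m$ and $(\Xi',w')\stackrel{F}{\sim}(\Xi'',w'')$ via $n$, set $p = nm \in W_F$; then $p\Phi_\Xi = n\Phi_{\Xi'} = \Phi_{\Xi''}$, and a short computation with the two relations $\Fr(m)wm\inv = w'z_1$, $\Fr(n)w'n\inv = w''z_2$ (with $z_1 \in W_F \cap W_{\Xi'}$, $z_2 \in W_F \cap W_{\Xi''}$) gives $\Fr(p)wp\inv = \Fr(n)\Fr(m)wm\inv n\inv = \Fr(n)(w'z_1)n\inv = (\Fr(n)w'n\inv)(nz_1n\inv) = w'' z_2 (nz_1n\inv)$, and $nz_1 n\inv \in W_F \cap W_{\Xi''}$ by the same conjugation argument ($n\Phi_{\Xi'} = \Phi_{\Xi''}$, so $nW_{\Xi'}n\inv = W_{\Xi''}$), hence $\Fr(p)wp\inv \in w''(W_F \cap W_{\Xi''})$.

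I do not expect any real obstacle here; the proof is routine and the paper itself flags this by stating the lemma with a \qed rather than a proof. The one point requiring a word of care is the identity $mW_\Xi m\inv = W_{m\Xi}$ for $m \in W$ and $\Xi \in \tilde{\Theta}$ --- equivalently that $W_\Xi$ depends only on $\Phi_\Xi$ and that $W$ normalizes the passage $\Xi \mapsto \Phi_\Xi \mapsto W_\Xi$ --- but this is standard and already implicitly used throughout Sections~\ref{sec:paramunram} and~\ref{sec:stableconj} for $\Theta$. Given the \qed already present in the statement, I would simply remark that the verification is identical to that for $\stackrel{F}{\sim}$ on $\dot{I}(F)$, with $\tilde{\Theta}$ in place of $\Theta$, and omit the details.
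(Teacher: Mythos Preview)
Your verification is correct and complete; the paper gives no proof at all (the \qed appears in the statement itself), treating this as routine exactly as you anticipate, so your approach aligns with the paper's implicit one.
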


We will say that $(\Xi, w) \in \calI(F)$ is \emph{$F$-elliptic} provided that  for all $(\Xi',w') \in \calI(F)$ with $(\Xi, w) \stackrel{F}{\sim} (\Xi',w')$ we have that $w'$ does not belong to a $\Fr$-stable proper parabolic subgroup of $W_F$.
We set
$$\calI^{\ee}(F) := \{(\Xi, w)  \in \calI(F) \, | \,  \text{$(\Xi,w)$ is $F$-elliptic} \}.$$

Define
$$\calI = \{ (F, \Xi, w) \, | \, \text{ $F$ is a $G^\Fr$-facet in $\AA(A)^\Fr$ and } (\Xi,w) \in \calI(F) \}.$$
For $(F, \Xi, w), (F', \Xi', w') \in \calI$ we write 
 $(F, \Xi,w) \approx (F',\Xi',w')$ provided that there exists an element $m \in \affFrW$ for which $A(\AA(A)^\Fr, F') = A(\AA(A)^{\Fr},mF)$ and with the identifications of  $\sfG_{F'} \overset{i}{=} \sfG_{mF}$  and $\X^*(\sfA_{F'}) \overset{i}{=} \X^*(\sfA_{mF}) = \X^*(\bA)$ thus induced we have that $(\Xi',w') \stackrel{F'}{\sim} (m\Xi, \lsup{m}w)$ in $\calI(F') \overset{i}{=} \calI(mF)$.

\begin{lemma}
The relation $\approx$ is an equivalence relation on $\calI$. \qed
\end{lemma}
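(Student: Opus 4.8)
We must verify reflexivity, symmetry, and transitivity of $\approx$ on $\calI$. Reflexivity is immediate: taking $m=1\in(\affW)^\Fr$ we have $A(\AA(A)^\Fr, F)=A(\AA(A)^\Fr, F)$, and $(\Xi,w)\stackrel{F}{\sim}(\Xi,w)$ since $\stackrel{F}{\sim}$ is already known to be an equivalence relation on $\calI(F)$. The content is therefore in symmetry and transitivity, and the plan is to reduce both to two compatibility statements about the relation $\stackrel{F}{\sim}$, exactly as in the analogous arguments for the equivalence relation of Definition~\ref{defn:531} and for $\approx$ on $\tilde I$ (cf.~\cite[Section 3.6]{debacker:nilpotent}, \cite[Section 3.2]{debacker:unramified}).

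\textbf{The two compatibility facts.} First, $(\affW)^\Fr$ acts on $\AA(A)^\Fr$ by affine automorphisms that permute the $G^\Fr$-facets and carry affine-root hyperplanes to affine-root hyperplanes; consequently $A(\AA(A)^\Fr, mF) = m\cdot A(\AA(A)^\Fr, F)$ for every $G^\Fr$-facet $F$ and every $m\in(\affW)^\Fr$, and $mF$ is again a $G^\Fr$-facet. This is what makes the ``equal affine span'' condition behave well under inversion and composition of the witnessing elements. Second, I would record that $\stackrel{F}{\sim}$ is functorial in the appropriate sense: (a) if $F,F'$ are $G^\Fr$-facets with $\emptyset\neq A(\AA,F)=A(\AA,F')$ then the induced identification $\sfG_F\overset{i}{=}\sfG_{F'}$ (hence $W_F\overset{i}{=}W_{F'}$ inside $W$ and $\X^*(\sfA_F)\overset{i}{=}\X^*(\sfA_{F'})=\X^*(\bA)$) carries $\stackrel{F}{\sim}$ to $\stackrel{F'}{\sim}$ on $\calI(F)\overset{i}{=}\calI(F')$; and (b) for $m\in(\affW)^\Fr$, conjugation $\mathrm{Int}(m)$ induces a $\Fr$-equivariant isomorphism $W_F\to W_{mF}$ (compatibly with the inclusions into $W$ and with the action on $\Theta$-type data), and hence $(\Xi,w)\stackrel{F}{\sim}(\Xi',w')$ if and only if $(m\Xi,\lsup m w)\stackrel{mF}{\sim}(m\Xi',\lsup m{w'})$. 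Both (a) and (b) follow from the corresponding statements in Bruhat–Tits theory together with the definition of $\stackrel{F}{\sim}$; the point is simply that $m\Phi_\Xi=\Phi_{\Xi'}$, $\Fr(m)wm^{-1}\in w'(W_F\cap W_{\Xi'})$, $W_F\cap W_{\Xi'}$, etc., all transform equivariantly under $\mathrm{Int}(m)$ and under $i$.

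\textbf{Symmetry and transitivity.} For symmetry, suppose $(F,\Xi,w)\approx(F',\Xi',w')$ is witnessed by $m\in(\affW)^\Fr$; I claim $m^{-1}$ witnesses $(F',\Xi',w')\approx(F,\Xi,w)$. Applying $m^{-1}$ to $A(\AA,F')=A(\AA,mF)$ and using the first compatibility fact gives $A(\AA,m^{-1}F')=A(\AA,F)\neq\emptyset$. From $(\Xi',w')\stackrel{F'}{\sim}(m\Xi,\lsup m w)$ and symmetry of $\stackrel{F'}{\sim}$ we get $(m\Xi,\lsup m w)\stackrel{F'}{\sim}(\Xi',w')$; transporting along $i$ and then conjugating by $m^{-1}$ (compatibility (b)) yields $(\Xi,w)\stackrel{F}{\sim}(m^{-1}\Xi',\lsup{m^{-1}}w')$ in $\calI(F)\overset{i}{=}\calI(m^{-1}F')$, which is exactly the required relation. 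For transitivity, given witnesses $m_1$ for $(F_1,\Xi_1,w_1)\approx(F_2,\Xi_2,w_2)$ and $m_2$ for $(F_2,\Xi_2,w_2)\approx(F_3,\Xi_3,w_3)$, I would use $m_2m_1\in(\affW)^\Fr$: the affine-span condition chains by $A(\AA,F_3)=A(\AA,m_2F_2)=m_2A(\AA,F_2)=m_2m_1A(\AA,F_1)=A(\AA,m_2m_1F_1)$, and conjugating the first $\stackrel{F_2}{\sim}$-relation by $m_2$ gives $(m_2\Xi_2,\lsup{m_2}w_2)\stackrel{F_3}{\sim}(m_2m_1\Xi_1,\lsup{m_2m_1}w_1)$, which composes with $(\Xi_3,w_3)\stackrel{F_3}{\sim}(m_2\Xi_2,\lsup{m_2}w_2)$ via transitivity of $\stackrel{F_3}{\sim}$.

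\textbf{Expected main obstacle.} The only delicate point is the bookkeeping in compatibility fact (b): one must be sure that an element of $(\affW)^\Fr$, which need not lift to $N_{G^\Fr}(\bA)$, nonetheless induces a genuine $\Fr$-equivariant identification of $W_F$ with $W_{mF}$ that is consistent with all the identifications $W_F\overset{i}{=}W\supseteq W_{mF}$ appearing in the definition of $\approx$, and with the $W$- and $\Fr$-actions on $\tilde\Theta$. Once this is pinned down (it is essentially inherited from the corresponding facts for $\dot I(F)$ used in Section~\ref{sec:paramunram}), the cocycle manipulations above are purely formal.
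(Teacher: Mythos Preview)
Your proposal is correct. The paper gives no proof of this lemma (it is stated with a bare \qed), treating it as routine in the same spirit as the earlier lemma on $\tilde I$ and the equivalence relation of Definition~\ref{defn:531}, where the reader is referred to \cite[Section~3.6]{debacker:nilpotent} and \cite[Section~3.2]{debacker:unramified}; your write-up is precisely the expected verification, and your identification of the one nontrivial bookkeeping point (that an element of $(\affW)^\Fr$ induces the needed $\Fr$-equivariant identifications) is on target.
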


\begin{defn}
We will say that $(F, \Xi, w) \in \calI$ is \emph{elliptic} provided that $(\Xi,w) \in \calI^{\ee}(F)$.
We set
$$\calI^{\ee} := \{(F, \Xi, w)  \in \calI \, | \,  (\Xi,w) \in \calI^{\ee}(F) \}.$$
\end{defn}

\begin{rem}  If
$(F,\Xi,w)\in \calI^{\ee}$, then $(\emptyset,w)\in \dot{I}^{\ee}(F) \subset \calI^{\ee}(F)$.
\end{rem}

\subsection{Parameterizing \texorpdfstring{$\tilde{\UU}$}{U}}

Suppose $\mu = (F, \Xi, w) \in \calI$. Choose $g \in G_F$ such that $\Fr(g)\inv g \in N_{G_F}(\bA)$ has image $w$ in $W_F$.  Let $\bS = \lsup{g}\bA$.  Since $\Fr(\bS) = \bS$ and   $\Fr(g \Phi_\Xi) = \Fr(g) \Fr(\Phi_\Xi) = \Fr(g) w \Phi_\Xi = \Fr(g) \Fr(g)\inv g \Phi_\Xi = g \Phi_\Xi$, the connected reductive $K$-group $\bL_{\mu,g} := \lsup{g}\bM_{\Xi}$ is a  $k$-group.  Since $\bL_{\mu,g}$ contains  $C_\bG(\bS)$, we conclude that $\bL_{\mu,g}$  is an unramified twisted generalized Levi.

\begin{lemma} \label{lem:okdefn}
The $G_F^\Fr$-conjugacy class of $\bL_{\mu,g}$ depends only on $\mu$.
\end{lemma}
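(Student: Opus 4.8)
The plan is to show that the only choice made in the construction — namely the element $g \in G_F$ with $\Fr(g)\inv g$ mapping to $w$ in $W_F$ — is unique up to left multiplication by $G_F^\Fr$ modulo something that does not move $\bM_\Xi$, so that the $G_F^\Fr$-conjugacy class of $\lsup g \bM_\Xi$ is well defined. The argument will parallel the well-definedness portion of Lemma~\ref{lem:well-definedtoN}. First I would take a second element $g' \in G_F$ with the image of $\Fr(g')\inv g'$ in $W_F$ also equal to $w$, and set $\bL_{\mu,g'} := \lsup{g'}\bM_\Xi$; I need to produce an element of $G_F^\Fr$ conjugating $\bL_{\mu,g}$ to $\bL_{\mu,g'}$. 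Since $\Fr(g)\inv g$ and $\Fr(g')\inv g'$ have the same image in $W_F = N_{G_F}(\bA)/(C_G(\bA) \cap G_F)$, there is $s' \in C_G(\bA) \cap G_F$ with $\Fr(g')\inv g' s' = \Fr(g)\inv g$. Writing $x = g' g\inv \in G_F$, a direct computation (identical in form to the one in the proof of Lemma~\ref{lem:well-definedtoN}) gives $\Fr(x)\inv x = g\, (\text{something in } C_G(\bA)\cap G_F)\, g\inv$, which lies in $C_{G_F}(\bS) \subset \lsup g\bM_\Xi = \bL_{\mu,g}$, since $\bS = \lsup g\bA \leq \lsup g\bM_\Xi$ centralizes itself and $C_G(\bA) \leq \bM_\Xi$.

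Next I would record that $\Int(x)$ carries $\bL_{\mu,g}$ to $\bL_{\mu,g'}$: indeed $\lsup x(\lsup g\bM_\Xi) = \lsup{g'g\inv g}\bM_\Xi = \lsup{g'}\bM_\Xi$. So it suffices to correct $x$ by an element of $\bL_{\mu,g}$ to land inside $G_F^\Fr$ without disturbing which group we conjugate to. Put $\bL = \bL_{\mu,g}$, a $k$-group, and note $C_{G_F}(\bS) = L_F$ is the parahoric of $\bL$ attached to $F$ (using that $F \subset \BB(L)$, since $\bS = \lsup g\bA$ is a maximal $K$-split torus of $\bL$ and $F \subset \AA(A)$ so $gF \subset \AA(\bS)$... more precisely one replaces $F$ by noting $F$ itself lies in $\BB(L)$ because $\bL \supseteq C_\bG(\bS)$ and $F \subset \AA(A)^\Fr \subset \BB(C_G(A))$, hence $g\inv$ of the relevant apartment contains $F$; I'd phrase this carefully). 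Then $\Fr(x)\inv x \in L_F$, and by Lemma~\ref{lem:parahoriccohom} applied to $\bL$ — i.e. $\cohom^1(\Fr, L_F) = 1$ — there is $\ell \in L_F$ with $\Fr(x)\inv x = \Fr(\ell)\inv \ell$. Hence $x\ell\inv \in G_F^\Fr$, and since $\ell \in L_F \leq \bL_{\mu,g}$ normalizes $\bL_{\mu,g}$, we get $\lsup{x\ell\inv}\bL_{\mu,g} = \lsup x(\lsup{\ell\inv}\bL_{\mu,g}) = \lsup x\bL_{\mu,g} = \bL_{\mu,g'}$, with $x\ell\inv \in G_F^\Fr$. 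This proves that the $G_F^\Fr$-conjugacy class of $\bL_{\mu,g}$ is independent of $g$, hence depends only on $\mu$.

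The main obstacle, as in Lemma~\ref{lem:well-definedtoN}, is purely bookkeeping: verifying that $\Fr(x)\inv x$ genuinely lands in the parahoric $L_F$ rather than merely in $L$, which requires knowing $x = g'g\inv$ and its Frobenius-twist both preserve $F$ (true since $g, g' \in G_F$) and that $L_F = G_F \cap L$ together with the cocycle computation places $\Fr(x)\inv x$ in $C_G(\bA) \cap G_F$ conjugated into $L_F$. I would lift the relevant one-line cocycle identity verbatim from the proof of Lemma~\ref{lem:well-definedtoN}, replacing $\bA_\theta$ by $\bM_\Xi$ and $\bS_\iota$ by $\bS = \lsup g\bA$, and invoke Lemma~\ref{lem:parahoriccohom} for $\bL$. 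No new ideas beyond those already in the excerpt are needed.
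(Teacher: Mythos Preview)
Your proposal is correct and follows essentially the same approach as the paper: compute that the Frobenius cocycle $\Fr(x)^{-1}x$ of $x = g'g^{-1}$ lands in a group whose $\cohom^1(\Fr,-)$ vanishes by Lemma~\ref{lem:parahoriccohom}, then modify $x$ by an element of that group to obtain an element of $G_F^{\Fr}$ conjugating $\bL_{\mu,g}$ to $\bL_{\mu,g'}$.

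The only differences are cosmetic. The paper works with $\hat{g}g^{-1}$ rather than $g'g^{-1}$ and shows the cocycle already lands in the parahoric $\hat{S}_0$ of the \emph{torus} $\hat{S} = {}^{\hat{g}}A$, applying Lemma~\ref{lem:parahoriccohom} there; you instead place the cocycle inside $L_F$ and invoke Lemma~\ref{lem:parahoriccohom} for $\bL$, exactly as in Lemma~\ref{lem:well-definedtoN}. One small inaccuracy: your claimed equality $C_{G_F}(\bS) = L_F$ is false in general (the left side is the parahoric of the maximal torus $C_{\bG}(\bS)$, strictly smaller than $L_F$ when $\Xi \neq \emptyset$), but you only need the containment $C_{G_F}(\bS) \subset L_F$, which holds and is all your argument uses.
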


\begin{proof}
Fix  $\hat{g} \in G_F$ such that $\Fr(\hat{g})\inv \hat{g} \in N_{G_F}(\bA)$ has image $w$ in $W_F$ and notice that
$\hat{g}\inv \Fr(\hat{g}) \Fr(g)\inv g \in \Ebtorusrat$.  Set $\hat{S} = \lsup{\hat{g}}\Ebtorusrat$ and  note that $\Fr(g \hat{g}\inv)\inv g \hat{g}\inv \in \hat{S} \cap G_F$.   Since $\hat{S}_0 = \hat{S} \cap G_F$ is the parahoric subgroup of $\hat{S}$, from Lemma~\ref{lem:parahoriccohom} we have $\cohom^1(\Fr, \hat{S}_0)$ is trivial, and so there exists $s \in \hat{S}_0$ such that $\Fr(g \hat{g}\inv s\inv) =  g \hat{g}\inv s\inv \in G_F$.   We conclude that $s \hat{g} g\inv \in G^\Fr_F$.  Since
$$\lsup{\hat{g}}\bM_\Xi =  \lsup{s\hat{g}}\bM_\Xi = \lsup{s\hat{g}g\inv g}\bM_\Xi = \lsup{s \hat{g}g\inv} \bL_{\mu,g},$$
we conclude that $\lsup{\hat{g}}\bM_\Xi$ is $G_F^\Fr$-conjugate to $\bL_{\mu,g}$.
\end{proof}

Thanks to Lemma~\ref{lem:okdefn} the following definition makes sense.
\begin{defn} \label{defn:842}
Define  $j \colon \calI \rightarrow \tilde{\UU}$ by setting $j(\mu)$ equal to the $G^\Fr$-conjugacy class of $\bL_{\mu,g}$.
\end{defn}

\begin{lemma} \label{lem:thelemmaweneed}
Suppose $F \subset \AA(A)^\Fr$ is a $G^\Fr$-facet.
Suppose $(\Xi_i,w_i) \in \calI(F)$ and  $g_i \in G_F$ such that $\Fr(g_i)\inv g_i \in N_{G_F}(\bA)$ has image $w_i$ in $W_F$  for $i \in \{1,2\}$.  Set $\bL_i = \lsup{g_i}\bM_{\Xi_i}$.
The $G_F^\Fr$-conjugacy classes of $L_1$ and $L_2$ coincide if and only if
$(\Xi_1,w_1) \stackrel{F}{\sim} (\Xi_2,w_2)$.
\end{lemma}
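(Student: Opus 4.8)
The structure will closely mirror the proof of the analogous statement for honest Levi subgroups, namely the injectivity argument in Lemma~\ref{lem:lem645} (and, over the residue field, Lemma~\ref{lem:ffversion}), with $\tilde{\Theta}$ and $\bM_\Xi$ playing the roles of $\Theta$ and $\bM_\theta$. Write $\bS_i = \lsup{g_i}\bA$, so that $\bS_i$ is a maximal $K$-split $k$-torus with $F \subset \AA(S_i)$, and $C_\bG(\bS_i) \leq \bL_i = \lsup{g_i}\bM_{\Xi_i}$. The key structural facts to keep in hand are: $W_{\Xi_i} = W(M_{\Xi_i},\bA)$, so $N_{G}(\bA) \cap \bL_i$ projects onto $W_{\Xi_i}$; and a conjugate $\lsup{n}\bM_{\Xi}$ (for $n \in N_G(\bA)$) equals $\bM_{n\Xi}$, so passing an element of $N_G(\bA)$ through $\bM_\Xi$ carries $\Phi_\Xi$ to $n\Phi_\Xi$. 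This last point is what lets us extract the condition $m\Phi_{\Xi_1} = \Phi_{\Xi_2}$.

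First I would prove the ``if'' direction. Suppose $(\Xi_1,w_1)\stackrel{F}{\sim}(\Xi_2,w_2)$ via $m \in W_F$ with $m\Phi_{\Xi_1} = \Phi_{\Xi_2}$ and $\Fr(m)w_1m\inv \in w_2(W_F\cap W_{\Xi_2})$. Lift $m$ to $\dot m \in N_{G_F}(\bA)$. A direct computation, identical in shape to the one opening the proof of Lemma~\ref{lem:lem645}, gives $\Fr(g_2\dot m g_1\inv)\inv (g_2\dot m g_1\inv) \equiv $ (an element whose image in $W$ lies in $W_F\cap W_{\Xi_2}$) modulo $G_{F,0^+}$; after using $\cohom^1(\Fr,(L_2)_F)=1$ (Lemma~\ref{lem:parahoriccohom} applied to the parahoric of $C_G(\lsup{g_2}\bA)$, which normalizes $\bM_{\Xi_2}$) one corrects $g_2\dot m g_1\inv$ by an element of $\bM_{\Xi_2}\cap G_F$ to land in $G_F^\Fr$. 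Since $\lsup{\dot m}\bM_{\Xi_1} = \bM_{m\Xi_1} = \bM_{\Xi_2}$ (here I use that $m\Phi_{\Xi_1}=\Phi_{\Xi_2}$ together with the fact that $\bM_\Xi$ depends only on $\Phi_\Xi$), conjugating $\bL_1 = \lsup{g_1}\bM_{\Xi_1}$ by this element of $G_F^\Fr$ yields $\lsup{g_2}\bM_{\Xi_2} = \bL_2$.

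For the ``only if'' direction, suppose $\lsup{h}\bL_1 = \bL_2$ for some $h \in G_F^\Fr$; replacing $g_1$ by $hg_1$ (which does not change $w_1$'s class) we may take $\bL_1 = \bL_2 =: \bL$. Then $\lsup{g_1}\bA$ and $\lsup{g_2}\bA$ are two maximal $K$-split tori in $\bL$, hence $\lsup{g_2}\bA = \lsup{\ell g_1}\bA$ for some $\ell \in L$; applying Corollary~\ref{cor:heart}-style reasoning (or directly, since they are $K$-split tori through $F$) we may take $\ell \in L\cap G_F$. Set $m = g_2\inv \ell g_1 \in N_{G_F}(\bA)$ with image $\dot m \in W_F$. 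Because $\lsup{\ell g_1}\bM_{\Xi_1} = \lsup{\ell}\bL = \bL = \lsup{g_2}\bM_{\Xi_2}$, conjugating gives $\lsup{m}\bM_{\Xi_1} = \bM_{\Xi_2}$, i.e. $\dot m\Phi_{\Xi_1} = \Phi_{\Xi_2}$ (the root-group characterization of $\bM_\Xi$ forces equality of root spans, not just of the groups). Finally, tracking $\Fr(g_2)\inv g_2$ through the identity $g_2 = \ell g_1 m\inv$ and using that $\ell \in L$ has image in $W_{\Xi_1}$ while $\Fr(\ell)\inv\ell \in L\cap G_F$ has image in $W_F\cap W_{\Xi_1}$, one computes $\Fr(\dot m)w_1\dot m\inv \in w_2(W_F\cap W_{\Xi_2})$, exactly as in the displayed computation near the end of Lemma~\ref{lem:lem645}. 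Hence $(\Xi_1,w_1)\stackrel{F}{\sim}(\Xi_2,w_2)$.

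**Main obstacle.** The delicate point is the step ``$\lsup{m}\bM_{\Xi_1} = \bM_{\Xi_2} \implies \dot m\Phi_{\Xi_1} = \Phi_{\Xi_2}$'': one must be careful that for quasi-closed (as opposed to closed) root subsystems the group $\bM_\Xi$ still determines its set of roots $\{\alpha : \res_\bA\alpha \in \Phi_\Xi\}$, so that no two distinct quasi-closed root subsystems give the same $\bM_\Xi$. This is precisely the content of the definition of ``quasi-closed'' (a root group lies in $\bG_\mu$ only if its root is in $\mu$), so conjugation by $N_G(\bA)$ does recover the $W$-action on $\tilde\Theta$ faithfully at the level of root spans — but it is worth spelling out rather than asserting. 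Everything else is a cohomological bookkeeping exercise already carried out in the Levi case.
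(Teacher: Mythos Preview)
Your approach is essentially the same as the paper's: both directions proceed exactly as you outline, via lifting the Weyl element, computing the Galois cocycle of $g_2\dot m g_1^{-1}$ (or its inverse), and killing it with $\cohom^1(\Fr,(L_i)_F)=1$ for the ``if'' direction, and via conjugating the two maximal $K$-split tori inside $\bL$ by some $\ell\in L_F$ and reading off $m$ and the cocycle condition for the ``only if'' direction. Two small points to tidy when you write it out: the cocycle $\Fr(g_2\dot m g_1^{-1})^{-1}(g_2\dot m g_1^{-1})$ actually lands in $\lsup{g_1}(N_{G_F}(\bA)\cap M_{\Xi_1})\subset (L_1)_F$ rather than in $L_2$ (so you apply Lemma~\ref{lem:parahoriccohom} directly to the parahoric of $\bL_1$, not via any normalizer statement about $C_G(\lsup{g_2}\bA)$), and in the converse the element $\ell$ itself need not normalize $\bA$---what has image in $W_F\cap W_{\Xi_2}$ is $g_2^{-1}\Fr(\ell)\ell^{-1}g_2$, exactly because both $\lsup{g_1}\bA$ and $\lsup{g_2}\bA$ are $k$-tori.
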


\begin{proof}
``$\Leftarrow$''
  Since  $(\Xi_1,w_1) \stackrel{F}{\sim} (\Xi_2,w_2)$, there exists $n \in W_F$ such that 
\begin{itemize}
    \item $n \Phi_{\Xi_1} = \Phi_{\Xi_2}$ and
    \item $\Fr(n) w_1 n\inv \in w_2 (W_F \cap W_{\Xi_2})$.
\end{itemize}

Choose $\dot{n} \in N_{G_F}(\bA)$ such that the image of $\dot{n}$ in $W_F$ is $n$.  Choose $g_i \in G_F$ such that the image of $\Fr(g_i)\inv g_i \in N_{G_F}(\bA)$ in $W_F$ is $w_i$.    Set
$$h := (g_2\inv \Fr(g_2)) \Fr(\dot{n}) (\Fr(g_1)\inv g_1) \dot{n}\inv.$$
Since $h$ belongs to $N_{G_F}(\bA)$ and has image $w_2\inv \Fr(n) w_1 n\inv$ in $W$, we conclude that $h$ belongs to $M_{\Xi_2} \cap N_{G_F}(\bA)$.  Thus
$$\lsup{g_2}h = \Fr(g_1 \dot{n}\inv g_2\inv)\inv (g_1 \dot{n}\inv g_2\inv)$$
is an element of $G_F \cap L_2$.  From Lemma~\ref{lem:parahoriccohom} we have that $\cohom^1( \Fr, (L_2)_F)$ is trivial, so there exists $\ell \in (L_2)_F \leq  G_F$ such that $\lsup{g_2}h = \Fr(\ell) \ell \inv$.   Thus
$$g_1 \dot{n}\inv g_2\inv \ell = \Fr(g_1 \dot{n}\inv g_2\inv \ell).$$
So $g_1 \dot{n}\inv g_2\inv \ell \in G_F^\Fr$ and 
$$\lsup{g_1 \dot{n}\inv g_2\inv \ell } \bL_2 = \lsup{g_1 \dot{n}\inv}\bM_{\Xi_2} = \lsup{g_1} \bM_{\Xi_1} = \bL_1.$$

``$\Rightarrow$''
Since the $G_F^\Fr$-conjugacy classes of $L_1$ and $L_2$ coincide,  there exists $x \in G_F^\Fr$ such that $\lsup{x}L_1 = L_2$.  Without loss of generality we can replace $g_2$ by $xg_2$ and assume $\bL_1 = \bL_2$.   Set $\bL = \bL_1$.

Since $\lsup{g_1}\bA$ and $\lsup{g_2}\bA$ are maximal $K$-split $k$-tori in $\bL$ and $F \subset \BB(\lsup{g_1}A)^\Fr \cap \BB(\lsup{g_2}A)^\Fr \subset \BB(L)^\Fr$, there exists $\ell \in L_F \leq G_F \cap L$ such that $\lsup{\ell g_1}\bA = \lsup{g_2} \bA$.  Let $m$ denote the image of $g_1\inv \ell\inv g_2 \in N_{G_F}(\bA)$ in $W$.   Since $\lsup{g_1}\bA$ and $\lsup{g_2}\bA$ are $k$-tori, we have
$$\lsup{\Fr(\ell)\inv}(\lsup{g_2}A) = \lsup{\ell \inv}(\lsup{g_2}A)$$
which implies that $g_2\inv \Fr(\ell) \ell \inv g_2 \in N_{G_F}(\bA)$  
has image in $W$ belonging to $W_{\Xi_2} \cap W_F$.

Note that
\[ 
\begin{split}
\Phi_{\Xi_1} &= \Phi(\bM_{\Xi_1},\bA)
=g_1\inv \Phi(\lsup{g_1}\bM_{\Xi_1},{\lsup{g_1}\bA}) = g_1\inv \Phi(\bL,{\lsup{g_1}\bA}) =   g_1\inv \ell \inv \Phi(\bL,{\lsup{\ell g_1}\bA})\\
&=g_1\inv \ell \inv \Phi(\bL,{\lsup{g_2}\bA}) =
g_1\inv \ell \inv g_2 \Phi(\bM_{\Xi_2},\bA) =
g_1\inv \ell \inv g_2 \Phi_{\Xi_2}  \\
&= m \Phi_{\Xi_2}
\end{split}
\]
and $\Fr(m)\inv w_1 m$ is the image in $W$ of
\[ 
(\Fr(g_2\inv \ell g_1) ) (\Fr(g_1)\inv g_1) (g_1\inv \ell\inv g_2) = \Fr(g_2)\inv(\Fr(\ell) \ell\inv) g_2 = \Fr(g_2)\inv g_2 (g_2\inv  \Fr(\ell) \ell\inv g_2)
\]
which has image in $w_2(W_{\Xi_2} \cap W_F)$.   Consequently, $(\Xi_1,w_1) \stackrel{F}{\sim} (\Xi_2,w_2)$.
\end{proof}

\begin{lemma}  \label{lem:maybehelp}
Suppose $(F,\Xi,w) \in \calI^{\ee}$, $g \in G_F$ such that the image of $\Fr(g)\inv g \in N_{G_F}(\bA)$ in $W_F$ is $w$, and $\bL = \lsup{g}\bM_{\Xi}$. Then  $F$ is  a maximal $G^{\Fr}$-facet in $\BB(L)^\Fr$. 
\end{lemma}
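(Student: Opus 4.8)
The plan is to argue by contradiction, following the surjectivity part of the proof of Theorem~\ref{thm:maintheorem} closely, with Lemma~\ref{lem:thelemmaweneed} taking over the role played there by Lemma~\ref{lem:lem645}.

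First, set $\bS := \lsup{g}\bA$. This is a maximal $K$-split torus of $\bG$ contained in $\bL = \lsup{g}\bM_{\Xi}$, and since $g \in G_F$ we have $F = gF \subset \AA(S)^{\Fr} \subset \BB(L)^{\Fr}$; in particular $\bL$ has full $K$-split rank, so $\BB(L)^{\Fr}$ is a genuine sub-building of $\BB(G)^{\Fr}$. Suppose, for contradiction, that $F$ is not a maximal $G^{\Fr}$-facet in $\BB(L)^{\Fr}$. Then there is a $\Gamma$-stable $G$-facet $H$ with $F \subset \overline{H}$, $F \neq H$, and $H \subset \BB(L)^{\Fr}$; equivalently, $W_H$ is a proper $\Fr$-stable parabolic subgroup of $W_F$. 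It therefore suffices to exhibit $(\Xi', w') \in \calI(F)$ with $(\Xi, w) \stackrel{F}{\sim} (\Xi', w')$ and $w' \in W_H$, since this contradicts the $F$-ellipticity of $(\Xi, w)$.

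To construct such a pair, pick a maximal $K$-split $k$-torus $\bS'$ of $\bL$ whose apartment contains $H$ (and hence also $F$). Combining the transitivity of parahoric subgroups on the apartments through a common facet with the vanishing $\cohom^1(\Fr, G_H) = \cohom^1(\Fr, G_F) = 1$ of Lemma~\ref{lem:parahoriccohom}, and arguing exactly as in Lemmas~\ref{lem:AandS} and~\ref{lem:gbackwards} and in the ``$\Rightarrow$'' direction of Lemma~\ref{lem:thelemmaweneed}, we normalize so that $H \subset \AA(A)^{\Fr}$ and $\bS' = \lsup{g'}\bA$ for some $g' \in G_H \subset G_F$ with $n' := \Fr(g')^{-1} g' \in N_{G_H}(\bA)$, while keeping $\bL = \lsup{g}\bM_{\Xi}$ and $\Fr(g)^{-1} g$ of image $w$ in $W_F$; then the image $w'$ of $n'$ in $W$ lies in $W_H \le W_F$. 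Now $\bS = \lsup{g}\bA$ and $\bS' = \lsup{g'}\bA$ are two maximal $K$-split $k$-tori of $\bL$ whose apartments contain $F$, so there is $\ell \in L_F \subset G_F \cap L$ with $\lsup{\ell g}\bA = \lsup{g'}\bA$. Let $\bar m \in W_F$ be the image of $m := g^{-1}\ell^{-1} g' \in N_{G_F}(\bA)$, and set $\Xi' := \bar m^{-1}\Xi \in \tilde{\Theta}$. Because $\lsup{g}\bA$ and $\lsup{g'}\bA$ are $k$-tori and $\bL = \lsup{g}\bM_{\Xi} = \lsup{g'}\bM_{\Xi'}$ is a $k$-group, the computation carried out in the ``$\Rightarrow$'' direction of Lemma~\ref{lem:thelemmaweneed} (now with $\bM_\Xi$, $\Phi_\Xi$ in place of $\bM_{\Xi_1}$, $\Phi_{\Xi_1}$, and so on) shows that $\bar m\,\Phi_{\Xi'} = \Phi_{\Xi}$, that $\Fr(\Phi_{\Xi'}) = w'\,\Phi_{\Xi'}$, and that $\Fr(\bar m)^{-1} w\, \bar m \in w'\,(W_F \cap W_{\Xi'})$. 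Hence $(\Xi', w') \in \calI(F)$ and $(\Xi, w) \stackrel{F}{\sim} (\Xi', w')$ via $\bar m^{-1}$, with $w' \in W_H$ a proper $\Fr$-stable parabolic subgroup element of $W_F$ — the desired contradiction.

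The step I expect to be the main obstacle is precisely the normalization that brings $H$ into the standard apartment $\AA(A)^{\Fr}$ while simultaneously realizing $\bS'$ as $\lsup{g'}\bA$ with $g' \in G_H$ and keeping the given $\bL$ in the form $\lsup{g}\bM_{\Xi}$ with the correct Weyl datum $w$ attached to $F$: this must be done through the same interplay of Bruhat--Tits transitivity on apartments and triviality of the relevant $\cohom^1(\Fr, -)$ that is used in the paper's earlier lemmas, and one must take care throughout to preserve the $\Gamma$-structure of $H$ so that $W_H$ remains an $\Fr$-stable parabolic subgroup. Once $H$ sits inside $\AA(A)^{\Fr}$, the remainder is a direct transcription of the bookkeeping in Lemma~\ref{lem:thelemmaweneed}, where the hypothesis $\Fr(\Phi_{\Xi}) = w\Phi_{\Xi}$ (equivalently, that $\bL = \lsup{g}\bM_\Xi$ is a $k$-group) is exactly what guarantees $(\Xi', w') \in \calI(F)$.
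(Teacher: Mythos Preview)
Your proof is correct and follows essentially the same route as the paper's: assume $F$ is not maximal, pick a larger $G^{\Fr}$-facet $H \subset \BB(L)^{\Fr}$, normalize so that $H \subset \AA(A)^{\Fr}$, produce $(\Xi',w')$ with $w' \in W_H$ via a conjugating element $\ell \in L_F$, and run the computation from the ``$\Rightarrow$'' direction of Lemma~\ref{lem:thelemmaweneed} to show $(\Xi,w) \stackrel{F}{\sim} (\Xi',w')$, contradicting $F$-ellipticity. The paper handles the normalization you flag as the main obstacle in one line---conjugate by some $x \in G_F^{\Fr}$ with $xH \subset \AA(A)^{\Fr}$ (replacing $g$ by $xg$ keeps $w$ unchanged)---rather than invoking Lemmas~\ref{lem:AandS} and~\ref{lem:gbackwards}, but the content is the same.
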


\begin{proof} 
Let $\bS = \lsup{g}\bA$.
If $F$ is not maximal, then there exists a $G^\Fr$-facet $H$ in $\BB(L)^\Fr$ such that $F \subset \overline{H}$ and $F \neq H$.   Since we may choose $x \in G_F^\Fr$ such that $xH \subset \BB(A)^\Fr$, without loss of generality we may assume that $F$ and $H$ are $G^\Fr$-facets in $\AA(A)^\Fr \cap \BB(L)^{\Fr}$.

Since $H \subset \BB(L)^\Fr$, there is a maximally $k$-split maximal $(K,k)$-torus $\bT \leq \bL$ such that $H \subset \BB(T)^\Fr$.  Consequently, we can choose $h \in G_H \leq G_F$ such that $\bT = \lsup{h} \bA$ and the image, $w'$, of $\Fr(h)\inv h \in N_{G_H}(\bA)$ lies in $W_{H} \leq W_{F} \leq W$.  Choose a basis $\Delta_L$ for $\Phi(\bL,\bT)$ and set $\Xi' = h\inv \Delta_L$.  We have $\lsup{h}\bM_{\Xi'} = \bL$.

We now show $(\Xi,w) \stackrel{F}{\sim} (\Xi',w')$.
Since $\bT$ and $\bS$ are maximal $K$-split $k$-tori in $\bL$ and  $F \subset \BB(T)^\Fr \cap \BB(S)^\Fr$, there exists $\ell \in L_F \leq G_F$ such that $\lsup{\ell}\bS = \bT$.  
Let $m$ denote the image of $g\inv \ell\inv h \in N_{G_F}(\bA)$ in $W$.   Since $\lsup{g}\bA$ and $\lsup{h}\bA$ are $k$-tori, we have
$$\lsup{\Fr(\ell)\inv}(\lsup{h}A) = \lsup{\ell \inv}(\lsup{h}A)$$
which implies that $h\inv  \Fr(\ell) \ell\inv h \in N_{G_F}(\bA)$ 
has image in $W$ belonging to $W_{\Xi'} \cap W_F$.

Note that
\[ 
\begin{split}
\Phi_{\Xi} &= \Phi(\bM_{\Xi},\bA)
=g\inv \Phi(\lsup{g}\bM_{\Xi},{\lsup{g}\bA}) = g\inv \Phi(\bL,{\lsup{g}\bA}) =   g\inv \ell \inv \Phi(\bL,{\lsup{\ell g}\bA})\\
&=g\inv \ell \inv \Phi(\bL,{\lsup{h}\bA}) =
g\inv \ell \inv h \Phi(\bM_{\Xi'},\bA) =
g\inv \ell \inv h \Phi_{\Xi'}  \\
&= m \Phi_{\Xi'}
\end{split}
\]
and $\Fr(m)\inv w m$ is the image in $W$ of
\[ 
(\Fr(h\inv \ell g) ) (\Fr(g)\inv g) (g\inv \ell\inv h) = \Fr(h)\inv(\Fr(\ell) \ell\inv) h = (\Fr(h)\inv h) \cdot (h\inv \Fr(\ell) \ell\inv h)
\]
which has image in $w'(W_{\Xi'} \cap W_F)$.   Consequently, $(\Xi,w) \stackrel{F}{\sim} (\Xi',w')$.

  Since $w'$ belongs to $W_H$,  a $\Fr$-stable proper parabolic subgroup of $W_F$, this contradicts the assumption that $(F,\Xi,w)$ is elliptic.
\end{proof}

\begin{theorem}  \label{thm:jisbijective}
The map $j$ defined in Definition~\ref{defn:842} induces a  bijection from ${\calI}^{\ee}/\!\approx$ to $\tilde{\UU}$.
\end{theorem}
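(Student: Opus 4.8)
The plan is to mimic, essentially verbatim, the proof of Theorem~\ref{thm:maintheorem}, with Lemma~\ref{lem:thelemmaweneed} playing the role that Lemma~\ref{lem:lem645} played there and Lemma~\ref{lem:maybehelp} supplying the dictionary between ellipticity of a triple and maximality of its facet. Well-definedness of $j$ on $\approx$-classes is the first thing to settle: for a triple $(F,\Xi,w)$ and $m\in(\affW)^\Fr$, lifting $m$ to $\dot m\in N_{G^\Fr}(\bA)$ (possible since $H^1(\Fr,C_G(\bA)\cap G_F)=1$, cf.\ Lemma~\ref{lem:parahoriccohom}) and noting $\lsup{\dot m}\bM_\Xi=\bM_{m\Xi}$ shows $j(F,\Xi,w)=j(mF,m\Xi,\lsup m w)$, while the remaining $\stackrel{F}{\sim}$-part of $\approx$ is exactly the ``$\Leftarrow$'' direction of Lemma~\ref{lem:thelemmaweneed} read off on a common facet whose affine span equals that of $F$.

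For surjectivity I would start with an unramified twisted generalized Levi $\bL$. By definition $\bL$ contains $C_\bG(\bS_0)$ for a maximal unramified torus $\bS_0$, and $\bS_0$ is the $K$-split component of some maximal $k$-torus of $\bG$ (Lemma~\ref{lem:torusexists}), so $\bL$ contains a maximal $k$-torus of $\bG$ and $\BB(L)$ embeds $\Fr$-equivariantly into $\BB(G)$. Choose a maximal $G^\Fr$-facet $F$ in $\BB(L)^\Fr$, and a maximally $k$-split maximal $(K,k)$-torus $\bT$ of $\bL$ with $F\subset\BB(T)^\Fr$; after a common $G^\Fr$-translate we may assume $F\subset\AA(A)^\Fr$, that $C_\bG(\bA)\le\bL$, and that $\bT=\lsup g\bA$ for some $g\in G_F$. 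Picking a basis $\Delta_\bL$ for $\Phi(\bL,\bT)$, setting $\Xi=g\inv\Delta_\bL$ and letting $w$ be the image of $\Fr(g)\inv g$ in $W_F$, one checks $\Xi\in\tilde\Theta$ (the root system of $\bL$ relative to $\bT$ is closed in $\Phi(\bG,\bT)$), that $\Fr(\Phi_\Xi)=w\Phi_\Xi$ since $\bL$ is a $k$-group, and that $\lsup g\bM_\Xi=\bL$, so $(F,\Xi,w)\in\calI$ and $j(F,\Xi,w)=(\bL)$. That $(F,\Xi,w)$ is elliptic then follows from the maximality of $F$: if $(\Xi,w)\stackrel{F}{\sim}(\Xi',w')$ with $w'$ in $W_H$ for a $G^\Fr$-facet $H\supsetneq F$ in its closure, then choosing $h\in G_H$ realizing $w'$ and invoking Lemma~\ref{lem:thelemmaweneed} yields a $G_F^\Fr$-conjugate of $\bL$ of the form $\lsup h\bM_{\Xi'}$, hence a $G^\Fr$-facet of dimension $\dim H>\dim F$ sitting inside $\BB(L)^\Fr$ (after pulling back into $\AA(A)^\Fr$), a contradiction. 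This is precisely the computation in the proof of Lemma~\ref{lem:maybehelp}, run in the other direction.

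For injectivity, suppose $(F_i,\Xi_i,w_i)\in\calI^{\ee}$, $i\in\{1,2\}$, with $j(F_1,\Xi_1,w_1)=j(F_2,\Xi_2,w_2)$, and write $\bL_i=\lsup{g_i}\bM_{\Xi_i}$. By Lemma~\ref{lem:maybehelp} each $F_i$ is a maximal $G^\Fr$-facet in $\BB(L_i)^\Fr$; after a $G^\Fr$-conjugation (absorbed into $\approx$ by re-aligning with $\AA(A)^\Fr$) we may assume $\bL_1=\bL_2=\bL$, so $F_1,F_2$ are maximal $G^\Fr$-facets in $\BB(L)^\Fr$. Since such facets are exactly the $G^\Fr$-facets relatively open in an $L^\Fr$-alcove (the analogue of Lemma~\ref{lem:alcoveinM}), after conjugating each $F_i$ by an element of $L^\Fr$ into the apartment $\BB(T)^\Fr$ of a fixed maximally $k$-split maximal $(K,k)$-torus $\bT$ of $\bL$ their affine spans inside $\BB(T)^\Fr$ agree; transporting $\BB(T)^\Fr$ into $\AA(A)^\Fr$ and replacing the intervening element of $G^\Fr$ by a representative $n$ in $(\affW)^\Fr$ via the affine Bruhat decomposition, we arrive at $\emptyset\neq A(\AA(A)^\Fr,F_1)=A(\AA(A)^\Fr,nF_2)$ and the situation of Lemma~\ref{lem:thelemmaweneed} on the facet $F_1\overset{i}{=}nF_2$. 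That lemma gives $(\Xi_1,w_1)\stackrel{F_1}{\sim}(n\Xi_2,\lsup n w_2)$, which is the definition of $(F_1,\Xi_1,w_1)\approx(F_2,\Xi_2,w_2)$.

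The main obstacle I expect is this last facet-matching step: verifying that two maximal $G^\Fr$-facets of $\BB(L)^\Fr$ become, after conjugation into a common apartment of $\BB(L)^\Fr$ and translation into $\AA(A)^\Fr$, related by an element of $(\affW)^\Fr$ carrying the relevant affine span to the relevant affine span, together with the compatibility of the induced identifications $\sfG_{F_1}\overset{i}{=}\sfG_{nF_2}$ and $\X^*(\sfA_{F_1})\overset{i}{=}\X^*(\bA)$ used to state $\approx$. In the torus case of Theorem~\ref{thm:maintheorem} this is handed off cleanly to Corollary~\ref{cor:toogeneralbij} and the $\JGammamax$-machinery, but here $\bL$ is not the centralizer of a $(K,k)$-torus, so the reduction must be made by hand; keeping track of the Bruhat representative $n$ and checking that it simultaneously conjugates $\Xi_2$ to (a $W_{F_1}$-translate of) $\Xi_1$ and $w_2$ correctly is where the bookkeeping will be heaviest.
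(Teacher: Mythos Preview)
Your approach is correct and follows the paper's argument closely; surjectivity and the ellipticity check are handled exactly as in the paper.  For injectivity the paper takes a slightly different and cleaner route than the one you outline, and it resolves precisely the ``main obstacle'' you flag.  Rather than appealing to an analogue of Lemma~\ref{lem:alcoveinM} for generalized Levis (which would need to be proved separately) and then to the affine Bruhat decomposition, the paper uses the tori already present in the data: since $F_i$ is a maximal $G^\Fr$-facet in $\BB(L_i)^\Fr$ and $F_i\subset\BB(S_i)^\Fr$ where $\bS_i=\lsup{g_i}\bA$, one sees immediately that $\bS_i$ is a \emph{maximally $k$-split} maximal $(K,k)$-torus in $\bL_i$.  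After conjugating so that $\bL_1=\bL_2=\bL$, Prasad's theorem then lets one $L^\Fr$-conjugate $\bS_2$ to $\bS_1=\bS$; a further $G_{F_1}^\Fr$-conjugation places $\BB(S)^\Fr$ inside $\AA(A)^\Fr$.  Now both $F_1$ and the (conjugated) $F_2$ lie in the same affine subspace $\BB(S)^\Fr\subset\AA(A)^\Fr$ and are maximal there, so their affine spans agree and one may take $m=1$ in the definition of $\approx$.  The required $\stackrel{F}{\sim}$-relation is then read off directly from the image $\dot n\in W_{F_1}$ of $\bar g_2^{-1}\bar g_1$, using $\lsup{g_2^{-1}g_1}\bM_{\Xi_1}=\bM_{\Xi_2}$.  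This avoids both the alcove-matching step and the affine Bruhat bookkeeping entirely; what you propose also works, but the paper's torus-first reduction is shorter.
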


\begin{proof}
We first show that $j$ is surjective.   
Suppose $\bL$ is an unramified twisted generalized Levi subgroup of $\bG$.   Choose a $G^\Fr$-facet $F \subset \BB(L)^\Fr$ that is maximal among the set of $G^\Fr$-facets in $\BB(L)^\Fr$.   Let $\bS \leq \bL$ be a maximally $k$-split maximal $(K,k)$-torus in $\bL$ such that $F \subset \BB(S)^\Fr \subset \BB(L)^\Fr$.   Without loss of generality, we assume, after conjugating everything in sight by an element of $G^\Fr$, that $F \subset \BB(S)^\Fr \subset \AA(A)^\Fr \cap \BB(L)^\Fr$.   Choose $g \in G_F$ such that $S = \lsup{g}A$.    Let $w$ be the image of $\Fr(g)\inv g \in N_{G_F}(\bA)$ in $W_F \leq W$.
Choose a basis $\Delta_L$ for $\Phi(\bL,\bS)$ and let $\Xi = g\inv \Delta_L \in \tilde{\Theta}$.   
By construction,  $(F, \Xi, w) \in \calI$ and $j((F,\Xi,w))$ is the $G^\Fr$-conjugacy class of $L$.

To complete the proof of surjectivity, we need to show that $(F,\Xi, w)$ is elliptic.  If it is not elliptic, then there exist $(\Xi',w') \in \calI(F)$ with $(\Xi,w) \stackrel{F}{\sim} (\Xi', w')$ and a $G^{\Fr}$-facet $H$ in $\AA(A)^{\Fr}$ with $F \subset \bar{H}$ and $F \neq H$ such that $w'$ lies in $W_H$.   Since $w' \in W_H$, there exists $h \in G_{H} \subset G_F$ such that  the image of $\Fr(h)^{-1} h$ in $W_{H} \leq W_F$ is $w'$.    Since  $(\Xi,w) \stackrel{F}{\sim} (\Xi', w')$, from Lemma~\ref{lem:thelemmaweneed} we have 
$\lsup{xh}\bM_{\Xi} = \bL$ for some $x \in G_F^{\Fr}$.  Note that $\lsup{xh}\bA \leq \bL$.   Hence,  $x H = xh H \subset \AA(\lsup{xh}\bA)^\Fr \leq \BB(L)^{\Fr}$, contradicting the maximality of $F$.

We now show that if $ \mu_i = (F_i,\Xi_i, w_i)$ for $i \in \{1,2\}$ are two elements of  $\calI^{\ee}$ with $j(\mu_1) = j(\mu_2) $, then  
$\mu_1 \approx \mu_2$.

Choose $g_i \in G_{F_i}$ such that $\Fr(g_i)\inv g_i \in N_{G_F}(\bA)$ has image $w_i$ in $W_{F_i} \leq W$.  Set $\bL_i = \lsup{g_i} \bM_{\Xi_i}$ and $\bS_i = \lsup{g_i}\bA$.  Thanks to Lemma~\ref{lem:maybehelp}, we know that $F_i$ is a maximal $G^{\Fr}$-facet in $\BB(L_{i})^\Fr$. 
Since  $F_i$ is a maximal $G^{\Fr}$-facet in $\BB(L_{i})^\Fr$ and $F_i \subset \BB(S_i)^\Fr \subset \BB(L_{i})^\Fr$, the torus $\bS_i$ is a maximally $k$-split maximal $(K,k)$-torus in $\bL_{i}$.

Since $j(\mu_1) = j(\mu_2) $, there exists $y \in G^\Fr$  such that $L_{1} = \lsup{y}L_{2}$.  Since $\lsup{y}\bS_2$ and $\bS_1$ are maximally $k$-split maximal $(K,k)$-tori in $\bL_{1}$, from~\cite[Lemma~6.1]{prasad:unramified} there exists $\ell \in L_{1}^{\Fr}$ such that $\bS_1 = \lsup{\ell y}\bS_2$.  Since $F_1 \subset \AA(A)^{\Fr}$, there exists $x \in G_F^\Fr$ such that $\BB(\lsup{x}\bS_1)^\Fr \subset \AA(A)^\Fr$.   Thus, after replacing $\mu_1$ with $x \cdot \mu_1$ and $\mu_2$ with $x \ell y \cdot \mu_2$, we may assume that  $ \bL_{1} = \bL_{2}$,  $\bS_1 = \bS_2$, and  $F_1, F_2 \subset \BB(S)^\Fr \subset \BB(L)^\Fr \cap \AA(A)^\Fr$.  
Let $\bL = \bL_1$ and $\bS = \bS_1$.  Since $F_1$ and $F_2$ are maximal in $\BB(L)^\Fr$, they are maximal in $\BB(S)^\Fr$ and so $\emptyset \neq A(F_1, \AA(A)^\Fr ) = A(F_2, \AA(A)^\Fr )$.

Let $\sfS_i$ denote the image of $S \cap G_{F_i}$ in $\sfG_{F_i}$.  Since $S$ is a lift of $(F_i,\sfS_i)$, we conclude that $\sfS_1 \overset{i}{=} {\sfS_2}$ in $\sfG_{F_1} \overset{i}{=} \sfG_{F_2}$; this means
$(\emptyset,w_1) \stackrel{F}{\sim} (\emptyset,w_2)$ in $\dot{I}(F_1)\overset{i}{=} \dot{I}(F_2)$.    Let $\bar{g}_i$ denote the image of $g_i$ in $G_{F_i}$.   Let $n = \bar{g}_2\inv \bar{g}_1$ in $\sfG_{F_1} \overset{i}{=} \sfG_{F_2}$.   Note that $n \in N_{\sfG_{F_1}}(\sfA) \overset{i}{=} N_{\sfG_{F_2}}(\sfA)$, and so it has image $\dot{n} \in   W_{F_1} = W_{F_2} \leq W$.  Moreover, $\Fr(\dot{n}) w_1 \dot{n}\inv = w_2$.  Since 
$\lsup{g_2 \inv g_1}\bM_{\Xi_1} = \bM_{\Xi_2}$, we conclude that $\dot{n} \Phi_{\Xi_1} \overset{i}{=} \Phi_{\Xi_2}$ in $X^*(\sfA_{F_1}) \overset{i}{=} X^*(\sfA_{F_2}) = X^*(\bA)$.   Consequently,
$\mu_1 \approx \mu_2$.
\end{proof}

\begin{example} \label{ex:goodp} In Figure~\ref{fig:last} we provide, up to rational conjugacy, a parameterization of unramified twisted Levis that are not unramified twisted Levis for 
 the groups $\Sp_4$ (when the characteristic of $k$ is not $2$) and $\Gtwo_2$ (when the characteristic of $k$ is not $3$).

\begin{figure}[ht]
\centering
\begin{tikzpicture}
\draw (0,0)
  -- (5,0) 
  -- (5,5) 
  -- cycle;
   \draw (2,1.5) node[anchor=west]{$(\{\beta, \beta + 2 \alpha \},1)$};
    \draw (.8,1.3) node[anchor=west, rotate = 45]{$(\{\beta, \beta + 2 \alpha \},w_\alpha)$};

   \draw (11,0)
  -- (13.9,5) 
  -- (11,5) 
  -- cycle;
  \draw (10.6,4.2) node[anchor=east, rotate = 90]{$(\{\beta, \beta + 3 \alpha \}, w_\alpha)$};
    \draw (12.25,3.35) node[anchor=south, rotate = 90]{\shortstack{$(\{\alpha, 2\beta + 3 \alpha \}, 1)$\\ $(\{\beta, \beta + 3 \alpha \},1)$}};
 \draw (12.5,5.1) -- (12.5,4.9);
    \draw (12.53,2.44) -- (12.35,2.55);

\end{tikzpicture}

\caption{A parameterization of  the  rational classes of unramified twisted generalized Levis that are not unramified twisted Levis for $\Sp_4$ ($\characteristic(k) \neq 2$) and $\Gtwo_2$ ($\characteristic(k) \neq 3$). \label{fig:last}}
\end{figure}
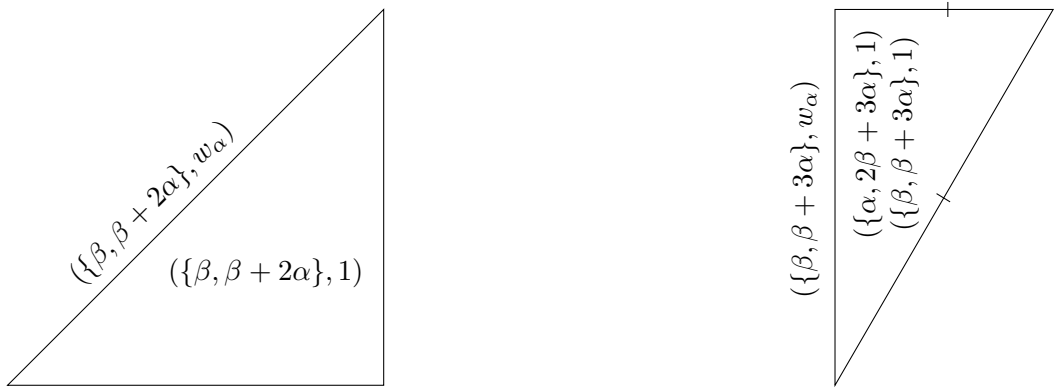

For the group $\Sp_4$ with $\characteristic(k) \neq 2$, the label $(\{\beta, \beta + 2 \alpha \},1)$ corresponds to the rational conjugacy class of unramified twisted generalized Levis that are isomorphic to $\SL_2 \times \SL_2$  and the label $(\{\beta, \beta + 2 \alpha \},w_\alpha)$ corresponds to the rational conjugacy class of unramified twisted generalized Levis that are isomorphic to $R_{E/k}(\SL_2)$ where $E$ is the unramified quadratic extension of $k$.

For the group $\Gtwo_2$ with $\characteristic(k) \neq 3$, the labels $(\{\alpha, 2\beta + 3 \alpha \}, 1)$ and $(\{\beta, \beta + 3 \alpha \},1)$ correspond to the rational conjugacy classes of unramified twisted generalized Levis that are isomorphic to  $\SO_4$ and $\SL_3$, respectively.   The label $(\{\beta, \beta + 3 \alpha \}, w_\alpha)$ corresponds to the  rational conjugacy class of unramified twisted generalized Levis that are isomorphic to (unramified) $\SU_3$.

\end{example}

\begin{example} In Figure~\ref{fig:lastp} we provide, up to rational conjugacy, a parameterization of unramified twisted Levis that are not twisted Levis for the groups $\Sp_4$, when the characteristic of $k$ is $2$, and $\Gtwo_2$, when the characteristic of $k$ is $3$.

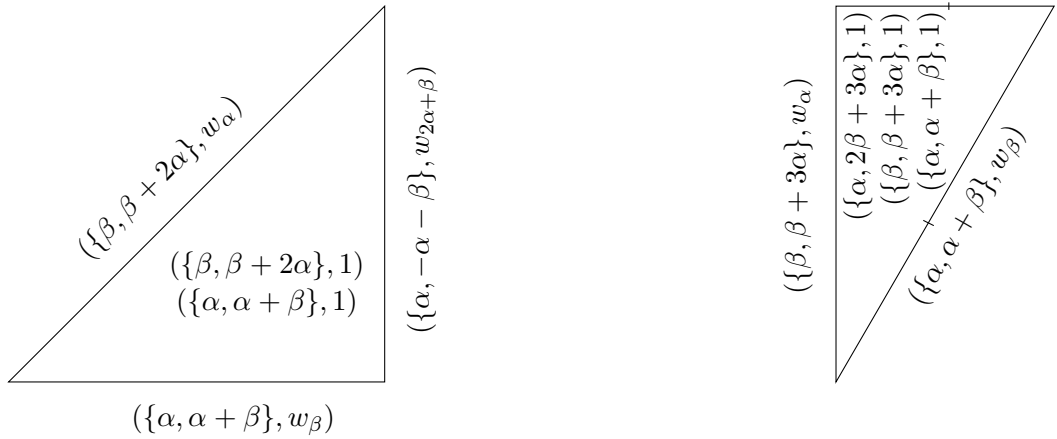
\begin{figure}[ht]
\centering
\begin{tikzpicture}
\draw (0,0)
  -- (5,0) 
  -- (5,5) 
  -- cycle;
   \draw (2,1.3) node[anchor=west]{\shortstack{$(\{\beta, \beta + 2 \alpha \},1)$  \\ $(\{\alpha, \alpha + \beta\}, 1)$}};
    \draw (.9,1.6) node[anchor=west, rotate = 45]{$(\{\beta, \beta + 2 \alpha \},w_\alpha)$};
    \draw (5.5,.5) node[anchor=west, rotate = 90]{$(\{\alpha, -\alpha - \beta \},w_{2 \alpha + \beta})$};
    \draw (1.5,-.5) node[anchor=west]{$(\{\alpha, \alpha + \beta \},w_\beta)$};

   \draw (11,0)
  -- (13.9,5) 
  -- (11,5) 
  -- cycle;
  \draw (10.5,4.2) node[anchor=east, rotate = 90]{$(\{\beta, \beta + 3 \alpha \}, w_\alpha)$};
    \draw (12.6,3.54) node[anchor=south, rotate = 90]{\shortstack{$(\{\alpha, 2\beta + 3 \alpha \}, 1)$\\ $\hspace{.5em}(\{\beta, \beta + 3 \alpha \},1)$ \\ $\hspace{1em}(\{\alpha, \alpha + \beta  \},1)$}};
  \draw (13.5,3.5) node[anchor=east, rotate = 60]{$(\{\alpha, \alpha + \beta \}, w_\beta)$};
     \draw (12.5,5.05) -- (12.5,4.95);
    \draw (12.3,2.08) -- (12.16,2.15);

\end{tikzpicture}

\caption{A parameterization of  the rational conjugacy classes of unramified twisted generalized Levis that are not unramified twisted Levis for $\Sp_4$ when $\characteristic(k) = 2$ and for $\Gtwo_2$ when $\characteristic(k) = 3$. \label{fig:lastp}}
\end{figure}

The unexplained labels are as in Example~\ref{ex:goodp}.

For the group $\Sp_4$ with $\characteristic(k) = 2$, the label $(\{\alpha , \alpha + \beta\},1)$ corresponds to the rational conjugacy class of unramified twisted generalized Levis that are isomorphic to $\SO_4$   while the labels
$(\{\alpha, \alpha + \beta \},w_\beta)$
and 
$(\{\alpha, -\alpha - \beta \},w_\beta)$
$(\{\beta, \beta + 2 \alpha \},w_{2 \alpha + \beta})$ correspond to the two distinct rational  conjugacy classes of unramified twisted generalized Levis that are isomorphic to the non-split quasi-split form of $\SO_4$. 

For the group $\Gtwo_2$ with $\characteristic(k) = 3$, the label $(\{\alpha, \beta +  \alpha \}, 1)$ corresponds to the  rational conjugacy class of unramified twisted generalized Levis that are isomorphic to $\PGL_3$.   The label $(\{\alpha, \beta +  \alpha \}, w_\beta)$ corresponds to the  rational conjugacy class of unramified twisted generalized Levis that are isomorphic to  $\PU_3$.

\end{example}

\subsection{Relations among unramified twisted generalized Levis}

As a Corollary to Theorem~\ref{thm:jisbijective} we have:

\begin{cor} \label{cor:maybethistime} Suppose $\bL$ and $\tilde{\bL}$ are  unramified twisted generalized Levi subgroups in $\bG$.  There exists $x \in G^\Fr$ such that  $\lsup{x}\bL \leq \tilde{\bL}$ if and only if there exist 
$(F,\Xi,w), (\tilde{F},\tilde{\Xi},\tilde{w}) \in  \calI^{\ee}$  and  $\Xi' \in \tilde{\Theta}$ such that
\begin{enumerate}
  \item \label{item:genleviawk} $\Phi_\Xi \subset \Phi_{\Xi'}$,
    \item $F$ is in the closure of $\tilde{F}$,
    \item \label{item:explainme}  $(\Xi',w) \stackrel{F}{\sim} (\tilde{\Xi},\tilde{w})$, 
    \item  $\bL \in j((F,\Xi,w))$, and
    \item  $\tilde{\bL} \in j((\tilde{F},\tilde{\Xi},\tilde{w}))$.
\end{enumerate}
Moreover, if $\bL$ and $\tilde{\bL}$ are  unramified twisted Levi subgroups in $\bG$, then statement~(\ref{item:genleviawk}) may be replaced by the statement: $\Xi \subset \Xi'$.
\end{cor}

\begin{rem} If $F$ is in the closure of $\tilde{F} \subset \AA(A^\Fr)$, then we have $G_{\tilde{F}} \leq G_{F}$ and so $W_{\tilde{F}} \leq W_F \leq W$.  Hence it makes sense to think of $\tilde{w}$ as an element of $W_F$ in statement~(\ref{item:explainme}) of Corollary~\ref{cor:maybethistime}.
\end{rem}

\begin{proof}  The last statement of the lemma, about unramified twisted Levi subgroups, is immediate because for  $\Xi, \Xi' \in {\Theta}$ we have  $\Phi_{\Xi} \subset \Phi_{\Xi'}$ if and only if there exists a basis $\Xi''$ for $\Phi_{\Xi'}$ such that $\Xi \subset \Xi''$.

``$\Leftarrow$''  Choose $g \in G_F$ such that the image of $\Fr(g)\inv g \in N_{G_F}(\bA)$ has image $w$ in $W_F$.  Choose $\tilde{g} \in G_{\tilde{F}}$ such that the image of $\Fr(\tilde{g})\inv \tilde{g} \in N_{G_{\tilde{F}}}(\bA)$ has image $\tilde{w}$ in $W_{\tilde{F}}$. Recall that $\bL_{(F,\Xi,w),g} := \lsup{g} \bM_{\Xi}$,  $\bL_{(F,\Xi',w),g} := \lsup{g} \bM_{\Xi'}$,  $\bL_{(\tilde{F},\tilde{\Xi},\tilde{w}),\tilde{g}} := \lsup{\tilde{g}} \bM_{\tilde{\Xi}}$, etc.   

Since $\Phi_\Xi \subset \Phi_{\Xi'}$, we have $\bL_{(F,\Xi,w),g} \leq \bL_{(F, \Xi',w),g}$.   Since 
$F$ is in the closure of $\tilde{F}$, we have $\tilde{g} \in G_{\tilde{F}} \leq G_F$ and $\tilde{w} \in W_{\tilde{F}} \leq W_F$; hence $\bL_{({F},\tilde{\Xi},\tilde{w}),\tilde{g}}$ makes sense and is equal to  $\bL_{(\tilde{F},\tilde{\Xi},\tilde{w}),\tilde{g}}$. Since  $(\Xi',w) \stackrel{F}{\sim} (\tilde{\Xi},\tilde{w})$, from Lemma~ \ref{lem:thelemmaweneed} there exists $k \in G_F^\Fr$ such that  $\lsup{k} \bL_{(F,\Xi',w),g} =  \bL_{(F,\tilde{\Xi},\tilde{w}),\tilde{g}}$.
Since  $\bL \in j((F,\Xi,w))$, from Theorem~\ref{thm:jisbijective} there exists $h \in G^{\Fr}$ such that $\lsup{h}\bL =  \bL_{(F,\Xi,w),g}$.
Since $\tilde{\bL} \in j((\tilde{F},\tilde{\Xi},\tilde{w}))$, from Theorem~\ref{thm:jisbijective} there exists $\tilde{h} \in G^{\Fr}$ such that $\tilde{\bL} = \lsup{\tilde{h}} \bL_{(\tilde{F},\tilde{\Xi},\tilde{w}),\tilde{g}}$.  If $x = \tilde{h}kh$, then $x \in G^{\Fr}$ and 
\[ \lsup{x}\bL =  \lsup{\tilde{h}kh}\bL = \lsup{\tilde{h}k} \bL_{(F,\Xi,w),g} \leq \lsup{ \tilde{h}k} \bL_{(F,\Xi',w),g}  = \lsup{\tilde{h}} \bL_{(F,\tilde{\Xi},\tilde{w}),\tilde{g}}  =  \tilde{\bL}.\]

``$\Rightarrow$''   We suppose $\lsup{x}\bL \leq \tilde{\bL}$.   Choose a $G^\Fr$-facet $F$ in $\BB(G)^\Fr$ such that $x\inv F$ is a maximal $G^\Fr$-facet in $\BB(L)^{\Fr}$.  Let $\bS$ be a $(K,k)$-torus such that $\lsup{x\inv} \bS$ is a maximally $k$-split $(K,k)$-torus in $\bL$ and $ F \subset \BB(S)^\Fr$. Since $\lsup{x}L \leq \tilde{L}$, there exists a maximally $k$-split $(K,k)$-torus $\tilde{\bS}$ in $\tilde{\bL}$ such that $F \subset \BB(S)^\Fr \subset \BB(\tilde{S})^\Fr \subset \BB(\tilde{L})^\Fr$.  We can choose $y \in G^\Fr$ such that $y \BB(\tilde{S})^{\Fr} \subset \AA(A)^{\Fr}$.   After conjugating everything in sight by $y$, we have that $F$ is a maximal $G^\Fr$-facet in $\lsup{x}\BB(L)^\Fr$
and
\[F \subset \BB(S)^\Fr \subset \BB(\tilde{S})^\Fr \subset \AA(A)^\Fr.\]

Choose $g \in G_{F}$ such that $\lsup{g}\bA = \bS$.  Let $\Xi$ be a basis for $\Phi(\lsup{g\inv x}\bL, \bA)$, and let $w$ denote the image of $\Fr(g)\inv g \in N_{G_F}(\bA)$ in $W_F$. 
Let $\Xi'$ be a basis for $\Phi(\lsup{g\inv}\tilde{\bL}, \bA)$.   Since $\lsup{x}\bL \leq \tilde{\bL}$, we have $\Phi_{\Xi} \subset \Phi_{\Xi'}$.

Let $\tilde{F}$ be a maximal $G^\Fr$-facet in $\BB(\tilde{S})$ that contains $F$ in its closure.  Choose $\tilde{g} \in G_{\tilde{F}}$ such that $\lsup{\tilde{g}}\bA = \tilde{\bS}$.  Let $\tilde{\Xi}$ be a basis for $\Phi(\lsup{\tilde{g}\inv}\bL, \bA)$, and let $\tilde{w}$ denote the image of $\Fr(\tilde{g})\inv \tilde{g} \in N_{G_{\tilde{F}}}(\bA)$ in $W_{\tilde{F}}$.

Since $F$ is in the closure of $\tilde{F}$, we have $\tilde{g} \in G_{\tilde{F}} \leq G_F$ and $\tilde{w} \in W_{\tilde{F}} \leq W_F$; hence $\bL_{({F},\tilde{\Xi},\tilde{w}),\tilde{g}}$ makes sense and is equal to  $\bL_{(\tilde{F},\tilde{\Xi},\tilde{w}),\tilde{g}}$. 
Since   $\tilde{\bL} =  \bL_{(F,\Xi',w),g} =  \bL_{(F,\tilde{\Xi},\tilde{w}),\tilde{g}}$, from Lemma~ \ref{lem:thelemmaweneed} we have
$(\Xi',w) \stackrel{F}{\sim} (\tilde{\Xi},\tilde{w})$.
 
 As in the proof of Theorem~\ref{thm:jisbijective}, since $F$ was chosen to be a maximal $G^\Fr$-facet in $\BB(\lsup{x}L)$, we have $(F,\Xi,w) \in \calI^{\ee}$
 and $\lsup{x}{\bL} \in j((F,\Xi,w))$.  Since $j((F,\Xi,w))$ is a single $G^\Fr$-conjugacy class of unramified twisted generalized Levi subgroups of $\bG$, we have $\bL \in j((F,\Xi,w))$.

Similarly, since  $\tilde{F}$ was chosen to be a maximal $G^\Fr$-facet in $\BB(\tilde{L})$, we have
 $(\tilde{F},\tilde{\Xi},\tilde{w}) \in \calI^{\ee}$ and $\tilde{\bL} \in j((\tilde{F},\tilde{\Xi},\tilde{w}))$.
\end{proof}

 \subsection{Twisted generalized Levi subgroups for reductive groups over quasi-finite fields}

We close with a  generalization of the material in Section~\ref{sec:leviff}.
Let $\sfG$, $\sfB$, $\sfA$ etc. be as in Section~\ref{sec:leviff}.  We denote by  $\Phi_\sfG = \Phi(\sfG, \sfA)$ the roots of $\sfG$ with respect to $\sfA$ and by  $\Phi^+_\sfG = \Phi^+(\sfG, \sfB, \sfA)$ the corresponding set of positive roots. 

  For $\rho \subset \Phi_\sfG$  define
$\sfG_\rho$ to be the group generated by $\sfA$ and the root groups $\sfU_\alpha$ for $\alpha \in \rho$. The subset $\rho$ of $\Phi_\sfG$ is said to be quasi-closed provided that if $\beta \in \Phi_\sfG$ and $\sfU_\beta \subset \sfG_\rho$, then $\beta \in \rho$.
We denote by $\tilde{\Delta}_\sfG = \tilde{\Delta}(\sfG, \sfB, \sfA)$ the set 
\[ \{ \Xi \subset \Phi^+_\sfG \, | \, \text{ $\Xi$ is a basis for a quasi-closed subset of $\Phi_\sfG$} \}. \]

\begin{defn}
A reductive subgroup $\sfL$ of $\sfG$ is called a \emph{twisted generalized Levi $\ff$-subgroup of $\sfG$} provided that $\sfL$ is defined over $\ff$ and $\sfL$ is a full rank reductive subgroup of $\sfG$.   We let $\LL'$ denote the set of twisted generalized Levi $\ff$-subgroups of $\sfG$, and we let $\tilde{\LL}'$ denote the set of $\sfG^{\Fr}$-conjugacy classes in $\LL'$.
\end{defn}
 
Let $I'_\sfG$ denote the set of pairs $(\Xi,w)$ where $\Xi \subset \tilde{\Delta}_\sfG$ and $w \in W_\sfG$ such that $\Fr (\Xi) = w \Xi$.   For $(\Xi',w')$ and $(\Xi,w) \in I'_\sfG$ we write $(\Xi',w') \sim (\Xi,w)$ provided that there exists an element $\dot{n} \in W_\sfG$ for which
\begin{itemize}
\item  $   \Xi = \dot{n} \Xi ' $ and
\item $ w = \Fr(\dot{n}) w' \dot{n}\inv $.
\end{itemize}
One checks that $\sim$ is an equivalence relation on the set $I'_\sfG$.

\begin{lemma}
There is a natural bijective correspondence between $I'_\sfG / \! \sim$ and $\tilde{\LL}'$. 
\end{lemma}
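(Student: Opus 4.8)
The plan is to mimic the proof of Lemma~\ref{lem:ffversion} almost verbatim, replacing subsets of $\Delta_\sfG$ by bases of quasi-closed subsystems (i.e.\ elements of $\tilde{\Delta}_\sfG$) and replacing $\sfM_\theta = C_\sfG(\sfA_\theta)$ by $\sfG_{\Phi_\Xi}$, the connected reductive $\ff$-subgroup generated by $\sfA$ and the root groups $\sfU_\alpha$ for $\alpha\in\Xi$. First I would define $\varphi\colon I'_\sfG\rightarrow\tilde{\LL}'$: given $(\Xi,w)\in I'_\sfG$, use \cite[Section 4.2]{debacker:unramified} (exactly as before) to pick $g\in\sfG$ with $n_g:=\Fr(g\inv)(g)\in N_\sfG(\sfA)$ having image $w$ in $W_\sfG$, and set $\varphi(\Xi,w)$ to be the $\sfG^\Fr$-conjugacy class of $\lsup{g}\sfG_{\Phi_\Xi}$. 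The computation $\Fr(\lsup{g}\sfG_{\Phi_\Xi}) = \lsup{g}\sfG_{\Phi_\Xi}$ goes through exactly as in Lemma~\ref{lem:ffversion}, using $\Fr(\Phi_\Xi)=\Fr(\Xi)\text{-span}=w\Phi_\Xi$ and $\lsup{n_g}\sfG_{\Phi_{w\Xi}} = \sfG_{\Phi_\Xi}$; so $\lsup{g}\sfG_{\Phi_\Xi}$ is a full-rank reductive $\ff$-subgroup, hence in $\LL'$. Well-definedness in $g$ is verbatim the same $\cohom^1(\Fr,\lsup{g}\sfA)=1$ argument.

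Next I would show $\varphi$ descends to an injection on $I'_\sfG/\!\sim$. This is the same argument as in Lemma~\ref{lem:ffversion}: if $\varphi(\Xi,w)=\varphi(\Xi',w')$, conjugate so that $\lsup{g}\sfG_{\Phi_\Xi} = \lsup{g'}\sfG_{\Phi_{\Xi'}} =: \sfL$; then $\lsup{g}\sfA,\lsup{g'}\sfA$ are two maximal $\ff$-tori of $\sfL$, so after adjusting $g'$ by an element of $\sfL$ we may assume $\lsup{g'}\sfA = \lsup{g}\sfA$ and that the two Borels match up, forcing (as before, via the exact relation $\Fr(n)(n_{g'})n\inv = n_g(\ldots)$ with $(\ldots)\in N_{\sfL}(\sfA)$) that $n := \dot{g}\inv\dot{g'}\in W_\sfG$ satisfies $n\Xi' = \Xi$ (here I use that $W_{\Phi_\Xi}\cap(\text{stabilizer of }\Xi)=1$, i.e.\ the Weyl group of $\Phi_\Xi$ acts simply transitively on bases of $\Phi_\Xi$) and $\Fr(n)w'n\inv\in wW_{\Phi_\Xi}$; a final check against the simply-transitive action then pins down the element to be exactly $w$, giving $(\Xi,w)\sim(\Xi',w')$. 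Surjectivity is also the same: given $\sfL\in\LL'$, it is full-rank reductive, so $\Phi(\sfL,\sfA_\sfL)$ is a quasi-closed subsystem of $\Phi(\sfG,\sfA_\sfL)$; choosing $g$ so that $\sfA_\sfL = \lsup{g}\sfA$ and a Borel $\sfB_\sfL\leq\lsup{g}\sfB$, and setting $\Xi = g\inv\Delta(\sfL,\sfB_\sfL,\sfA_\sfL)$ (which lies in $\tilde{\Delta}_\sfG$ since it is a basis of the quasi-closed subsystem $g\inv\Phi(\sfL,\sfA_\sfL)$), $w = $ image of $\Fr(g\inv)g$, one checks $\Fr(\Xi)=w\Xi$ and $\varphi(\Xi,w) = (\sfL)$.

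The one genuinely new point — and the step I expect to be the main obstacle — is verifying that full-rank reductive $\ff$-subgroups of $\sfG$ are exactly the $\sfG_{\Phi_\Xi}$ for $\Xi$ a basis of a quasi-closed subsystem, up to conjugacy and choice of positive system; i.e.\ the bijection between full-rank reductive subgroups containing a given maximal torus $\sfA$ and quasi-closed subsystems of $\Phi(\sfG,\sfA)$. This is where the distinction between closed and quasi-closed matters (unlike in Lemma~\ref{lem:ffversion}, where only Levi subgroups, hence only $\Delta$-parabolic type subsystems, occur), and I would invoke the Borel–Tits theory cited in the Remark after the closed/quasi-closed definitions (\cite[Section 3]{borel-tits:groupes}, \cite[XXII,XXIII]{schemasengroupesIII}) to the effect that $\rho\mapsto\sfG_\rho$ is a bijection from quasi-closed subsystems to full-rank connected reductive subgroups containing $\sfA$, and that $N_\sfG(\sfA)$-conjugation corresponds to $W_\sfG$-action on subsystems. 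Granting that dictionary, every step above is a routine transcription of the proof of Lemma~\ref{lem:ffversion}, with $\Theta$ replaced by $\tilde{\Theta}$ and ``Levi'' replaced by ``full-rank reductive.'' I would also note, as in Remark~\ref{rem:ffversion}, the sanity checks that $(\emptyset,w)$ parametrizes maximal tori and $(\Delta_\sfG$-type bases$,w)$ recovers the twisted Levi case of Lemma~\ref{lem:ffversion}.
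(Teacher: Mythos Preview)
Your proposal is correct and matches the paper's approach exactly: the paper's entire proof is to define $\sfM_\Xi$ and $W_{\sfG,\Xi}$ and then state that ``the proof mimics that of Lemma~\ref{lem:ffversion}.'' Your one flagged ``obstacle'' (the correspondence between quasi-closed subsystems of $\Phi_\sfG$ and full-rank connected reductive subgroups containing $\sfA$) is less serious than you suggest, since it is essentially the content of the definition of quasi-closed together with the fact that a connected reductive group is generated by a maximal torus and its root subgroups.
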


\begin{proof}    For $\Xi \in \tilde{\Delta}$  define
$\sfM_\Xi$ to be the group generated by $\sfA$ and the root groups $\sfU_\alpha$ for $\alpha$ in the root system spanned by $\Xi$.   We let  $W_{\sfG,\Xi}$ denote the corresponding subgroup of $W_\sfG$.  

With the definitions above and appropriate minor  modifications, the proof mimics that of Lemma~\ref{lem:ffversion}.
\end{proof}

\begin{example}  We consider $\sfG = \G2$ and adopt the notation of Example~\ref{ex:g2finiteinitial}.   In Table~\ref{table:gtwofiniteB}  a complete list of representatives for the elements of $I'_\sfG/ \! \sim$ that are not in $I_\sfG/\! \sim $.    Recall that $I_\sfG$ is defined in Section~\ref{subsec:ffparam}.  We also indicate the type of the corresponding twisted generalized Levi $\ff$-subgroup of $\G2$.

\begin{table}[!ht]
\centering
\begin{tabular}{ |c||c|c| }
 \hline
 Pair & Type of twisted generalized Levi $\ff$-group & conditions on $\ff$\\
 \hline \hline
 $(\{\alpha, 2 \beta + 3 \alpha \},  1)$ & $\SO_4$ & none \\
 $(\{\beta, \beta + 3 \alpha \}, 1)$ &  ${\SL}_3$ & none\\
 $(\{\beta, \beta + 3 \alpha \}, w_\alpha )$ &  ${\SU}(3)$ & none\\
 $(\{\alpha, \beta +  \alpha \}, 1 )$ &  $\PGL_3$ & $\characteristic(\ff) = 3$\\
  $(\{\alpha, \beta +  \alpha \}, w_\beta )$ &  $\PU_3$ & $\characteristic(\ff) = 3$\\
 \hline
 \end{tabular}
 \caption{$\G2$: A set of representatives for $(I'_\sfG \setminus I_\sfG)/ \! \sim$  \label{table:gtwofiniteB}}
\end{table}
\end{example}

\FloatBarrier

\appendix

\section{Existence of \texorpdfstring{$K$-minisotropic maximal $k$-tori}{K minisotropoic maximal k tori}}  \label{sec:appendixone}

\begin{center}
    Jeffrey D. Adler
\end{center}

When $k$ has characteristic zero,
it follows from~\cite[Section 15]{knesser:galoisII} or~\cite[Theorem~6.21]{platonov-rapinchuk:algebraic}  that every connected reductive $k$-group $\bG$ contains a $k$-minisotropic maximal $k$-torus.  When the residue field of $k$ is finite, it follows from~\cite[Section 2.4]{debacker:unramified} that $\bG$ contains a $k$-minisotropic maximally $K$-split maximal $k$-torus.  In this appendix, we show that when the residue field of $k$ is finite,  $\bG$  contains a $k$-minisotropic maximal $k$-torus that is as ramified as possible.

\begin{lemma}
If $\bG$ is a connected reductive $k$-group and the residue field of $k$ is finite,
then $\bG$ contains a $K$-minisotropic maximal $k$-torus.
\end{lemma}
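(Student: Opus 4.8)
The plan is to reduce to the case of an absolutely simple, simply connected $\bG$, and then to produce the torus by twisting the maximal torus of the $K$-quasi-split form by a suitable \emph{totally ramified} extension of $k$ --- the point being that an unramified twist would only yield a $K$-split torus. First I would note that it suffices to treat $\bG$ semisimple: if $\bT_1$ is a maximal $k$-torus of $\bG^{\mathrm{der}}$ that is $K$-anisotropic (for semisimple groups $\bZ_{\bG^{\mathrm{der}}}$ is finite, so ``$K$-minisotropic'' means ``$K$-anisotropic''), then $\bT := \bT_1 \cdot (\bZ_\bG)^\circ$ is a maximal $k$-torus of $\bG$ with $\bT^K = \bZ^K$, since the image of $\bT_1$ in $\bG^{\mathrm{ad}}$ is again $K$-anisotropic. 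Because a central $k$-isogeny induces a rational isomorphism of cocharacter lattices, $K$-anisotropy of maximal tori is preserved along the simply connected cover $\widetilde{\bG}\to\bG^{\mathrm{der}}$; and writing $\widetilde{\bG}\cong\prod_i R_{k_i/k}\bG_i$ with each $\bG_i$ absolutely simple, simply connected over a finite separable extension $k_i/k$ with finite residue field, the isomorphism $k_i\otimes_k K\cong K_i^{f_i}$ --- in which $K_i$ is the maximal unramified extension of $k_i$, $f_i$ the residue degree, and $K_i/K$ is totally ramified --- shows that a $K_i$-anisotropic maximal $k_i$-torus of $\bG_i$ yields, via $R_{k_i/k}$, a $K$-anisotropic maximal $k$-torus of $R_{k_i/k}\bG_i$. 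This reduces the problem to: $\bG$ absolutely simple and simply connected over $k$, produce a $K$-anisotropic maximal $k$-torus. Lastly, $\bG_K$ is quasi-split, and any maximal $k$-torus of the quasi-split inner $k$-form $\bG^*$ of $\bG$ that is anisotropic modulo its center admits an admissible $k$-embedding into $\bG$ (standard over local fields: for elliptic $\bT^*$ the map $H^1(k,\bT^*_{\mathrm{ad}})\to H^1(k,\bG^*_{\mathrm{ad}})$ is surjective), and its image has the same $K$-split rank; so I may assume $\bG=\bG^*$ is quasi-split.

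For the construction I would fix a finite Galois extension $k_1/k$ splitting $\bG$, a split maximal torus $\bS$ of $\bG_{k_1}$, the Weyl group $W=W(\bG_{k_1},\bS)$, the group $\Omega$ of pinned automorphisms, and the structure homomorphism $\mu\colon\Gamma_k\to\Omega$ (here $\Gamma_k=\Gal(\bar k/k)$ and $\Gamma_K=\Gal(\bar k/K)$ is the inertia group). For a continuous cocycle $\rho\colon\Gamma_k\to W$ for the $\mu$-twisted action, the twisted torus ${}_{\rho}\bS$ is a maximal $k$-torus of $\bG$ --- every such Weyl-type is realized, after lifting $\rho$ through $N_\bG(\bS)\to W$, because $\bG$ is quasi-split over the local field $k$ --- and ${}_{\rho}\bS$ is $K$-anisotropic precisely when the restriction of the twisted action to $\Gamma_K$ has no nonzero fixed vectors on $X_*(\bS)\otimes\mathbb{Q}$, i.e. when the homomorphism $\Gamma_K\to W\rtimes\Omega$, $\sigma\mapsto\rho(\sigma)\mu(\sigma)$, has image acting without nonzero fixed vectors. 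So I would choose $n\in W\rtimes\Omega$ acting on $X_*(\bS)\otimes\mathbb{Q}$ without nonzero fixed vectors and lying over the image of $\mu|_{\Gamma_K}$ in $\Omega$ (for split $\bG$, a Coxeter element of $W$; in general a twisted Coxeter element), and then, using that $\Gamma_K$ has cohomological dimension at most $1$ and has cyclic quotients of every order --- its tame quotient is $\prod_{\ell\ne p}\mathbb{Z}_\ell$, and by local class field theory $\Gamma_K^{\mathrm{ab}}\cong\mathcal{O}_k^\times$ also has $p$-power cyclic quotients --- produce a homomorphism $\Gamma_K\to W\rtimes\Omega$ with image $\langle n\rangle$ lifting $\mu|_{\Gamma_K}$, hence the inertia part of a cocycle $\rho$, and extend it Frobenius-compatibly to a cocycle on all of $\Gamma_k$ (again invoking $\mathrm{cd}(\Gamma_K)\le1$). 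The resulting ${}_{\rho}\bS$ is a $K$-anisotropic maximal $k$-torus of $\bG$, which by the reductions above gives a $K$-minisotropic maximal $k$-torus of the group we started with.

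The hard part will be exactly the heart of the second paragraph: realizing an elliptic (twisted) Weyl element through \emph{ramification} rather than through the unramified extension --- concretely, building the homomorphism from the inertia group with elliptic image and extending it Frobenius-equivariantly to an honest $\Gamma_k$-cocycle valued in $W$. The delicate case is wild ramification, when the relevant (twisted) Coxeter number is divisible by the residual characteristic $p$: there one must exploit the $p$-part of the structure of the inertia group rather than merely tame extensions. By comparison, the reduction to the quasi-split inner form and the final lift from $W$ to $N_\bG(\bS)$ are routine appeals to standard facts about reductive groups over local fields.
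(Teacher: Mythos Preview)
Your reductions --- to simply connected (the paper uses adjoint, but this is immaterial), then absolutely simple via restriction of scalars, then quasi-split via the Kottwitz/Kaletha transfer for elliptic tori --- match the paper's proof exactly. The divergence is at the final construction step, and here your route is genuinely different from the paper's.

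The paper does \emph{not} build the torus by exhibiting a $W$-valued cocycle with elliptic inertial image. Instead it performs one more reduction: using Borel--de Siebenthal it finds, inside the absolutely simple quasi-split $\bG$, a full-rank semisimple $k$-subgroup $\bH$ that is (up to isogeny) a product of groups of type~$A$ --- concretely, products of $\GL_n$'s and quasi-split unitary groups, possibly after restriction of scalars. For such $\bH$ the $K$-anisotropic torus is written down explicitly from a totally ramified separable extension of the right degree (e.g.\ $R_{E/k}\GL_1 \hookrightarrow \GL_n$). A short case check on Dynkin diagrams --- only the outer forms ${}^2D_n$, ${}^3D_4$, ${}^6D_4$, ${}^2E_6$ need separate attention --- shows that a Galois-stable type-$A$ subsystem of full rank always exists. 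This completely avoids the cocycle-extension problem.

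Your proposed construction is plausible but the step you yourself flag as ``the hard part'' is a real gap as stated. Two specific issues: (i) The invocation of $\mathrm{cd}(\Gamma_K)\le 1$ addresses lifting problems \emph{over} $\Gamma_K$, not the extension from $\Gamma_K$ to $\Gamma_k$; for the latter you need your homomorphism $\Gamma_K\to\langle n\rangle$ to be Frobenius-equivariant (its $\mathrm{Fr}$-conjugate must be conjugate, in $W\rtimes\Omega$, to itself in a way compatible with how $\mathrm{Fr}$ acts on the relevant cyclic quotient of $\Gamma_K$), and nothing you've said forces this. For example, for unramified ${}^2A_2$ the only elliptic elements of $W=S_3$ are the $3$-cycles, which the diagram automorphism \emph{inverts}; one can still build a suitable cocycle, but not by the recipe ``pick elliptic $n$, map inertia to $\langle n\rangle$, extend.'' (ii) When $\mu|_{\Gamma_K}$ has non-cyclic image --- this occurs for ${}^6D_4$ with wildly ramified $S_3$-splitting field --- your prescription ``choose $n$ lying over the image of $\mu|_{\Gamma_K}$'' does not parse. (A minor point: $\Gamma_K^{\mathrm{ab}}\not\cong\mathcal{O}_k^\times$; local class field theory identifies $\mathcal{O}_k^\times$ with the image of inertia in $\Gamma_k^{\mathrm{ab}}$, which is a quotient of $\Gamma_K^{\mathrm{ab}}$. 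Your conclusion that $\Gamma_K$ has cyclic quotients of every order is nonetheless correct.)

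In short: your strategy can very likely be completed, but the cocycle construction requires a genuine argument you have not supplied, whereas the paper's Borel--de Siebenthal reduction to type~$A$ makes the construction elementary and uniform.
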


\begin{proof}
First observe that our result is true for general linear and $k$-quasi-split unitary groups.  For if $n\geq 1$, the field $k$ has a totally ramified, separable extension of degree $n$.
For such a field $E$, the torus $R_{E/k}\GL_1$ embeds
as a $K$-minisotropic maximal $k$-torus in $\GL_n$.
Given a quadratic Galois extension $L/k$, we can choose $E$ to not contain $L$, in which case the kernel of the map
$N_{EL/E}\colon R_{EL/k}\GL_1 \rightarrow R_{E/k}\GL_1$
embeds as a $K$-minisotropic maximal $k$-torus in the quasi-split unitary group $U_{n,L/k}$.

Second, we reduce to the case where $\bG$ is absolutely simple.  Observe that our result is true for $\bG$ if and only if it is true for $\bG/\bZ$, where $\bZ$ is the center of $\bG$, and so we may assume that $\bG$ is adjoint.
Write $\bG=\prod_{i=1}^r R_{E_i/k}\bG_i$, where each $E_i/k$ is a finite separable extension, and each $\bG_i$ is an absolutely simple $E_i$-group (see~\cite[Section~6.21]{borel-tits:groupes}).
If each group $\bG_i$ contains a $KE_i$-anisotropic maximal $E_i$-torus $\bT_i$, then $\prod_i R_{E_i/k}\bT_i$ is a $K$-anisotropic maximal $k$-torus in $\bG$ (see~\cite[Corollary~6.19]{borel-tits:groupes}).
Therefore, we may replace $\bG$ by $\bG_i$ and $k$ by $E_i$, and assume that $\bG$ is absolutely simple.

Third, we reduce to the case where $\bG$ is $k$-quasi-split.  For suppose that $\bG_0$ is a $k$-quasi-split inner form of $\bG$.
From a result of Kottwitz~\cite[Section 10]{kottwitz:stable} (see~\cite[Section 3.2]{kaletha:regular} for the characteristic free version) 
a $k$-anisotropic torus $\bT$ $k$-embeds in $\bG$ if and only if it $k$-embeds in $\bG_0$.  Whether or not $\bT$ is $K$-anisotropic is independent of the $k$-embedding, so we may as well replace $\bG$ by $\bG_0$ and assume that $\bG$ is $k$-quasi-split.

Fourth, suppose that $\bG$ has a full-rank, semisimple $k$-subgroup $\bH$ that is, up to isogeny, a product of groups of type $A$.  Then we have already seen that $\bH$ has a $K$-anisotropic maximal $k$-torus, and thus so does $\bG$.  Therefore, it will be enough to show that $\bG$ contains such a subgroup.
Choose a maximal $k$-torus in a $k$-Borel subgroup of $\bG$.  These choices determine an absolute root system $\Phi$ and a system $\Delta$ of simple roots, both of which are acted upon by $\Gal(E/k)$, where $E$ is the splitting field of our torus.
It will be enough to show that $\Phi$ contains a closed, full-rank subsystem, invariant under the action of $\Gal(E/k)$, that is a product of systems of type $A$.

Identify $\Delta$ with its Dynkin diagram, and let $\widetilde\Delta_v$ be the diagram obtained by deleting a vertex $v$ from the extended Dynkin diagram of $\Delta$.
A theorem of Borel and de Siebenthal tells us that each $\widetilde\Delta_v$ is the Dynkin diagram of a maximal, full-rank, closed subsystem of $\Phi$.
If $v$ is fixed by the action of $\Gal(E/k)$, then so is our subsystem.
Therefore, it will be enough to show that by iterating this process (i.e. replacing a Dynkin diagram by its extended diagram, and then deleting a $\Gal(E/k)$-invariant vertex), one can eventually obtain a product of diagrams of type $A$.

Doing so is straightforward in the cases where $\bG$ is $k$-split, i.e.  $E=k$.  Thus we only need to consider absolutely simple groups of type $^2D_n$ ($n\geq 4$),  $^3D_4$,  $^6D_4$, and $^2E_6$ (see~\cite[\S2 and Table II]{tits:classification}).  If $\bG$ has type $^2D_n$ ($n\geq 4$), then $\Phi$ contains a closed subsystem of type $D_{n-2}\times R_{E/k}A_1$.
If $\bG$ has type $^3D_4$, then $\Phi$ contains a closed subsystem of type $A_1 \times R_{E/k} A_1$.
If $\bG$ has type $^6D_4$, then $\Phi$ contains a closed subsystem of type $A_1 \times R_{E'/k} A_1$, where $E'/k$ is a cubic extension contained in $E$.
If $\bG$ has type $^2E_6$, then $\Phi$ contains a closed subsystem of type $A_2 \times R_{E/k} A_2$.
\end{proof}


\begin{thebibliography}{widest entry}


\bibitem{borel-tits:groupes}
A.~Borel and J.~Tits,
\emph{Groupes r\'{e}ductifs},
{Inst. Hautes \'{E}tudes Sci. Publ. Math.},
\textbf{27} ({1965}),
{55--150}.


\bibitem{carter:finite}
R. W. Carter,
\emph{Finite Groups of Lie Type},
Wiley Classics Library, 1993.





\bibitem{debacker:unramified}
S. DeBacker,
\emph{Parameterizing conjugacy classes of maximal unramified tori via
   Bruhat--Tits theory},
{Michigan Math. J.},
   \textbf{54}
   (2006)
pp.\ 157--178.

\bibitem{debacker:nilpotent}
\bysame, 
\emph{Parametrizing nilpotent orbits via Bruhat--Tits theory},
{Ann. of Math. (2)},
\textbf{156} (2002),
pp.\ 295--332.


\bibitem{debacker:oberwolfach}
\bysame,
\emph{Parametrizing unramified tori},
New Developments in Representation Theory of p-adic Groups (J.~Fintzen, W.~Gan, and S.~Takeda eds.),
Oberwolfach Rep. \textbf{16} (2019), no. 3, 2748--2853.

\bibitem{debacker:totally}
\bysame,
\emph{Totally Ramified Maximal Tori and Bruhat-Tits theory}, 14 September, 2024,
\url{http://arxiv.org/abs/2309.09049}


\bibitem{debacker-reeder:depth-zero}
S. DeBacker and M. Reeder,
\emph{Depth-zero supercuspidal {$L$}-packets and their stability},
Ann. of Math. (2),
\textbf{169} (2009),
pp.\ 795--901.


\bibitem{digne-michel}
F. Digne and J. Michel,
\emph{Representations of Finite Groups of Lie Type},
London Mathematical Society Student Texts, vol.\ 21,
Cambridge University Press, 1991.

 
\bibitem{jabon:thesis}
D. Jabon,
\emph{The supercuspidal representations
                of ${\rm U}(2,1)$ and ${\rm GSp}_4$},
Ph.D. Thesis, The University of Chicago, 1989.


\bibitem{kaletha:regular}
T. Kaletha, \emph{Regular supercuspidal representations},
J. Amer. Math. Soc. \textbf{32} (2019), no. 4, 1071–1170.


\bibitem{kazhdan-lusztig:fixed}
D. Kazhdan and G. Lusztig,
\emph{Fixed point varieties on affine flag manifolds}, Israel J.
Math. \textbf{62} (1988), 129–168.



\bibitem{knesser:galoisII}
M. Kneser, 
\emph{Galois-{K}ohomologie halbeinfacher algebraischer {G}ruppen  \"{u}ber {${ p}$}-adischen {K}\"{o}rpern. {I}},
 Math. Z. \textbf{89} (1965), 250-272.



\bibitem{kottwitz:rational}
R.~Kottwitz, 
\emph{Rational conjugacy classes in reductive groups},
{Duke Math. J.},
\textbf{49}
({1982})
no. {4},
{785--806}.

   
\bibitem{kottwitz:stable}
\bysame, 
\emph{Stable trace formula: elliptic singular terms}, Math. Ann. \textbf{275} (1986), no. 3,
365–399. 



\bibitem{platonov-rapinchuk:algebraic}
V. Platonov and A. Rapinchuk, 
\emph{Algebraic groups and number theory}
(R. Rowen, trans.), 
Pure and Applied Mathematics,
\textbf{139},
Academic Press, Boston, 1994.

	
	
\bibitem{prasad:unramified}
G. Prasad,
\emph{A new approach to unramified descent in Bruhat-Tits theory},
Amer.\ J. Math.\ \textbf{142} (2020), no.\ 1, 215--253.

\bibitem{serre:galois}
J.-P. Serre, 
\emph{Galois cohomology}
(P. Ion, trans.), 
Springer-Verlag, Berlin, 1997


\bibitem{serre:local}
\bysame, 
\emph{Local fields}
(M. J. Greenberg, trans.), Grad. Texts in Math., \textbf{67},
Springer-Verlag, Berlin, 1979.


\bibitem{solleveld:conjugacy}
M.~Solleveld, 
\emph{Conjugacy of Levi subgroups of reductive groups and a generalization to linear algebraic groups}, J. Pure Appl. Algebra, \textbf{224} (2020), no.6.


\bibitem{springer:algebraic}
T. A. Springer,   
\emph{Linear algebraic groups},
Birkh\"{a}user, Boston, 2009.



\bibitem{tits:classification}
J.~Tits,
\emph{Classification of algebraic semisimple groups},
{Algebraic {G}roups and {D}iscontinuous {S}ubgroups ({P}roc. {S}ympos. {P}ure {M}ath., {B}oulder, {C}olo., 1965)},  AMS, 1966,
{33--62}.


\bibitem{schemasengroupesIII}
\emph{Sch\'{e}mas en groupes ({SGA} 3). {T}ome {I}. {P}ropri\'{e}t\'{e}s
              g\'{e}n\'{e}rales des sch\'{e}mas en groupes},
{Documents Math\'{e}matiques (Paris)},
\textbf{7},
(P. Gille and P. Polo, eds),
 {S\'{e}minaire de G\'{e}om\'{e}trie Alg\'{e}brique du Bois Marie 1962--64.
              [Algebraic Geometry Seminar of Bois Marie 1962--64],
              A seminar directed by M. Demazure and A. Grothendieck with the
              collaboration of M. Artin, J.-E. Bertin, P. Gabriel, M.
              Raynaud and J-P. Serre,
              Revised and annotated edition of the 1970 French original},
{Soci\'{e}t\'{e} Math\'{e}matique de France, Paris},
 {2011}.


\end{thebibliography}
\end{document}